\newtheorem{theorem}{Theorem}[section]
\newtheorem{definition}[theorem]{Definition}
\newtheorem{lemma}[theorem]{Lemma}
\newtheorem{proposition}[theorem]{Proposition}
\newtheorem{corollary}[theorem]{Corollary}
\newtheorem{remark}[theorem]{Remark}
\newtheorem{example}[theorem]{Example}
\newtheorem{examples}[theorem]{Examples}
\newtheorem{assumption}[theorem]{Assumption}
\newcommand{\oo}{{\mathbb{O}}}
\newcommand{\hh}{{\mathbb{H}}}
\newcommand{\cc}{{\mathbb{C}}}
\newcommand{\rr}{{\mathbb{R}}}
\newcommand{\zz}{{\mathbb{Z}}}
\newcommand{\nn}{{\mathbb{N}}}
\newcommand{\D}{\mathbb{D}}
\newcommand{\s}{{\mathbb{S}}}
\newcommand{\sr}{\mathcal{SR}}
\newcommand{\I}{\mathcal{I}}
\renewcommand{\L}{\mathcal{L}}
\newcommand{\R}{\mathcal{R}}
\renewcommand{\P}{\mathcal{P}}
\newcommand{\T}{\mathcal{T}}
\newcommand{\B}{\mathcal{B}}
\newcommand{\torus}{\mathbb{T}}
\newcommand{\mon}{\mathrm{Mon}}
\newcommand{\reg}{\operatorname{Reg}}
\newcommand{\slice}{\mathcal{S}}
\renewcommand{\ker}{\operatorname{Ker}}
\newcommand{\h}{{\bf h}}
\renewcommand{\k}{{\bf k}}
\newcommand{\F}{\mathscr{F}}
\newcommand{\debar}{\overline{\partial}}
\newcommand\re{\operatorname{Re}}
\newcommand\im{\operatorname{Im}}
\newcommand{\Span}{\operatorname{Span}}
\title{{\bf  A unified theory of regular functions \\ of a hypercomplex variable}}
\author{Riccardo Ghiloni\\
\small Dipartimento di Matematica, Universit\`a di Trento\\ 
\small Via Sommarive 14, I-38123 Povo Trento, Italy\\
\small riccardo.ghiloni@unitn.it\\
\and
Caterina Stoppato
\\ 
\small Dipartimento di Matematica e Informatica ``U. Dini'', Universit\`a di Firenze \\
\small Viale Morgagni 67/A, I-50134 Firenze, Italy\\
\small caterina.stoppato@unifi.it}
\date{  }
\begin{document}

\maketitle


\begin{abstract}
This work proposes a unified theory of regularity in one hypercomplex variable: the theory of \emph{$T$-regular} functions. In the special case of quaternion-valued functions of one quaternionic variable, this unified theory comprises Fueter-regular functions, slice-regular functions and a recently-discovered function class. In the special case of Clifford-valued functions of one paravector variable, it encompasses monogenic functions, slice-monogenic functions, generalized partial-slice monogenic functions, and a variety of function classes not yet considered in literature. For $T$-regular functions over an associative $*$-algebra, this work provides integral formulas, series expansions, an Identity Principle, a Maximum Modulus Principle and a Representation Formula. It also proves some foundational results about $T$-regular functions over an alternative but nonassociative $*$-algebra, such as the real algebra of octonions.
\end{abstract}


{\footnotesize
\section*{Contents}
\contentsline {section}{\numberline {1}Introduction}{2}{section.1}%
\contentsline {section}{\numberline {2}Hypercomplex subspaces and monogenic functions}{3}{section.2}%
\contentsline {subsection}{\numberline {2.1}Alternative real $*$-algebras}{4}{subsection.2.1}%
\contentsline {subsection}{\numberline {2.2}Hypercomplex subspaces}{9}{subsection.2.2}%
\contentsline {subsection}{\numberline {2.3}Monogenic functions on hypercomplex subspaces}{12}{subsection.2.3}%
\contentsline {section}{\numberline {3}Properties of monogenic functions on hypercomplex subspaces}{13}{section.3}%
\contentsline {subsection}{\numberline {3.1}Monogenic polynomial maps on a hypercomplex subspace}{13}{subsection.3.1}%
\contentsline {subsection}{\numberline {3.2}Integral representation of functions on a hypercomplex subspace}{15}{subsection.3.2}%
\contentsline {subsection}{\numberline {3.3}Properties of the reproducing kernel and harmonicity}{17}{subsection.3.3}%
\contentsline {subsection}{\numberline {3.4}Series expansions of monogenic functions on a hypercomplex subspace}{18}{subsection.3.4}%
\contentsline {section}{\numberline {4}Regularity in hypercomplex subspaces}{19}{section.4}%
\contentsline {subsection}{\numberline {4.1}$T$-fans}{20}{subsection.4.1}%
\contentsline {subsection}{\numberline {4.2}$T$-regularity}{21}{subsection.4.2}%
\contentsline {subsection}{\numberline {4.3}Integral representation}{25}{subsection.4.3}%
\contentsline {section}{\numberline {5}Polynomial regular functions}{25}{section.5}%
\contentsline {subsection}{\numberline {5.1}The polynomial functions $\mathcal {T}_{\bf k}$}{26}{subsection.5.1}%
\contentsline {subsection}{\numberline {5.2}Adapted partial derivatives}{28}{subsection.5.2}%
\contentsline {subsection}{\numberline {5.3}Properties of polynomial functions $\mathcal {T}_{\bf k}$}{32}{subsection.5.3}%
\contentsline {section}{\numberline {6}$T$-functions and strongly $T$-regular functions}{37}{section.6}%
\contentsline {subsection}{\numberline {6.1}$T$-stem functions}{37}{subsection.6.1}%
\contentsline {subsection}{\numberline {6.2}$T$-functions and strongly $T$-regular functions}{38}{subsection.6.2}%
\contentsline {subsection}{\numberline {6.3}Mirror $T$-stem functions}{40}{subsection.6.3}%
\contentsline {section}{\numberline {7}Series expansion and representation formula}{42}{section.7}%
\contentsline {subsection}{\numberline {7.1}Series expansion}{43}{subsection.7.1}%
\contentsline {subsection}{\numberline {7.2}Representation formula on $T$-symmetric $T$-slice domains}{45}{subsection.7.2}%
\contentsline {section}{\numberline {8}Foundations for the nonassociative theory}{49}{section.8}%
\contentsline {section}{Appendix}{53}{page.53}%
\contentsline {section}{Acknowledgements}{74}{page.74}%
\contentsline {section}{References}{74}{page.74}%

}

\section{Introduction}

For several centuries now, complex analysis has been an attractive field of research. Its richness, already in one variable, has pushed scholars to seek analogs in higher-dimensional settings. Besides complex functions of several variables, functions of one variable over real algebras have been extensively studied: this gave rise to a new area of research, known as hypercomplex analysis. Some examples are: Fueter's theory of quaternionic regular functions~\cite{fueter1,fueter2,sudbery}; the celebrated theory of monogenic functions over Clifford algebras~\cite{librosommen,librocnops,librogurlebeck2}; Gentili and Struppa's theory of quaternionic slice-regular functions~\cite{librospringer2,cras,advances}; and Colombo, Sabadini and Struppa's theory of slice-monogenic functions over Clifford algebras~\cite{israel,librodaniele2}. Hypercomplex analysis is not limited to the associative setting: for instance, octonionic function theories have been introduced in~\cite{sce,rocky} and studied in subsequent works. It is also not limited to one variable, but well-developed also in several variables, see~\cite{librodaniele,gpseveral,perticiseveral} and references therein. While each of the aforementioned theories has been successfully developed and usefully applied in other areas of mathematics and physics, hypercomplex analysis ended up fragmenting while it grew in scope and importance. Even theories based on the same differential operator, such as Fueter's theory and monogenic function theory, traditionally required separate, algebra-specific, presentations.

The purpose of this work is to reduce the fragmentation of hypercomplex analysis, offering a unified approach at two different levels. At a first level, we unify the treatment of function classes defined as kernels of Cauchy-Riemann operators, such as Fueter's theory and monogenic function theory. At a second, deeper, level, we develop a unified theory of regularity in one hypercomplex variable encompassing Fueter-regularity, slice-regularity, monogenicity, slice-monogenicity and even examples not yet considered in literature. Surprisingly, an example of this kind is already available in an associative lower-dimensional algebra such as the real algebra of quaternions: it is presented and studied in some detail in~\cite{unifiednotion}. The same article announced the general definitions of the concepts of \emph{monogenic function on a hypercomplex subspace} and of \emph{$T$-regular function}, upon which the present work is based. Independently, the work~\cite{xsgeneralizedpartialslice} (see also~\cite{xsannouncement}) developed the notion of \emph{generalized partial-slice monogenic function}, which is a sub-case of the notion of Clifford-valued $T$-regular function on the paravector subspace. For generalized partial-slice monogenic functions, the same authors proved a version of the Fueter-Sce Theorem in~\cite{xsfuetersce}.

We provide here, for $T$-regular functions over a general associative $*$-algebra $A$, integral and series representations, an Identity Principle and a Representation Formula valid under specific hypotheses on the domains. Some important tools to prove these results are the properties that monogenic functions on a hypercomplex subspace of $A$ share with classical Clifford monogenic functions. However, the proofs of the properties of $T$-regular functions require original ideas that do not follow the lines of any previously-known function theory. As a final addition, we provide foundations for a theory of $T$-regular functions in the alternative but nonassociative setting. The paper is organized as follows.

Section~\ref{sec:monogenic} is devoted to preliminaries. Alternative real $*$-algebras are covered in Subsection~\ref{subsec:*-algebras}. Subsection~\ref{subsec:hypercomplexsubspaces} covers the concept of hypercomplex subspace, defined in~\cite{perotticr}. The notions of Cauchy-Riemann operator on a hypercomplex subspace (from~\cite{perotticr}) and of monogenic function on a hypercomplex subspace (from~\cite{unifiednotion}) are recalled in Subsection~\ref{subsec:monogenic}.

Section~\ref{sec:monogenicproperties} studies monogenic functions on hypercomplex subspaces, in the associative case. Subsection~\ref{subsec:monogenicpolynomial} presents a family of polynomial examples, which turn out to generate all polynomial monogenic functions. Integral representations and a reproducing kernel are provided in Subsection~\ref{subsec:monogenicintegral}. Subsection~\ref{subsec:reproducingkernel} studies the reproducing kernel and establishes that monogenic functions on hypercomplex subspaces are harmonic functions. Subsection~\ref{subsec:seriesexpansionmonogenic} provides series expansions. These results resonate with classical Clifford monogenic function theory and their proofs are postponed to the final Appendix.

Section~\ref{sec:hypercomplexregularity} concerns $T$-regular functions. For each list of steps $T$, Subsection~\ref{subsec:Tfans} recalls the concepts of $T$-fan, mirror and $T$-torus given in~\cite{unifiednotion}. The definition of $T$-regular function, as well as the concepts of $T$-slice domain and $T$-symmetric set, are recalled in Subsection~\ref{subsec:Tregularity}. In the associative case, Subsection~\ref{subsec:integralrepresentation} provides integral representations of $T$-regular functions: a Cauchy Formula and a Mean Value Formula.

Section~\ref{sec:polynomials} is entirely devoted to $T$-regular functions over an associative algebra. For each $k\in\nn$, a finite set $\F_k$ of polynomial $T$-regular functions is constructed in Subsection~\ref{subsec:T_k}. The elements of $\F_k$ are related to the polynomial functions constructed in Subsection~\ref{subsec:monogenicpolynomial} in a highly nontrivial fashion. After the construction of adapted partial derivatives in Subsection~\ref{subsec:partialderivatives}, $\F_k$ is proven to generate all $k$-homogenous polynomial $T$-regular functions in Subsection~\ref{subsec:propertiespolynomial}. In the same subsection, $\F_1$ is used to determine for which other lists of steps $\widehat{T}$ the class of $\widehat{T}$-regular functions coincides with the class of $T$-regular functions.

Section~\ref{sec:Tfunctions} concerns possible symmetries in $T$-regular functions. The concept of $T$-stem function defined in~\cite{unifiednotion} is recalled in Subsection~\ref{subsec:Tstem}. The definitions of $T$-function and strongly $T$-regular function are recalled and studied, in the associative case, in Subsection~\ref{subsec:Tfunctions}. Proofs of most results therein are postponed to Section~\ref{sec:nonassociative}. Subsection~\ref{subsec:mirrorTstem} defines and studies, still in the associative case, the concept of mirror $T$-stem function, which is a technical preparation for the subsequent section.

Section~\ref{sec:seriesexpansion} studies $T$-regular functions over an associative algebra more in depth. Subsection~\ref{subsec:seriesexpansion} provides series expansions for any $T$-regular function on an open ball centered at a point of the mirror. As a consequence, an Identity Principle for $T$-regular functions on $T$-slice domains is established and used, in turn, to prove a Maximum Modulus Principle. In Subsection~\ref{subsec:representationformula}, $T$-regular functions on a $T$-symmetric $T$-slice domain are proven to be automatically strongly $T$-regular, with the so-called Representation Formula.

Section~\ref{sec:nonassociative} studies the notions of $T$-function and of strongly $T$-regular function in full generality, without assuming the algebra considered to be associative.

The final Appendix comprises the proofs of all properties of monogenic functions on hypercomplex subspaces stated in Section~\ref{sec:monogenicproperties}.


\section{Hypercomplex subspaces and monogenic functions}\label{sec:monogenic}

Complex and hypercomplex analysis traditionally study classes of \emph{monogenic} functions from $\cc,\hh$ or $\oo$ to itself, or functions from the space of paravectors $\rr^{m+1}$ to the Clifford algebra $C\ell(0,m)$. These setups have mostly been treated separately in literature because of the different natures of the algebras considered. In this section, we present an approach that allows to treat all these cases, and further cases, at once.


\subsection{Alternative real $*$-algebras}\label{subsec:*-algebras}

\begin{assumption}
We assume $(A,+,\cdot,^c)$ to be an alternative real $*$-algebra of finite dimension. Additionally, we endow $A$ (whence all its real vector subspaces) with the natural topology and differential structure as a real vector space.
\end{assumption}

We recall that a real $*$-algebra of finite dimension is a finite-dimensional $\rr$-vector space endowed with an $\rr$-bilinear multiplicative operation and with an involutive $\rr$-linear antiautomorphism $x\mapsto x^c$ (called $*$-involution). We recall that $A$ is alternative if, and only if, $x(xy)=x^2y,(xy)y=xy^2$ for all $x,y\in A$. This is automatically true if $A$ is associative. More details about nonassociative algebras can be found in~\cite{schafer}.

Function theory over $A$ has been extensively studied, especially in the following special cases.

\begin{examples}[Division algebras]
The $*$-algebras of complex numbers $\cc$, quaternions $\hh$ and octonions $\oo$ can be built from the real field $\rr$ by means of the so-called Cayley-Dickson construction:
\begin{itemize}
\item $\cc=\rr+i\rr$, $(\alpha+i\beta)(\gamma+i\delta)=\alpha\gamma-\beta\delta+i(\alpha\delta+\beta\gamma)$, $(\alpha+i\beta)^c=\alpha-i\beta\quad\forall\,\alpha,\beta,\gamma,\delta\in\rr$.
\item $\hh=\cc+j\cc$, $(\alpha+j\beta)(\gamma+j\delta)=\alpha\gamma-\delta\beta^c+j(\alpha^c\delta+\gamma\beta)$, $(\alpha+j\beta)^c=\alpha^c-j\beta\quad\forall\,\alpha,\beta,\gamma,\delta\in\cc$.
\item $\oo=\hh+\ell\hh$, $(\alpha+\ell \beta)(\gamma+\ell \delta)=\alpha\gamma-\delta\beta^c+\ell(\alpha^c\delta+\gamma\beta)$, $(\alpha+\ell \beta)^c=\alpha^c-\ell \beta\quad\forall\,\alpha,\beta,\gamma,\delta\in\hh$.
\end{itemize}
\end{examples}

Set $\nn^*:=\nn\setminus\{0\}$. For any $m\in\nn^*$, let $\mathscr{P}(m)$ denote the power set of $\{1,\ldots,m\}$. Furthermore: for all $K\in\mathscr{P}(m)$, let $|K|$ denote the cardinality of $K$.

\begin{examples}[Clifford algebras]
The Clifford algebra $C\ell(p,q)$ is the associative $*$-algebra constructed by taking the real vector space $\rr^{2^{m}}$ with $m=p+q$ and with the following conventions:
\begin{itemize}
\item $(e_K)_{K\in\mathscr{P}(n)}$ denotes the standard basis of $\rr^{2^{m}}$; if $K=\{k_1,\ldots,k_s\}$ with $1\leq k_1<\ldots<k_s\leq m$, then the element $e_K$ is also denoted as $e_{k_1\ldots k_s}$;
\item $e_\emptyset$ is defined to be the neutral element and also denoted as $1$;
\item $e_k^2:=1$ for all $k\in\{1,\ldots,p\}$ and $e_k^2:=-1$ for all $k\in\{p+1,\ldots,m\}$;
\item $1\leq k_1<\ldots<k_s\leq m$, then the product $e_{k_1}\cdots e_{k_s}$ is defined to be $e_{k_1\ldots k_s}$;
\item $e_he_k = -e_ke_h$ for all distinct $h,k\in\{1,\ldots,m\}$;
\item $e_K^c:=e_K$ if $|K|\equiv0,3 \mod 4$ and $e_K^c:=-e_K$ if $|K|\equiv1,2 \mod 4$.
\end{itemize}
The $*$-involution $x\mapsto x^c$ is called Clifford conjugation.
\end{examples}

For more details on these two examples and their history, we refer the reader to~\cite{ebbinghaus,librogurlebeck2}. Another example follows.

\begin{example}[Dual quaternions]
The associative $*$-algebra $\D\hh$ of dual quaternions can be defined as $\hh+\epsilon\hh$, where $(\alpha+\epsilon\beta)(\gamma+\epsilon\delta)=\alpha\gamma+\epsilon(\alpha\delta+\beta\gamma)$ and $(\alpha+\epsilon\beta)^c=\alpha^c+\epsilon \beta^c$ for all $\alpha,\beta\in\hh$. In particular, $\epsilon$ commutes with every element of $\D\hh$ and $\epsilon^2=0$.
\end{example}

On our alternative $*$-algebra $A$, we will use the notations $t(x):=x+x^c$ and $n(x):=xx^c$ for all $x\in A$ and call the elements of
\[\s_A:=\{x\in A: t(x)=0,n(x)=1\}\]
the \emph{imaginary units} of $A$.

\begin{assumption}
We assume $\s_A\neq\emptyset$.
\end{assumption}

Before proceeding to set up the domains of the $A$-valued functions we are going to study, it is useful to understand conjugation, $t$ and $n$ a bit more in detail.

\begin{definition}
A \emph{fitted basis} of $A$ is an ordered basis $(w_0,w_1,\ldots,w_d)$ of $A$ such that $w_s^c=\pm w_s$ for all $s\in\{0,\ldots,d\}$.
\end{definition}

\begin{remark}
Every element $w_s$ of a fitted basis is an eigenvector for $t$, with eigenvalue $2$ or $0$. Moreover, $n(w_s)=\pm w_s^2$.
\end{remark}

For $\nu,p\in\nn$, let $I_\nu$ denote the real $\nu\times\nu$ identity matrix and $0_{\nu,p}$ denote the real $\nu\times p$ zero matrix.

\begin{lemma}\label{lem:fitted}
Fix $m\geq1$. If $v_1,\ldots,v_m\in\s_A$ are linearly independent, then $(1,v_1,\ldots,v_m)$ can be completed to a fitted basis of $A$.
\end{lemma}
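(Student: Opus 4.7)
The plan is to exploit the fact that the $*$-involution $c\colon x\mapsto x^c$ is an $\rr$-linear involutive map, hence diagonalizable over $\rr$ with eigenvalues $\pm1$. This will yield a direct sum decomposition into eigenspaces, in which both $1$ and the given imaginary units already live, so that the problem reduces to a straightforward completion of bases.

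More precisely, I would first introduce $A^+:=\{x\in A:x^c=x\}$ and $A^-:=\{x\in A:x^c=-x\}$. Since $c$ is $\rr$-linear and satisfies $c^2=\mathrm{id}$, the standard trick $x=\tfrac{x+x^c}{2}+\tfrac{x-x^c}{2}$ shows $A=A^+\oplus A^-$ as real vector spaces. Next I would verify that $1\in A^+$ and $v_1,\dots,v_m\in A^-$: the identity $1$ satisfies $1^c=1$ because $c$ is a unital antiautomorphism (one sees $1^c=1^c\cdot 1^c$ is idempotent, and since $c$ is a bijection, it must equal $1$); and each $v_i\in\s_A$ satisfies $t(v_i)=v_i+v_i^c=0$, hence $v_i^c=-v_i$.

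Then I would observe that $\{1\}$ is a linearly independent subset of $A^+$ and that $\{v_1,\dots,v_m\}$ is, by hypothesis, a linearly independent subset of $A^-$ (linear independence in $A^-\subseteq A$ is equivalent to linear independence in $A$). By a standard application of the basis completion theorem, I can extend $\{1\}$ to a basis $(1,u_1,\dots,u_{\dim A^+-1})$ of $A^+$ and extend $\{v_1,\dots,v_m\}$ to a basis $(v_1,\dots,v_m,u'_1,\dots,u'_{\dim A^--m})$ of $A^-$. Concatenating these ordered tuples in the order
\[(1,v_1,\dots,v_m,u_1,\dots,u_{\dim A^+-1},u'_1,\dots,u'_{\dim A^--m})\]
yields a basis of $A=A^+\oplus A^-$ in which every vector is either in $A^+$ or in $A^-$, hence satisfies $w^c=\pm w$; this is precisely a fitted basis completing $(1,v_1,\dots,v_m)$.

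There is no real obstacle here: the only subtlety worth double-checking is the statement $1^c=1$, which relies on $c$ being a unital antiautomorphism rather than merely an $\rr$-linear involution. Everything else is routine linear algebra once the eigenspace decomposition is in place.
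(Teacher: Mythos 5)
Your proof is correct and rests on essentially the same idea as the paper's: diagonalize the involution $x\mapsto x^c$ with eigenvalues $\pm 1$, note that $1$ lies in the $+1$-eigenspace and $v_1,\dots,v_m$ in the $-1$-eigenspace, and complete each to a basis of the corresponding eigenspace — your symmetrization $x=\tfrac{x+x^c}{2}+\tfrac{x-x^c}{2}$ is just a more elementary way of obtaining the decomposition that the paper derives via the complex spectrum and Jordan's theorem. The one wrinkle is your justification of $1^c=1$: idempotency of $1^c$ together with bijectivity of $c$ does not by itself force $1^c=1$ (an algebra may have nontrivial idempotents); the clean argument is that $x^c=(x\cdot 1)^c=1^c\,x^c$ for every $x$, so surjectivity of $c$ makes $1^c$ a left identity, whence $1^c=1^c\cdot 1=1$.
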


\begin{proof}
Since $1^c=1$, the element $1\in A$ is an eigenvector with eigenvalue $1$ for the $*$-involution $A\to A\,\ a\mapsto a^c$. For $s\in\{1,\ldots,m\}$, 
the hypothesis $v_s\in\s_A$ implies $0=t(v_s)=v_s+v_s^c$, whence $v_s^c=-v_s$: in other words, $v_s$ is an eigenvector with eigenvalue $-1$ for the $*$-involution.
Let $C$ denote the real matrix associated to the $*$-involution $A\to A\,\ a\mapsto a^c$ with respect to a fixed basis of $A$. Since $(a^c)^c=a$ for all $a\in A$, the equality $C^2=I_{d+1}$ holds true and the complex spectrum of $C^2$ is $\{1\}$. Thus, the complex spectrum of $C$ is $\{\pm1\}$. The generalized eigenspace relative to $\pm1$ is the eigenspace relative to $\pm1$, i.e., $\ker(C\mp I_{d+1})$, because the equality $C^2=I_{d+1}$ implies $(C\mp I_{d+1})^2=C^2\mp2 C+I_{d+1}=\mp2(C\mp I_{d+1})$. By Jordan's Theorem, there exist $p,\nu\in\nn$ with $p\geq1,\nu\geq m$ and $p+\nu=d+1$, as well as a basis $\B:=(w_0,w_1,\ldots,w_d)$ of $A$ with $w_0=1,w_p=v_1,\ldots,w_{p+m-1}=v_m$, such that
\[\begin{bmatrix}
I_p&0_{p,\nu}\\
0_{\nu,p}&-I_\nu
\end{bmatrix}\]
is the matrix associated to the $*$-involution $A\to A,\ a\mapsto a^c$ with respect to $\B$.
\end{proof}

We will need to endow $A$ with a Hilbert space structure, as follows.

\begin{definition}
Fix an ordered real vector basis $\B'=(v_0,v_1,\ldots,v_d)$ of $A$ with $v_0=1$. We denote by the symbols $\langle\cdot,\cdot\rangle=\langle\cdot,\cdot\rangle_{\B'}$ and $\Vert\cdot\Vert=\Vert\cdot\Vert_{\B'}$ the standard Euclidean scalar product and norm associated to $\B'$.
\end{definition}

In other words, we consider the real space isomorphism
\[L_{\B'}:\rr^{d+1}\to A\,,\quad L_{\B'}(x_0,\ldots,x_d)=\sum_{s=0}^dx_s\,v_s=\sum_{s=0}^dv_s\,x_s\]
and endow $A$ with the Hilbert space structure that makes $L_{\B'}$ a Hilbert space isomorphism. For future reference, we make the following remark.

\begin{remark}\label{rmk:endomorphisms}
There exists a (real linear) isomorphism $\mathcal{CON}:\rr^{d+1}\to\rr^{d+1}$ such that
\[\left(L_{\B'}\circ \mathcal{CON}\circ L_{\B'}^{-1}\right)(x)=x^c\]
for all $x\in A$. For every $a\in A$, there exist unique (real linear) endomorphisms $\L_a,\R_a:\rr^{d+1}\to\rr^{d+1}$ such that
\[\left(L_{\B'}\circ \L_a\circ L_{\B'}^{-1}\right)(x)=ax\,,\quad \left(L_{\B'}\circ \R_a\circ L_{\B'}^{-1}\right)(x)=xa\]
for all $x\in A$. Moreover: $\L_a$ is an isomorphism if, and only if, $a$ is not a left zero divisor in $A$; $\R_a$ is an isomorphism if, and only if, $a$ is not a right zero divisor in $A$.
\end{remark}

\begin{example}[Division algebras]
Let us assume $A=\oo$ (or $\hh$, or $\cc$). The standard basis $\B'=\{1,i,j,k,\ell,\ell i,\ell j,\ell k\}$ (or $\B'=\{1,i,j,k\}$, or $\B'=\{1,i\}$) of $\oo$ (or $\hh$, or $\cc$, respectively) is fitted. For every nonzero element $a\in A$, the isomorphism $\L_a$ is a conformal transformation: namely, a rotation about the origin composed with a dilation whose scaling factor is $\Vert a\Vert$. The same is true for $\R_a$.
\end{example}

\begin{example}[$C\ell(0,3)$]
The standard basis $\B'=(e_\emptyset,e_1,e_2,e_3,e_{12},e_{13},e_{23},e_{123})$ of the Clifford algebra $C\ell(0,3)$ is fitted. If we set $a:=\frac12(1+e_{123}),b:=\frac12(1-e_{123})$ in $C\ell(0,3)$, then $a^2=a$ and $ab=0$. The endomorphism $\L_a:\rr^8\to\rr^8$ has rank $4$ and the direct sum decomposition $\rr^8=\L_a(\rr^8)\oplus\ker(\L_a)$ corresponds to the decomposition $C\ell(0,3)=a\,C\ell(0,2)+b\,C\ell(0,2)$.
\end{example}

\begin{example}[Dual quaternions]
In $\D\hh=\hh+\epsilon\hh$, by direct inspection, the standard basis $\B'=(1,i,j,k,\epsilon,\epsilon i,\epsilon j,\epsilon k)$ is fitted. The endomorphism $\L_\epsilon:\rr^8\to\rr^8$ has rank $4$. Both the image $\L_\epsilon(\rr^8)$ and the kernel $\ker(\L_\epsilon)$ correspond to the $4$-subspace $\epsilon\hh$ of $\D\hh$.
\end{example}

We would like the Hilbert space structure we defined on $A$ to be as adapted to the $*$-algebra structure as possible.

\begin{definition}
Fix an ordered real vector basis $\B'=(v_0,v_1,\ldots,v_d)$ of $A$ with $v_0=1$. Consider the symmetric real bilinear form $\llbracket a,b\rrbracket=\llbracket a,b\rrbracket_{\B'}:=\frac12\langle t(ab^c),1\rangle_{\B'}$. If the matrix associated to $\llbracket\cdot,\cdot\rrbracket$ with respect to $\B'$ takes the form
\[\begin{bmatrix}
I_p&0_{p,\nu}&0_{p,\zeta}\\
0_{\nu,p}&-I_\nu&0_{\nu,\zeta}\\
0_{\zeta,p}&0_{\zeta,\nu}&0_{\zeta,\zeta}
\end{bmatrix}
\]
for some $p,\nu,\zeta\in\nn$, then $\B'$ is called an \emph{adapted basis} of $A$. We call $(p,\nu,\zeta)$ the \emph{signature} of $\B'$. An adapted basis $\B'$ with signature $(d+1,0,0)$ is called a \emph{distinguished basis} of $A$.
\end{definition}

\begin{remark}\label{rmk:distinguished}
Assume $\B'=(v_0,v_1,\ldots,v_d)$ to be an adapted basis of $A$ with signature $(p,\nu,\zeta)$. If $a=\sum_{s=0}^da_sv_s,b=\sum_{s=0}^db_sv_s\in A$, then
\[\llbracket a,b\rrbracket=\sum_{s=0}^{p-1}a_sb_s-\sum_{s=p}^{p+\nu-1}a_sb_s,\qquad \langle n(a),1\rangle=\llbracket a,a\rrbracket=\sum_{s=0}^{p-1}a_s^2-\sum_{s=p}^{p+\nu-1}a_s^2\leq\Vert a\Vert^2\,.\]
In particular, $\llbracket a,x\rrbracket=\langle a,x\rangle$ and $\llbracket x,x\rrbracket=\Vert x\Vert^2$ for all $x\in\Span(v_0,\ldots,v_{p-1})$.
The following are equivalent:
\begin{enumerate}
\item $\B'$ is a distinguished basis of $A$;
\item $\llbracket\cdot,\cdot\rrbracket$ is positive definite;
\item $\llbracket a,b\rrbracket=\langle a,b\rangle$ for all $a,b\in A$;
\item $\langle n(a),1\rangle=\llbracket a,a\rrbracket=\Vert a\Vert^2$ for all $a\in A$.
\end{enumerate}
\end{remark}

\begin{examples}[Division algebras]
Let $A\in\{\cc,\hh,\oo\}$. By direct inspection, the standard basis $\B'$ is a fitted distinguished basis of $A$. The functions $t,n:A\to A$ take values in $\rr$. We have $\langle a,1\rangle=\llbracket a,1\rrbracket=\frac12t(a)$, $\langle a,b\rangle=\llbracket a,b\rrbracket=\frac12t(ab^c)$ and $\Vert a\Vert^2=\llbracket a,a\rrbracket=n(a)$ for all elements $a,b\in A$.
\end{examples}

\begin{examples}[Clifford algebras]
The standard basis $\B'=(e_K)_{K\in\mathscr{P}(m)}$ of $C\ell(0,m)$ is a fitted distinguished basis of $C\ell(0,m)$, by direct inspection.

For $m\leq2$, we find the two division algebras $C\ell(0,1)\simeq\cc$ and $C\ell(0,2)\simeq\hh$ and the considerations made in the previous examples apply.

If, instead, $m\geq3$, for the element $a=1+e_{123}$ we have $a^c=a,a^2=2a$, whence $t(a)=2a=2\langle a,1\rangle+2e_{123}=2\llbracket a,1\rrbracket+2e_{123}$, $n(a)=2a=\Vert a\Vert^2+2e_{123}=\llbracket a,a\rrbracket+2e_{123}$.
\end{examples}

We now provide an example of an adapted basis that is not distinguished.

\begin{example}[Dual quaternions]
Within the $*$-algebra $\D\hh=\hh+\epsilon\hh$ of dual quaternions, we already remarked that the standard basis $\B'=(1,i,j,k,\epsilon,\epsilon i,\epsilon j,\epsilon k)$ is fitted. Let us prove that it is also adapted, but not distinguished.

For $a_1=p_1+\epsilon q_1,a_2=p_2+\epsilon q_2$ (with $p_1,p_2,q_1,q_2\in\hh$) we have $t(a_1) = t(p_1)+\epsilon t(q_1)$ and
\begin{align*}
t(a_1(a_2)^c)&=t\big(p_1(p_2)^c+\epsilon(p_1(q_2)^c+q_1(p_2)^c)\big)=2\langle p_1,p_2\rangle+2\epsilon(\langle p_1,q_2\rangle+\langle q_1,p_2\rangle)\,,\\
n(a_1)&=a_1a_1^c=n(p_1)+\epsilon t(p_1q_1^c)=\Vert p_1\Vert^2+2\epsilon\langle p_1,q_1\rangle\,.
\end{align*}
Thus,
\[\llbracket a_1,a_2\rrbracket=\langle p_1,p_2\rangle\,,\qquad\langle n(a_1),1\rangle=\Vert p_1\Vert^2\leq\Vert a_1\Vert^2\,.\]
It follows at once that the matrix associated to $\llbracket\cdot,\cdot\rrbracket$ with respect to $\B'$ is
\[\begin{bmatrix}
I_4&0_{4,4}\\
0_{4,4}&0_{4,4}
\end{bmatrix}
\]
and that $\B'$ is an adapted basis with signature $(4,0,4)$.
\end{example}

An adapted basis always exists, as shown in the next proposition.

\begin{proposition}\label{prop:adaptedbasis}
It is always possible to complete $1$ to an adapted basis $\B'$ of $A$.
\end{proposition}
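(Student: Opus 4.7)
The plan is to first produce a linear complement $W'$ of $\rr\cdot 1$ in $A$ that is stable under the $*$-involution, and then to invoke Sylvester's law of inertia to diagonalize the restricted bilinear form on $W'$. The crucial preliminary observation is that $\llbracket\cdot,\cdot\rrbracket_{\B'}$ depends on $\B'$ only through the subspace $W':=\Span(v_1,\ldots,v_d)$: indeed, if we expand $y=\sum_{s=0}^d y_sv_s$, then $\langle y,1\rangle_{\B'}=y_0$ is the coordinate of $y$ along $v_0=1$, which is the projection of $y$ onto $\rr\cdot 1$ along $W'$; this is unchanged if we modify $v_1,\ldots,v_d$ without altering their span. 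Consequently, once $W'$ is fixed, we may replace $(v_1,\ldots,v_d)$ by any basis of $W'$ diagonalizing the restricted form without affecting the form itself.

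To construct $W'$, I would apply Lemma \ref{lem:fitted} with $m=1$ and any $v_1\in\s_A$ (which exists by assumption) to obtain a fitted basis $(1,w_1,\ldots,w_d)$ of $A$, and set $W':=\Span(w_1,\ldots,w_d)$. Since each $w_s$ with $s\geq 1$ satisfies $w_s^c=\pm w_s$, the subspace $W'$ is stable under the $*$-involution, hence $t(w)=w+w^c\in W'$ for every $w\in W'$; the projection of $t(w)$ onto $\rr\cdot 1$ along $W'$ therefore vanishes. This yields $\llbracket 1,w\rrbracket_{\B'}=\frac12\langle t(w),1\rangle_{\B'}=0$ for every $w\in W'$, while a direct computation gives $\llbracket 1,1\rrbracket_{\B'}=\frac12\langle t(1),1\rangle_{\B'}=1$.

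The last step is to apply Sylvester's law of inertia to the symmetric real bilinear form $\llbracket\cdot,\cdot\rrbracket|_{W'\times W'}$, obtaining an ordered basis $(v_1,\ldots,v_d)$ of $W'$ in which the Gram matrix of the restriction is $\mathrm{diag}(I_{p-1},-I_\nu,0_\zeta)$, for some $p\geq 1$ and $\nu,\zeta\in\nn$ with $p+\nu+\zeta=d+1$. Combining this with the vanishing of $\llbracket 1,w\rrbracket$ and with $\llbracket 1,1\rrbracket=1$, the Gram matrix of $\llbracket\cdot,\cdot\rrbracket_{\B'}$ on $\B':=(1,v_1,\ldots,v_d)$ is $\mathrm{diag}(I_p,-I_\nu,0_\zeta)$, so $\B'$ is an adapted basis of signature $(p,\nu,\zeta)$. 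I expect the main subtlety to lie in the basis-independence of $\llbracket\cdot,\cdot\rrbracket$ once $W'$ is fixed: this invariance is what decouples the construction of a stable complement from the diagonalization step and prevents the latter from disrupting the vanishing of $\llbracket 1,w\rrbracket$.
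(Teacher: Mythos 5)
Your proof is correct and follows essentially the same route as the paper: both arguments use Lemma~\ref{lem:fitted} to produce a conjugation-stable complement of $\rr\cdot 1$, observe that this forces $\llbracket 1,w\rrbracket=0$ on that complement and $\llbracket 1,1\rrbracket=1$, and then diagonalize the restricted form by Sylvester's theorem. The only difference is presentational: the paper introduces an auxiliary basis-free form $\mathscr{B}(a,b)=\frac12\re(t(ab^c))$ and verifies a posteriori that it coincides with $\llbracket\cdot,\cdot\rrbracket_{\B'}$, whereas you argue directly that $\llbracket\cdot,\cdot\rrbracket_{\B'}$ depends only on the span of $(v_1,\ldots,v_d)$ — two phrasings of the same invariance.
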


\begin{proof}
Lemma~\ref{lem:fitted} guarantees that $A$ has a fitted basis $(1,w_1,\ldots,w_d)$. In particular, the hyperplane $\mathcal{H}:=\Span(w_1,\ldots,w_d)$ of $A$ is preserved by conjugation and $A$ decomposes into the direct sum $\rr\oplus\mathcal{H}$. We can now define the real linear map $\re:A\to\rr$ to act as the identity on $\rr$ and to vanish identically in $\mathcal{H}$. Moreover, $a\in\mathcal{H}$ implies $a^c\in\mathcal{H}$, whence $t(a)=a+a^c\in\mathcal{H}$ and $\re(t(a))=0$. We can define a symmetric bilinear form $\mathscr{B}:A\times A\to\rr$ by means of the formula $\mathscr{B}(a,b):=\frac12\re(t(ab^c))$ and let $(p,\nu,\zeta)$ denote the signature of $\mathscr{B}$. We remark that $\mathscr{B}(1,1)=\frac12\re(t(1))=1$ and that $\mathscr{B}(a,1)=\frac12\re(t(a))=0$ for all $a\in\mathcal{H}$. By Sylvester's theorem, we can complete $v_0=1$ to a basis $\B'=(v_0,v_1,\ldots,v_d)$ of $A$, with $v_1,\ldots,v_d\in\mathcal{H}$, so that, for all distinct $s,u\in\{0,1,\ldots,d\}$: $\mathscr{B}(v_s,v_u)=0$; $\mathscr{B}(v_s,v_s)=1$ if $0\leq s\leq p-1$; $\mathscr{B}(v_s,v_s)=-1$ if $p\leq s\leq p+\nu-1$; $\mathscr{B}(v_s,v_s)=0$ if $p+\nu\leq s\leq d$. Clearly, $p\geq1$. We now endow $A$ with the standard Euclidean scalar product $\langle\cdot,\cdot\rangle=\langle\cdot,\cdot\rangle_{\B'}$ and norm $\Vert\cdot\Vert=\Vert\cdot\Vert_{\B'}$ associated to $\B'$. The very definition of $\langle\cdot,\cdot\rangle$ implies that $\langle1,1\rangle=1=\re(1)$ and that $\langle v_s,1\rangle=0=\re(v_s)$ for all $s\in\{1,\ldots,d\}$, whence $\langle a,1\rangle=\re(a)$ for all $a\in A$. Recalling the definition $\llbracket a,b\rrbracket:=\frac12\langle t(ab^c),1\rangle$, we conclude that $\llbracket\cdot,\cdot\rrbracket=\mathscr{B}(\cdot,\cdot)$. Thus, $\B'$ is an adapted basis with signature $(p,\nu,\zeta)$, as desired.
\end{proof}

Although all standard bases in our examples are both fitted and adapted, we do not know an a priori reason why a general $*$-algebra should possess a basis that is both fitted and adapted.

The work~\cite{perotti} defined the \emph{quadratic cone} of $A$ as
\[Q_A:=\rr\cup\{x\in A\setminus\rr:t(x)\in\rr,n(x)\in\rr,4n(x)>t(x)^2\}\]
and proved the property
\[Q_A=\bigcup_{J\in\s_A}\cc_J\,,\]
where $\cc_J:=\rr+J\rr$ for all $J\in\s_A$. Since each $\cc_J$ is $*$-isomorphic to $\cc$, we can make the following remarks for every $x=\alpha+\beta J\in Q_A$ (with $\alpha,\beta\in\rr,J\in\s_A$): the conjugate $x^c=\alpha-\beta J$ belongs to $\cc_J\subset Q_A$; $t(x)=2\alpha\in\rr$; $n(x)=n(x^c)=\alpha^2+\beta^2$ is a positive real number; provided $x\neq0$, the element $x$ has a multiplicative inverse, namely $x^{-1}=n(x)^{-1}x^c=x^cn(x)^{-1}$, which still belongs to $Q_A$. In particular, $x\in Q_A\setminus\{0\}$ is neither a left nor a right zero divisor and the endomorphisms $\L_x,\R_x$ defined in Remark~\ref{rmk:endomorphisms} are isomorphisms. Our previous assumption $\s_A\neq\emptyset$ guarantees that $\rr\subsetneq Q_A$.

The next remark is a simple application of~\cite[Proposition 1.11]{gpsalgebra}. We recall that the \emph{associative nucleus} of $A$ is the real vector subspace of all elements $a\in A$ such that $a(xy)=(ax)y$ for all $x,y\in A$. The associative nucleus of $A$ includes the real axis $\rr$.

\begin{remark}\label{rmk:multiplicativen}
Assume the trace function $t:A\to A$ to take values in the associative nucleus of $A$, a fact which is always true if $A$ is associative. Take any $a,x\in A$. If $n(x)$ belongs to the commutative center of $A$, then $n(ax)=n(a)n(x)=n(x)n(a)$.
If $n(x^c)$ belongs to the commutative center of $A$, then $n((xa)^c)=n(a^cx^c)=n(a^c)n(x^c)=n(x^c)n(a^c)$. In particular: if $x\in Q_A$ (which implies $n(x)=n(x^c)\in\rr$), then
\begin{align*}
n(ax)&=n(a)n(x)=n(x)n(a)\,,\\
n((xa)^c)&=n(a^c)n(x)=n(x)n(a^c)\,.
\end{align*}
\end{remark}

\begin{examples}[Division algebras]
The complex field $\cc$, the skew field $\hh$ of real quaternions and the real algebra $\oo$ of octonions are alternative real $*$-algebras of dimensions $2,4,8$, respectively. The equalities $Q_\cc=\cc,Q_\hh=\hh,Q_\oo=\oo$ hold true. The sets $\s_\cc,\s_\hh,\s_\oo$ are, respectively, the $0,2,6$-dimensional unit spheres in the respective subspaces $t(x)=0$, each called the sphere of \emph{imaginary units}. For all elements $x,y$, we have $n(x^c)=n(x)$ and, since $t,n$ take values in $\rr$, $n(xy)=n(x)n(y)=n(y)n(x)=n(yx)$.
\end{examples}

\begin{examples}[Clifford algebras]
For any $m\in\nn^*$, consider the Clifford algebra $C\ell(0,m)$. The sets $\s_{C\ell(0,m)}$ and $Q_{C\ell(0,m)}$ are nested proper real algebraic subsets of $C\ell(0,m)$. While Remark~\ref{rmk:multiplicativen} holds true, if $m\geq4$ then $n(a\,b)$ does not equal $n(a)\,n(b)$ for general $a,b\in C\ell(0,m)$. For instance:  the elements $a:=\frac12(1+e_{123}),b:=\frac12(1+e_{1234})$ are preserved by conjugation and have $n(a)=a^2=a$ and $n(b)=b^2=b$, while $n(ab)=ab\,(ab)^c=a\,n(b)\,a=aba\neq ab=n(a)\,n(b)$ because
\begin{align*}
4ab&=(1+e_{123})(1+e_{1234})=1+e_4+e_{123}+e_{1234}\,,\\
4aba&=\frac12(1+e_4+e_{123}+e_{1234})(1+e_{123})\\
&=\frac12(1+e_4+e_{123}+e_{1234}+e_{123}-e_{1234}+1-e_4)=1+e_{123}\,.
\end{align*}
\end{examples}

\begin{example}[Dual quaternions]
Within the associative $*$-algebra $\D\hh$ of dual quaternions, $t(p+\epsilon q) = t(p)+\epsilon t(q)$ and $n(p+\epsilon q)=n(p)+\epsilon t(pq^c)$ for all $p,q \in \hh$. The set $\s_{\D\hh}=\{p+\epsilon q: p\in\s_\hh,q\in\im(\hh),\langle p,q\rangle=0\}$ is a $4$-dimensional algebraic subset of $\D\hh$, while $Q_{\D\hh}$ is a $6$-dimensional semialgebraic subset of $\D\hh$. By direct inspection, $n(a)=n(a^c)$ for all $a\in\D\hh$. Since $t$ and $n$ take values in the commutative center $\rr+\epsilon\rr$ of $\D\hh$, we conclude that $n(ab)=n(a)n(b)=n(b)n(a)=n(ba)$ for all $a,b\in\D\hh$.
\end{example}


\subsection{Hypercomplex subspaces}\label{subsec:hypercomplexsubspaces}

From now on, we will focus on specific subsets of the quadratic cone $Q_A$, constructed in~\cite[\S3]{perotticr} (cf.~\cite[Lemma 1.4]{volumeintegral}).

\begin{definition}
Let $M$ be a real vector subspace of our $*$-algebra $A$. An ordered real vector basis $(v_0,v_1,\ldots,v_m)$ of $M$ is called a \emph{hypercomplex basis} of $M$ if: $m\geq1$; $v_0=1$; $v_s\in\s_A$ and $v_sv_t=-v_tv_s$ for all distinct $s,t\in\{1,\ldots,m\}$. The subspace $M$ is called a \emph{hypercomplex subspace} of $A$ if $\rr\subsetneq M\subseteq Q_A$.
\end{definition}

Equivalently, a basis $(1,v_1,\ldots,v_m)$ is a hypercomplex basis if, and only if, $t(v_s)=0,n(v_s)=1$ and $t(v_sv_t^c)=0$ for all distinct $s,t\in\{1,\ldots,m\}$. Here and later in the paper, the double use of the letter $t$ as the trace function and as an index should bear no confusion, as the trace function is always followed either by an argument or by its domain. We remark that, for any $\ell\in\{1,\ldots,m\}$, the shortened ordered set $(v_0,v_1,\ldots,v_\ell)$ is a hypercomplex basis of its span. In the special case $m=1$ the hypercomplex subspace $M$ is always a $*$-subalgebra of $A$, isomorphic to the complex field. When $m\geq2$, the hypercomplex subspace $M$ is not, in general, a $*$-subalgebra of $A$. The next theorem was proven partly in~\cite[\S3]{perotticr}, partly in~\cite{unifiednotion}.

\begin{theorem}\label{thm:hypercomplexbasis}
Let $M$ be a real vector subspace of $A$. Then $M$ is a hypercomplex subspace of $A$ if, and only if, $M$ admits a hypercomplex basis $\B=(v_0,v_1,\ldots,v_m)$. If this is the case, if we complete $\B$ to a real vector basis $\B'=(v_0,v_1,\ldots,v_m,v_{m+1},\ldots,v_d)$ of $A$ and if we endow $A$ with $\langle\cdot,\cdot\rangle=\langle\cdot,\cdot\rangle_{\B'}$ and $\Vert\cdot\Vert=\Vert\cdot\Vert_{\B'}$, then
\begin{align}
&t(xy^c)=t(yx^c)=2\langle x,y\rangle\,,\label{eq:cliffordscalarproduct}\\
&n(x)=n(x^c)=\Vert x\Vert^2\,,\label{eq:cliffordnorm}
\end{align}
for all $x,y\in M$.
\end{theorem}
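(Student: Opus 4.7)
The plan is to split the theorem into two stages: first I would establish the equivalence between $M$ being a hypercomplex subspace and the existence of a hypercomplex basis, and then I would derive the identities \eqref{eq:cliffordscalarproduct}--\eqref{eq:cliffordnorm} by a direct expansion on such a basis. For the \emph{if} direction, suppose $(1,v_1,\ldots,v_m)$ is a hypercomplex basis of $M$. Each $v_s\in\s_A$ forces $v_s^c = -v_s$ and $v_s^2 = -v_sv_s^c = -n(v_s) = -1$, while the anticommutativity hypothesis makes the off-diagonal sum $\sum_{s\neq t}\alpha_s\alpha_t v_sv_t$ vanish after pairing the indices $(s,t)$ with $(t,s)$. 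For $x = \alpha_0 + \sum_{s\geq 1}\alpha_s v_s\in M$, a direct expansion then yields $t(x) = 2\alpha_0\in\rr$ and $n(x) = \alpha_0^2+\alpha_1^2+\cdots+\alpha_m^2\in\rr$, so $4n(x)-t(x)^2 = 4\sum_{s\geq 1}\alpha_s^2\geq 0$, with strict inequality whenever $x\notin\rr$; hence $M\subseteq Q_A$, and the existence of $v_1\in M\setminus\rr$ gives $\rr\subsetneq M$.

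For the \emph{only if} direction, assume $\rr\subsetneq M\subseteq Q_A$. Since $t(x)\in\rr$ for every $x\in M$, the restriction $t|_M$ is a real linear functional and $M = \rr\oplus M_0$ with $M_0 := M\cap\ker t$; elements $y\in M_0$ satisfy $y^c = -y$ and $y^2 = -n(y)$. The idea is to equip $M_0$ with the symmetric bilinear form
\[B(y_1,y_2) := -\tfrac12(y_1y_2 + y_2y_1)\]
and produce an orthonormal basis for it. The form takes values in $\rr$: using $y_1+y_2\in M\subseteq Q_A$ together with $n(y_1+y_2) = -(y_1+y_2)^2 = n(y_1)+n(y_2) - (y_1y_2+y_2y_1)$, one gets $y_1y_2+y_2y_1\in\rr$. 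Moreover $B(y,y) = n(y)$, and any nonzero $y\in M_0$ lies in $Q_A\setminus\rr$, forcing $4n(y) > t(y)^2 = 0$; thus $B$ is positive definite. Gram--Schmidt over $\rr$ then yields an orthonormal basis $(v_1,\ldots,v_m)$ of $M_0$ whose relations $B(v_s,v_t) = \delta_{st}$ unpack to $v_s^2 = -1$ (whence $v_s\in\s_A$) and $v_sv_t = -v_tv_s$ for distinct $s,t$, making $(1,v_1,\ldots,v_m)$ a hypercomplex basis of $M$.

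Finally, to obtain the identities, I would complete $\B$ to an arbitrary basis $\B'=(v_0,\ldots,v_d)$ of $A$ and expand $xy^c + yx^c$ for $x = \sum_{s=0}^m\alpha_s v_s$ and $y = \sum_{s=0}^m\beta_s v_s$ in $M$. Using $v_0 = 1$, $v_s^2 = -1$ for $s\geq 1$, and $v_sv_t = -v_tv_s$ for distinct $s,t\geq 1$: the purely scalar contributions give $2\alpha_0\beta_0 + 2\sum_{s\geq 1}\alpha_s\beta_s = 2\langle x,y\rangle_{\B'}$; the terms linear in a single $v_s$ cancel pairwise between $xy^c$ and $yx^c$; and the terms of the form $\alpha_s\beta_t v_sv_t$ with distinct $s,t\geq 1$ pair with their anticommutative partners and vanish. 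Since $^c$ is an antiautomorphism, $(xy^c)^c = yx^c$, so this sum is exactly $t(xy^c) = t(yx^c)$, yielding \eqref{eq:cliffordscalarproduct}. Specializing $y = x$ gives $n(x) = \Vert x\Vert_{\B'}^2$, and $n(x^c) = n(x)$ follows from the commutativity $xx^c = x^cx$, which in turn holds because $x^c = t(x) - x$ with $t(x)\in\rr$; this establishes \eqref{eq:cliffordnorm}.

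The hardest step should be the construction and positive-definiteness of $B$ in the \emph{only if} direction, where the intrinsic characterization of the quadratic cone through the inequality $4n > t^2$ is essential; once that form is in place, everything else reduces to bookkeeping with the anticommutation relations.
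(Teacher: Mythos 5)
Your proof is correct. Note that the paper itself does not prove Theorem \ref{thm:hypercomplexbasis} but defers it to \cite{perotticr} and \cite{unifiednotion}, so there is no in-paper argument to compare against; your self-contained proof is the natural one and all the key steps check out: the computation $t(x)=2\alpha_0$, $n(x)=\sum_s\alpha_s^2$ giving $M\subseteq Q_A$ in one direction; the splitting $M=\rr\oplus(M\cap\ker t)$, the real-valuedness and positive-definiteness of $B(y_1,y_2)=-\tfrac12(y_1y_2+y_2y_1)$ (correctly deduced from $n(y_1+y_2)\in\rr$ via the intrinsic inequality $4n>t^2$ on $Q_A\setminus\rr$), and Gram--Schmidt in the other; and the direct expansion of $xy^c+yx^c$ for the identities \eqref{eq:cliffordscalarproduct}--\eqref{eq:cliffordnorm}. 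The only point worth making explicit is that $m\geq1$ in the constructed basis, which follows from $\rr\subsetneq M$ forcing $M\cap\ker t\neq\{0\}$; everything else, including the nonassociative setting (no associativity is ever used in your expansions), is handled correctly.
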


We can draw from Theorem~\ref{thm:hypercomplexbasis} a useful consequence.

\begin{corollary}\label{cor:compact}
Under the hypotheses of Theorem~\ref{thm:hypercomplexbasis}, the intersection $\s_A\cap M$ is a compact set: namely, the unit $(m-1)$-sphere centered at the origin in $\Span(v_1,\ldots,v_m)$, with respect to the norm $\Vert\cdot\Vert$.
\end{corollary}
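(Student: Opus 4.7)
The plan is to translate each of the two defining conditions of $\s_A$, namely $t(x)=0$ and $n(x)=1$, into explicit constraints on the coordinates of a generic element $x\in M$ with respect to the hypercomplex basis $\B=(v_0,v_1,\ldots,v_m)$. Write $x=\sum_{s=0}^m x_s v_s$. Everything should follow by plugging these coordinates into the two identities~\eqref{eq:cliffordscalarproduct} and~\eqref{eq:cliffordnorm} furnished by Theorem~\ref{thm:hypercomplexbasis}; no new ingredient appears to be needed.

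First I would handle the trace condition. Applying~\eqref{eq:cliffordscalarproduct} with $y=v_0=1\in M$ gives $t(x)=t(x\,1^c)=2\langle x,1\rangle$. Since $\B'$ extends $\B$ and the inner product $\langle\cdot,\cdot\rangle=\langle\cdot,\cdot\rangle_{\B'}$ is the standard Euclidean one relative to $\B'$, we have $\langle x,1\rangle=x_0$, so $t(x)=2x_0$. Hence $t(x)=0$ is equivalent to $x_0=0$, i.e., to $x\in\Span(v_1,\ldots,v_m)$. Next, identity~\eqref{eq:cliffordnorm} gives $n(x)=\Vert x\Vert^2=\sum_{s=0}^m x_s^2$, so under the trace condition $n(x)=1$ becomes $\sum_{s=1}^m x_s^2 = 1$. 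Combining the two,
\[
\s_A\cap M \;=\; \Big\{\textstyle\sum_{s=1}^m x_s v_s \,:\, x_1,\ldots,x_m\in\rr,\ \sum_{s=1}^m x_s^2=1\Big\},
\]
which is exactly the unit $(m-1)$-sphere centered at the origin in $\Span(v_1,\ldots,v_m)$ with respect to $\Vert\cdot\Vert$.

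Finally, compactness is immediate: the set on the right is closed (preimage of $\{1\}$ under the continuous function $\Vert\cdot\Vert^2$ restricted to the finite-dimensional subspace $\Span(v_1,\ldots,v_m)$) and bounded (every element has norm $1$), hence compact by the Heine--Borel theorem in the finite-dimensional real vector space $A$. There is no genuine obstacle to overcome here; the entire content of the corollary is encoded in the two identities of Theorem~\ref{thm:hypercomplexbasis}, which already do the work of simultaneously diagonalizing $t$ and $n$ on $M$ in the coordinates provided by a hypercomplex basis.
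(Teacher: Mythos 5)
Your proof is correct and is exactly the argument the paper intends: the corollary is stated without proof as an immediate consequence of Theorem~\ref{thm:hypercomplexbasis}, and your use of~\eqref{eq:cliffordscalarproduct} with $y=1$ to get $t(x)=2x_0$ together with~\eqref{eq:cliffordnorm} to get $n(x)=\Vert x\Vert^2$ is the natural way to fill in the details. Nothing is missing.
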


Later in this work, we will need to control the norms of products of a specific form. We study this matter in the next remark and in the subsequent proposition.

\begin{remark}\label{rmk:norminequality}
Let us define $\omega=\omega_{\B,\B'}:=\max_{u\in M,v\in A,\Vert u\Vert=1=\Vert v\Vert}\Vert uv\Vert$. By construction, $\omega\geq\Vert 1\Vert=1$. Moreover, for all $x\in M$ and $a\in A$,
\[\Vert xa\Vert\leq\omega\,\Vert x\Vert\,\Vert a\Vert\,.\]
\end{remark}

\begin{proposition}\label{prop:norm}
Assume the trace function $t:A\to A$ to take values in the associative nucleus of $A$, which is always true if $A$ is associative. Under the hypotheses of Theorem~\ref{thm:hypercomplexbasis}, choose any $x,y\in M$ and any $a,b\in A$. Then
\[n(ax)=n(a)\Vert x\Vert^2=\Vert x\Vert^2 n(a),\quad n((xa)^c)=n(a^c)\Vert x\Vert^2=\Vert x\Vert^2 n(a^c)\]
and
\[n(xy) = \Vert x\Vert^2\Vert y\Vert^2 = \Vert y\Vert^2 \Vert x\Vert^2=n((xy)^c)\,.\]
If $\B'$ is a distinguished basis of $A$, it follows that $\Vert ax\Vert=\Vert a\Vert \Vert x\Vert = \Vert x\Vert \Vert a\Vert$, $\Vert (xa)^c\Vert=\Vert a^c\Vert \Vert x\Vert = \Vert x\Vert \Vert a^c\Vert$ and $\Vert xy\Vert = \Vert x\Vert\,\Vert y\Vert = \Vert y\Vert\,\Vert x\Vert= \Vert(xy)^c\Vert$.

If $\B'$ is fitted, then $\Vert a\Vert=\Vert a^c\Vert$. If $\B'$ is fitted and adapted, then $\llbracket a,x\rrbracket=\langle a,x\rangle$  and $\llbracket x,x\rrbracket=\Vert x\Vert^2$. Finally: if $\B'$ is a fitted distinguished basis of $A$, then
\begin{equation}\label{eq:cliffordmultiplicativenorm}
\Vert ax\Vert=\Vert a\Vert\,\Vert x\Vert=\Vert x\Vert\,\Vert a\Vert=\Vert xa\Vert\,,
\end{equation}
whence $\omega_{\B,\B'}=1$.
\end{proposition}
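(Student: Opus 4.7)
The plan is to prove the identities in order, working upward from $n$ to $\Vert\cdot\Vert$ and from generic bases to progressively more restricted ones, letting each step rest on its predecessors.

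For the four $n$-identities, observe that $M\subseteq Q_A$, so for every $x\in M$ Remark~\ref{rmk:multiplicativen} applies and gives
\[n(ax)=n(a)n(x)=n(x)n(a),\qquad n((xa)^c)=n(a^c)n(x)=n(x)n(a^c).\]
Equation~\eqref{eq:cliffordnorm} of Theorem~\ref{thm:hypercomplexbasis} converts each $n(x)$ into $\Vert x\Vert^2$, yielding the first line of the statement. Specializing to $a=y\in M$ and invoking~\eqref{eq:cliffordnorm} a second time (applied to $y$ and to $y^c$) produces the twin identities $n(xy)=\Vert x\Vert^2\Vert y\Vert^2=n((xy)^c)$.

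If $\B'$ is distinguished, Remark~\ref{rmk:distinguished} supplies $\langle n(a),1\rangle=\Vert a\Vert^2$ for every $a\in A$. I will pair each of the four $n$-identities above with $1$, pulling out the real scalar $\Vert x\Vert^2$, to obtain the corresponding $\Vert\cdot\Vert$-identities in item (3) (note in particular that $\Vert xy\Vert=\Vert(xy)^c\Vert$ follows directly from $n(xy)=n((xy)^c)=\Vert x\Vert^2\Vert y\Vert^2$). If instead $\B'$ is merely fitted, then each $v_s$ satisfies $v_s^c=\pm v_s$, so conjugation acts on the coordinates by sign changes and $\Vert a^c\Vert=\Vert a\Vert$ follows from unfolding $a=\sum_s a_sv_s$.

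The technical core is the fitted and adapted case. To conclude $\llbracket a,x\rrbracket=\langle a,x\rangle$ and $\llbracket x,x\rrbracket=\Vert x\Vert^2$ for $x\in M$, by the equivalences in Remark~\ref{rmk:distinguished} it suffices to show $M\subseteq\Span(v_0,\ldots,v_{p-1})$, where $(p,\nu,\zeta)$ is the signature of $\B'$. For this I will compute $\llbracket v_s,v_t\rrbracket$ for $s,t\in\{0,\ldots,m\}$ directly from the definition $\frac12\langle t(v_sv_t^c),1\rangle$: using $v_0=1$, $v_s^c=-v_s$ and $n(v_s)=1$ for $s\geq1$, together with the anticommutativity $v_sv_t=-v_tv_s$ of distinct elements of the hypercomplex basis, a short case analysis gives $t(v_sv_t^c)=2\delta_{st}$, hence $\llbracket v_s,v_t\rrbracket=\delta_{st}$ on $\{0,\ldots,m\}^2$. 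Since $\B'$ is adapted, the diagonal value $+1$ on each of the first $m+1$ basis vectors forces $m+1\leq p$, so $M$ lies in the ``positive block'' $\Span(v_0,\ldots,v_{p-1})$, and the two $\llbracket\cdot,\cdot\rrbracket$-equalities follow from Remark~\ref{rmk:distinguished}. The remaining equality~\eqref{eq:cliffordmultiplicativenorm} is then immediate from items (3) and (4): in the fitted distinguished setting, $\Vert xa\Vert=\Vert(xa)^c\Vert=\Vert a^c\Vert\Vert x\Vert=\Vert a\Vert\Vert x\Vert$, whereupon the supremum defining $\omega_{\B,\B'}$ collapses to $1$ since $\Vert uv\Vert=1$ for every $u\in M,v\in A$ of unit norm. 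I expect the signature-placement step in the fitted adapted case to be the one spot requiring genuine care; the remaining items are direct unpackings of Remarks~\ref{rmk:multiplicativen} and~\ref{rmk:distinguished} together with Theorem~\ref{thm:hypercomplexbasis}.
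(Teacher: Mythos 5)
Your proposal is correct and follows essentially the same route as the paper's proof: Remark~\ref{rmk:multiplicativen} together with Theorem~\ref{thm:hypercomplexbasis} for the $n$-identities, Remark~\ref{rmk:distinguished} to pass to norms in the distinguished case, the sign-change argument for fitted bases, and the substitution $\Vert xa\Vert=\Vert(xa)^c\Vert$ at the end. The only difference is that you explicitly verify $\llbracket v_s,v_t\rrbracket=\delta_{st}$ on $\{0,\ldots,m\}^2$ to place $M$ in the positive block of the adapted basis (forcing $m+1\leq p$), a step the paper leaves implicit when it invokes Remark~\ref{rmk:distinguished} for $x\in\Span(\B)$; this is a welcome clarification rather than a different argument.
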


\begin{proof}
The first two equalities are applications of Remark~\ref{rmk:multiplicativen}, where we take into account Theorem~\ref{thm:hypercomplexbasis}. If, moreover, $\B'$ is a distinguished basis of $A$, then Remark~\ref{rmk:distinguished} guarantees that  $\Vert ax\Vert^2=\langle n(ax),1\rangle$, $\Vert a\Vert^2=\langle n(a),1\rangle$, $\Vert (xa)^c\Vert^2=\langle n((xa)^c),1\rangle$, $\Vert a^c\Vert^2=\langle n(a^c),1\rangle$, $\Vert xy\Vert^2=\langle n(xy),1\rangle$, and $\Vert(xy)^c\Vert^2=\langle n((xy)^c),1\rangle$.

If $\B'$ is fitted, then the $*$-involution $p\mapsto p^c$ preserves the norm $\Vert\cdot\Vert=\Vert\cdot\Vert_{\B'}$ because it maps every element of $\B'$ either into itself or into its opposite. Now assume $\B'$, which includes as its first $m+1$ elements the elements of the hypercomplex basis $\B$, to be fitted and adapted. Taking into account that $x\in\Span(\B)$, Remark~\ref{rmk:distinguished} guarantees that $\llbracket a,x\rrbracket=\langle a,x\rangle$ and $\llbracket x,x\rrbracket=\Vert x\Vert^2$. Finally: if a distinguished $\B'$ is also fitted, then we can substitute $\Vert xa\Vert$ for $\Vert (xa)^c\Vert$ and $\Vert a\Vert$ for $\Vert a^c\Vert$.
\end{proof}

\begin{example}[Paravectors]\label{ex:paravectors}
The space of paravectors $\rr^{m+1}$ is a hypercomplex subspace of the Clifford algebra $C\ell(0,m)$, with hypercomplex basis $\B=(e_\emptyset,e_1,\ldots,e_m)$. We complete $\B$ to the standard basis $\B'=(e_K)_{K\in\mathscr{P}(m)}$ of $C\ell(0,m)$, which is fitted and distinguished. Thus, Theorem~\ref{thm:hypercomplexbasis} and Proposition~\ref{prop:norm} yield that equalities~\eqref{eq:cliffordscalarproduct},~\eqref{eq:cliffordnorm} and~\eqref{eq:cliffordmultiplicativenorm} hold true for all $a\in C\ell(0,m),x,y\in\rr^{m+1}$. In particular, $\omega_{\B,\B'}=1$.

On the other hand, for $m\geq3$, the norm $\Vert\cdot\Vert$ is not multiplicative over general elements of $C\ell(0,m)$. For instance: the elements $a=1+e_{123}$ and $b=1-e_{123}$ have $ab=0$, whence $\Vert ab\Vert=0\neq2=\Vert a\Vert\Vert b\Vert$.

For every $m\geq2$, an example of basis $\B''$ of $C\ell(0,m)$ that is fitted but not adapted can be obtained from $\B'$ by substituting $\mu e_{12}$ for $e_{12}$ (for some $\mu$ such that $0<\mu<1$). Indeed, in this case $n(\mu e_{12})=\mu^2\not\in\{1,0,-1\}$. Moreover, in this case we have $\omega_{\B,\B''}\geq\mu^{-1}>1$ because $\Vert e_1e_2\Vert_{\B''}=\Vert e_{12}\Vert_{\B''}=\mu^{-1}\Vert\mu e_{12}\Vert_{\B''}=\mu^{-1}$.
\end{example}

\begin{examples}[{\cite[Example 1.15]{gpsalgebra}}]\label{ex:svectors}
For every $h\in\{1,\ldots,m\}$ with $h\equiv1\,\mathrm{mod}\,4$, the set
\[V_h:=\left\{x_0+\sum_{1\leq k_1<\ldots<k_h\leq m}x_{k_1\ldots k_h}e_{k_1\ldots k_h} : x_0, x_{k_1\ldots k_h}\in\rr\right\}\]
is a hypercomplex subspace of $C\ell(0,m)$. It has hypercomplex basis $\B=(e_{k_1\ldots k_h})_{1\leq k_1<\ldots<k_h\leq m}$, whence $\dim V_h=\binom{m}{h}+1\geq\left(\frac{m}{h}\right)^h+1$. If we set $h(m):=4\lfloor \frac{m+2}8\rfloor+1$ (whence $\frac{m}2-2<h(m)\leq\frac{m}2+2$), then $\dim V_{h(m)}$ grows exponentially with $m$. Again, we can complete $\B$ to the fitted distinguished basis $\B'=(e_K)_{K\in\mathscr{P}(m)}$ of $C\ell(0,m)$. Equalities~\eqref{eq:cliffordscalarproduct},~\eqref{eq:cliffordnorm} and~\eqref{eq:cliffordmultiplicativenorm} hold true for all $a\in C\ell(0,m),x,y\in V_h$. Thus, $\omega_{\B,\B'}=1$.
\end{examples}

Further examples of hypercomplex subspaces can be constructed by means of the next lemma, also proven in~\cite{unifiednotion}.

\begin{lemma}\label{lem:equal2mod4}
Let $M$ be a real vector subspace of $A$ with a hypercomplex basis $\B=(v_0,v_1,\ldots,v_m)$ and set $\widehat v:=v_1\cdots v_m$. The ordered set $\widehat\B:=(v_0,v_1,\ldots,v_m,\widehat v)$ is a hypercomplex basis of $\widehat M:=\Span(v_0,v_1,\ldots,v_m,\widehat v)$ if, and only if, $m\equiv2\,\mathrm{mod}\,4$. If this is the case, then not only $M$ but also $\widehat M$ is a hypercomplex subspace of $A$.
\end{lemma}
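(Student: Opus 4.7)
The plan is to test, directly, the characterization of hypercomplex basis recalled right after the definition in Subsection~\ref{subsec:hypercomplexsubspaces}: $\widehat{\B}$ is a hypercomplex basis of its span if and only if $\widehat v\in\s_A$ (that is, $t(\widehat v)=0$ and $n(\widehat v)=1$), $t(\widehat v\,v_s^c)=0$ for every $s\in\{1,\ldots,m\}$, and $\widehat v$ is linearly independent of $v_0,\ldots,v_m$. Everything reduces to two sign computations that use only $v_s^c=-v_s$, $v_s^2=-1$ and pairwise anticommutation. First, since $c$ is an antiautomorphism,
\[
\widehat v^c=(-1)^m\, v_m v_{m-1}\cdots v_1,
\]
and reversing a product of $m$ pairwise anticommuting factors contributes the sign $(-1)^{m(m-1)/2}$, so $\widehat v^c=\varepsilon_m\widehat v$ with $\varepsilon_m:=(-1)^{m(m+1)/2}$. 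Second, shifting $v_s$ through $\widehat v$ and collapsing the resulting $v_s^2=-1$ produces $v_s\widehat v=(-1)^s\,v_1\cdots\widehat{v_s}\cdots v_m$ from the left and $\widehat v\,v_s=(-1)^{m-s+1}\,v_1\cdots\widehat{v_s}\cdots v_m$ from the right, so $\widehat v\,v_s=(-1)^{m+1}\,v_s\widehat v$.

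For the necessary direction, $t(\widehat v)=0$ amounts to $\varepsilon_m=-1$, i.e.\ $m\equiv 1,2\pmod 4$. The identity $t(\widehat v\,v_s^c)=0$, combined with the second computation, forces the shorter product $v_1\cdots\widehat{v_s}\cdots v_m$ of $m-1$ pairwise anticommuting factors to be reversed in sign by $c$, which by the same reversal argument is equivalent to $(-1)^{(m-1)m/2}=-1$. The two constraints are simultaneously met exactly when $m\equiv 2\pmod 4$. For the sufficient direction, assume this congruence. Reversing the second copy of $\widehat v$ and then collapsing pair by pair via $v_i^2=-1$ yields
\[
\widehat v^{\,2}=(-1)^{m(m-1)/2}\,(v_1\cdots v_m)(v_m\cdots v_1)=(-1)^{m(m-1)/2}\cdot(-1)^m=\varepsilon_m=-1,
\]
so $n(\widehat v)=\widehat v\,\widehat v^c=-\widehat v^{\,2}=1$ and $\widehat v\in\s_A$. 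The same reversal argument applied to each shorter product $v_1\cdots\widehat{v_s}\cdots v_m$ yields $t(\widehat v\,v_s^c)=0$ for every $s$.

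Linear independence is then a bonus: if $\widehat v=\alpha_0+\sum_{s\geq 1}\alpha_s v_s$, taking $t$ gives $2\alpha_0=t(\widehat v)=0$, while multiplying on the right by $v_s^c$ and taking $t$ gives $2\alpha_s=t(\widehat v\,v_s^c)=0$ for each $s$ (the cross terms $t(v_j v_s^c)$ with $j\neq s$ vanishing by the hypercomplex basis assumption on $\B$); this contradicts $\widehat v^{\,2}=-1$. The second assertion, that $\widehat M$ is then a hypercomplex subspace, is an immediate application of Theorem~\ref{thm:hypercomplexbasis} to $\widehat M$ equipped with the basis $\widehat{\B}$. The main subtlety is that $A$ is only assumed alternative, so the multi-factor products $v_1\cdots v_m$ require a choice of parenthesization and the sign manipulations above must in principle be set up inductively on $m$, at each stage invoking Artin's theorem to work inside the associative subalgebra generated by at most two elements; this is the one point where genuine care is needed, though it is bookkeeping rather than a conceptual obstacle.
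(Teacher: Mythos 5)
The paper itself does not prove this lemma (it defers to the reference \cite{unifiednotion}), so your argument can only be judged on its own merits. In the associative case it is correct and essentially complete: the signs $\widehat v^c=(-1)^{m(m+1)/2}\widehat v$ and $\widehat v\,v_s=(-1)^{m+1}v_s\widehat v$, the two resulting congruences $m\equiv1,2$ and $m\equiv2,3\pmod 4$ whose intersection is $m\equiv2\pmod 4$, the computation $n(\widehat v)=1$, and the trace argument for linear independence are all fine. One small omission: to pass from $t(\widehat v)=(1+\varepsilon_m)\widehat v=0$ to $\varepsilon_m=-1$ (and similarly for the shorter products $v_1\cdots\widehat{v_s}\cdots v_m$) you need these products to be nonzero; this holds because each $v_i\in Q_A\setminus\{0\}$ is not a zero divisor, but it should be said.

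The genuine gap is your closing paragraph. Artin's theorem only makes the subalgebra generated by \emph{two} elements associative, while every one of your sign manipulations --- reversing $v_m\cdots v_1$ into $v_1\cdots v_m$, and sliding $v_s$ through $\widehat v$ --- reassociates products of three or more distinct generators. This is not bookkeeping: the identities fail. Take $A=\oo$ and $\B=(1,i,j,\ell)$, so $m=3$ and $\widehat v=(ij)\ell=k\ell=-\ell k$. Your formula predicts $\widehat v^c=(-1)^{3\cdot 4/2}\widehat v=+\widehat v$, but in fact $\widehat v^c=\ell^c(ij)^c=(-\ell)(ji)=\ell k=-\widehat v$; the discrepancy arises because associativity would force $\ell$ to commute with $ij$, whereas in $\oo$ it anticommutes with $k=ij$. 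Likewise your formula predicts $\widehat v\,v_s=(-1)^{m+1}v_s\widehat v=+v_s\widehat v$, yet $-\ell k$ anticommutes with each of $i,j,\ell$. One checks directly that $(1,i,j,\ell,-\ell k)$ satisfies every condition in the definition of a hypercomplex basis even though $3\not\equiv2\pmod 4$, so no amount of sign-tracking can rescue the ``only if'' direction in the nonassociative setting: the proof (and, as stated, the conclusion your computations would establish) is confined to associative $A$, and the alternative case requires either an extra hypothesis or a genuinely different argument than the one you sketch.
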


Several applications of Lemma~\ref{lem:equal2mod4} follow.

\begin{example}\label{ex:2mod4}
A further example of hypercomplex subspace of $C\ell(0,m)$ is the space $W_h=\Span(e_\emptyset,e_1,e_2,\ldots,e_h,e_{12\ldots h})$, for any $h\leq m$ with $h\equiv2\,\mathrm{mod}\,4$. Once more, we can complete the hypercomplex basis $\B=(e_\emptyset,e_1,e_2,\ldots,e_h,e_{12\ldots h})$ to the fitted distinguished basis $\B'=(e_K)_{K\in\mathscr{P}(m)}$ of $C\ell(0,m)$. Equalities~\eqref{eq:cliffordscalarproduct},~\eqref{eq:cliffordnorm} and~\eqref{eq:cliffordmultiplicativenorm} hold true for all $a\in C\ell(0,m),x,y\in W_h$ and $\omega_{\B,\B'}=1$.
\end{example}

We single out the case $h=m=2$, as follows.

\begin{example}[Quaternions]\label{ex:quaternions}
Within the real algebra of quaternions $\hh=C\ell(0,2)$, the subspace of paravectors $\rr^{2+1}$ is a hypercomplex subspace, with hypercomplex basis $(e_\emptyset,e_1,e_2)$. The whole algebra $\hh$ is also a hypercomplex subspace of $\hh$, with hypercomplex basis $(e_\emptyset,e_1,e_2,e_{12})=(1,i,j,k)$. In all examples and statements concerning $\hh$, unless otherwise stated, we shall assume $\B=\B'=(1,i,j,k)$ and endow $\hh$ with its standard scalar product and norm. For all $x,y\in\hh$, we have $t(xy^c)=t(yx^c)=2\langle x,y\rangle$,
$n(x)=n(x^c)=\Vert x\Vert^2$, and $\Vert xy\Vert=\Vert x\Vert\,\Vert y\Vert=\Vert y\Vert\,\Vert x\Vert=\Vert yx\Vert$. Moreover, $\omega_{\B,\B'}=1$.
\end{example}

\begin{example}[Dual quaternions]
Within the $*$-algebra $\D\hh=\hh+\epsilon\hh$ of dual quaternions, two elements $v_1=p_1+\epsilon q_1,v_2=p_2+\epsilon q_2$ belong to $\s_{\D\hh}$ if, and only if, $p_1,p_2\in\s_\hh,q_1,q_2\in\im(\hh)$ and $\langle p_1,q_1\rangle=0=\langle p_2,q_2\rangle$. If this is the case, then $\mathcal{A}:=(1,v_1,v_2)$ is a hypercomplex basis of the $3$-space $\Span(1,v_1,v_2)$ if, and only if, $t(v_1v_2^c)=0$. This happens if, and only if, $\langle p_1,p_2\rangle=0$ and $\langle q_1,p_2\rangle=-\langle q_2,p_1\rangle$. If this is the case, Lemma~\ref{lem:equal2mod4} guarantees that $\widehat{\mathcal{A}}:=(1,v_1,v_2,v_1v_2)$ is a hypercomplex basis of the $4$-space $\Span(1,v_1,v_2,v_1v_2)$. Both $\mathcal{A}$ and $\widehat{\mathcal{A}}$ can be completed to the basis $\mathcal{A}'=(1,v_1,v_2,v_1v_2,\epsilon,\epsilon v_1,\epsilon v_2,\epsilon v_1v_2)$ of $\D\hh$.

For instance, if for some $\alpha,\beta\in\rr$ we set $v_1=i+\epsilon(\alpha j+\beta k),v_2=j+\epsilon(-\alpha i+\beta k)$ and \[v_3:=v_1v_2=ij+\epsilon(i(-\alpha i+\beta k)+(\alpha j+\beta k)j)=k-\beta\epsilon(i+j)\,,\]
then $\Span(1,v_1,v_2,v_3)$ is a hypercomplex subspace of $\D\hh$ and its basis $(1,v_1,v_2,v_3)$ can be completed to the basis $(1,v_1,v_2,v_3,\epsilon,\epsilon i,\epsilon j,\epsilon k)$ of $\D\hh$.

Choosing $\alpha=0=\beta$, we find that $\hh=\hh+\epsilon0$ is a hypercomplex subspace of $\D\hh$, whose hypercomplex basis $\B=(1,i,j,k)$ can be completed to the standard basis $\B'=(1,i,j,k,\epsilon,\epsilon i,\epsilon j,\epsilon k)$. While this basis is adapted but not distinguished, we still find that $\omega_{\B,\B'}=1$. Indeed, for $x=x+\epsilon 0,a=p+\epsilon q$ (with $x,p,q\in\hh$) and for $\Vert\cdot\Vert=\Vert\cdot\Vert_{\B'}$, we find that $xa=xp+\epsilon xq$ has $\Vert xa\Vert^2=\Vert xp\Vert^2+\Vert xq\Vert^2=\Vert x\Vert^2\Vert a\Vert^2$. On the other hand, if $\lambda$ is such that $\Vert a b\Vert\leq\lambda\Vert a\Vert\Vert b\Vert$ for all $a,b\in\D\hh$, then $\lambda\geq\frac{2}{\sqrt{3}}>1$. Indeed, the element $a=\sqrt{\frac23}+\epsilon i \sqrt{\frac13}$ has $\Vert a\Vert=1$ but $a^2=\frac23+\epsilon i\frac{2\sqrt{2}}3$ has $\Vert a^2\Vert=\sqrt{\frac49+\frac89}=\frac{2}{\sqrt{3}}=\frac{2}{\sqrt{3}}\Vert a\Vert^2$.
\end{example}

Examples of hypercomplex subspaces are available in nonassociative settings, too.

\begin{example}[Octonions]
$\cc,\hh,\oo$ are examples of hypercomplex subspaces of $\oo$. In any example concerning $\oo$, we shall assume $\B=\B'=(1,i,j,k,l,li,lj,lk)$ and endow $\oo$ with its standard scalar product and norm. Not only $\B'$ is a fitted distinguished basis of $\oo$: we also have $t(xy^c)=t(yx^c)=2\langle x,y\rangle$ and $n(x)=n(x^c)=\Vert x\Vert^2$ for all $x,y\in\oo$. Moreover, $\Vert xy\Vert=\Vert x\Vert\,\Vert y\Vert=\Vert y\Vert\,\Vert x\Vert=\Vert yx\Vert$ for all $x,y\in\oo$ and $\omega_{\B,\B'}=1$.
\end{example}


\subsection{Monogenic functions on hypercomplex subspaces}\label{subsec:monogenic}

For the present subsection, $M$ is a fixed hypercomplex subspace of the alternative $*$-algebra $A$, with a fixed hypercomplex basis $\B=(v_0,v_1,\ldots,v_m)$. Moreover, $\B$ is completed to a real vector basis $\B'=(v_0,v_1,\ldots,v_m,v_{m+1},\ldots,v_d)$ of $A$. We also fix a \emph{domain} $G$ in the hypercomplex subspace $M$, i.e., a nonempty connected open subset $G$ of $M$. We let the symbol $\overline{G}$ denote the closure of $G$ in $M$. Clearly, the real vector space $\mathscr{C}^1(G,A)$ of $\mathscr{C}^1$ functions $G\to A$ is both a left $A$-module and a right $A$-module. The same is true for the real vector space $\mathscr{C}^1(\overline{G},A)$, comprising restrictions to $\overline{G}$ of $\mathscr{C}^1$ functions from some open neighborhood of $\overline{G}$ to $A$. Following~\cite[Definition 2]{perotticr}, we give the next definition. In addition to the previously defined $L_{\B'}:\rr^{d+1}\to A$, we will use
\[L_{\B}:\rr^{m+1}\to M\,,\quad L_{\B}(x_0,\ldots,x_m)=\sum_{s=0}^mx_s\,v_s=\sum_{s=0}^mv_s\,x_s\,.\]

\begin{definition}\label{def:monogenic}
Let $\phi,\psi\in\mathscr{C}^1(G,A)$. For $s\in\{0,\ldots,m\}$, we define $\widetilde G:=L_\B^{-1}(G)$ and
\[\partial_s\phi=L_{\B'}\circ\left(\frac{\partial}{\partial x_s}\left(L_{\B'}^{-1}\circ \phi\circ (L_\B)_{|_{\widetilde G}}\right)\right)\circ (L_\B^{-1})_{|_G}\in\mathscr{C}^0(G,A)\,.\]
Moreover, we define $\debar_\B$ and $\partial_\B$ by means of the equalities
\begin{align*}
&\debar_\B \phi:=\sum_{s=0}^mv_s\,\partial_s\phi\,,&\partial_\B \phi:=\sum_{s=0}^mv_s^c\,\partial_s\phi=\partial_0\phi-\sum_{s=1}^mv_s\,\partial_s\phi\,,\\
&\psi\debar_\B:=\sum_{s=0}^m(\partial_s\psi)v_s\,,&\psi\partial_\B:=\sum_{s=0}^m(\partial_s\psi)v_s^c=\partial_0\psi-\sum_{s=1}^m(\partial_s\psi)v_s\,.
\end{align*}
The right $A$-submodule of those $\phi\in\mathscr{C}^1(G,A)$ such that $\debar_\B \phi\equiv0$ is called the \emph{left kernel of} $\debar_\B$. Its elements are called \emph{left-monogenic with respect to $\B$}. The left $A$-submodule of those $\psi\in\mathscr{C}^1(G,A)$ such that $\psi\debar_\B\equiv0$ is called the \emph{right kernel of} $\debar_\B$. Its elements are called \emph{right-monogenic with respect to $\B$}.

If $G$ is bounded and has a $\mathscr{C}^1$ boundary $\partial G$ and if $\phi,\psi\in\mathscr{C}^1(\overline{G},A)$, we similarly define $\debar_\B \phi,\partial_\B \phi,\psi\debar_\B,\psi\partial_\B\in\mathscr{C}^0(\overline{G},A)$ and call $\phi$ \emph{left-monogenic with respect to $\B$} if $\debar_\B \phi\equiv0$, $\psi$ \emph{right-monogenic with respect to $\B$} if $\psi\debar_\B\equiv0$.

The operator $\Delta_\B:\mathscr{C}^2(G,A)\to\mathscr{C}^0(G,A)$ is defined by the formula
\[\Delta_\B \phi:=\sum_{s=0}^m\partial_s(\partial_s\phi)\,.\]
The elements of the kernel of $\Delta_\B$ are termed \emph{harmonic with respect to $\B$}.
\end{definition}

\begin{remark}\label{rmk:incrementalratio}
For any $s\in\{0,\ldots,m\}$, we have
\[\partial_s\phi(x)=\lim_{\rr\ni \varepsilon\to0}\varepsilon^{-1}\left(\phi(x+\varepsilon v_s)-\phi(x)\right)\]
at all $x\in G$. In particular, the operator $\partial_s$ does not depend on the whole basis $\B'$ of $A$ chosen but only on the choice of $v_s$. As a consequence, the operators $\debar_\B,\partial_\B,\Delta_\B$ do not depend on the whole basis $\B'$ of $A$ chosen, but only on the basis $\B$ of $M$ chosen.
\end{remark}

Our notation $\Delta_\B$ is consistent with~\cite[\S3]{perotticr}, while our $\debar_\B,\partial_\B$ are twice the operators with the same symbols constructed in~\cite[\S3]{perotticr}. In accordance to~\cite[Proposition 5 (b)]{perotticr}, we make the next remark.

\begin{remark}\label{rmk:harmonic}
The equalities $\Delta_\B\phi=\partial_\B\debar_\B \phi=\debar_\B\partial_\B \phi=\phi\,\partial_\B\debar_\B=\phi\,\debar_\B\partial_\B$ hold true for all $\phi\in\mathscr{C}^{2}(G,A)$. In particular, every $\mathscr{C}^{2}$ function that is left- or right-monogenic with respect to $\B$ is automatically harmonic with respect to $\B$, whence real analytic.
\end{remark}

For future use, we give the next definition.

\begin{definition}
For any $u\in\nn$ and $\h=(h_1,\ldots,h_u)\in\nn^u$, we adopt the notations $|\h|:=\sum_{s=1}^uh_s$ and $\h!:=\prod_{s=1}^uh_s!$. Now, for any $\h=(h_0,\ldots,h_m)\in\nn^{m+1}$, we define the operator $\nabla_\B^\h:\mathscr{C}^{|\h|}(G,A)\to\mathscr{C}^0(G,A)$ by setting
\[\nabla_\B^\h \phi:=\partial_0^{h_0}(\partial_1^{h_1}(\ldots(\partial_m^{h_m}\phi)\ldots))\]
for all $\phi\in\mathscr{C}^{|\h|}(G,A)$.
\end{definition}


\section{Properties of monogenic functions on hypercomplex subspaces}\label{sec:monogenicproperties}

This section studies left-monogenic functions on hypercomplex subspaces of an associative $*$-algebra $A$. The associativity assumption will be mentioned explicitly in all definitions and results in this section, because later in the paper we will drop it and go back to general alternative $*$-algebras. A motivation for the associativity assumption is keeping our presentation simple. Moreover, this assumption makes the properties stated here the perfect analogs over hypercomplex subspaces of the results collected in the monograph~\cite{librogurlebeck2}. Thus, the reader with experience in Clifford monogenic function theory will be easily convinced that these properties hold true and feel free to move swiftly to the next section. All proofs relative to the present section are postponed to the final Appendix.

Within our associative $*$-algebra $A$, throughout the present section fix a hypercomplex subspace $M$, with a fixed hypercomplex basis $\B=(v_0,v_1,\ldots,v_m)$. Moreover, $\B$ is completed to a real vector basis $\B'=(v_0,v_1,\ldots,v_m,v_{m+1},\ldots,v_d)$ of $A$ and $A$ is endowed with the standard Euclidean scalar product $\langle\cdot,\cdot\rangle$ and norm $\Vert\cdot\Vert$ associated to $\B'$.


\subsection{Monogenic polynomial maps on a hypercomplex subspace}\label{subsec:monogenicpolynomial}

This subsection is devoted to polynomial left-monogenic functions. 
The basic examples of polynomial left-monogenic functions are the hypercomplex analogs of Fueter variables and Fueter polynomial functions. We perform the same construction on any hypercomplex subspace. In particular, we overcome the traditional distinction between quaternionic and Clifford  Fueter polynomials.

\begin{definition}
Assume $A$ to be associative. For $s\in\{1,\ldots,m\}$, we define the $s$-th \emph{hypercomplex Fueter variable} as
\[\zeta_s=\zeta_s^\B:=x_s-x_0v_s\,.\]
Let us consider the elements $\epsilon_1=(1,0,\ldots,0),\epsilon_2=(0,1,\ldots,0),\ldots,\epsilon_m=(0,0,\ldots,1)$ of $\nn^m$. For all $\k=(k_1,\ldots,k_m)\in\zz^m$, we define the \emph{hypercomplex Fueter polynomial function} $\P_\k^\B:M\to A$ so that the following formulas hold true for $x\in M$:
\begin{align*}
&\P_\k^\B:\equiv0&&\mathrm{\ if\ }\k\not\in\nn^m\\
&\P_\k^\B:\equiv1&&\mathrm{\ if\ }\k=(0,\ldots,0)\\
&|\k|\,\P_\k^\B(x):=\sum_{s=1}^mk_s\,\P_{\k-\epsilon_s}^\B(x)\,\zeta_s^\B&&\mathrm{\ if\ }\k\in\nn^m\setminus\{(0,\ldots,0)\}
\end{align*}
\end{definition}

\begin{example}
We have:
\begin{align*}
&\P_{(0,0,0\ldots,0)}^\B\equiv 1\\
&\P_{(1,0,0,\ldots,0)}^\B=\zeta_1,\ \P_{(0,1,0,\ldots,0)}^\B=\zeta_2,\ \P_{(0,0,1,\ldots,0)}^\B=\zeta_3,\ldots\\
&\P_{(1,1,0,\ldots,0)}^\B=\frac{1}{2}(\zeta_2\zeta_1+\zeta_1\zeta_2),\ \P_{(1,0,1,\ldots,0)}^\B=\frac{1}{2}(\zeta_3\zeta_1+\zeta_1\zeta_3),\ \ldots,\ \P_{(0,1,1,\ldots,0)}^\B=\frac{1}{2}(\zeta_3\zeta_2+\zeta_2\zeta_3),\ldots\\
&\P_{(1,1,1,0,\ldots,0)}^\B=\frac{1}{6}(\zeta_3\zeta_2\zeta_1+\zeta_2\zeta_3\zeta_1+\zeta_3\zeta_1\zeta_2+\zeta_1\zeta_3\zeta_2+\zeta_2\zeta_1\zeta_3+\zeta_1\zeta_2\zeta_3),\ \ldots
\end{align*}
\end{example}

Before we even prove that the $\P_\k^\B$'s are left-monogenic with respect to $\B$, we wish to establish the following properties, which will be crucial later in the paper. We point out that
\[L_{\B}:\rr^{m+1}\to M\,,\quad L_{\B}(x_0,\ldots,x_m)=\sum_{s=0}^mx_s\,v_s=\sum_{s=0}^mv_s\,x_s\]
is a Hilbert (sub)space isomorphism.

\begin{proposition}\label{prop:fueterpolynomialsindependofbasis}
Assume $A$ to be associative. For $\k=(k_1,\ldots,k_m)\in\zz^m$, the following properties hold true.
\begin{enumerate}
\item There exists a map $p_\k=(p_\k^0,p_\k^1,\ldots,p_\k^m):\rr^{m+1}\to\rr^{m+1}$ such that $\P_\k^\B=L_\B\circ p_\k\circ L_\B^{-1}$ for any hypercomplex basis $\B$ of a hypercomplex subspace $M$ of $A$.
\item The equality $(k_u+1)\,p^s_\k=k_s\,p^u_{\k+\epsilon_u-\epsilon_s}$ holds true for all distinct $s,u\in\{1,\ldots,m\}$.
\item For all $x\in M$,
\begin{align}
&\sum_{s=1}^mk_s\,v_s\,\P_{\k-\epsilon_s}^\B(x)=\sum_{s=1}^mk_s\,\P_{\k-\epsilon_s}^\B(x)\,v_s\,,\label{eq:technical1}\\
&|\k|\,\P_\k^\B(x)=\sum_{s=1}^mk_s\,\zeta_s^\B\,\P_{\k-\epsilon_s}^\B(x)\label{eq:technical2}
\end{align}
\item The equality $\partial_s\P_\k^\B=k_s\,\P_{\k-\epsilon_s}^\B$ holds true for all $s\in\{1,\ldots,m\}$.
\end{enumerate}
\end{proposition}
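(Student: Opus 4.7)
The plan is to prove parts (1), (2), and (3) simultaneously by induction on $n := |\k|$, together with an auxiliary claim (AUX): \emph{if $k_u = 0$ for some $u \in \{1, \ldots, m\}$, then $p_\k^u \equiv 0$}. Part (4) will then follow from a separate induction using the recursive definition and the Leibniz rule. The base case $n = 0$ is immediate: $\P_{(0,\ldots,0)}^\B \equiv 1$, so $p_{(0,\ldots,0)} = (1, 0, \ldots, 0)$, and (2), (3), (AUX) are vacuous or trivial.

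For the inductive step on (1), I would start from the defining recurrence $(n+1)\P_\k^\B = \sum_{s=1}^m k_s\,\P_{\k-\epsilon_s}^\B\,\zeta_s^\B$ and substitute the inductive expression $\P_{\k-\epsilon_s}^\B = \sum_{t=0}^m p_{\k-\epsilon_s}^t\,v_t$ (with $v_0 = 1$), together with $\zeta_s^\B = x_s - x_0 v_s$. Using $v_s^2 = -1$ and $v_s v_t = -v_t v_s$ for $s \neq t$, the expansion splits into a ``diagonal'' part manifestly lying in $\Span(1, v_1, \ldots, v_m)$ and an ``off-diagonal'' contribution $-x_0 \sum_{s \neq t} k_s\,p_{\k-\epsilon_s}^t\,v_t v_s$. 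Pairing $(s,t) \leftrightarrow (t,s)$ rewrites the latter as $-x_0 \sum_{s<t}[k_s\,p_{\k-\epsilon_s}^t - k_t\,p_{\k-\epsilon_t}^s]\,v_t v_s$, which vanishes by hypothesis (2) at level $n$. What remains yields explicit formulas for the coefficients $p_\k^s(x_0,\ldots,x_m)$ as polynomials depending only on $\k$, establishing (1) at level $n+1$.

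With these formulas in hand, the remaining three claims at level $n+1$ follow in close order. For (AUX), when $k_u = 0$ the explicit formula for $p_\k^u$ only involves factors of the form $p_{\k-\epsilon_r}^u$ with $(\k-\epsilon_r)_u = 0$, so (AUX) at level $n$ closes the induction. For (3), the commutator $\sum_s k_s [\P_{\k-\epsilon_s}^\B v_s - v_s\,\P_{\k-\epsilon_s}^\B]$ equals, by the identical pairing cancellation, zero, giving \eqref{eq:technical1}; \eqref{eq:technical2} follows by subtracting the two orderings of the recurrence and using that $x_0, x_s$ are central. For (2), I would substitute the formula from (1) on both sides of $(k_u+1)\,p_\k^s = k_s\,p_{\k+\epsilon_u-\epsilon_s}^u$ and match coefficients of each $x_r$ separately; after reindexing $\k'' := \k - \epsilon_r$, each resulting equation reduces to (2) at level $n$, with the degenerate cases $k_u = 0$ or $k_s = 0$ handled by (AUX). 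Finally, part (4) is a short separate induction: differentiate the recurrence via Leibniz, apply $\partial_s \zeta_r^\B = \delta_{sr}$ for $s \geq 1$ and the inductive identity $\partial_s \P_{\k-\epsilon_r}^\B = (k-\epsilon_r)_s\,\P_{\k-\epsilon_r-\epsilon_s}^\B$, then reorganize the resulting double sum to recognize the recurrence for $\P_{\k-\epsilon_s}^\B$ itself.

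The main obstacle is the tight entanglement of (1), (2), (3), and (AUX) within a single induction: (1) at a new level relies on (2) at the previous level to cancel the off-diagonal contributions, while (2) at the new level cannot even be stated without the formulas from (1) and requires (AUX) to cover vanishing cases. The most delicate step will be the case analysis for (2) across $r \notin \{s, u\}$, $r = s$, and $r = u$, but all three cases reduce systematically to (2) at level $n$ after appropriate rewriting.
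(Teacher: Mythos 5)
Your proposal is correct and follows essentially the same route as the paper's proof: the coefficient maps $p_\k$ are obtained from the defining recurrence, the off-diagonal terms $x_0\sum_{s<u}v_sv_u(k_s\,p_{\k-\epsilon_s}^u-k_u\,p_{\k-\epsilon_u}^s)$ are cancelled via property \emph{2} at the previous level, property \emph{2} itself is verified by matching coefficients of each $x_w$ (with the three cases $w=s$, $w=u$, $w\notin\{s,u\}$ each reducing to the induction hypothesis), and property \emph{4} follows from the Leibniz rule applied to the recurrence. The only cosmetic difference is that you run a single simultaneous induction with the auxiliary claim that $k_u=0$ implies $p_\k^u\equiv 0$, whereas the paper defines $p_\k$ by an explicit recursion up front and proves property \emph{2} as a self-contained induction (your auxiliary claim being the special case $k_s=0$ of that property).
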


\begin{remark}\label{rmk:propertiesfueterpolynomials}
Let $\k,\k'\in\zz^m$. As a consequence of property {\it 1}, $\P_\k^\B(M)\subseteq M$ and
\[\Vert\P_\k^\B(x_0v_0+x_1v_1+\ldots+x_mv_m)\Vert=\Vert p_\k(x_0,x_1,\ldots,x_m)\Vert_{\rr^{m+1}}\]
for all $(x_0,x_1,\ldots,x_m)\in\rr^{m+1}$. By property {\it 2}, if $\k=k_s\epsilon_s$, then $\P_\k^\B(M)\subseteq \rr+\rr v_s$. As a consequence of property {\it 4}, we obtain: $\nabla_\B^{(0,\k)}\P_\k^\B\equiv\k!$; if $k'_u>k_u$ for some $u\in\{1,\ldots,m\}$, then $\nabla_\B^{(0,\k')}\P_\k^\B\equiv0$.
\end{remark}

In the classical context of Clifford algebras, where $M=\rr^{m+1}$ is the paravector space within $A=C\ell(0,m)$, the property $\P_\k^\B(M)\subseteq M$ for all $\k\in\zz^m$ was proven in~\cite{malonekhabilitation}.

In analogy with~\cite[Proposition 9.21]{librogurlebeck2}, we find that the $\P_\k^\B$'s generate all polynomial left-monogenic functions.

\begin{definition}
Within the right $A$-module of functions $M\to A$ that are left-monogenic with respect to $\B$, for any $k\in\nn$, we define $U^\B_k$ to be the right $A$-submodule of those $P:M\to A$ such that $P(x_0+x_1v_1+\ldots+x_mv_m)$ is a $k$-homogeneous polynomial map in the real variables $x_0,x_1,\ldots,x_m$.
\end{definition}

\begin{proposition}\label{prop:basismonogenicpolynomials}
Assume $A$ to be associative and fix $k\in\nn$. Then $\left\{\P_\k^\B\right\}_{|\k|=k}$ is a right $A$-basis for $U^\B_k$. Namely, for all $P\in U^\B_k$, the equality
\begin{equation}\label{eq:monogenicpolynomialexpansion}
P(x)=\sum_{|\k|=k}\P_\k^\B(x)\,\frac{1}{\k!}\nabla_\B^{(0,\k)}P(0)
\end{equation}
holds true at all $x\in M$. 
\end{proposition}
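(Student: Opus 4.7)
The plan is to prove three things: (a) each $\P_\k^\B$ with $|\k|=k$ actually belongs to $U^\B_k$; (b) the family $\{\P_\k^\B\}_{|\k|=k}$ is right $A$-linearly independent; (c) every $P\in U^\B_k$ is the right $A$-linear combination displayed in \eqref{eq:monogenicpolynomialexpansion}. Together these statements yield both the basis property and the explicit reproducing formula.

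For (a), the $|\k|$-homogeneity of $\P_\k^\B$ in the real coordinates $x_0,\ldots,x_m$ follows at once by induction on $|\k|$ from the recursion, since each $\zeta_s^\B=x_s-x_0v_s$ is $1$-homogeneous. For left-monogenicity I would also argue by induction on $|\k|$. Applying $\debar_\B$ to $|\k|\,\P_\k^\B=\sum_{s=1}^m k_s\P_{\k-\epsilon_s}^\B\,\zeta_s^\B$ and invoking associativity of $A$, the Leibniz rule for each $\partial_u$, the inductive vanishing $\debar_\B\P_{\k-\epsilon_s}^\B=0$, and $\partial_u\zeta_s^\B=\delta_{us}-\delta_{u0}v_s$, one reduces $|\k|\,\debar_\B\P_\k^\B$ to $\sum_{s=1}^m k_s(v_s\P_{\k-\epsilon_s}^\B-\P_{\k-\epsilon_s}^\B v_s)$, which vanishes by the commutation identity \eqref{eq:technical1}.

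For (b), suppose $\sum_{|\k|=k}\P_\k^\B\,a_\k\equiv0$ for constants $a_\k\in A$. Fix $\k'$ with $|\k'|=k$ and apply $\nabla_\B^{(0,\k')}$ at the origin. By Remark \ref{rmk:propertiesfueterpolynomials}, $\nabla_\B^{(0,\k')}\P_{\k'}^\B\equiv\k'!$, whereas for any $\k\neq\k'$ with $|\k|=|\k'|=k$ some component of $\k'$ exceeds the corresponding component of $\k$, so $\nabla_\B^{(0,\k')}\P_\k^\B\equiv0$. Hence $\k'!\,a_{\k'}=0$ and thus $a_{\k'}=0$, giving independence.

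For (c), given $P\in U^\B_k$, set $Q:=\sum_{|\k|=k}\P_\k^\B\cdot\tfrac{1}{\k!}\nabla_\B^{(0,\k)}P(0)$, where each coefficient is a constant in $A$ by $k$-homogeneity. The computation of (b) applied to $R:=P-Q\in U^\B_k$ shows $\nabla_\B^{(0,\k)}R(0)=0$ for every $|\k|=k$. To conclude $R\equiv0$, I expand $R(x_0,\ldots,x_m)=\sum_{h=0}^kx_0^h\,S_h(x_1,\ldots,x_m)$ with each $S_h$ a $(k-h)$-homogeneous $A$-valued polynomial. Separating the identity $\debar_\B R=0$ into coefficients of powers of $x_0$ yields the recursion $(h+1)S_{h+1}=-\sum_{s=1}^mv_s\,\partial_sS_h$, so each $S_h$ is determined by $S_0=R(0,\cdot)$. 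Since the vanishing of $\nabla_\B^{(0,\k)}R(0)$ for all $|\k|=k$ is exactly the vanishing of every order-$k$ partial derivative of $S_0$ at the origin, and $S_0$ is itself $k$-homogeneous in $x_1,\ldots,x_m$, this forces $S_0\equiv0$, hence all $S_h\equiv0$, hence $R\equiv0$ and $P=Q$. The main obstacle I anticipate is step (a): the Leibniz bookkeeping is not deep but must be handled carefully, and in particular the commutation identity \eqref{eq:technical1} is essential because there is no a priori reason for $v_s$ and $\P_{\k-\epsilon_s}^\B$ to commute in a general associative $*$-algebra. Once (a) is in hand, step (c) hinges on the clean fact that left-monogenicity determines a $k$-homogeneous polynomial from its restriction to $\{x_0=0\}$.
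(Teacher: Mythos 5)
Your steps (a) and (b) coincide with the paper's proof: the same inductive Leibniz computation reducing $|\k|\,\debar_\B\P_\k^\B$ to $\sum_{s}k_s\,(v_s\,\P_{\k-\epsilon_s}^\B-\P_{\k-\epsilon_s}^\B\,v_s)$, killed by~\eqref{eq:technical1}, and the same application of $\nabla_\B^{(0,\k')}$ together with Remark~\ref{rmk:propertiesfueterpolynomials} for independence. Step (c), however, is a genuinely different argument. The paper derives the expansion directly by induction on the degree $k$: Euler's formula combined with $\debar_\B P\equiv0$ gives $k\,P=\sum_{s=1}^m\zeta_s^\B\,\partial_sP$, the induction hypothesis is applied to each $\partial_sP\in U^\B_{k-1}$, and the identity~\eqref{eq:technical2} (the ``left'' recursion for $\P_\k^\B$, which is the nontrivial content of Proposition~\ref{prop:fueterpolynomialsindependofbasis}, property 3) reassembles the sum into $\sum_{|\k'|}\P_{\k'}^\B\,\frac{1}{\k'!}\nabla_\B^{(0,\k')}P$. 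You instead prove a uniqueness statement: writing $R=\sum_h x_0^h\,S_h(x_1,\ldots,x_m)$ and reading off the coefficient of $x_0^h$ in $\debar_\B R=0$ yields $(h+1)S_{h+1}=-\sum_s v_s\,\partial_sS_h$, so $R$ is determined by its restriction $S_0$ to $\{x_0=0\}$, and the vanishing of $\nabla_\B^{(0,\k)}R(0)$ for all $|\k|=k$ forces the $k$-homogeneous polynomial $S_0$ to vanish. This is the polynomial form of the Cauchy--Kovalevskaya extension principle from Clifford analysis. Your route avoids~\eqref{eq:technical2} entirely (you only need~\eqref{eq:technical1} for monogenicity) and isolates a reusable fact -- a left-monogenic homogeneous polynomial is determined by its trace on the hyperplane $x_0=0$ -- at the cost of the extra bookkeeping with the power series in $x_0$; the paper's route is more computational but produces the expansion constructively rather than by comparison with a candidate. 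Both are correct.
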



\subsection{Integral representation of functions on a hypercomplex subspace}\label{subsec:monogenicintegral}

Let us fix a domain $G$ in the hypercomplex subspace $M$ of $A$. Our next aim is establishing integral representations for functions $G\to A$. We will work with respect to the coordinates $x_0,\ldots,x_m$, i.e., using the previously-defined Hilbert subspace isomorphism $L_\B:\rr^{m+1}\to M\subseteq A$. We will also use Hilbert space isomorphism $L_{\B'}:\rr^{d+1}\to A$. Differential forms were set up in~\cite[\S A.1]{librogurlebeck2} over a general associative algebra and not specifically over Clifford algebras. Recalling that we are assuming $A$ to be associative, we adopt the same setup. In particular, we set
\begin{align*}
d\sigma&=d\sigma_x=dx_0\wedge dx_1\wedge\ldots\wedge dx_m\,,\\
dx_s^*&=(-1)^sdx_0\wedge\ldots\wedge dx_{s-1}\wedge dx_{s+1}\wedge\ldots\wedge dx_m\,,\\
dx^*&=\sum_{s=0}^mv_s\,dx_s^*\,.
\end{align*}
Volume integration was set up in~\cite[Definition A.2.1]{librogurlebeck2} specifically over Clifford algebras. We now define and study it over our associative $*$-algebra $A$. We use, for all integrable functions $\phi_0,\ldots,\phi_d:G\to\rr$ the notation
\[\int_{G}\left(\phi_0,\ldots,\phi_d\right)\,d\sigma:=\left(\int_{G}\phi_0\,d\sigma,\ldots,\int_{G}\phi_d\,d\sigma\right)\in\rr^{d+1}\,.\]

\begin{definition}\label{def:volumeintegral}
Assume $A$ to be associative and fix a domain $G$ in the hypercomplex subspace $M$ of $A$. For $\phi:G\to A$, we set
\[\int_{G}\phi\,d\sigma:=L_{\B'}\left(\int_{G}L_{\B'}^{-1}\circ\phi\,d\sigma\right)\,.\]
In other words, if we have a decomposition $\phi=\sum_{s=0}^d\phi_s\,v_s=\sum_{s=0}^dv_s\,\phi_s$, where $\phi_0,\ldots,\phi_d:G\to\rr$ are real-valued integrable functions, we call $\phi$ \emph{integrable} and define its integral as
\[\int_{G}\phi\,d\sigma:=\sum_{s=0}^dv_s\int_{G}\phi_s\,d\sigma=\sum_{s=0}^d\left(\int_{G}\phi_s\,d\sigma\right)v_s\,.\]
\end{definition}

The integral in Definition~\ref{def:volumeintegral} has the properties described in the next proposition, which subsumes~\cite[Proposition A.2.2]{librogurlebeck2}.

\begin{proposition}\label{prop:propertiesintegral}
Assume $A$ to be associative and fix a domain $G$ in the hypercomplex subspace $M$ of $A$. The following properties hold true for all integrable $\phi,\psi:G\to A$, all $a,b\in A$ and all disjoint domains $G_1,G_2$ in $M$:
\begin{enumerate}
\item $G=G_1\cup G_2\Rightarrow\int_{G}\phi\,d\sigma=\int_{G_1}\phi\,d\sigma+\int_{G_2}\phi\,d\sigma$.
\item $\int_{G}(a\phi+b\psi)\,d\sigma=a\int_{G}\phi\,d\sigma+b\int_{G}\psi\,d\sigma$.
\item $\int_{G}(\phi a+\psi b)\,d\sigma=\left(\int_{G}\phi\,d\sigma\right)a+\left(\int_{G}\psi\,d\sigma\right)b$.
\item $\left(\int_{G}\phi\,d\sigma\right)^c=\int_{G}\phi^c\,d\sigma$.
\item $\Vert\int_{G}\phi\,d\sigma\Vert\leq\int_{G}\Vert \phi\Vert\,d\sigma$.
\end{enumerate}
\end{proposition}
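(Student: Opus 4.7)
The plan is to reduce every claim to the component-wise definition, exploiting the identification $L_{\B'}:\rr^{d+1}\to A$ and the standard properties of real-valued Lebesgue integrals. Throughout, I write $\phi=\sum_{s=0}^d \phi_s v_s$ and $\psi=\sum_{s=0}^d \psi_s v_s$ with $\phi_s,\psi_s:G\to\rr$ integrable, so that $\int_G \phi\,d\sigma=\sum_s\bigl(\int_G\phi_s\,d\sigma\bigr)v_s$ by definition.

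Property \emph{1} is immediate: additivity of the real integral on each real component $\phi_s$ over $G=G_1\cup G_2$, combined with $\rr$-linearity of $L_{\B'}$, yields the decomposition at once. Property \emph{4} is equally direct: since conjugation is the $\rr$-linear endomorphism $\mathcal{CON}$ of $\rr^{d+1}$ (Remark~\ref{rmk:endomorphisms}), it commutes with summation and with real-valued integration, so $\bigl(\int_G\phi\,d\sigma\bigr)^c=L_{\B'}\bigl(\mathcal{CON}\bigl(\int_G L_{\B'}^{-1}\circ\phi\,d\sigma\bigr)\bigr)=\int_G \phi^c\,d\sigma$.

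For Properties \emph{2} and \emph{3} I invoke the endomorphisms $\L_a,\R_a$ of $\rr^{d+1}$ from Remark~\ref{rmk:endomorphisms}; associativity of $A$ is used precisely so that these are well-defined $\rr$-linear maps and so that $a(\phi_s v_s)=(a v_s)\phi_s$ can be expanded component-wise. Then $a\phi=L_{\B'}\bigl(\L_a\circ L_{\B'}^{-1}\circ\phi\bigr)$, and applying $\rr$-linearity of $\L_a$ under the integral sign gives $\int_G a\phi\,d\sigma=L_{\B'}\bigl(\L_a\bigl(\int_G L_{\B'}^{-1}\circ\phi\,d\sigma\bigr)\bigr)=a\int_G\phi\,d\sigma$. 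Combined with $\rr$-linearity of the real integral on each coordinate, this yields \emph{2}; the identical argument with $\R_a$ in place of $\L_a$ proves \emph{3}.

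For Property \emph{5}, translating through $L_{\B'}$ reduces the claim to the standard inequality $\bigl\Vert\int_G F\,d\sigma\bigr\Vert\leq\int_G\Vert F\Vert\,d\sigma$ for an $\rr^{d+1}$-valued integrable function $F=L_{\B'}^{-1}\circ\phi$, where $\Vert\cdot\Vert$ is the Euclidean norm. This inequality follows in the usual way: setting $w:=\int_G F\,d\sigma\in\rr^{d+1}$, one has $\Vert w\Vert^2=\langle w,w\rangle=\int_G\langle w,F\rangle\,d\sigma\leq\int_G\Vert w\Vert\,\Vert F\Vert\,d\sigma$ by Cauchy--Schwarz, whence, if $w\neq 0$, division by $\Vert w\Vert$ gives the result (and the case $w=0$ is trivial).

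I do not expect genuine obstacles; the only subtle point is making sure that the $A$-linearity in Properties \emph{2} and \emph{3} really reduces to $\rr$-linearity of $\L_a$ and $\R_a$, which requires associativity so that $a(b\phi)=(ab)\phi$ and $(\phi a)b=\phi(ab)$ distribute correctly through the component expansion. Everything else is a mechanical transfer of well-known real properties through the isomorphism $L_{\B'}$.
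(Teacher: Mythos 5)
Your proof is correct and follows essentially the same route as the paper: both reduce each property to the componentwise definition via $L_{\B'}$, use commutation of the real integral with the $\rr$-linear endomorphisms $\L_a$, $\R_a$, $\mathcal{CON}$ for properties 2--4, and prove property 5 by the same Cauchy--Schwarz pairing argument with $w:=\int_G F\,d\sigma$. One small inaccuracy worth noting: $\L_a$ and $\R_a$ are well-defined $\rr$-linear maps for any alternative algebra (Remark~\ref{rmk:endomorphisms} does not assume associativity), so associativity is not actually what makes properties 2 and 3 go through --- but this does not affect the validity of your argument.
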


Assume $G$ to be bounded and to have a $\mathscr{C}^1$ boundary $\partial G$. For any choice of $A$-valued $\mathscr{C}^1$ functions $\phi,\psi$ on an open neighborhood of $\partial G$, the work~\cite[\S A.2.1.3]{librogurlebeck2} defines and studies the integral $\int_{\partial G}\psi(x)\,dx^*\phi(x)$. For fixed $p\in M, R>0$, let us adopt the notations $B^{m+1}(p,R):=\{x\in M:\Vert x-p\Vert<R\}$ and $\overline{B}^{m+1}(p,R):=\{x\in M:\Vert x-p\Vert\leq R\}$. We recall from~\cite[Example A.2.17]{librogurlebeck2}:

\begin{remark}\label{rmk:sphere}
Assume $G=B^{m+1}(p,R)$. For $0\leq r<R, w\in\partial B^{m+1}(0,1)$ and $x=p+rw\in G$, we have $d\sigma=r^m\,dr\,|do_w|$ in $G$, where $|do_w|$ denotes the surface element of the unit sphere $\partial B^{m+1}(0,1)$. Similarly, for $x=p+Rw\in\partial G$, we have $dx^*=R^m\,w\,|do_w|$ on $\partial G$. 
\end{remark}

The integral $\int_{\partial B^{m+1}(0,1)}\phi(w)\,|do_w|$ is defined in~\cite[\S A.2.1.3]{librogurlebeck2}, too. For future use, we establish the following inequality.

\begin{lemma}\label{lem:surfaceintegral}
$\left\Vert\int_{\partial B^{m+1}(0,1)}\phi(w)\,|do_w|\right\Vert\leq\int_{\partial B^{m+1}(0,1)}\Vert\phi(w)\Vert\,|do_w|$.
\end{lemma}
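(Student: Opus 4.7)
The plan is to mimic the standard proof of the analogous inequality for the volume integral (property \emph{5} of Proposition~\ref{prop:propertiesintegral}), adapted to the surface-integral setting already set up in~\cite[\S A.2.1.3]{librogurlebeck2}. The key observation is that both the norm $\Vert\cdot\Vert=\Vert\cdot\Vert_{\B'}$ and the surface integral are defined componentwise with respect to the basis $\B'=(v_0,\ldots,v_d)$: using the Hilbert space isomorphism $L_{\B'}:\rr^{d+1}\to A$, if we expand $\phi=\sum_{s=0}^d\phi_s\,v_s$ with real-valued continuous functions $\phi_s:\partial B^{m+1}(0,1)\to\rr$, then
\[I:=\int_{\partial B^{m+1}(0,1)}\phi(w)\,|do_w|=\sum_{s=0}^dv_s\int_{\partial B^{m+1}(0,1)}\phi_s(w)\,|do_w|=\sum_{s=0}^dI_s\,v_s\]
with $I_s:=\int_{\partial B^{m+1}(0,1)}\phi_s(w)\,|do_w|\in\rr$.

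Next, I would compute $\Vert I\Vert^2=\sum_{s=0}^dI_s^2$ by pulling the constants $I_s$ inside the integral against $\phi_s$, and then recognising the resulting sum as the pointwise Euclidean scalar product of $I$ and $\phi(w)$:
\[\Vert I\Vert^2=\sum_{s=0}^dI_s\int_{\partial B^{m+1}(0,1)}\phi_s(w)\,|do_w|=\int_{\partial B^{m+1}(0,1)}\langle I,\phi(w)\rangle\,|do_w|.\]
Here the linearity properties of the scalar surface integral on real-valued functions are used, together with the fact that $\langle\cdot,\cdot\rangle=\langle\cdot,\cdot\rangle_{\B'}$ is by construction the standard Euclidean product in the coordinates given by $\B'$.

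Finally, I would apply the pointwise Cauchy--Schwarz inequality $\langle I,\phi(w)\rangle\leq\Vert I\Vert\,\Vert\phi(w)\Vert$ and integrate, obtaining
\[\Vert I\Vert^2\leq\Vert I\Vert\int_{\partial B^{m+1}(0,1)}\Vert\phi(w)\Vert\,|do_w|.\]
The claim then follows by dividing by $\Vert I\Vert$ if $\Vert I\Vert>0$, and is trivial otherwise since the right-hand side is non-negative. There is no substantial obstacle: the only subtlety is making sure that the componentwise definition of the surface integral in~\cite[\S A.2.1.3]{librogurlebeck2} enjoys the same real-linearity and commutation with scalar multiplication as the volume integral of Definition~\ref{def:volumeintegral}, which is immediate from its construction.
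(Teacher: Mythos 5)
Your argument is correct and is essentially the paper's own proof: the paper reduces the statement to the componentwise definition of the surface integral via $L_{\B'}$ and then invokes the same Cauchy--Schwarz computation used for property \emph{5} of Proposition~\ref{prop:propertiesintegral}, which is exactly the computation you spell out (including the separate treatment of the case $\Vert I\Vert=0$).
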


The next result subsumes~\cite[Theorem A.2.21 and Theorem A.2.22]{librogurlebeck2}.

\begin{theorem}[Gauss]\label{thm:gauss}
Assume $A$ to be associative and fix a bounded domain $G$ in the hypercomplex subspace $M$ of $A$, with a $\mathscr{C}^1$ boundary $\partial G$. Then
\[\int_{\partial G}\psi\,dx^*\phi = \int_{G}\left((\psi\debar_\B)\,\phi+\psi\,(\debar_\B \phi)\right)\,d\sigma\]
for any $\phi,\psi\in\mathscr{C}^1(\overline{G},A)$.
\end{theorem}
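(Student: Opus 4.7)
The plan is to reduce the statement to the classical Stokes' theorem applied componentwise in the basis $\B'$. First, using the definition $dx^* = \sum_{s=0}^m v_s\, dx_s^*$, I would expand
\[\psi\, dx^* \phi = \sum_{s=0}^m \psi\, v_s\, \phi\, dx_s^*,\]
observing that the associativity of $A$ is essential to make the product $\psi v_s \phi$ an unambiguously defined $A$-valued $\mathscr{C}^1$ function on $\overline{G}$, so that $\psi\, dx^* \phi$ is a bona fide $A$-valued $\mathscr{C}^1$ differential $m$-form on a neighborhood of $\overline{G}$.

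Next, I would compute the exterior derivative of each summand. Decomposing the real coefficients of $\psi v_s \phi$ along $\B'$ and applying the Leibniz rule componentwise, I get $d(\psi v_s \phi) = \sum_{t=0}^m \bigl((\partial_t \psi) v_s \phi + \psi v_s (\partial_t \phi)\bigr)\, dx_t$. The wedge $dx_t \wedge dx_s^*$ vanishes for $t \neq s$; for $t=s$, the sign convention $dx_s^* = (-1)^s dx_0 \wedge \ldots \wedge \widehat{dx_s} \wedge \ldots \wedge dx_m$ gives $dx_s \wedge dx_s^* = d\sigma$, since transporting $dx_s$ past $s$ earlier factors contributes an additional $(-1)^s$. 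Summing over $s$ and comparing with Definition~\ref{def:monogenic}, this yields
\[d(\psi\, dx^* \phi) = \sum_{s=0}^m \bigl((\partial_s \psi) v_s \phi + \psi v_s (\partial_s \phi)\bigr)\, d\sigma = \bigl((\psi\debar_\B)\,\phi + \psi\,(\debar_\B \phi)\bigr)\, d\sigma.\]

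Finally, I would invoke the classical Stokes' theorem on each of the $d+1$ real-valued components of the $A$-valued $m$-form $\psi\, dx^* \phi$ (the hypothesis that $G$ is bounded with $\mathscr{C}^1$ boundary and that $\phi,\psi\in\mathscr{C}^1(\overline G,A)$ makes this legitimate), and reassemble via $L_{\B'}$ in accordance with Definition~\ref{def:volumeintegral} to obtain
\[\int_{\partial G} \psi\, dx^* \phi \;=\; \int_G d(\psi\, dx^* \phi) \;=\; \int_G \bigl((\psi\debar_\B)\,\phi + \psi\,(\debar_\B \phi)\bigr)\, d\sigma,\]
which is precisely the claimed identity. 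The main point where care is required is the combinatorial bookkeeping of signs in $dx_s \wedge dx_s^* = d\sigma$ together with the interplay between left-multiplication by $\psi$, the middle factor $v_s$ and right-multiplication by $\phi$; everything else is a direct componentwise application of standard multivariable calculus, which is why associativity is the only algebraic hypothesis needed beyond those already imposed.
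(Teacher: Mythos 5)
Your proposal is correct and follows essentially the same route as the paper: both compute $d(\psi\,dx^*\phi)=\left((\psi\debar_\B)\,\phi+\psi\,(\debar_\B \phi)\right)d\sigma$ using the identity $dx_t\wedge dx_s^*=\delta_{ts}\,d\sigma$ and then conclude by Stokes' theorem applied componentwise. The only cosmetic difference is that the paper organizes the calculation via the graded Leibniz rule $d(\psi\,dx^*\phi)=d\psi\wedge dx^*\,\phi+(-1)^m\psi\,dx^*\wedge d\phi$ and evaluates $d\psi\wedge dx^*$ and $dx^*\wedge d\phi$ separately, whereas you expand the sum over $s$ first; the sign bookkeeping and the use of $\rr$-bilinearity of the product are identical.
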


We will soon plug into the Gauss theorem, in the role of $\psi$, the function described in the next definition and lemma (which generalizes~\cite[Proposition 7.7]{librogurlebeck2}).

\begin{definition}
The \emph{Cauchy kernel} of $M$ is the function $E_m:M\setminus\{0\}\to A$ defined by the formula
\[E_m(x):=\frac{1}{\sigma_m}\frac{x^c}{\Vert x\Vert^{m+1}}\,,\quad\sigma_m:=2\frac{\Gamma^{m+1}(\frac12)}{\Gamma(\frac{m+1}2)}\,.\]
\end{definition}

In the last definition, the letter $\Gamma$ denotes the gamma function and the number $\sigma_m$ is the (surface) volume of the unit $m$-sphere in $\rr^{m+1}$.

\begin{lemma}\label{lem:cauchykernel}
If we fix $x\in M$, then the function
\[M\setminus\{x\} \to A\,,\quad y\mapsto E_m(y-x)\]
is both left- and right-monogenic with respect to $\B$.
\end{lemma}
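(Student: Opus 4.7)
The plan is to reduce to the case $x=0$ by translation invariance and then verify by direct computation in the coordinates $y_0,\ldots,y_m$ afforded by $L_\B$ that both $\debar_\B E_m\equiv 0$ and $E_m\debar_\B\equiv 0$ on $M\setminus\{0\}$. Since each $\partial_s$ is a directional derivative along the constant vector $v_s$, it commutes with the translation $y\mapsto y-x$, so monogenicity of $y\mapsto E_m(y-x)$ at a fixed $x\in M$ reduces to monogenicity of $E_m$ itself.

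I would first collect the algebraic identities forced by the hypercomplex basis. For $y=\sum_{s=0}^m y_sv_s\in M$ with $y_s\in\rr$, one has $y^c=y_0-\sum_{s=1}^m y_sv_s$; the condition $v_s\in\s_A$ forces $v_s^c=-v_s$ and $v_s^2=-1$ for each $s\geq 1$; and Theorem~\ref{thm:hypercomplexbasis}, applied to both $y$ and $y^c\in M$, yields $\|y\|^2=n(y)=y\,y^c$ and $\|y\|^2=\|y^c\|^2=n(y^c)=y^c\,y$. A routine application of the product rule to $E_m(y)=\sigma_m^{-1}\,y^c\,\|y\|^{-(m+1)}$ then gives
\begin{align*}
\sigma_m\,\partial_0E_m&=\|y\|^{-(m+1)}-(m+1)\,y_0\,y^c\,\|y\|^{-(m+3)},\\
\sigma_m\,\partial_sE_m&=-v_s\,\|y\|^{-(m+1)}-(m+1)\,y_s\,y^c\,\|y\|^{-(m+3)}\qquad(s\geq 1).
\end{align*}

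Assembling $\debar_\B E_m=\sum_{s=0}^m v_s\,\partial_sE_m$, the leading terms contribute $\|y\|^{-(m+1)}+\sum_{s\geq 1}(-v_s^2)\|y\|^{-(m+1)}=(m+1)\|y\|^{-(m+1)}$, while the trailing terms contribute $-(m+1)\|y\|^{-(m+3)}\bigl(y_0\,y^c+\sum_{s\geq 1}v_s\,y_s\,y^c\bigr)=-(m+1)\|y\|^{-(m+3)}\,y\,y^c=-(m+1)\|y\|^{-(m+1)}$, and the two contributions cancel. The computation for $E_m\debar_\B=\sum_{s=0}^m(\partial_sE_m)v_s$ is symmetric: the leading terms again sum to $(m+1)\|y\|^{-(m+1)}$, while the trailing terms collapse via $y_0\,y^c+\sum_{s\geq 1}y_s\,y^c\,v_s=y^c\,y=\|y\|^2$.

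The main delicate point is the order of multiplication, since $A$ is only assumed to be associative: associativity and the centrality of real scalars are exactly what is needed to collapse $\sum_{s=0}^m v_s y_s=y=\sum_{s=0}^m y_s v_s$ inside the longer sums and then invoke $y\,y^c=y^c\,y=\|y\|^2$. A conceptually tidier alternative, valid for $m\geq 2$, is to note that $\partial_\B\|y\|^{-(m-1)}=-(m-1)\sigma_m\,E_m(y)$ (and, since $\|y\|^{-(m-1)}$ is real-valued, this also equals $\|y\|^{-(m-1)}\partial_\B$), so that Remark~\ref{rmk:harmonic} reduces both $\debar_\B E_m$ and $E_m\debar_\B$ to a constant multiple of $\Delta_\B\|y\|^{-(m-1)}$, which vanishes on $M\setminus\{0\}$ because $\|y\|^{-(m-1)}$ is, up to constant, the fundamental solution of the $(m+1)$-dimensional Laplacian.
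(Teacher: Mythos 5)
Your proof is correct and follows essentially the same route as the paper: reduce to $x=0$ by translation invariance, then verify $\debar_\B E_m\equiv 0\equiv E_m\debar_\B$ by direct differentiation, using $v_s^2=-1$ and the identities $yy^c=y^cy=\Vert y\Vert^2$ from Theorem~\ref{thm:hypercomplexbasis}. The alternative you sketch via $\partial_\B\Vert y\Vert^{-(m-1)}$ and harmonicity is also sound for $m\geq 2$ (and is implicitly how the paper later proves Theorem~\ref{thm:taylorkernel}), but it is not needed here.
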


We are now ready for the announced integral representation, which subsumes~\cite[Theorem 7.8 and Theorem 7.9]{librogurlebeck2}.

\begin{theorem}[Borel-Pompeiu]\label{thm:borelpompeiu}
Assume $A$ to be associative and fix a bounded domain $G$ in the hypercomplex subspace $M$ of $A$, with a $\mathscr{C}^1$ boundary $\partial G$. If $\phi\in\mathscr{C}^1(\overline{G},A)$, then
\[\int_{\partial G}E_m(y-x)\,dy^*\,\phi(y) - \int_{G}E_m(y-x)\,\debar_\B \phi(y)\,d\sigma_y
=\left\{\begin{array}{ll}
\phi(x)&\mathrm{if\ } x\in G\\
0&\mathrm{if\ } x\in M\setminus\overline{G}\,.
\end{array}\right.\]
\end{theorem}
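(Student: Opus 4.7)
The plan is to apply Gauss's theorem (Theorem~\ref{thm:gauss}) with $\psi(y):=E_m(y-x)$, taking advantage of the fact that this $\psi$ is right-monogenic with respect to $\B$ away from $y=x$ (Lemma~\ref{lem:cauchykernel}). Since $E_m(y-x)$ has a singularity of order $\Vert y-x\Vert^{-m}$ at $y=x$, a direct application is possible only when $x\not\in\overline{G}$; otherwise, the standard trick is to excise a small ball around $x$ and let its radius tend to zero.

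For $x\in M\setminus\overline{G}$, the map $y\mapsto E_m(y-x)$ is of class $\mathscr{C}^1$ on an open neighborhood of $\overline{G}$ and satisfies $E_m(\cdot-x)\,\debar_\B\equiv 0$ there. Theorem~\ref{thm:gauss} applied with $\psi(y)=E_m(y-x)$ then reduces to
\[\int_{\partial G}E_m(y-x)\,dy^*\,\phi(y)=\int_{G}E_m(y-x)\,\debar_\B\phi(y)\,d\sigma_y,\]
which is the desired identity in this case.

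For $x\in G$, pick $\varepsilon_0>0$ with $\overline{B}^{m+1}(x,\varepsilon_0)\subset G$ and set $G_\varepsilon:=G\setminus\overline{B}^{m+1}(x,\varepsilon)$ for $0<\varepsilon<\varepsilon_0$. The domain $G_\varepsilon$ is bounded, has $\mathscr{C}^1$ boundary $\partial G_\varepsilon=\partial G\cup\partial B^{m+1}(x,\varepsilon)$ (with the inner sphere carrying the orientation opposite to its outward orientation as boundary of the ball), and the kernel $\psi(y)=E_m(y-x)$ is $\mathscr{C}^1$ and right-monogenic with respect to $\B$ on an open neighborhood of $\overline{G_\varepsilon}$. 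Applying Theorem~\ref{thm:gauss} on $G_\varepsilon$ and separating the two boundary pieces, I obtain
\[\int_{\partial G}E_m(y-x)\,dy^*\,\phi(y)-\int_{\partial B^{m+1}(x,\varepsilon)}E_m(y-x)\,dy^*\,\phi(y)=\int_{G_\varepsilon}E_m(y-x)\,\debar_\B\phi(y)\,d\sigma_y.\]

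It remains to pass to the limit $\varepsilon\to 0^+$. The right-hand side converges to $\int_G E_m(y-x)\,\debar_\B\phi(y)\,d\sigma_y$ by a dominated-convergence argument: in the spherical coordinates of Remark~\ref{rmk:sphere} centered at $x$, one has $d\sigma=r^m\,dr\,|do_w|$, so $\Vert E_m(y-x)\Vert\,d\sigma$ behaves like $r^{-m}\cdot r^m\,dr\,|do_w|$ near $x$, which is integrable, and $\debar_\B\phi$ is bounded on $\overline{G}$. For the small-sphere term, Remark~\ref{rmk:sphere} gives $dy^*=\varepsilon^m w\,|do_w|$ for $y=x+\varepsilon w$ with $w$ on the unit sphere of $M$; using $(y-x)^c=\varepsilon w^c$ and $\Vert y-x\Vert^{m+1}=\varepsilon^{m+1}$, one computes
\[E_m(y-x)\,dy^*=\frac{1}{\sigma_m}\cdot\frac{\varepsilon w^c}{\varepsilon^{m+1}}\cdot\varepsilon^m w\,|do_w|=\frac{w^c w}{\sigma_m}\,|do_w|=\frac{n(w^c)}{\sigma_m}\,|do_w|=\frac{1}{\sigma_m}\,|do_w|,\]
where the last equality uses $n(w^c)=\Vert w\Vert^2=1$ from Theorem~\ref{thm:hypercomplexbasis}. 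Hence the small-sphere integral equals $\sigma_m^{-1}\int_{\partial B^{m+1}(0,1)}\phi(x+\varepsilon w)\,|do_w|$, which tends to $\sigma_m^{-1}\cdot\sigma_m\cdot\phi(x)=\phi(x)$ by continuity of $\phi$ at $x$, together with Lemma~\ref{lem:surfaceintegral} to control the passage to the limit. Rearranging yields the claim. The main technical point is the identification $w^c w=n(w^c)=1$ and its interaction with $dy^*$, which relies crucially on $w$ lying in the hypercomplex subspace $M$; otherwise the algebraic manipulations are routine.
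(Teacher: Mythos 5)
Your proof is correct and follows essentially the same route as the paper's: a direct application of the Gauss theorem for $x\notin\overline{G}$, and for $x\in G$ the excision of a small ball, the computation $E_m(y-x)\,dy^*=\sigma_m^{-1}w^cw\,|do_w|=\sigma_m^{-1}|do_w|$ on the small sphere via Remark~\ref{rmk:sphere} and $n(w^c)=\Vert w\Vert^2=1$, and the vanishing of the volume contribution near $x$ because the $r^{-m}$ singularity is cancelled by $d\sigma=r^m\,dr\,|do_w|$. The only cosmetic difference is that you phrase the volume limit as dominated convergence of $\int_{G_\varepsilon}$ to $\int_G$, whereas the paper shows directly that $\int_{B^{m+1}(x,\varepsilon)}\to0$; these are the same step.
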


In the special case when $\phi$ is left-monogenic with respect to $\B$, Theorem~\ref{thm:borelpompeiu} takes the special form described in the next corollary (see~\cite[Theorem 7.12 and Theorem 7.13]{librogurlebeck2} for the Clifford and complex cases).

\begin{corollary}[Cauchy Formula]\label{cor:monogeniccauchy}
Assume $A$ to be associative and fix a bounded domain $G$ in the hypercomplex subspace $M$ of $A$, with a $\mathscr{C}^1$ boundary $\partial G$. If $\phi\in\mathscr{C}^1(\overline{G},A)$ is left-monogenic with respect to $\B$, then
\[\int_{\partial G}E_m(y-x)\,dy^*\,\phi(y) =\left\{\begin{array}{ll}
\phi(x)&\mathrm{if\ } x\in G\\
0&\mathrm{if\ } x\in M\setminus\overline{G}\,.
\end{array}\right.\]
\end{corollary}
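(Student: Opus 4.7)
The plan is to derive this as an immediate consequence of the Borel-Pompeiu formula (Theorem~\ref{thm:borelpompeiu}), since the hypothesis on $\phi$ kills precisely the volume-integral term appearing there.

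More concretely, I would proceed as follows. First, observe that all the hypotheses of Theorem~\ref{thm:borelpompeiu} are met verbatim: $A$ is associative, $G$ is a bounded domain in $M$ with $\mathscr{C}^1$ boundary, and $\phi \in \mathscr{C}^1(\overline{G},A)$. Therefore, for every $x \in G$,
\[
\int_{\partial G}E_m(y-x)\,dy^*\,\phi(y) \;-\; \int_{G}E_m(y-x)\,\debar_\B \phi(y)\,d\sigma_y = \phi(x),
\]
and for every $x \in M \setminus \overline{G}$ the same expression equals $0$. Next, I use the assumption that $\phi$ is left-monogenic with respect to $\B$, which by Definition~\ref{def:monogenic} is the statement that $\debar_\B \phi \equiv 0$ on $\overline{G}$. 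By the linearity property (item \emph{2}) of the volume integral recorded in Proposition~\ref{prop:propertiesintegral}, the integrand of the volume term is identically zero, and hence so is the integral. Substituting $0$ for the volume term in the Borel-Pompeiu identity yields the two cases of the Cauchy Formula.

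The only subtlety worth flagging is the behavior of the volume integral $\int_G E_m(y-x)\debar_\B\phi(y)\, d\sigma_y$ when $x \in G$: here $E_m(\,\cdot\, - x)$ has a singularity of order $m$ at $y = x$, which is integrable with respect to $d\sigma_y = r^m\,dr\,|do_w|$ (see Remark~\ref{rmk:sphere}), so the integral makes sense in the first place — but this integrability has already been incorporated into the statement of Theorem~\ref{thm:borelpompeiu}, and for our purposes the integrand is identically $0$ anyway. Thus no real obstacle arises and the proof is a one-line specialization of Borel-Pompeiu.
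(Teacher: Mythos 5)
Your proposal is correct and coincides with the paper's (implicit) argument: the corollary is presented as an immediate specialization of the Borel--Pompeiu formula, with the volume term vanishing because $\debar_\B\phi\equiv0$. The extra remark about the integrability of the singularity of $E_m(\cdot-x)$ is a reasonable sanity check but, as you note, is already absorbed into the statement of Theorem~\ref{thm:borelpompeiu}.
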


We conclude this subsection with the following property, which subsumes~\cite[Corollary 7.31]{librogurlebeck2}.

\begin{proposition}[Mean value property]\label{prop:monogenicmeanvalue}
Assume $A$ to be associative and fix an open ball $B^{m+1}=B^{m+1}(x,R)$ in the hypercomplex subspace $M$ of $A$. If $\phi\in\mathscr{C}^1(\overline{B}^{m+1},A)$ is left-monogenic with respect to $\B$, then
\[\phi(x)=\frac1{\sigma_m}\int_{\partial B^{m+1}(0,1)}\phi(x+R w)\,|do_w|\,.\]
\end{proposition}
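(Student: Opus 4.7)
The plan is to deduce the Mean Value Property directly from the Cauchy Formula (Corollary~\ref{cor:monogeniccauchy}), specialized to the ball $G=B^{m+1}(x,R)$. Since $\phi$ is left-monogenic on $\overline{G}$ and $x\in G$, Corollary~\ref{cor:monogeniccauchy} gives
\[\phi(x)=\int_{\partial G}E_m(y-x)\,dy^*\,\phi(y)\,.\]
It remains to rewrite the right-hand side in terms of the surface element on the unit sphere.

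First I would parametrize $\partial G$ as described in Remark~\ref{rmk:sphere}: set $y=x+Rw$ with $w\in\partial B^{m+1}(0,1)$. Then $y-x=Rw$, so $\Vert y-x\Vert=R\Vert w\Vert=R$ and $(y-x)^c=Rw^c$, yielding
\[E_m(y-x)=\frac{1}{\sigma_m}\frac{Rw^c}{R^{m+1}}=\frac{w^c}{\sigma_m R^m}\,.\]
Meanwhile, from Remark~\ref{rmk:sphere}, the surface form on $\partial G$ is $dy^*=R^m\,w\,|do_w|$. Substituting and using associativity of $A$,
\[E_m(y-x)\,dy^*=\frac{w^c}{\sigma_m R^m}\cdot R^m\,w\,|do_w|=\frac{1}{\sigma_m}\,w^cw\,|do_w|\,.\]

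The key step is then to evaluate $w^cw$ for $w$ ranging over the unit sphere of $M$. Since $M\subseteq Q_A$, each $w\in M$ lies in some $\cc_J$, whence $n(w)=ww^c=w^cw$ (we may also read this off from $x^{-1}=n(x)^{-1}x^c=x^cn(x)^{-1}$, recalled in the discussion of $Q_A$). By Theorem~\ref{thm:hypercomplexbasis}, $n(w)=\Vert w\Vert^2=1$, so $w^cw=1$. This is the main (and only) nontrivial point: it crucially uses that $w$ lies in the hypercomplex subspace $M$, not merely in $A$.

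Putting everything together,
\[\phi(x)=\int_{\partial G}E_m(y-x)\,dy^*\,\phi(y)=\frac{1}{\sigma_m}\int_{\partial B^{m+1}(0,1)}\phi(x+Rw)\,|do_w|\,,\]
which is the claimed identity. No further estimates are needed; the only subtlety worth a sentence in the write-up is the identification $w^cw=\Vert w\Vert^2$ on $M$, invoking Theorem~\ref{thm:hypercomplexbasis} together with the $Q_A$-identity $w^cw=n(w)$.
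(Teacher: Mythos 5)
Your proof is correct and follows essentially the same route as the paper: the paper also derives the statement from the Cauchy Formula by citing the computation $\int_{\partial B^{m+1}(x,\varepsilon)}E_m(y-x)\,dy^*\,\phi(y)=\sigma_m^{-1}\int_{\partial B^{m+1}(0,1)}\phi(x+\varepsilon w)\,|do_w|$ already carried out inside the proof of Theorem~\ref{thm:borelpompeiu}, where the same identities $E_m(Rw)=R^{-m}E_m(w)$, $dy^*=R^m\,w\,|do_w|$ and $w^cw=\Vert w\Vert^2=1$ on $M$ are used. Your explicit justification of $w^cw=n(w)=1$ via $Q_A$ and Theorem~\ref{thm:hypercomplexbasis} is exactly the point the paper relies on implicitly.
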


After some preliminary work in the next subsection, Corollary~\ref{cor:monogeniccauchy} will be the key ingredient to endow every function $\phi$ that is left-monogenic with respect to $\B$ with a series expansion in the forthcoming Subsection~\ref{subsec:seriesexpansionmonogenic}.


\subsection{Properties of the reproducing kernel and harmonicity}\label{subsec:reproducingkernel}

This subsection studies the reproducing kernel $E_m(y-x)$, whose role was fundamental in Theorem~\ref{thm:borelpompeiu} and in Corollary~\ref{cor:monogeniccauchy}, and uses it to establish that left-monogenic functions are harmonic and real analytic.

Our first aim is expanding $E_m(y-x)$ into series. We begin by recalling some standard terminology.

\begin{definition}
Let $\mathscr{E},\mathscr{E}'$ be finite-dimensional Euclidean spaces and let $\Lambda$ be an open subset of $\mathscr{E}'$. For any $\{f_k\}_{k\in\nn}\subset\mathscr{C}^0(\Lambda,\mathscr{E})$, we say that the function series $\sum_{k\in\nn}f_k$ is \emph{normally convergent} in $\Lambda$ if, for any compact subset $C$ of $\Lambda$, the number series $\sum_{k\in\nn}\max_C\Vert f_k\Vert_{\mathscr{E}}$ converges. If this is the case, we call the function $f:\Lambda\to\mathscr{E}$ with $f(x)=\sum_{k\in\nn}f_k(x)$ the \emph{sum of the series} $\sum_{k\in\nn}f_k$.
\end{definition}

The chosen ordering of $\nn$ plays no role in assessing the convergence of $\sum_{k\in\nn}\max_C\Vert f_k\Vert_{\mathscr{E}}$ because each term is nonnegative. The theory of Banach spaces also guarantees that the result of the sum $\sum_{k\in\nn}f_k(x)$ does not depend on the chosen ordering of $\nn$ and that $f\in\mathscr{C}^0(\Lambda,\mathscr{E})$. Our summands $f_k$ will mostly take the form $f_k=\sum_{|\k|=k}g_\k$, for given $\{g_\k\}_{\k\in\nn^m}\subset\mathscr{C}^0(\Lambda,\mathscr{E})$.

We now expand the reproducing kernel $E_m(y-x)=\frac{1}{\sigma_m}\frac{y^c-x^c}{\Vert y-x\Vert^{m+1}}$ into a normally convergent series in the next theorem.

\begin{theorem}\label{thm:taylorkernel}
There exists a family $\big\{q_\k\big\}_{\k\in\nn^m}$, where, for $|\k|=k$, $q_{\k}:M\setminus\{0\}\to M$ is a $(k+1)$-homogeneous polynomial function, such that
\[E_m(y-x)=\frac1{\sigma_m}\sum_{k\in\nn}\sum_{|\k|=k}\P_\k^\B(x)\frac{q_\k(y)}{\Vert y\Vert^{m+2k+1}}\]
for all $(x,y)\in\Lambda:=\{(x,y)\in M\times M : \Vert x\Vert<\Vert y\Vert\}$. Here, the series converges normally in $\Lambda$ because
\[\Big\Vert\sum_{|\k|=k}\P_\k^\B(x)\frac{q_\k(y)}{\Vert y\Vert^{m+2k+1}}\Big\Vert\leq\sqrt{2}\,\binom{k+m}{m}\,\,\Vert x\Vert^k\,\Vert y\Vert^{-m-k}\,.\]
In particular, $E_m(y-x)$ is a real analytic function in the real variables $x_0,x_1\ldots,x_m,y_0,y_1\ldots,y_m$.
\end{theorem}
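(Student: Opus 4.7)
The plan is to Taylor-expand $E_m(y-x)$, as a real analytic function of $x$, at $x=0$ with $y$ fixed, group the expansion by homogeneous degree in $x$, and rewrite each homogeneous component in the Fueter basis by means of Proposition~\ref{prop:basismonogenicpolynomials}. The very formula $E_m(y-x)=(y-x)^c/(\sigma_m\Vert y-x\Vert^{m+1})$ makes it manifest that $E_m(y-x)$ is real analytic on $\{(x,y)\in M\times M:x\neq y\}$, because on that set $\Vert y-x\Vert^2$ is a strictly positive polynomial in the real coordinates and $u\mapsto u^{-(m+1)/2}$ is real analytic on $(0,\infty)$; this already delivers the final assertion of the theorem. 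Combining the chain rule $\partial_{x_s}(F(y-x))=-(\partial_sF)(y-x)$ with Lemma~\ref{lem:cauchykernel}, one further obtains that $x\mapsto E_m(y-x)$ is left-monogenic with respect to $\B$ wherever defined.

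To propagate the Taylor expansion to the full ball $\{x\in M:\Vert x\Vert<\Vert y\Vert\}$ with quantitative control, I would invoke the Gegenbauer generating function
\[
(1-2tu+t^2)^{-(m+1)/2}=\sum_{k=0}^{\infty}C_k^{(m+1)/2}(u)\,t^k\,,\qquad |t|<1,\ |u|\leq 1,
\]
applied with $t=\Vert x\Vert/\Vert y\Vert$ and $u=\langle x,y\rangle/(\Vert x\Vert\,\Vert y\Vert)$ (so $|u|\leq 1$ by Cauchy--Schwarz). This yields normal convergence on $\Lambda=\{(x,y):\Vert x\Vert<\Vert y\Vert\}$ of the scalar expansion of $\Vert y-x\Vert^{-(m+1)}$, hence of $E_m(y-x)$ after multiplication by $(y-x)^c/\sigma_m$. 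Grouping the resulting series by homogeneous degree in $x$ then produces $E_m(y-x)=\sum_{k\geq 0}P_k(x;y)$ with $P_k(\cdot;y)$ a $k$-homogeneous $A$-valued polynomial; each $P_k(\cdot;y)$ is left-monogenic with respect to $\B$, since $\debar_\B$ in $x$ lowers the homogeneity grading by one and annihilates the full sum.

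Proposition~\ref{prop:basismonogenicpolynomials} then realizes each $P_k(\cdot;y)$ as $\sum_{|\k|=k}\P_\k^\B(x)\,a_\k(y)$ with
\[
a_\k(y)=\frac{1}{\k!}\,\nabla_\B^{(0,\k)}P_k(0;y)=\frac{(-1)^k}{\k!}\bigl(\nabla_\B^{(0,\k)}E_m\bigr)(y).
\]
A routine induction on $|\k|$, based on
\[
\partial_s\!\left(\frac{R(y)}{\Vert y\Vert^{m+2j+1}}\right)=\frac{(\partial_sR)(y)\Vert y\Vert^2-(m+2j+1)\,R(y)\,y_s}{\Vert y\Vert^{m+2j+3}}\,,
\]
starting from $E_m(y)=y^c/(\sigma_m\Vert y\Vert^{m+1})$ and using $\partial_s y^c=v_s^c$, shows $(\nabla_\B^{(0,\k)}E_m)(y)=R_\k(y)/(\sigma_m\Vert y\Vert^{m+2k+1})$ where $R_\k:M\to M$ is an $M$-valued $(k+1)$-homogeneous polynomial (its $M$-valuedness propagates through the induction because $\Vert y\Vert^2,y_s\in\rr$). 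Setting $q_\k:=(-1)^kR_\k/\k!$ produces the displayed expansion. For the norm bound, I would extract the $k$-homogeneous component in $x$ of $(y-x)^c/\Vert y-x\Vert^{m+1}$ in closed form as $\Vert y\Vert^{-(m+1)}[y^cC_k^{(m+1)/2}(u)\,t^k-x^cC_{k-1}^{(m+1)/2}(u)\,t^{k-1}]$ (the only two Gegenbauer terms contributing, through $y^c-x^c$, to degree $k$), apply the standard bound $|C_j^{(m+1)/2}(u)|\leq C_j^{(m+1)/2}(1)=\binom{j+m}{m}$ for $|u|\leq 1$, and combine the two summands via a Cauchy--Schwarz-type inequality $\Vert A-B\Vert\leq\sqrt{2}\sqrt{\Vert A\Vert^2+\Vert B\Vert^2}$ to extract the stated constant $\sqrt{2}\,\binom{k+m}{m}$. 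The main obstacle I anticipate is the bookkeeping required to reconcile the Fueter and Gegenbauer viewpoints cleanly enough to pin down the precise numerical constant in the final bound.
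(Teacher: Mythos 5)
Your overall architecture is sound and genuinely different from the paper's: you expand $\Vert y-x\Vert^{-(m+1)}$ directly via the Gegenbauer generating function with parameter $\mu=\tfrac{m+1}{2}$ and multiply by $(y-x)^c$, whereas the paper writes $E_m(y-x)=\frac{1}{\sigma_m(m-1)}\partial_\B^x\Vert y-x\Vert^{1-m}$ and differentiates the expansion of the potential with parameter $\mu=\tfrac{m-1}{2}$ (which forces a separate treatment of $m=1$, where the exponent $1-m$ vanishes). Your route is uniform in $m$, and your closed form $a_\k(y)=\frac{(-1)^k}{\k!}(\nabla_\B^{(0,\k)}E_m)(y)$, together with the induction showing that $\Vert y\Vert^{m+2|\h|+1}\nabla_\B^{\h}E_m$ is an $M$-valued $(|\h|+1)$-homogeneous polynomial, correctly delivers the existence, $M$-valuedness and homogeneity of the $q_\k$ (the paper obtains the same homogeneity less explicitly, by applying $\nabla_\B^{(0,\k')}$ to $P_k(x,y)\Vert y\Vert^{m+2k+1}$). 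The identification of the degree-$k$ component as $\Vert y\Vert^{-(m+1)}\bigl[y^cC_k^{(m+1)/2}(u)t^k-x^cC_{k-1}^{(m+1)/2}(u)t^{k-1}\bigr]$ is also correct.

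The gap is in the final estimate: your proposed combination of the two summands does not yield the stated constant $\sqrt{2}\binom{k+m}{m}$. Bounding the two terms separately gives $\binom{k+m}{m}+\binom{k+m-1}{m}=\bigl(1+\tfrac{k}{k+m}\bigr)\binom{k+m}{m}$ by the triangle inequality, which exceeds $\sqrt{2}\binom{k+m}{m}$ as soon as $k>m/\sqrt{2}$ (already at $k=m=1$); and the inequality $\Vert A-B\Vert\leq\sqrt{2}\sqrt{\Vert A\Vert^2+\Vert B\Vert^2}$ is \emph{weaker} than the triangle inequality, so it gives $\sqrt{2}\sqrt{\binom{k+m}{m}^2+\binom{k+m-1}{m}^2}>\sqrt{2}\binom{k+m}{m}$ for every $k\geq1$. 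The $\sqrt{2}$ in the paper comes from a genuine orthogonality: differentiating $A_{k+1}(x,y)=C_{k+1}^{(m-1)/2}(\langle u(x),u(y)\rangle)\Vert x\Vert^{k+1}$ splits $\partial_\B^xA_{k+1}$ into a component along $u(x)$ and one along $u(y)-u(x)\langle u(x),u(y)\rangle\perp u(x)$, so Pythagoras applies; moreover the $u(x)$-component carries the coefficient $(k+1)C_{k+1}^{(m-1)/2}$, whose maximum is smaller than that of the orthogonal component by the factor $\tfrac{m}{k+m}$, yielding $\sqrt{1+\tfrac{m^2}{(k+m)^2}}\leq\sqrt{2}$. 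In your decomposition the two vectors $y^c$ and $x^c$ are not orthogonal and the two coefficients are comparable in size ($\binom{k+m-1}{m}/\binom{k+m}{m}=\tfrac{k}{k+m}\to1$), so no such gain is available without exploiting cancellation between the terms. Your weaker bound (with $2$ in place of $\sqrt{2}$) would still give normal convergence and real analyticity, but it does not prove the inequality as stated, and that explicit constant is reused verbatim in Theorem~\ref{thm:taylormonogenic}, Remark~\ref{rmk:alternatetaylor} and Theorem~\ref{thm:seriesexpansion}. To close the gap you should either switch to the paper's potential-theoretic decomposition or decompose $y^c$ into its components parallel and orthogonal to $x^c$ and track the resulting cancellation carefully.
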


We now wish to construct an analog of Corollary~\ref{cor:monogeniccauchy} for derivatives, subsuming~\cite[Corollary 7.28]{librogurlebeck2}. The resulting integral formula for derivatives will allow us to prove that every function that is left-monogenic with respect to $\B$ is harmonic with respect to $\B$ and real analytic. We recall that $\omega=\omega_{\B,\B'}\geq1$ is a constant such that $\Vert xa\Vert\leq\omega\,\Vert x\Vert \,\Vert a\Vert$ for all $x\in M,a\in A$ (see Remark~\ref{rmk:norminequality}). Moreover, by Proposition~\ref{prop:norm}: if $A$ is associative and $\B'$ is a fitted distinguished basis of $A$, then $\omega=1$.

\begin{theorem}[Integral formula for $\nabla_\B^\h \phi$]\label{thm:integralformulaforderivatives}
Assume $A$ to be associative.  Fix a domain $G$ in the hypercomplex subspace $M$ of $A$ and a function $\phi:G\to A$ that is left-monogenic with respect to $\B$. Then $\phi$ is harmonic with respect to $\B$ and real analytic. For every $\h\in\nn^{m+1}$: the function $\nabla_\B^\h \phi$ is still left-monogenic with respect to $\B$ and real analytic; given any open ball $B^{m+1}=B^{m+1}(p,R)$ whose closure $\overline{B}^{m+1}$ is contained in $G$,
\[\nabla_\B^\h \phi(x)=(-1)^{|\h|}\int_{\partial B^{m+1}}\left(\nabla_\B^{\h}E_m\right)(y-x)\,dy^*\,\phi(y)\]
for all $x\in B^{m+1}$; and, at the center $p$ of the ball $B^{m+1}$,
\[\Vert\nabla_\B^\h \phi(p)\Vert\leq\frac{C_m}{R^{|\h|}}\,\max_{\partial B^{m+1}}\Vert \phi\Vert\,,\qquad C_m:=\sigma_m\,\omega^2\,\max_{\partial B^{m+1}(0,1)}\Vert\nabla_\B^{\h}E_m\Vert\,.\]
\end{theorem}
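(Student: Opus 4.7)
My approach will be to bootstrap from the Cauchy Formula (Corollary~\ref{cor:monogeniccauchy}) using the series expansion of the reproducing kernel (Theorem~\ref{thm:taylorkernel}), then differentiate under the integral sign and derive the remaining assertions as corollaries of real analyticity. Specifically, given an open ball $B^{m+1}=B^{m+1}(p,R)$ with $\overline{B}^{m+1}\subset G$, Corollary~\ref{cor:monogeniccauchy} gives $\phi(x)=\int_{\partial B^{m+1}}E_m(y-x)\,dy^*\,\phi(y)$ on $B^{m+1}$. Translating by $p$ and plugging in the normally convergent expansion of $E_m(y-x)$ provided by Theorem~\ref{thm:taylorkernel}, I can interchange sum and integral to exhibit $\phi$ as a convergent power series on $B^{m+1}$ in the real coordinates of $x-p$; since $B^{m+1}$ was arbitrary, $\phi$ is real analytic on $G$. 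In particular $\phi\in\mathscr{C}^2(G,A)$, so Remark~\ref{rmk:harmonic} yields $\Delta_\B\phi=\partial_\B\debar_\B\phi\equiv0$, i.e., $\phi$ is harmonic with respect to $\B$.

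Next, real analyticity of $E_m(y-x)$ in $x$, combined with compactness of $\partial B^{m+1}$ and uniform boundedness of its $x$-derivatives when $x$ stays in a closed sub-ball, justifies iterated differentiation of the Cauchy Formula under the integral. The chain rule gives $(\partial/\partial x_s)[E_m(y-x)]=-(\partial_s E_m)(y-x)$; iterating along $\h$ produces the sign $(-1)^{|\h|}$ and the claimed integral formula for $\nabla_\B^\h\phi$. Real analyticity of $\nabla_\B^\h\phi$ is then inherited from that of $\phi$; since partial derivatives of a real analytic function commute, $\debar_\B(\nabla_\B^\h\phi)=\nabla_\B^\h(\debar_\B\phi)\equiv0$, so $\nabla_\B^\h\phi$ is itself left-monogenic with respect to $\B$.

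For the norm estimate at the center $p$, I will parametrize $\partial B^{m+1}$ by $y=p+Rw$ with $w\in\partial B^{m+1}(0,1)\subset M$, so that Remark~\ref{rmk:sphere} gives $dy^*=R^m w\,|do_w|$. Since $E_m(x)=\sigma_m^{-1}x^c\Vert x\Vert^{-m-1}$ is $(-m)$-homogeneous, $\nabla_\B^\h E_m$ is $(-m-|\h|)$-homogeneous, so $(\nabla_\B^\h E_m)(Rw)=R^{-m-|\h|}(\nabla_\B^\h E_m)(w)$. Moreover, $E_m$ is $M$-valued (because $M$ is closed under the $*$-involution), and real partial derivatives preserve $M$-valuedness, so $(\nabla_\B^\h E_m)(w)\in M$. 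Applying Lemma~\ref{lem:surfaceintegral} to the parametrized integral and then using associativity of $A$ to reassociate the integrand as $(\nabla_\B^\h E_m)(w)\cdot\bigl(w\cdot\phi(p+Rw)\bigr)$, Remark~\ref{rmk:norminequality} applies twice---once to each product whose first factor lies in $M$---yielding a factor $\omega^2$. Since $\Vert w\Vert=1$ and $\Vert\phi(p+Rw)\Vert\leq\max_{\partial B^{m+1}}\Vert\phi\Vert$, collecting the $R$-powers and constants gives the stated bound with $C_m$ as defined.

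The main hurdle I anticipate is this final step: verifying that the associativity-based factoring yields exactly the factor $\omega^2$ (rather than a larger constant requiring a more general norm bound on $A$-valued products), and checking rigorously that $\nabla_\B^\h E_m$ remains $M$-valued. The earlier arguments are essentially routine consequences of the Cauchy Formula and the kernel expansion already available.
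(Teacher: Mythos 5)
Your proposal is correct and follows essentially the same route as the paper: differentiate the Cauchy Formula under the integral sign, use the homogeneity of $\nabla_\B^{\h}E_m$ (which the paper establishes by showing inductively that $\Vert x\Vert^{m+2|\h|+1}\nabla_\B^{\h}E_m(x)$ is a polynomial, rather than by the degree-counting argument you give, but the conclusion is the same) together with Remark~\ref{rmk:sphere}, Lemma~\ref{lem:surfaceintegral} and two applications of Remark~\ref{rmk:norminequality} to $M$-valued first factors to obtain the factor $\omega^2$. The only cosmetic difference is that you establish real analyticity of $\phi$ up front by substituting the kernel expansion of Theorem~\ref{thm:taylorkernel} into the Cauchy Formula, whereas the paper deduces it at the end from harmonicity via Remark~\ref{rmk:harmonic}; both are legitimate and rest on results proved independently of this theorem.
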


The various properties stated in Theorem~\ref{thm:integralformulaforderivatives} have interesting consequences. The last inequality immediately yields the next corollary (see~\cite[Proposition 7.33]{librogurlebeck2} for the Clifford case).

\begin{corollary}[Liouville]
Assume $A$ to be associative. Let $\phi:M\to A$ be left-monogenic with respect to $\B$. If there exist $n\in\nn$ and $c>0$ such that
\[\Vert \phi(x)\Vert \leq c \Vert x\Vert^n\]
for all $x\in M$, then $\phi$ is a polynomial function and $\deg(\phi)\leq n$. In particular: if $\phi$ is bounded, then $\phi$ is constant.
\end{corollary}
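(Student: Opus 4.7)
The approach is the standard Cauchy-type estimate argument adapted to the hypercomplex setting, relying on the integral formula for derivatives from Theorem~\ref{thm:integralformulaforderivatives}. My plan is to show that for every multi-index $\h\in\nn^{m+1}$ of total order $|\h|=n+1$ and every $p\in M$, the derivative $\nabla_\B^\h \phi(p)$ vanishes. Once this is achieved, real analyticity of $\phi$ (also guaranteed by Theorem~\ref{thm:integralformulaforderivatives}) will force $\phi$ to be a polynomial of degree at most $n$ via its Taylor expansion centered at any base point.

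Fix $p\in M$, a multi-index $\h$ with $|\h|=n+1$, and let $R>0$. Since $\overline{B}^{m+1}(p,R)\subset M=G$, Theorem~\ref{thm:integralformulaforderivatives} applies and yields
\[
\Vert\nabla_\B^\h \phi(p)\Vert\leq\frac{C_m}{R^{\,n+1}}\,\max_{\partial B^{m+1}(p,R)}\Vert\phi\Vert\,,
\]
with $C_m=\sigma_m\,\omega^2\,\max_{\partial B^{m+1}(0,1)}\Vert\nabla_\B^{\h}E_m\Vert$ a constant depending only on $\B$, $\B'$ and $\h$. For any $y\in\partial B^{m+1}(p,R)$ the triangle inequality gives $\Vert y\Vert\leq \Vert p\Vert+R$, so the growth hypothesis implies $\Vert\phi(y)\Vert\leq c(\Vert p\Vert+R)^n$. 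Combining,
\[
\Vert\nabla_\B^\h \phi(p)\Vert\leq\frac{C_m\,c\,(\Vert p\Vert+R)^n}{R^{\,n+1}}\,.
\]
Since the right-hand side tends to $0$ as $R\to+\infty$ (the numerator is of order $R^n$ and the denominator is of order $R^{n+1}$), we conclude $\nabla_\B^\h \phi(p)=0$. As $p$ and $\h$ were arbitrary (with $|\h|=n+1$), we have $\nabla_\B^\h \phi\equiv0$ on $M$ for every $\h$ with $|\h|=n+1$, and hence also for every $\h$ with $|\h|>n$.

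Finally, by Theorem~\ref{thm:integralformulaforderivatives} the function $\phi$ is real analytic on all of $M$, so near $0$ it equals the sum of its multivariable Taylor series in the real coordinates $x_0,\ldots,x_m$. The vanishing of all partial derivatives of total order strictly greater than $n$ forces this series to reduce to a polynomial of total degree at most $n$; by the Identity Principle for real analytic functions on the connected set $M$, the equality with this polynomial extends to all of $M$, proving the first claim. The final assertion is the special case $n=0$: a bounded $\phi$ is a polynomial of degree $0$, hence constant. The argument is essentially forced and presents no real obstacle — the only mild subtlety is ensuring that the constant $C_m$ in Theorem~\ref{thm:integralformulaforderivatives} is independent of $R$ (which it is, being computed on the unit sphere), so that the $R^{-(n+1)}$ decay genuinely wins over the $R^n$ growth.
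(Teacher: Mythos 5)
Your proof is correct and follows exactly the route the paper intends: the paper states that the corollary ``immediately'' follows from the final inequality of Theorem~\ref{thm:integralformulaforderivatives}, which is precisely the Cauchy-estimate argument you spell out (letting $R\to+\infty$ to kill all derivatives of order $n+1$, then invoking real analyticity to conclude $\phi$ is a polynomial of degree at most $n$). Your attention to the fact that $C_m$ is independent of $R$ is exactly the right point to check, and the argument is complete.
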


Moreover, harmonicity allows to prove the next result, which subsumes~\cite[Theorem 7.32]{librogurlebeck2}.

\begin{theorem}[Maximum Modulus Principle]\label{thm:monogenicmaximummodulus}
Assume $A$ to be associative. Fix a domain $G$ in the hypercomplex subspace $M$ of $A$ and a function $\phi:G\to A$, left-monogenic with respect to $\B$. If the function $\Vert\phi\Vert:G\to\rr$ has a global maximum point in $G$, then $\phi$ is constant in $G$.
\end{theorem}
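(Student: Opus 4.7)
The plan is to use the Mean Value Property (Proposition~\ref{prop:monogenicmeanvalue}) in two stages: first to show that $\|\phi\|$ is identically equal to its maximum $M$ on all of $G$, and then to upgrade this to the constancy of $\phi$ itself by analysing the equality case in a vector-valued triangle inequality. Throughout, continuity of $\phi$ is the only regularity I will need.

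For the first step, set $M:=\sup_{x\in G}\|\phi(x)\|$ and $E:=\{x\in G:\|\phi(x)\|=M\}$. By hypothesis $E$ is nonempty, and by continuity of $\|\phi\|$ it is closed in $G$. To show $E$ is open, pick $p\in E$ and $R>0$ with $\overline{B}^{m+1}(p,R)\subset G$. For each $r\in(0,R]$, Proposition~\ref{prop:monogenicmeanvalue} gives
\[
\phi(p)=\sigma_m^{-1}\int_{\partial B^{m+1}(0,1)}\phi(p+rw)\,|do_w|,
\]
so Lemma~\ref{lem:surfaceintegral} together with the bound $\|\phi\|\le M$ on $G$ yields
\[
M=\|\phi(p)\|\le\sigma_m^{-1}\int_{\partial B^{m+1}(0,1)}\|\phi(p+rw)\|\,|do_w|\le M.
\]
Equality throughout, plus continuity of $w\mapsto\|\phi(p+rw)\|$, forces $\|\phi(p+rw)\|=M$ for every $w$ and every $r\in(0,R]$. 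Hence $\overline{B}^{m+1}(p,R)\subset E$, so $E$ is open, and by connectedness of $G$ we conclude $E=G$, i.e.\ $\|\phi\|\equiv M$.

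For the second step, if $M=0$ then $\phi\equiv 0$ is constant. If $M>0$, fix $x_0\in G$, set $u:=\phi(x_0)/M\in A$, and pick $R>0$ with $\overline{B}^{m+1}(x_0,R)\subset G$. The real-linear functional $a\mapsto\langle a,u\rangle$ commutes with the $A$-valued surface integral, by the component-wise definition underlying Definition~\ref{def:volumeintegral}. Thus the Mean Value Property gives
\[
M=\langle\phi(x_0),u\rangle=\sigma_m^{-1}\int_{\partial B^{m+1}(0,1)}\langle\phi(x_0+rw),u\rangle\,|do_w|
\]
for each $r\in(0,R]$. Since $\|u\|=1$ and $\|\phi(x_0+rw)\|=M$ by the first step, Cauchy--Schwarz gives $\langle\phi(x_0+rw),u\rangle\le M$ pointwise, so equality in the integral combined with continuity forces pointwise equality. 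The equality case of Cauchy--Schwarz then forces $\phi(x_0+rw)=Mu=\phi(x_0)$ for every $w$ and every $r\in(0,R]$. Hence $\phi\equiv\phi(x_0)$ on $B^{m+1}(x_0,R)$, and an open-closed argument in the connected set $G$ concludes that $\phi\equiv\phi(x_0)$ on $G$.

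The genuinely non-routine point is the second step: constancy of $\|\phi\|$ alone only confines $\phi$ to a single sphere in $A$, and nothing a priori prevents $\phi$ from wandering along that sphere. The rigidity that rules this out comes from the equality case of Cauchy--Schwarz applied to $\phi(x_0+rw)$ against the fixed unit vector $u=\phi(x_0)/M$; setting this up legitimately relies on the fact that the real-linear functional $\langle\cdot,u\rangle$ passes through the $A$-valued integral, which is precisely what the component-wise definition of $\int\phi\,d\sigma$ permits.
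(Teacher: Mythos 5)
Your proof is correct, and its second half takes a genuinely different route from the paper's. The first step (showing $\Vert\phi\Vert$ is identically equal to its maximum via the Mean Value Property, Lemma~\ref{lem:surfaceintegral} and an open--closed argument) coincides with the paper's argument. For the upgrade from $\Vert\phi\Vert\equiv M$ to constancy of $\phi$, the paper writes $\phi=\sum_u\phi_uv_u$, invokes the harmonicity $\Delta_\B\phi_u\equiv0$ established in Theorem~\ref{thm:integralformulaforderivatives}, and differentiates $\sum_u\phi_u^2\equiv M^2$ twice to obtain $\sum_{u,s}(\partial_s\phi_u)^2\equiv0$; you instead pair the Mean Value Formula against the fixed unit vector $u=\phi(x_0)/M$ and exploit the equality case of Cauchy--Schwarz. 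Both arguments are sound. Your route has the advantage of using only the mean value property and the component-wise definition of the integral (which indeed lets the real-linear functional $\langle\cdot,u\rangle$ pass through the surface integral, since $\langle\cdot,\cdot\rangle=\langle\cdot,\cdot\rangle_{\B'}$ is the Euclidean product in the same basis used to define the integral), so it does not need the $\mathscr{C}^2$ regularity and harmonicity machinery; the paper's route is a more standard "gradient of the squared norm" computation that reuses results it has already paid for. One small caveat: your opening remark that continuity is the only regularity needed is slightly loose, since Proposition~\ref{prop:monogenicmeanvalue} itself is stated for $\phi\in\mathscr{C}^1(\overline{B}^{m+1},A)$ --- but this is automatic here because left-monogenic functions are $\mathscr{C}^1$ by definition, so nothing is lost.
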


Finally, real analyticity will be the key ingredient to construct series representations of monogenic functions in the next subsection.


\subsection{Series expansions of monogenic functions on a hypercomplex subspace}\label{subsec:seriesexpansionmonogenic}

This subsection is devoted to series representations of functions that are left-monogenic with respect to the hypercomplex basis $\B$. The main result follows (see~\cite[Theorem 9.14 and Theorem 9.24]{librogurlebeck2} for the complex and Clifford cases).

\begin{theorem}[Series expansion]\label{thm:taylormonogenic}
Assume $A$ to be associative. Fix a domain $G$ in the hypercomplex subspace $M$ of $A$ and a function $\phi:G\to A$ that is left-monogenic with respect to $\B$. In every open ball $B^{m+1}(p,R)$ contained in $G$, the following series expansion is valid:
\[\phi(x)=\sum_{k\in\nn}\sum_{|\k|=k}\P_\k^\B(x-p)\,a_\k\,,\quad a_\k=\frac{1}{\k!}\nabla_\B^{(0,\k)}\phi(p)\,.\]
Here, the series converges normally in $B^{m+1}(p,R)$ because
\[\max_{\Vert x-p\Vert\leq r_1}\Big\Vert\sum_{|\k|=k}\P_\k^\B(x-p)\,a_\k\Big\Vert\leq\omega^2\,\sqrt{2}\;\binom{k+m}{m}\,\left(\frac{r_1}{r_2}\right)^k\,\max_{\Vert y-p\Vert=r_2}\Vert\phi(y)\Vert\]
whenever $0<r_1<r_2<R$.
\end{theorem}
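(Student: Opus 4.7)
The plan is to reduce to the Cauchy Formula (Corollary~\ref{cor:monogeniccauchy}) and then expand the reproducing kernel via Theorem~\ref{thm:taylorkernel}. By translation I may assume $p=0$. Fix any $r_1,r_2$ with $0<r_1<r_2<R$; for $x\in\overline{B^{m+1}(0,r_1)}$ Corollary~\ref{cor:monogeniccauchy} yields
\[
\phi(x)=\int_{\partial B^{m+1}(0,r_2)}E_m(y-x)\,dy^*\,\phi(y).
\]
On the compact set $\{(x,y):\|x\|\leq r_1,\,\|y\|=r_2\}$ the hypothesis $\|x\|<\|y\|$ of Theorem~\ref{thm:taylorkernel} holds uniformly and the kernel expansion there is majorized by $\sqrt{2}\binom{k+m}{m}(r_1/r_2)^k r_2^{-m}$ on each $k$-th block. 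Substituting into the integral and interchanging sum and integral, legitimate via Proposition~\ref{prop:propertiesintegral}(5) together with Remark~\ref{rmk:norminequality}, yields
\[
\phi(x)=\sum_{k\in\nn}\sum_{|\k|=k}\P_\k^\B(x)\,a_\k,\qquad a_\k:=\frac{1}{\sigma_m}\int_{\partial B^{m+1}(0,r_2)}\frac{q_\k(y)}{\|y\|^{m+2|\k|+1}}\,dy^*\,\phi(y).
\]

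To identify $a_\k$ with $\tfrac{1}{\k!}\nabla_\B^{(0,\k)}\phi(0)$, I would differentiate the kernel expansion term-by-term in the variable $x$. By Proposition~\ref{prop:fueterpolynomialsindependofbasis}(4) applied iteratively, $\nabla_\B^{(0,\k)}\P_{\k'}^\B$ is a scalar multiple of $\P_{\k'-\k}^\B$ when $\k'\geq\k$ componentwise and vanishes otherwise; Remark~\ref{rmk:propertiesfueterpolynomials} then gives the pointwise identity $\nabla_\B^{(0,\k)}\P_{\k'}^\B(0)=\k!\,\delta_{\k,\k'}$. Applied to both sides of the kernel expansion and combined with the chain-rule identity $\nabla^{(0,\k)}_{\B,x}E_m(y-x)=(-1)^{|\k|}(\nabla_\B^{(0,\k)}E_m)(y-x)$, this produces
\[
\frac{q_\k(y)}{\|y\|^{m+2|\k|+1}}=\frac{(-1)^{|\k|}\sigma_m}{\k!}\,(\nabla_\B^{(0,\k)}E_m)(y).
\]
Substituting this into the formula for $a_\k$ and invoking the integral representation of $\nabla_\B^{(0,\k)}\phi(0)$ in Theorem~\ref{thm:integralformulaforderivatives} completes the identification.

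For the quantitative bound, I would estimate the $k$-th block directly through its integral representation. Setting $y=r_2 w$ with $w\in\partial B^{m+1}(0,1)\subset M$, Remark~\ref{rmk:sphere} gives $dy^*=r_2^m w\,|do_w|$; the bracketed element $E_k(x,y):=\sigma_m^{-1}\sum_{|\k|=k}\P_\k^\B(x)q_\k(y)\|y\|^{-m-2k-1}$ satisfies $\|E_k(x,y)\|\leq\sigma_m^{-1}\sqrt{2}\binom{k+m}{m}r_1^k r_2^{-m-k}$ by Theorem~\ref{thm:taylorkernel}. Bounding $\|E_k(x,y)\cdot w\cdot\phi(y)\|$ via two applications of Remark~\ref{rmk:norminequality} --- the same mechanism that produces the factor $\omega^2$ in Theorem~\ref{thm:integralformulaforderivatives} --- and integrating over the sphere of surface area $\sigma_m r_2^m$ yields the claimed bound $\omega^2\sqrt{2}\binom{k+m}{m}(r_1/r_2)^k\max_{\|y\|=r_2}\|\phi(y)\|$, and normal convergence on $B^{m+1}(0,R)$ follows by letting $r_2\nearrow R$. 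The main obstacle is the careful bookkeeping of the non-multiplicative norm: every product of algebra-valued factors must be routed through Remark~\ref{rmk:norminequality} in the right order, and the interchange of sum and integral (and of derivative and sum for the coefficient identification) must be justified uniformly on the correct compact sets; once those estimates are in place, the identification of the Taylor coefficients is purely algebraic.
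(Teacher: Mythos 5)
Your proposal is correct and follows the paper's proof almost step for step: translation to $p=0$, the Cauchy Formula of Corollary~\ref{cor:monogeniccauchy} on a sphere of radius $r_2<R$, substitution of the kernel expansion from Theorem~\ref{thm:taylorkernel}, interchange of sum and integral justified by the normal convergence of that expansion, and the $\omega^2$ estimate obtained by two applications of Remark~\ref{rmk:norminequality} (using, as the paper does, that the $k$-th block $\sum_{|\k|=k}\P_\k^\B(x)\,q_\k(y)\Vert y\Vert^{-m-2k-1}$ takes values in $M$). The one place where you genuinely diverge is the identification $a_\k=\frac{1}{\k!}\nabla_\B^{(0,\k)}\phi(p)$: the paper applies the already-established expansion to the function $\nabla_\B^{(0,\k')}\phi$ (which is again left-monogenic by Theorem~\ref{thm:integralformulaforderivatives}) and reads off the constant term via uniqueness of Taylor expansions, whereas you differentiate the kernel expansion term by term at $x=0$ to obtain the closed-form identity $q_\k(y)\,\Vert y\Vert^{-m-2|\k|-1}=(-1)^{|\k|}\sigma_m(\k!)^{-1}(\nabla_\B^{(0,\k)}E_m)(y)$ and then match the resulting integral against the integral representation of $\nabla_\B^{(0,\k)}\phi(p)$ in Theorem~\ref{thm:integralformulaforderivatives}. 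Both routes rest on the same real-analyticity facts and on $\nabla_\B^{(0,\k)}\P_{\k'}^\B(0)=\k!\,\delta_{\k,\k'}$ from Remark~\ref{rmk:propertiesfueterpolynomials}; yours has the small bonus of producing an explicit formula for $q_\k$, at the cost of having to justify term-by-term differentiation of the kernel series, while the paper's avoids that by reusing the expansion it has just proved. One cosmetic point: normal convergence on $B^{m+1}(p,R)$ does not require letting $r_2\nearrow R$; it suffices that every compact subset of the ball sits inside some $\overline{B}^{m+1}(p,r_1)$ with $r_1<r_2<R$.
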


Theorem~\ref{thm:taylormonogenic} has an extremely relevant consequence, which subsumes~\cite[Theorem 9.27]{librogurlebeck2}.

\begin{theorem}[Identity Principle]\label{thm:identitymonogenic}
Assume $A$ to be associative. Fix a domain $G$ in the hypercomplex subspace $M$ of $A$ and functions $\phi,\psi:G\to A$ that are left-monogenic with respect to $\B$. If $G$ contains a set of Hausdorff dimension $n\geq m$ where $\phi$ and $\psi$ coincide, then $\phi=\psi$ throughout $G$.
\end{theorem}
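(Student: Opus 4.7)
The plan is to show that $f := \phi - \psi$ vanishes identically on $G$. By $\rr$-linearity $f$ is left-monogenic with respect to $\B$; by Theorem~\ref{thm:integralformulaforderivatives} it is real-analytic in the coordinates $(x_0,\ldots,x_m)$ induced by $\B$. A standard open-closed argument reduces the task: the set $U := \{p \in G : f \equiv 0 \text{ in a neighborhood of } p\}$ is open by construction and closed in $G$ because at any of its accumulation points all Taylor coefficients of $f$ vanish, and real-analyticity then propagates the zero to a whole neighborhood. By connectedness of $G$ it suffices to show $U \neq \emptyset$.

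Arguing by contradiction, assume $U = \emptyset$. Then the zero locus $Z := f^{-1}(0)$ is a proper real-analytic subset of $G$, so $\dim_{\mathcal{H}} Z \leq \dim_\rr G - 1 = m$. Combined with the hypothesis $\dim_{\mathcal{H}} Z \geq m$, this forces $\dim_{\mathcal{H}} Z = m$. The standard structure theorem for real-analytic subsets then produces a point $p \in Z$ around which $Z$ contains an $m$-dimensional real-analytic submanifold $N$, whose tangent space $T_pN \subset M$ is a real hyperplane.

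Applying Theorem~\ref{thm:taylormonogenic}, in a ball $B^{m+1}(p,R) \subset G$,
\[
f(p+h) \;=\; \sum_{k \geq 0} f_k(h), \qquad f_k(h) := \sum_{|\k|=k} \P_\k^\B(h)\, a_\k,
\]
where each $f_k$ is a $k$-homogeneous polynomial, itself left-monogenic with respect to $\B$. Let $k_0 := \min\{k : f_k \not\equiv 0\}$. For any $v \in T_pN$, take a smooth curve $\gamma$ in $N$ with $\gamma(0) = p$ and $\gamma'(0) = v$; extracting the leading $t^{k_0}$-term from $0 = f(\gamma(t))$ gives $f_{k_0}(v) = 0$. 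Hence the nonzero homogeneous left-monogenic polynomial $f_{k_0}$ vanishes identically on the real hyperplane $T_pN$.

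I expect the main obstacle to be showing that this last situation is impossible: no nonzero homogeneous polynomial that is left-monogenic with respect to $\B$ can vanish on a real hyperplane of $M$. To establish it, write $T_pN = \{\ell = 0\}$ with $\ell(h) = \sum_{s=0}^m \xi_s h_s$, $\xi \neq 0$, and set $\nu := \sum_{s=0}^m \xi_s v_s$. Since $(1,v_1,\ldots,v_m)$ is a basis of $M$, $\nu$ is a nonzero element of $M \subset Q_A$, and is therefore invertible in $A$. The degree-one polynomial $\ell$ is prime in $\rr[h_0,\ldots,h_m]$ and a non-zero-divisor in $A[h_0,\ldots,h_m]$, so I can factor $f_{k_0} = \ell^r g$ with $r \geq 1$ chosen maximally, $\ell \nmid g$, and $g \not\equiv 0$. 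The Leibniz rule (using associativity of $A$ and centrality of the real polynomial $\ell$) yields
\[
0 \;=\; \debar_\B f_{k_0} \;=\; r\,\ell^{r-1}\,\nu\,g \;+\; \ell^r\,\debar_\B g.
\]
Cancelling the non-zero-divisor $\ell^{r-1}$ and left-multiplying by $\nu^{-1}$ gives $r g + \ell\cdot \nu^{-1}(\debar_\B g) = 0$, whence $\ell \mid g$, contradicting the maximality of $r$.
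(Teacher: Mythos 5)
Your proof is correct, and while it shares the paper's skeleton, the way it exploits the smooth point of the zero set is genuinely different. Both arguments use the local structure theory of real-analytic sets to produce a point $p$ near which the zero set of $\chi:=\phi-\psi$ is an $m$-dimensional real-analytic hypersurface, and both ultimately rest on the same algebraic fact: a nonzero element of $M$ lies in $Q_A$ and is therefore invertible, not a zero divisor. But the paper then proves by induction on $|\h|$ that \emph{every} iterated derivative $\nabla_\B^\h\chi$ vanishes along that hypersurface: since $\nabla_\B^\h\chi$ vanishes there, its full gradient at $p$ is a real multiple $(c_0,\ldots,c_m)$ of a single normal derivative, and $\debar_\B\nabla_\B^\h\chi=0$ gives $\left(\sum_{s}v_sc_s\right)(\partial_n\nabla_\B^\h\chi)(p)=0$, killing that derivative. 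You instead isolate the lowest nonvanishing homogeneous term $f_{k_0}$ of the Taylor expansion, note that it vanishes on the tangent hyperplane $\{\ell=0\}$, and rule this out by the factorization $f_{k_0}=\ell^r g$ with $r$ maximal, where $\debar_\B f_{k_0}=0$ and the invertibility of $\nu$ force $\ell\mid g$. Your route isolates a clean lemma of independent interest (no nonzero homogeneous left-monogenic polynomial vanishes on a linear hyperplane of $M$), at the price of working with divisibility in $A[h_0,\ldots,h_m]$; the paper's induction stays entirely with derivative values and needs no factorization. Two points you leave implicit are fine but worth recording: $k_0$ is well defined because $U=\emptyset$ prevents $f$ from vanishing identically near $p$; and $r\geq1$ because each real component of $f_{k_0}$ vanishes on the hyperplane and is hence divisible by the linear form $\ell$.
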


For future reference, we give the next definition and provide in the subsequent remark an equivalent restatement of Theorem~\ref{thm:taylormonogenic}.

\begin{definition}\label{def:deltaB}
Assume $A$ to be associative and fix $t_0\in\nn$. Let $M$ be a hypercomplex subspace of $A$, having a hypercomplex basis $\B=(v_0,v_1,\ldots,v_m)$ with $m\geq t_0$. Let $G$ be a domain in $M$ and let $\phi:G\to A$ be a left-monogenic function with respect to $\B$. For any $\h\in\nn^{m+1}$, define $\delta_\B^\h:\mathscr{C}^{|\h|}(G,A)\to\mathscr{C}^0(G,A)$ as
\begin{align*}
\delta_\B^{\h}\phi:=&\,(v_m^{h_m}\,v_{m-1}^{h_{m-1}}\cdots v_{t_0+1}^{h_{t_0+1}})^{-1}\nabla_\B^{\h}\phi\\
=&\,\partial_0^{h_0}\partial_1^{h_1}\ldots\partial_{t_0}^{h_{t_0}}(-v_{t_0+1}\partial_{t_0+1})^{h_{t_0+1}}\ldots(-v_m\partial_m)^{h_m}\phi\,.
\end{align*}
\end{definition}

In the last definition, the formula defining $\delta_\B^{\h}$ should be read as $\delta_\B^{\h}\phi:=\nabla_\B^{\h}\phi=\partial_0^{h_0}\partial_1^{h_1}\ldots\partial_{m}^{h_{m}}\phi$ in the special case when $m=t_0$.

\begin{remark}\label{rmk:alternatetaylor}
Assume $A$ to be associative and fix $t_0\in\nn$. Let $M$ be a hypercomplex subspace of $A$, having a hypercomplex basis $\B=(v_0,v_1,\ldots,v_m)$ with $m\geq t_0$. Let $G$ be a domain in $M$ and let $\phi:G\to A$ be a left-monogenic function with respect to $\B$. For every open ball $B^{m+1}=B^{m+1}(p,R)$ contained in $G$, the following series expansion is valid for $x\in B^{m+1}$:
\[\phi(x)=\sum_{k\in\nn}\sum_{|\k|=k}\P_\k^\B(x-p)v_m^{k_m}\,v_{m-1}^{k_{m-1}}\cdots v_{t_0+1}^{k_{t_0+1}}\frac{1}{\k!}\delta_\B^{(0,\k)}\phi(p)\,.\]
Here, the series converges normally in $B^{m+1}$ because
\begin{align*}
&\max_{\Vert x-p\Vert\leq r_1}\Big\Vert\sum_{|\k|=k}\P_\k^\B(x-p)v_m^{k_m}\,v_{m-1}^{k_{m-1}}\cdots v_{t_0+1}^{k_{t_0+1}}\frac{1}{\k!}\delta_\B^{(0,\k)}\phi(p)\Big\Vert\\
&\qquad\leq\omega^2\,\sqrt{2}\,\binom{k+m}{m}\left(\frac{r_1}{r_2}\right)^k\max_{\Vert y-p\Vert=r_2}\Vert\phi(y)\Vert
\end{align*}
whenever $0<r_1<r_2<R$.
\end{remark}


\section{Regularity in hypercomplex subspaces}\label{sec:hypercomplexregularity}

We henceforth make the following assumption.

\begin{assumption}\label{ass:alternative}
$V$ is a hypercomplex subspace of the alternative real $*$-algebra $A$, having a hypercomplex basis $\B=(v_0,v_1,\ldots,v_n)$ for some $n\in\nn^*$. After completing $\B$ to a real vector basis $\B'=(v_0,v_1,\ldots,v_n,v_{n+1},\ldots,v_d)$ of $A$, we endow $A$ with the standard Euclidean scalar product $\langle\cdot,\cdot\rangle=\langle\cdot,\cdot\rangle_{\B'}$ and norm $\Vert\cdot\Vert=\Vert\cdot\Vert_{\B'}$ associated to $\B'$.
\end{assumption}

There is a good reason to change notations with respect to Section~\ref{sec:monogenic}, which proved properties valid for an $(m+1)$-dimensional hypercomplex subspace $M$ of $A$, for domains $G\subseteq M$ and for functions $\phi:G\to A$. Indeed, we will apply those properties not only when $m=n$ and $M=V$, but also when $m<n$ and $M$ is a specific $(m+1)$-dimensional hypercomplex subspace of $A$ contained in $V$. The precise construction of these subspaces is the subject of the next subsection.


\subsection{$T$-fans}\label{subsec:Tfans}

Within our hypercomplex subspace $V$, we now construct some useful fans.

\begin{definition}
For $0\leq\ell\leq m\leq n$, we consider the $(m-\ell+1)$-dimensional subspace
\[\rr_{\ell,m}:=\Span(v_\ell,\ldots,v_m)\,.\]
Its unit $(m-\ell)$-sphere is denoted by $\s_{\ell,m}$. 
\end{definition}

For instance: $\rr_{0,n}=V$ and $\s_{1,n}=\{\sum_{t=1}^nx_tv_t\in A:\sum_{t=1}^nx_t^2=1\}=\s_A\cap V$. In general, $\s_{\ell,m}$ is a subset of $\s_A\cap V$ if, and only if, $\ell\geq1$. We recall that, by Corollary~\ref{cor:compact}, $\s_A\cap V$ is a compact set.

\begin{definition}
For any \emph{number of steps} $\tau\in\{0,\ldots,n\}$ and any \emph{list of steps} $T=(t_0,\ldots,t_\tau)\in\nn^{\tau+1}$, with $0\leq t_0<t_1<\ldots<t_\tau=n$, we define the \emph{$T$-fan} as
\[\rr_{0,t_0}\subsetneq\rr_{0,t_1}\subsetneq\ldots\subsetneq\rr_{0,t_\tau}=V\,.\]
The first subspace, $\rr_{0,t_0}$, is called the \emph{mirror}. We define the \emph{$T$-torus} as
\[\torus:=\s_{t_0+1,t_1}\times\ldots\times\s_{t_{\tau-1}+1,t_\tau}\]
when $\tau\geq1$ and as $\torus:=\emptyset$ when $\tau=0$ (whence $t_0=n$).
\end{definition}

We assume henceforth $\tau\in\{0,\ldots,n\}$ and $T=(t_0,\ldots,t_\tau)\in\nn^{\tau+1}$ (with $0\leq t_0<t_1<\ldots<t_\tau=n$) are fixed. Necessarily, $n\geq t_0+\tau$. The mirror $\rr_{0,t_0}$ of the $T$-fan is either the real axis $\rr$ or a hypercomplex subspace of $A$, while all other elements of the $T$-fan are hypercomplex subspaces of $A$. Moreover, if $\tau\geq1$ then, for every $h\in\{1,\ldots,\tau\}$, the sphere $\s_{t_{h-1}+1,t_h}$ is a $(t_h-t_{h-1}-1)$-dimensional subset of $\s_A\cap V$ and the $T$-torus $\torus$ is a $(n-t_0-\tau)$-dimensional compact set contained in $(\s_A)^\tau$.

\begin{example}[Paravectors]
If $V$ is the space $\rr^{n+1}$ of paravectors in $C\ell(0,n)$ (see Example~\ref{ex:paravectors}), then the $T$-fan is
\[\rr^{t_0+1}\subsetneq\rr^{t_1+1}\subsetneq\ldots\subsetneq\rr^{t_\tau+1}=\rr^{n+1}\,.\]
\end{example}

\begin{example}[Quaternions]\label{ex:quaternions2}
If $V=\hh$ within $\hh$ (see Example~\ref{ex:quaternions}): the $(3)$-fan is $\hh$; the $(2,3)$-fan is $\rr+i\rr+j\rr\subsetneq\hh$; the $(1,3)$-fan is $\cc\subsetneq\hh$; the $(0,3)$-fan is $\rr\subsetneq\hh$; the $(1,2,3)$-fan is $\cc\subsetneq\rr+i\rr+j\rr\subsetneq\hh$; the $(0,2,3)$-fan is $\rr\subsetneq\rr+i\rr+j\rr\subsetneq\hh$; the $(0,1,3)$-fan is $\rr\subsetneq\cc\subsetneq\hh$; and the $(0,1,2,3)$-fan is $\rr\subsetneq\cc\subsetneq\rr+i\rr+j\rr\subsetneq\hh$.
\end{example}

The work~\cite{unifiednotion} includes the next remark and lemma, which are useful tools to define and study the concept of $J$-monogenic function for any $J\in\torus$.

\begin{remark}\label{rmk:decomposedvariable}
Every $x=\sum_{\ell=0}^nv_\ell x_\ell\in V$ can be decomposed as $x=x^0+x^1+\ldots+x^\tau$, where $x^h:=\sum_{\ell=t_{h-1}+1}^{t_h}x_\ell v_\ell\in\rr_{t_{h-1}+1,t_h}$ (with $t_{-1}:=-1$). The decomposition is orthogonal, whence unique. When $\tau\geq1$, there exist $\beta=(\beta_1,\ldots,\beta_\tau)\in\rr^\tau$ and $J=(J_1,\ldots,J_\tau)\in\torus$ such that
\begin{equation}\label{eq:decomposedvariable}
x=x^0+\beta_1J_1+\ldots+\beta_\tau J_\tau\,.
\end{equation}
Equality~\eqref{eq:decomposedvariable} holds true exactly when, for each $h\in\{1,\ldots,\tau\}$: either $x^h\neq0,\beta_h=\pm\Vert x^h\Vert$ and $J_h=\frac{x^h}{\beta_h}$; or $x^h=0,\beta_h=0$ and $J_h$ is any element of $\s_{t_{h-1}+1,t_h}$.
\end{remark}

\begin{lemma}
If $\tau\geq1$, fix $J=(J_1,\ldots,J_\tau)\in\torus$ and set
\[\rr^{t_0+\tau+1}_J:=\Span(\B_J)\,,\quad\B_J:=(v_0,v_1,\ldots,v_{t_0},J_1,\ldots,J_\tau)\,.\]
If $\tau=0$ (whence $t_0=n\geq1$), set $J:=\emptyset,\B_\emptyset:=(v_0,v_1,\ldots,v_{t_0})=\B,\rr^{t_0+1}_\emptyset:=\Span(\B_\emptyset)=V$. 
In either case, $\B_J$ is a hypercomplex basis of $\rr^{t_0+\tau+1}_J$, which is therefore a hypercomplex subspace of $A$ contained in $V$. Moreover, if $J'\in\torus$, then the equality $\rr^{t_0+\tau+1}_J=\rr^{t_0+\tau+1}_{J'}$ is equivalent to $J'\in\{\pm J_1\}\times\ldots\times\{\pm J_\tau\}$.
\end{lemma}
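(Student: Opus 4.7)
The plan is to verify the defining properties of a hypercomplex basis for $\B_J$ directly from the structure of $\B$, then to exploit the orthogonal decomposition $V=\rr_{0,t_0}\oplus\rr_{t_0+1,t_1}\oplus\ldots\oplus\rr_{t_{\tau-1}+1,t_\tau}$ to establish the second assertion. The case $\tau=0$ is immediate, so I focus on $\tau\geq1$.

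For the first claim, linear independence of $(v_0,v_1,\ldots,v_{t_0},J_1,\ldots,J_\tau)$ follows because $v_0,\ldots,v_{t_0}$ span $\rr_{0,t_0}$, while each $J_h$ is a nonzero element of $\rr_{t_{h-1}+1,t_h}$, and these $\tau+1$ subspaces are pairwise orthogonal with respect to $\langle\cdot,\cdot\rangle_{\B'}$. The requirement $v_0=1$ holds by construction, and each $J_h\in\s_A$ by the very definition of $\torus$. What remains is to check the anticommutation relations between the distinct elements of $\B_J$ other than $v_0$. Expanding $J_h=\sum_{\ell=t_{h-1}+1}^{t_h}\alpha_\ell v_\ell$, the anticommutation $v_sJ_h=-J_hv_s$ for $s\in\{1,\ldots,t_0\}$ reduces, by $\rr$-bilinearity, to $v_sv_\ell=-v_\ell v_s$ for $s\leq t_0<\ell$, which holds because $\B$ is a hypercomplex basis. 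Similarly, for distinct $h,k\in\{1,\ldots,\tau\}$, the index ranges $\{t_{h-1}+1,\ldots,t_h\}$ and $\{t_{k-1}+1,\ldots,t_k\}$ are disjoint, so expanding $J_hJ_k$ and using anticommutation of the involved $v_\ell$'s gives $J_hJ_k=-J_kJ_h$. Hence $\B_J$ is a hypercomplex basis and $\rr^{t_0+\tau+1}_J$ is a hypercomplex subspace of $A$ contained in $V$.

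For the second claim, one direction is trivial: replacing each $J_h$ by $\pm J_h$ does not change the span. For the converse, suppose $\rr^{t_0+\tau+1}_J=\rr^{t_0+\tau+1}_{J'}$. Fix $h\in\{1,\ldots,\tau\}$. Since $J'_h$ lies in the common span, there exist reals $a_0,\ldots,a_{t_0},b_1,\ldots,b_\tau$ with
\[J'_h=\sum_{s=0}^{t_0}a_sv_s+\sum_{k=1}^\tau b_kJ_k\,.\]
But $J'_h\in\rr_{t_{h-1}+1,t_h}$, so $J'_h$ is orthogonal to $\rr_{0,t_0}$ and to every $\rr_{t_{k-1}+1,t_k}$ with $k\neq h$. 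Orthogonality of the decomposition forces $a_0=\ldots=a_{t_0}=0$ and $b_k=0$ for all $k\neq h$, leaving $J'_h=b_hJ_h$. Applying Theorem~\ref{thm:hypercomplexbasis} to the hypercomplex subspace $V$ gives $n(y)=\Vert y\Vert^2$ for every $y\in V$. Since $J_h,J'_h\in\s_A\cap V$, both have $n$-value $1$, so $1=n(J'_h)=\Vert J'_h\Vert^2=b_h^2\Vert J_h\Vert^2=b_h^2$, whence $b_h=\pm1$ and $J'_h=\pm J_h$, as required.

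No step looks like a serious obstacle; the closest thing to a subtle point is remembering to invoke Theorem~\ref{thm:hypercomplexbasis} (rather than Proposition~\ref{prop:norm}) to identify $n$ with $\Vert\cdot\Vert^2$ on $V$, since this identification is exactly what converts orthogonality arguments into statements about $\s_A\cap V$. All other verifications are straightforward manipulations with the anticommutation rules of $\B$ and the orthogonal decomposition induced by $T$.
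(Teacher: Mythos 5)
Your proof is correct. The paper itself does not prove this lemma---it is quoted from the cited work \cite{unifiednotion}---but your argument is the natural direct verification: linear independence and the anticommutation relations follow by expanding each $J_h$ in the pairwise orthogonal blocks $\rr_{t_{h-1}+1,t_h}$ and using bilinearity together with the hypercomplex-basis relations of $\B$, and the equality of slices forces $J'_h=b_hJ_h$ with $|b_h|=1$ exactly as you argue. The only cosmetic remark is that the detour through $n(J'_h)=\Vert J'_h\Vert^2$ is not needed, since $\s_{t_{h-1}+1,t_h}$ is by definition the unit sphere for $\Vert\cdot\Vert$, so $1=\Vert J'_h\Vert=|b_h|\,\Vert J_h\Vert=|b_h|$ directly.
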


For future use, we remark that if $J=(J_1,\ldots,J_{\tau-1},J_\tau)\in\torus$ and if $J'=(J_1,\ldots,J_{\tau-1},J_\tau')$ for some $J_\tau'\in\s_{t_{\tau-1}+1,t_\tau}\setminus\{\pm J_\tau\}$, then $\rr^{t_0+\tau+1}_J\cap\rr^{t_0+\tau+1}_{J'}$ is a $(t_0+\tau)$-dimensional space, which we may identify with the $(J_1,\ldots,J_{\tau-1})$-slice $\rr^{t_0+\tau}_{(J_1,\ldots,J_{\tau-1})}$ of the hypercomplex subspace $\rr_{0,t_{\tau-1}}$ of $A$ if $\tau>1$ and with the mirror $\rr_{0,t_0}$ if $\tau=1$. We also make the following remark.

\begin{remark}\label{rmk:basischange}
For all $J\in\torus$, the hypercomplex basis $\B_J:=(v_0,v_1,\ldots,v_{t_0},J_1,\ldots,J_\tau)$ of $\rr^{t_0+\tau+1}_J$ can always be completed to a basis $(\B_J)'$ of $A$ that is orthonormal with respect to $\langle\cdot,\cdot\rangle_{\B'}$, so that $\langle\cdot,\cdot\rangle_{(\B_J)'}=\langle\cdot,\cdot\rangle_{\B'}$ and $\Vert\cdot\Vert_{(\B_J)'}=\Vert\cdot\Vert_{\B'}$.
\end{remark}


\subsection{$T$-regularity}\label{subsec:Tregularity}

This subsection is devoted to defining a new notion of regularity for functions $f:\Omega\to A$, where $\Omega$ is a domain in $V$.

\begin{definition}\label{def:Jmonogenic}
If $\tau\geq1$, fix $J=(J_1,\ldots,J_\tau)\in\torus$. If $\tau=0$, set $J:=\emptyset$. Over any domain $G$ in $\rr^{t_0+\tau+1}_J$, the \emph{$J$-Cauchy-Riemann operator} $\debar_J:\mathscr{C}^{1}(G,A)\to\mathscr{C}^0(G,A)$ is defined as $\debar_J:=\debar_{\B_J}$ and the operators $\partial_J:\mathscr{C}^{1}(G,A)\to\mathscr{C}^0(G,A)$ and $\Delta_J:\mathscr{C}^{2}(G,A)\to\mathscr{C}^0(G,A)$ are defined as $\partial_J:=\partial_{\B_J}$ and $\Delta_J:=\Delta_{\B_J}$, according to Definition~\ref{def:monogenic}. Explicitly, referring to the decomposition~\eqref{eq:decomposedvariable} of the variable $x$, we have
\begin{align*}
\debar_J&=\partial_{x_0}+v_1\partial_{x_1}+\ldots+v_{t_0}\partial_{x_{t_0}}+J_1\partial_{\beta_1}+\ldots+J_\tau\partial_{\beta_\tau}\,,\\
\partial_J&=\partial_{x_0}-v_1\partial_{x_1}-\ldots-v_{t_0}\partial_{x_{t_0}}-J_1\partial_{\beta_1}-\ldots-J_\tau\partial_{\beta_\tau}\,,\\
\Delta_J&=\partial_{x_0}^2+\partial_{x_1}^2+\ldots+\partial_{x_{t_0}}^2+\partial_{\beta_1}^2+\ldots+\partial_{\beta_\tau}^2\,.
\end{align*}
The left kernel of $\debar_J$ is denoted by $\mon_J(G,A)$ and its elements are called \emph{$J$-monogenic} functions. The elements of the kernel of $\Delta_J$ are called \emph{$J$-harmonic} functions.
\end{definition}

In the special case when $\tau=0$, whence $t_0=n$, our last definition sets $\debar_\emptyset:=\debar_\B=\partial_{x_0}+v_1\partial_{x_1}+\ldots+v_n\partial_{x_n}$, as well as $\partial_\emptyset:=\partial_\B=\partial_{x_0}-v_1\partial_{x_1}-\ldots-v_n\partial_{x_n}$ and $\Delta_\emptyset:=\Delta_\B=\partial_{x_0}^2+\partial_{x_1}^2+\ldots+\partial_{x_n}^2$.

For all $J\in\torus$, the class $\mon_J(G,A)$ is a real vector space (a right $A$-module if $A$ is associative). Moreover, $J$-monogenicity is preserved under composition with translations by elements of $\rr^{t_0+\tau+1}_J$. Using the formal definition $\debar_J:=\debar_{\B_J}$ is necessary to guarantee, for $J,J'\in\torus$,
\begin{equation}\label{eq:CRwellposed}
\debar_J=\debar_{J'}\ \Longleftarrow\ \rr^{t_0+\tau+1}_J=\rr^{t_0+\tau+1}_{J'}\,.
\end{equation}
Similar considerations apply to $\partial_J,\Delta_J$. Remark~\ref{rmk:harmonic} and Theorem~\ref{thm:integralformulaforderivatives} allow the next remark.

\begin{remark}\label{rmk:J-harmonic}
Fix any domain $G$ in $\rr^{t_0+\tau+1}_J$. The equalities $\debar_J\partial_J=\partial_J\debar_J=\Delta_J$ hold true on $\mathscr{C}^{2}(G,A)$. Moreover, $J$-monogenic functions are real analytic and $J$-harmonic.
\end{remark}

The work~\cite{unifiednotion} also defined the new concept of $T$-regular function. Its special case with $A=C\ell(0,n),V=\rr^{n+1}$ and $\tau=1$ was independently constructed in~\cite{xsgeneralizedpartialslice} (see also~\cite{xsannouncement}) under the name of \emph{generalized partial-slice monogenic function}.

\begin{definition}
For $J\in\torus$ (or $J=\emptyset$, in case $\tau=0$) and for $Y\subseteq V,f:Y\to A$, the intersection $Y_J:=Y\cap\rr^{t_0+\tau+1}_J$ is called the \emph{$J$-slice} of $Y$. Consider the restriction $f_J:=f_{|_{Y_J}}$. Let $\Omega$ be a domain in $V$. A function $f:\Omega\to A$ is termed \emph{$T$-regular} if, the restriction $f_J:\Omega_J\to A$ is $J$-monogenic for every $J\in\torus$, if $\tau\geq1$ (for $J=\emptyset$, if $\tau=0$). If, moreover, $f(\rr^{t_0+\tau+1}_J)\subseteq\rr^{t_0+\tau+1}_J$ for all $J\in\torus$, then $f$ is called \emph{$T$-slice preserving}. The class of $T$-regular functions $\Omega\to A$ is denoted by $\reg_T(\Omega,A)$.
\end{definition}

In the last definition, of course we consider the condition ``$f_J:\Omega_J\to A$ is $J$-monogenic'' automatically fulfilled when $\Omega_J=\emptyset$. For future use, we make the next remark.

\begin{remark}\label{rmk:mirrortranslation}
Given a domain $\Omega$ in $V$, the class $\reg_T(\Omega,A)$ is a real vector space (a right $A$-module if $A$ is associative). Moreover, if $f\in\reg_T(\Omega,A)$ and $p\in\rr_{0,t_0}$, then setting $g(x):=f(x+p)$ defines a $g\in\reg_T(\Omega-p,A)$. This function $g$ is $T$-slice preserving if, and only if, $f$ is.
\end{remark}

As remarked in~\cite{unifiednotion}, $T$-regularity subsumes some of the best-known function theories over $C\ell(0,n)$.

\begin{example}[Paravectors]
Fix a domain $\Omega$ within the paravector subspace $\rr^{n+1}$ of $C\ell(0,n)$ (see Example~\ref{ex:paravectors}). For any function $f:\Omega\to C\ell(0,n)$:
\begin{itemize}
\item $f$ is $(n)$-regular if, and only if, it is in the kernel of the operator $\partial_{x_0}+e_1\partial_{x_1}+\ldots+e_{n}\partial_{x_n}$; this is the definition of \emph{monogenic} function (see, e.g.,~\cite{librosommen,librocnops,librogurlebeck2});
\item $f$ is $(0,n)$-regular if, and only if, for any $J_1\in\s_{1,n}=\s_{C\ell(0,n)}\cap\rr^{n+1}$, the restriction $f_{J_1}$ to the planar domain $\Omega_{J_1}\subseteq\cc_{J_1}$ is a holomorphic map $(\Omega_{J_1},J_1)\to(C\ell(0,n),J_1)$; this is the same as being \emph{slice-monogenic},~\cite{israel} (or \emph{slice-hyperholomorphic},~\cite{librodaniele2}).
\end{itemize}
\end{example}

The work~\cite{unifiednotion} also contains a complete classification of $T$-regularity over the hypercomplex subspace $\hh$ of $\hh$. Not only $T$-regularity subsumes the best-known quaternionic function theories. It also includes an entirely new function theory, called $(1,3)$-regularity and studied in some detail in~\cite{unifiednotion}.

\begin{example}[Quaternions]\label{ex:quaternions3}
Let $\Omega$ be a domain in $\hh$ and consider a function $f:\Omega\to\hh$. Then:
\begin{itemize}
\item $f$ is $(3)$-regular $\Leftrightarrow f$ belongs to the kernel of the left Cauchy-Riemann-Fueter operator $\partial_{x_0}+i\partial_{x_1}+j\partial_{x_2}+k\partial_{x_3} \Leftrightarrow f$ is a \emph{left Fueter-regular} function (see~\cite{fueter1,fueter2,sudbery});
\item $f$ is $(2,3)$-regular $\Leftrightarrow (\partial_{x_0}+i\partial_{x_1}+j\partial_{x_2}+J_1\partial_{\beta_1})f(x_0+ix_1+jx_2+\beta_1J_1)\equiv0$ for all $J_1\in\s_{3,3}=\{\pm k\}$ $\Leftrightarrow f$ is left Fueter-regular (because of~\eqref{eq:CRwellposed});
\item $f$ is $(1,3)$-regular $\Leftrightarrow$
\[\hskip35pt\debar_{J_1}f(x_0+ix_1+\beta_1J_1):=(\partial_{x_0}+i\partial_{x_1}+J_1\partial_{\beta_1})f(x_0+ix_1+\beta_1J_1)\equiv0\]
for all $J_1$ in the $(1,3)$-torus $\s_{2,3}$, which is simply the circle $\s^1:=\s_\hh\cap(j\rr+k\rr)$;
\item $f$ is $(0,3)$-regular $\Leftrightarrow $ for any $J_1\in\s_{1,3}=\s_\hh$, the restriction $f_{J_1}$ to the planar domain $\Omega_{J_1}\subseteq\cc_{J_1}$ is a holomorphic map $(\Omega_{J_1},J_1)\to(\hh,J_1) \Leftrightarrow f$ is a \emph{slice-regular} function,~\cite{librospringer2} (or \emph{Cullen-regular} in the original articles~\cite{cras,advances});
\item $f$ is $(1,2,3)$-regular $\Leftrightarrow (\partial_{x_0}+i\partial_{x_1}+J_1\partial_{\beta_1}+J_2\partial_{\beta_2})f(x_0+ix_1+\beta_1J_1+\beta_2J_2)\equiv0$ for all $(J_1,J_2)\in\s_{2,2}\times\s_{3,3}=\{\pm j\}\times\{\pm k\} \Leftrightarrow f$ is left Fueter-regular (because of~\eqref{eq:CRwellposed});
\item $f$ is $(0,1,3)$-regular $\Leftrightarrow (\partial_{x_0}+J_1\partial_{\beta_1}+J_2\partial_{\beta_2})f(x_0+\beta_1J_1+\beta_2J_2)\equiv0$ for all $(J_1,J_2)\in\s_{1,1}\times\s_{2,3}=\{\pm i\}\times\s^1 \Leftrightarrow f$ is $(1,3)$-regular (because of~\eqref{eq:CRwellposed});
\item $f$ is $(0,2,3)$-regular $\Leftrightarrow (\partial_{x_0}+J_1\partial_{\beta_1}+J_2\partial_{\beta_2})f(x_0+\beta_1J_1+\beta_2J_2)\equiv0$ for all $(J_1,J_2)\in\s_{1,2}\times\s_{3,3}=(\s_\hh\cap(i\rr+j\rr))\times\{\pm k\}$; if $\Phi:\hh\to\hh$ denotes the unique real vector space isomorphism mapping the standard basis $(1,i,j,k)$ into $(1,k,-j,i)$, then $g\mapsto\Phi^{-1}\circ g\circ\Phi$ is a bijection $\reg_{(0,2,3)}(\Omega,\hh)\to\reg_{(0,1,3)}(\Phi^{-1}(\Omega),\hh)$;
\item $f$ is $(0,1,2,3)$-regular $\Leftrightarrow (\partial_{x_0}+J_1\partial_{\beta_1}+J_2\partial_{\beta_2}+J_3\partial_{\beta_3})f(x_0+\beta_1J_1+\beta_2J_2+\beta_3J_3)\equiv0$ for all $(J_1,J_2,J_3)\in\s_{1,1}\times\s_{2,2}\times\s_{3,3}=\{\pm i\}\times\{\pm j\}\times\{\pm k\} \Leftrightarrow f$ is left Fueter-regular (because of~\eqref{eq:CRwellposed}).
\end{itemize}
\end{example}

Additionally, within $\hh$, the nonstandard choice of the $(2)$-fan $\rr+j\rr+k\rr$ with $\B=(1,-k,j),\B'=(1,-k,j,i)$, recovers as $(2)$-regular functions the theory of~\cite{moisilteodorescu}, for the reasons explained in~\cite[page 30]{perotticr}. Our general construction allows to treat all these cases at once. This is in contrast with most literature: even in the simple case of quaternions, Fueter-regular functions cannot be studied simultaneously with monogenic functions because the space of paravectors in $C\ell(0,2)=\hh$ is properly included in $\hh$.

For the hypercomplex subspace $\oo$ of $\oo$:

\begin{example}[Octonions]
Fix a domain $\Omega$ in $\oo$ and a function $f:\Omega\to\oo$. Then:
\begin{itemize}
\item $f$ is $(7)$-regular $\Leftrightarrow f$ belongs to the kernel of the octonionic Cauchy-Riemann operator $\partial_{x_0}+i\partial_{x_1}+j\partial_{x_2}+k\partial_{x_3}+l\partial_{x_4}+(li)\partial_{x_5}+(lj)\partial_{x_6}+(lk)\partial_{x_7}\Leftrightarrow f$ is an octonionic monogenic function (see~\cite{sce} and the recent~\cite{krausshardifferentialtopological});
\item $f$ is $(0,7)$-regular $\Leftrightarrow$ for any octonionic imaginary unit $J_1$, the restriction $f_{J_1}$ to the planar domain $\Omega_{J_1}\subseteq\cc_{J_1}$ is a holomorphic map $(\Omega_{J_1},J_1)\to(\oo,J_1) \Leftrightarrow f$ is a slice-regular function (see~\cite{rocky}).
\end{itemize}
\end{example}

$T$-regularity yields $\widehat{T}$-regularity with respect to a list of steps $\widehat{T}$ shorter than $T$, in the sense specified by the next lemma.

\begin{lemma}\label{lem:lowerdimlregularity}
Fix $\tau\geq2$, a list $T=(t_0,t_1,\ldots,t_\tau)$ of $\tau$ steps, a domain $\Omega$ in $V$, and a function $f\in\reg_T(\Omega,A)$. Fix $\sigma$ with $1\leq\sigma\leq\tau$ and consider the list $\widehat{T}:=(t_0+\sigma,t_{\sigma+1},\ldots,t_\tau)$ of $\tau-\sigma$ steps. For any $\widehat J=(J_1,\ldots,J_{\sigma})\in\s_{t_0+1,t_1}\times\ldots\times\s_{t_{\sigma-1}+1,t_{\sigma}}$, the ordered set
\[\widehat\B:=(v_0,\ldots,v_{t_0},J_1,\ldots,J_{\sigma},v_{t_{\sigma}+1},\ldots,v_{t_{\tau}})\]
is a hypercomplex basis of $\widehat V=\Span(\widehat\B)$, which is therefore a hypercomplex subspace of $A$. With respect to $\widehat\B$, the restriction of $f$ to $\widehat\Omega_{\widehat J}:=\Omega\cap\widehat V$ is a $\widehat{T}$-regular function.
\end{lemma}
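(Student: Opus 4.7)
The proof splits naturally into two independent tasks: verifying that $\widehat\B$ is genuinely a hypercomplex basis of $\widehat V$, and then matching the $\widehat T$-slices of $\widehat V$ with certain $T$-slices of $V$ so that $\widehat T$-regularity of $f_{|_{\widehat\Omega_{\widehat J}}}$ falls out of the assumed $T$-regularity of $f$.

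For the first task, I would expand each imaginary unit as $J_h=\sum_{k=t_{h-1}+1}^{t_h}a_k v_k$ with $\sum_k a_k^2=1$. Because the $v_k$'s pairwise anticommute and each squares to $-1$, the cross terms in $J_h^2$ cancel, giving $J_h^2=-1$ and $J_h^c=-J_h$, so $J_h\in\s_A$. Two distinct elements of $\widehat\B$ drawn from disjoint index blocks anticommute by real-bilinearity, since every pair $v_k v_l$ with $k\neq l$ satisfies $v_k v_l=-v_l v_k$ by the hypercomplex-basis property of $\B$. Thus $\widehat\B$ meets all defining conditions of a hypercomplex basis of $\widehat V:=\Span(\widehat\B)$.

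For the second task, I would take an arbitrary element $\widehat J'=(J'_1,\dots,J'_{\tau-\sigma})$ of the $\widehat T$-torus of $\widehat V$, with each $J'_h$ a unit imaginary unit in $\Span(v_{t_{\sigma+h-1}+1},\dots,v_{t_{\sigma+h}})$, and pair it with
\[
K:=(J_1,\dots,J_\sigma,J'_1,\dots,J'_{\tau-\sigma})\in\torus.
\]
Directly from the definitions,
\[
(\widehat V)_{\widehat J'}=\Span(v_0,\dots,v_{t_0},J_1,\dots,J_\sigma,J'_1,\dots,J'_{\tau-\sigma})=V_K,
\]
and the hypercomplex bases $\widehat\B_{\widehat J'}$ and $\B_K$ coincide as \emph{ordered} tuples. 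By Remark~\ref{rmk:incrementalratio} the partial derivatives $\partial_0,\dots,\partial_{t_0+\tau}$, and hence the Cauchy-Riemann operators, depend only on the chosen ordered hypercomplex basis of the slice and not on its completion to a full basis of $A$; this is also consistent with~\eqref{eq:CRwellposed}. Therefore $\debar_{\widehat J'}=\debar_K$ on their common domain.

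Finally, since $f\in\reg_T(\Omega,A)$, the restriction $f_K:\Omega_K\to A$ is $K$-monogenic. But $f_K$ is literally the restriction of $f_{|_{\widehat\Omega_{\widehat J}}}$ to its $\widehat J'$-slice, so that restriction is $\widehat J'$-monogenic. Letting $\widehat J'$ range over the entire $\widehat T$-torus gives $\widehat T$-regularity of $f_{|_{\widehat\Omega_{\widehat J}}}$ on each connected component of $\widehat\Omega_{\widehat J}$, which is all that is required. No step is genuinely hard; the only point that needs care is the clean identification of $\widehat\B_{\widehat J'}$ with $\B_K$ as ordered tuples, so that the two Cauchy-Riemann operators agree verbatim rather than merely up to a relabeling.
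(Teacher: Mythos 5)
Your proposal is correct and follows essentially the same route as the paper: you identify each $\widehat J'$-slice of $\widehat V$ with the $J$-slice of $V$ for $J=(J_1,\dots,J_\sigma,J'_1,\dots,J'_{\tau-\sigma})$, observe that the ordered hypercomplex bases (hence the Cauchy--Riemann operators) coincide, and conclude from the $J$-monogenicity of $f_J$. The only difference is that you also spell out the verification that $\widehat\B$ is a hypercomplex basis, which the paper leaves implicit; that check is correct.
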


\begin{proof}
The function $f_{|_{\widehat\Omega_{\widehat J}}}$ is $\widehat{T}$-regular if, and only if, for any $(J_{\sigma+1},\ldots,J_\tau)\in\s_{t_{\sigma}+1,t_{\sigma+1}}\times\cdots\times\s_{t_{\tau-1}+1,t_{\tau}}$ and for $\widehat\B_{(J_{\sigma+1},\ldots,J_\tau)}:=(v_0,\ldots,v_{t_0},J_1,\ldots,J_{\sigma},J_{\sigma+1},\ldots,J_\tau)$, the restriction of $f_{|_{\widehat\Omega_{\widehat J}}}$ to the set
\[\widehat\Omega_{\widehat J}\cap\Span(\widehat\B_{(J_{\sigma+1},\ldots,J_\tau)})=\Omega_J\,,\quad J:=(J_1,\ldots,J_{\sigma},J_{\sigma+1},\ldots,J_\tau)\,,\]
which is simply $f_J$, is left-monogenic with respect to the basis $\widehat\B_{(J_{\sigma+1},\ldots,J_\tau)}$; equivalently,
\[(\partial_{x_0}+\ldots+v_{t_0}\partial_{x_{t_0}}+J_1\partial_{\beta_1}+\ldots+J_{\sigma}\partial_{\beta_{\sigma}}+J_{\sigma+1}\partial_{\beta_{\sigma+1}}+\ldots+J_\tau\partial_{\beta_\tau}) f_J\equiv0\,,\]
i.e., $f_J$ is $J$-monogenic. But the last property is true under our hypothesis $f\in\reg_T(\Omega,A)$.
\end{proof}

In general, a $T$-regular function $f$ needs not be continuous, even though all restrictions $f_J$ are real analytic and $J$-harmonic by Remark~\ref{rmk:J-harmonic}.

\begin{example}
Assume $\tau=1$, whence $T=(t_0,t_1)=(t_0,n)$ and the $T$-torus $\torus$ is the $(n-t_0-1)$-sphere $\s_{t_0+1,n}$. Pick $I\in\s_{t_0+1,n}$. Since $\rr^{t_0+2}_I\cap\rr^{t_0+2}_J=\rr_{0,t_0}$ for all $J\in\s_{t_0+1,n}\setminus\{\pm I\}$, we may define a $T$-regular $f:V\setminus\rr_{0,t_0}\to A$ by setting $f_I:\equiv1\equiv:f_{-I}$ in $\rr^{t_0+2}_I\setminus\rr_{0,t_0}=\rr^{t_0+2}_{-I}\setminus\rr_{0,t_0}$ and $f_J:\equiv0$ in $\rr^{t_0+2}_J\setminus\rr_{0,t_0}$ for all $J\in\s_{t_0+1,n}\setminus\{\pm I\}$.
\end{example}

To get better-behaved $T$-regular functions $f:\Omega\to A$, we need to carefully choose the domain $\Omega$.

\begin{definition}\label{def:Tslicedomain}
A domain $\Omega\subseteq V$ is called a \emph{$T$-slice domain} if it intersects the mirror $\rr_{0,t_0}$ and if, for any $J\in\torus$, the $J$-slice $\Omega_J$ is connected (whence a domain in $\rr^{t_0+\tau+1}_J$). 
\end{definition}

Over $T$-slice domains within an associative $*$-algebra, we will prove an Identity Principle and a Maximum Modulus Principle in the forthcoming Section~\ref{sec:seriesexpansion}.

Another relevant property for the domain $\Omega$ of a $T$-regular function is symmetry, defined according to the following construction.

\begin{definition}\label{def:reflections}
For all $\beta=(\beta_1,\ldots,\beta_\tau)\in\rr^\tau,J=(J_1,\ldots,J_\tau)\in\torus$, we set the notation
\[\beta\,J:=\beta_1J_1+\ldots+\beta_\tau J_\tau\in V\,.\]
If $\tau=0$, for $\beta\in\rr^0=\{0\}$ and $J=\emptyset$ we define $\beta\,J$ to be the zero element of $V$.
For any $h\in\{1,\ldots,\tau\}$, we define the reflection
\[\rr^\tau\to\rr^\tau\,,\quad\beta=(\beta_1,\ldots,\beta_\tau)\mapsto\overline{\beta}^h:=(\beta_1,\ldots,\beta_{h-1},-\beta_h,\beta_{h+1},\ldots,\beta_\tau)\,.\]
For future use, we also define, for all $H\in\mathscr{P}(\tau)$, the reflection $\rr^\tau\to\rr^\tau,\ \beta\mapsto\overline{\beta}^H$ as follows: $\beta\mapsto\overline{\beta}^\emptyset$ is the identity map, while for $H=\{h_1,\ldots,h_p\}$ (with $1\leq h_1<\ldots<h_p\leq\tau$) the map $\beta\mapsto\overline{\beta}^H$ is the composition of the $p$ reflections $\beta\mapsto\overline{\beta}^{h_1},\ldots,\beta\mapsto\overline{\beta}^{h_p}$. 
\end{definition}

We point out that $\beta\,J$ is not a product, but just a shorthand for the second part of the decomposition~\eqref{eq:decomposedvariable} of the variable $x$. In other words, $\beta\,J$ is defined so that $L_{\B_J}(x_0,x_1,\ldots,x_{t_0},\beta)=x_0+x_1v_1+\ldots+x_{t_0}v_{t_0}+\beta\,J$.

\begin{definition}\label{def:Tsymmetric}
For any $D\subseteq\rr_{0,t_0}\times\rr^\tau$, we set
\[\Omega_D:=\{\alpha+\beta\,J: (\alpha,\beta)\in D, J\in\torus\}\]
if $\tau\geq1$ (and $\Omega_D:=\{\alpha\in V: (\alpha,0)\in D\}$ if $\tau=0$). A subset of $V$ is termed \emph{$T$-symmetric} if it equals $\Omega_D$ for some $D\subseteq\rr_{0,t_0}\times\rr^\tau$. The \emph{$T$-symmetric completion} $\widetilde{Y}$ of a set $Y\subseteq V$ is the smallest $T$-symmetric subset of $V$ containing $Y$. For each point $x\in V$, we denote by $\torus_x$ the $T$-symmetric completion of the singleton $\{x\}$.
\end{definition}

Let $\Omega$ be a $T$-symmetric $T$-slice domain and assume $A$ to be associative: we will prove in the forthcoming Subsection~\ref{subsec:representationformula} that every $T$-regular function $f:\Omega\to A$ has a specific symmetry property. This result, called Representation Formula, is well-known in the special cases of quaternionic slice-regular functions (see~\cite[Theorem 3.1]{advancesrevised}) and Clifford slice-monogenic functions (see~\cite[Theorem 2.2.18]{librodaniele2} and references therein). As a consequence of the Representation Formula, we will find that every $T$-regular function $f:\Omega\to A$ is real analytic.


\subsection{Integral representation}\label{subsec:integralrepresentation}

When $A$ is associative, a Cauchy-type representation of $T$-regular functions is readily obtained, using that of $J$-monogenic functions. We recall Assumption~\ref{ass:alternative} and Remark~\ref{rmk:basischange}.

\begin{proposition}[Cauchy Formula]\label{prop:cauchy}
Assume $A$ to be associative. Let $\Omega$ be a domain in $V$ and $f\in\reg_T(\Omega,A)$. Fix $J\in\torus$ and a bounded domain $G$ in $\rr^{t_0+\tau+1}_J$ , with a $\mathscr{C}^1$ boundary $\partial G$, such that $\overline{G}\subset\Omega_J$. Then
\[\frac{1}{\sigma_{t_0+\tau}}\int_{\partial G}\frac{(y-x)^c}{\Vert y-x\Vert^{t_0+\tau+1}}\,dy^*\,f_J(y) =\left\{\begin{array}{ll}
f(x)&\mathrm{if\ } x\in G\\
0&\mathrm{if\ } x\in \rr^{t_0+\tau+1}_J\setminus\overline{G}\,.
\end{array}\right.\]
\end{proposition}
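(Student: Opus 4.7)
The plan is to reduce the statement to the Cauchy Formula for monogenic functions on a hypercomplex subspace (Corollary~\ref{cor:monogeniccauchy}), applied to the restriction $f_J$ on the hypercomplex subspace $\rr^{t_0+\tau+1}_J$ of $A$. To do this, I first need to recognize $\rr^{t_0+\tau+1}_J$ as a hypercomplex subspace playing the role of $M$ from Section~\ref{sec:monogenicproperties}, with hypercomplex basis $\B_J=(v_0,v_1,\ldots,v_{t_0},J_1,\ldots,J_\tau)$. The relevant ``$m$'' for this subspace is $m=t_0+\tau$. By Remark~\ref{rmk:basischange}, I can complete $\B_J$ to a basis $(\B_J)'$ of $A$ orthonormal with respect to $\langle\cdot,\cdot\rangle_{\B'}$, so that $\Vert\cdot\Vert_{(\B_J)'}=\Vert\cdot\Vert_{\B'}=\Vert\cdot\Vert$. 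This ensures that the norm appearing in the statement coincides with the norm that enters the Cauchy kernel $E_{t_0+\tau}$ of $\rr^{t_0+\tau+1}_J$.

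Next, I would verify the regularity hypotheses of Corollary~\ref{cor:monogeniccauchy}. By Definition~\ref{def:Jmonogenic} and Definition~\ref{def:monogenic}, the $J$-Cauchy-Riemann operator is $\debar_J=\debar_{\B_J}$, so the defining property of $T$-regularity gives that $f_J:\Omega_J\to A$ is left-monogenic with respect to $\B_J$. Because $\Omega$ is open in $V$, the slice $\Omega_J=\Omega\cap\rr^{t_0+\tau+1}_J$ is open in $\rr^{t_0+\tau+1}_J$, and by Remark~\ref{rmk:J-harmonic} the function $f_J$ is real analytic on $\Omega_J$. Since $\overline{G}\subset\Omega_J$, the restriction of $f_J$ belongs to $\mathscr{C}^1(\overline{G},A)$.

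Now I apply Corollary~\ref{cor:monogeniccauchy} with $M=\rr^{t_0+\tau+1}_J$, hypercomplex basis $\B_J$, bounded domain $G$ with $\mathscr{C}^1$ boundary $\partial G$, and function $f_J\in\mathscr{C}^1(\overline{G},A)$. This yields
\[
\int_{\partial G}E_{t_0+\tau}(y-x)\,dy^*\,f_J(y)=\begin{cases}f_J(x)&\text{if }x\in G,\\ 0&\text{if }x\in\rr^{t_0+\tau+1}_J\setminus\overline{G}.\end{cases}
\]
Substituting the explicit form $E_{t_0+\tau}(y-x)=\frac{1}{\sigma_{t_0+\tau}}\frac{(y-x)^c}{\Vert y-x\Vert^{t_0+\tau+1}}$ and recalling $f_J(x)=f(x)$ for $x\in\Omega_J\supset G$ gives exactly the formula in the statement.

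The only real obstacle is bookkeeping: keeping track of the change in the ambient hypercomplex basis (from $\B$ on $V$ to $\B_J$ on $\rr^{t_0+\tau+1}_J$) and making sure the scalar product, norm, differential form $dy^*$, and Cauchy kernel all refer consistently to the subspace $\rr^{t_0+\tau+1}_J$. Once Remarks~\ref{rmk:basischange} and~\ref{rmk:J-harmonic} are invoked to reconcile norms and regularity, the statement is a direct instance of Corollary~\ref{cor:monogeniccauchy}.
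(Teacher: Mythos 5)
Your proof is correct and follows exactly the paper's route: the paper likewise obtains this as an immediate application of Corollary~\ref{cor:monogeniccauchy} to the $J$-monogenic function $f_J\in\mathscr{C}^1(\overline{G},A)$ on the hypercomplex subspace $\rr^{t_0+\tau+1}_J$. Your additional bookkeeping (invoking Remark~\ref{rmk:basischange} for the norm and Remark~\ref{rmk:J-harmonic} for regularity) is a sound elaboration of details the paper leaves implicit.
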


\begin{proof}
The thesis follows immediately by applying Corollary~\ref{cor:monogeniccauchy} to the $J$-monogenic function $f_J\in\mathscr{C}^1(\overline{G},A)$.
\end{proof}

The same is true for the mean value property. Here, and throughout the paper, for any $p\in V, R>0$, we adopt the notations $B(p,R):=\{x\in V:\Vert x-p\Vert<R\}$ and $\overline{B}(p,R):=\{x\in V:\Vert x-p\Vert\leq R\}$.

\begin{proposition}[Mean value property]
Assume $A$ to be associative. Let $\Omega$ be a domain in $V$ and $f\in\reg_T(\Omega,A)$. Fix $J\in\torus$, a point $p\in\Omega_J$ and a radius $R>0$ such that $\overline{B}(p,R)_J\subset\Omega_J$. Then
\[f(p)=\frac1{\sigma_m}\int_{\partial B_J}f_J(p+R w)\,|do_w|\,,\]
where $B:=B(0,1)$.
\end{proposition}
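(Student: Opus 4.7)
The plan is to reduce the statement to the monogenic mean value property (Proposition~\ref{prop:monogenicmeanvalue}) applied on the hypercomplex subspace $\rr^{t_0+\tau+1}_J$, exactly as the Cauchy Formula (Proposition~\ref{prop:cauchy}) was deduced from Corollary~\ref{cor:monogeniccauchy}.

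First I would observe that since $f\in\reg_T(\Omega,A)$, the restriction $f_J=f_{|_{\Omega_J}}$ is, by the very definition of $T$-regularity, $J$-monogenic: i.e., left-monogenic with respect to the hypercomplex basis $\B_J=(v_0,v_1,\ldots,v_{t_0},J_1,\ldots,J_\tau)$ of the $(t_0+\tau+1)$-dimensional hypercomplex subspace $\rr^{t_0+\tau+1}_J$ of $A$. By Remark~\ref{rmk:basischange}, $\B_J$ can be completed to a basis $(\B_J)'$ of $A$ such that $\Vert\cdot\Vert_{(\B_J)'}=\Vert\cdot\Vert_{\B'}$. Consequently, the intrinsic open ball of radius $R$ centered at $p$ in $\rr^{t_0+\tau+1}_J$, in the sense of Subsection~\ref{subsec:monogenicintegral}, coincides exactly with the $J$-slice $B(p,R)_J$ of the ambient ball $B(p,R)\subseteq V$. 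The hypothesis $\overline{B}(p,R)_J\subset\Omega_J$ guarantees that this closed ball is contained in the open set on which $f_J$ is $J$-monogenic.

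Next I would apply Proposition~\ref{prop:monogenicmeanvalue} to the hypercomplex subspace $M=\rr^{t_0+\tau+1}_J$ (so that the parameter $m$ of that proposition equals $t_0+\tau$), the basis $\B_J$, and the left-monogenic function $f_J\in\mathscr{C}^1(\overline{B}(p,R)_J,A)$. This yields
\[f_J(p)=\frac{1}{\sigma_{t_0+\tau}}\int_{\partial B^{t_0+\tau+1}(0,1)} f_J(p+Rw)\,|do_w|.\]
Since $p\in\rr^{t_0+\tau+1}_J$ gives $f(p)=f_J(p)$, and since $\partial B^{t_0+\tau+1}(0,1)=\partial B(0,1)_J$ by the norm compatibility above, this is precisely the claimed identity (with $\sigma_{t_0+\tau}$ playing the role of the $\sigma_m$ appearing in the statement, and $f$ in the role of the $\phi$ under the integral).

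There is no genuine obstacle: the only subtlety worth flagging is the compatibility of the Euclidean structures induced on $A$ by $\B'$ and by $(\B_J)'$, without which the shape of the ball and the surface measure $|do_w|$ on $\partial B(0,1)_J$ would need to be adjusted when transporting the statement of Proposition~\ref{prop:monogenicmeanvalue} from $\rr^{t_0+\tau+1}_J$ back into the ambient $V$. This compatibility is exactly provided by Remark~\ref{rmk:basischange}, so the reduction is clean and the proof amounts to a single application of the monogenic mean value property.
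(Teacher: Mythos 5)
Your proof is correct and follows exactly the paper's own route: the paper likewise disposes of this proposition by a one-line application of Proposition~\ref{prop:monogenicmeanvalue} to the $J$-monogenic function $f_J\in\mathscr{C}^1(\overline{B}(p,R)_J,A)$. Your additional remarks on the compatibility of the Euclidean structures via Remark~\ref{rmk:basischange} are a welcome (if implicit in the paper) clarification, not a deviation.
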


\begin{proof}
The thesis follows immediately by applying Proposition~\ref{prop:monogenicmeanvalue} to the $J$-monogenic function $f_J\in\mathscr{C}^1(\overline{B}(p,R)_J,A)$.
\end{proof}

Providing series representations of $T$-regular functions will require a much stronger effort, starting with the construction of appropriate polynomial functions in Section~\ref{sec:polynomials} and ending in Section~\ref{sec:seriesexpansion}.


\section{Polynomial regular functions}\label{sec:polynomials}

With our Assumption~\ref{ass:alternative} standing, in this section we also assume $A$ to be associative. We construct examples of $T$-regular polynomial functions and study the general properties of such functions. In particular, we construct a basis for the $A$-module described in the next definition.

\begin{definition}
For all $k\in\nn$, let $U_k$ denote the right $A$-submodule of $\reg_T(V,A)$ consisting of those elements $f\in\reg_T(V,A)$ such that $f(x_0v_0+\ldots+x_nv_n)$ is a $k$-homogeneous polynomial map in the $n+1$ real variables $x_0,\ldots,x_n$.
\end{definition}

In Subsection~\ref{subsec:T_k}, we construct for every $k\in\nn$ a finite set of $k$-homogeneous polynomial functions $\F_k$. In Subsection~\ref{subsec:partialderivatives} we construct adapted partial derivatives, useful to study not only $U_k$ but also $\reg_T(\Omega,A)$ for any domain $\Omega$ in $V$. Using these adapted derivatives, we prove in Subsection~\ref{subsec:propertiespolynomial} that $\F_k$ is a basis of $U_k$ for any $k\in\nn$.

\subsection{The polynomial functions $\T_\k$}\label{subsec:T_k}

Our construction of basic polynomial functions will subsume the following well-established cases, treating them all at once. We recall that Subsection~\ref{subsec:monogenicpolynomial} defined and studied the Fueter variables $\zeta_s^\B$ and polynomials $\P_\k^\B$, which are left-monogenic with respect to a hypercomplex basis $\B$. Moreover, Subsection~\ref{subsec:Tfans} associated to each $J$ in the $T$-torus $\torus$ (or to $J=\emptyset$, if $\tau=0$) a hypercomplex basis $\B_J$.

\begin{example}[\cite{sudbery}]
Let $A=\hh$, $V=\hh$ and $\B=(1,i,j,k)$. $(3)$-regular functions are Fueter-regular functions. In this case, the Fueter variables $\zeta_1^{\B_\emptyset},\zeta_2^{\B_\emptyset},\zeta_3^{\B_\emptyset}$ form a basis of $U_1$. In general, Fueter polynomials $\big\{\P_\k^{\B_\emptyset}\big\}_{|\k|=k}$ form a basis of $U_k$.
\end{example}

\begin{example}[\cite{librospringer2}]
Let $A=\hh$, $V=\hh$ and $\B=(1,i,j,k)$. $(0,3)$-regular functions are slice-regular functions. In this case, the full variable $x=x_0+ix_1+jx_2+kx_3$ is a basis of $U_1$ and, in general, its $k$th power is a basis of $U_k$.

When we restrict the full variable $x$ to the $J_1$-slice $\cc_{J_1}=\{x_0+\beta_1J_1:x_0,\beta_1\in\rr\}$ for $J_1\in\s_{0,2}$, we find $x_0+\beta_1J_1=(\beta_1-x_0J_1)J_1$, which equals the Fueter variable $\zeta^{\B_{J_1}}_1=\beta_1-x_0J_1$ only up to the multiplicative constant $J_1$. The restriction to $\cc_{J_1}$ of the  $k$th power gives $(x_0+\beta_1J_1)^k=\big(\zeta^{\B_{J_1}}_1\big)^kJ_1^k=\P^{\B_{J_1}}_{(k)}(x_0+\beta_1J_1)J_1^k$. On the other hand, it is not possible for odd $|\k|$ to construct a single function $\P_\k:\hh\to\hh$ by setting $(\P_\k)_{J_1}:=\P^{\B_{J_1}}_\k$ for all $J_1\in\s_{0,2}$: indeed, for $J_1\neq J_1'$, $(\P_\k)_{J_1}(x_0+\beta_1J_1)=(\beta_1-x_0J_1)^{|\k|}$ and $(\P_\k)_{J_1'}(x_0+\beta_1'J_1')=(\beta_1'-x_0J_1')^{|\k|}$ would not agree along $\cc_{J_1}\cap\cc_{J_1'}=\rr$.
\end{example}

\begin{example}[\cite{librogurlebeck2}]
Let $A=C\ell(0,n)$, $V=\rr^{n+1}$ and $\B=(e_\emptyset,e_1,\ldots,e_n)$. As we already mentioned, $(n)$-regular functions are the classical Clifford monogenic functions. The Fueter variables $\zeta_1^{\B_\emptyset},\ldots,\zeta_n^{\B_\emptyset}$ form a basis of $U_1$. In general, Fueter polynomials $\big\{\P_\k^{\B_\emptyset}\big\}_{|\k|=k}$ form a basis of $U_k$.
\end{example}

\begin{example}[\cite{librodaniele2}]
Let $A=C\ell(0,n)$, $V=\rr^{n+1}$ and $\B=(e_\emptyset,e_1,\ldots,e_n)$. $(0,n)$-regular functions are slice-monogenic functions. In this case, the full variable $x=x_0+e_1x_1+\dots+e_nx_n$ is a basis of $U_1$ and, in general, its $k$th power is a basis of $U_k$.

Again: for any $J_1\in\s_{0,n}$, the restriction to $\cc_{J_1}$ of the  $k$th power gives $\P^{\B_{J_1}}_{(k)}\,J_1^k$.
\end{example}

These instructive examples motivate us to construct some polynomial functions $\T_\k$, as follows. While the definition is rather technical, will soon prove that $(\T_\k)_J=\P_\k^{\B_J}\,J_\tau^{k_{t_0+\tau}}\cdots J_2^{k_{t_0+2}}\,J_1^{k_{t_0+1}}$ for all $J=(J_1,\ldots,J_\tau)\in\torus$. Recall that, for $x=x_0+v_1x_1+\ldots+v_nx_n$, we have set
\begin{align*}
x^0&=x_0+v_1x_1+\ldots+v_{t_0}x_{t_0}\,,\\
x^1&=v_{t_0+1}x_{t_0+1}+\ldots+v_{t_1}x_{t_1}\,,\\
x^2&=v_{t_1+1}x_{t_1+1}+\ldots+v_{t_2}x_{t_2}\,,\\
\vdots\\
x^\tau&=v_{t_{\tau-1}+1}x_{t_{\tau-1}+1}+\ldots+v_{t_\tau}x_{t_\tau}\,,
\end{align*}
where $t_\tau=n$ by construction.

\begin{definition}
Assume $A$ to be associative. We set $\T_\k:\equiv0$ if $\k\in\zz^{t_0+\tau}\setminus\nn^{t_0+\tau}$ and $\T_\k:\equiv1$ if $\k=(0,\ldots,0)$. For $\k\in\nn^{t_0+\tau}\setminus\{(0,\ldots,0)\}$, we define recursively
\begin{align*}
|\k|\T_\k(x)&:=\sum_{s=1}^{t_0}k_s\T_{\k-\epsilon_s}(x)\left(x_s-(-1)^ax_0v_s\right)+\sum_{s=t_0+1}^{t_0+\tau}(-1)^{b_s}k_s\T_{\k-\epsilon_s}(x)\left(x_0+(-1)^{a_s}x^{s-t_0}\right)
\end{align*}
where $a:=\sum_{u=t_0+1}^{t_0+\tau}k_u, a_s:=a-k_s$ and $b_s:=\sum_{u=s+1}^{t_0+\tau}k_u$. For all $k\in\nn$, we define $\F_k:=\{\T_\k\}_{|\k|=k}$.
\end{definition}

We notice that $a_s+b_s=\sum_{u=t_0+1}^{s-1}k_u+2b_s$, whence $(-1)^{a_s+b_s}=(-1)^{c_s},c_s:=\sum_{t_0<u<s}k_u$.

\begin{example}\label{ex:lowerdegreepolynomials}
$\F_0=\{\T_{(0,\ldots,0)}\}=\{1\}$. $\F_1$ consists of the functions
\begin{align*}
&\T_{\epsilon_s}(x)=x_s-x_0v_s&&1\leq s\leq t_0\,,\\
&\T_{\epsilon_{t_0+u}}(x)=x_0+x^u&&1\leq u\leq\tau\,.
\end{align*}
We call the functions $\T_{\epsilon_1},\ldots,\T_{\epsilon_{t_0}}$ the \emph{$T$-Fueter variables} and the functions $\T_{\epsilon_{t_0+1}},\ldots,\T_{\epsilon_{t_0+\tau}}$ the \emph{$T$-Cullen variables}.
$\F_2$ consists of the functions
\begin{align*}
&\T_{2\epsilon_s}(x)=(x_s-x_0v_s)^2&&1\leq s\leq t_0\,,\\
&\T_{2\epsilon_{t_0+u}}(x)=(x_0+x^u)^2&&1\leq u\leq\tau\,,\\
&\T_{\epsilon_s+\epsilon_u}(x)=\frac{1}{2}((x_u-x_0v_u)(x_s-x_0v_s)+(x_s-x_0v_s)(x_u-x_0v_u))&&1\leq s<u\leq t_0\,,\\
&\T_{\epsilon_s+\epsilon_{t_0+u}}(x)=\frac{1}{2}((x_0+x^u)(x_s+x_0v_s)+(x_s-x_0v_s)(x_0+x^u))&&1\leq s\leq t_0,1\leq u\leq\tau\,,\\
&\T_{\epsilon_{t_0+s}+\epsilon_{t_0+u}}(x)=\frac{1}{2}(-(x_0+x^u)(x_0-x^s)+(x_0+x^s)(x_0-x^u))&&1\leq s<u\leq\tau\,.
\end{align*}
\end{example}

Within $\reg_{(n)}(V,A)$, corresponding to the choice $\tau=0$, the set $\F_1$ consists only of Fueter variables. Within $\reg_{(0,n)}(V,A)$, corresponding to the choice $t_0=0$ and $\tau=1$, the set $\F_1$ consists only of a single Cullen variable, which is the full variable $x=x_0+x^1=x_0+x_1v_1+\ldots+x_nv_n$. These facts are consistent with the examples we gave at the beginning of this subsection.

In the special case with $A=C\ell(0,n),V=\rr^{n+1}$ and $\tau=1$, the work~\cite{xsgeneralizedpartialslice} (see also~\cite{xsannouncement}) constructed a different basis $(z_0,\ldots,z_{t_0})$ of the right $A$-module $U_1$ and used it to construct a basis of $U_k$ for each $k\in\nn$. When translated into our current notations, $z_s:=x_s+\beta_1J_1v_s$. There is no obvious extension of this construction to the case $\tau\geq2$, which is of interest here.

Our next aim is proving that the set $\F_k$ is a basis for $U_k$. We begin with the next lemma, which expresses each restriction of $\T_\k$ to a $J$-slice $\rr^{t_0+\tau+1}_J$ in terms of the polynomial function $\P_\k^{\B_J}:\rr^{t_0+\tau+1}_J\to A$ constructed in Subsection~\ref{subsec:monogenicpolynomial}.

\begin{lemma}\label{lem:PTnabladelta}
Assume $A$ to be associative. Fix $\k\in\zz^{t_0+\tau},
J=(J_1,\ldots,J_\tau)\in\torus$. The restriction $(\T_\k)_J$ of $\T_\k$ to the slice $\rr^{t_0+\tau+1}_J$ is the $J$-monogenic polynomial function
\[(\T_\k)_J=\P_\k^{\B_J}\,J_\tau^{k_{t_0+\tau}}\cdots J_2^{k_{t_0+2}}\,J_1^{k_{t_0+1}}\,.\]
\end{lemma}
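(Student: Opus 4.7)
The plan is to prove the claim by induction on $|\k|$, matching the recursive definition of $\T_\k$ against the Fueter recursion of $\P_\k^{\B_J}$ for the hypercomplex basis $\B_J=(v_0,v_1,\ldots,v_{t_0},J_1,\ldots,J_\tau)$. The key observation is that, on the slice $\rr^{t_0+\tau+1}_J$, the ``blocks'' $x^u$ collapse to $\beta_u J_u$, so the restrictions to the slice of the building blocks $x_s-(-1)^a x_0 v_s$ and $x_0+(-1)^{a_s} x^{s-t_0}$ become simple expressions in $x_0,\beta_u,v_s,J_u$. After this substitution, the whole issue reduces to a sign-chasing calculation that compares left and right multiplication by $v_s$ (for $s\leq t_0$) and by $J_u$ (for $s=t_0+u$) across the monomial $P:=J_\tau^{k_{t_0+\tau}}\cdots J_1^{k_{t_0+1}}$.

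Concretely, for the base case $\k=(0,\ldots,0)$ both sides equal $1$. For the inductive step, I would start from
\[
|\k|(\T_\k)_J(x)=\sum_{s=1}^{t_0}k_s(\T_{\k-\epsilon_s})_J(x)\bigl(x_s-(-1)^a x_0 v_s\bigr)+\sum_{u=1}^{\tau}(-1)^{b_{t_0+u}}k_{t_0+u}(\T_{\k-\epsilon_{t_0+u}})_J(x)\bigl(x_0+(-1)^{a_{t_0+u}}\beta_u J_u\bigr),
\]
plug in the inductive hypothesis (which, for each summand, provides a product of a Fueter polynomial with a $J$-monomial $P$ or $P'$ differing from $P$ only by the exponent of $J_u$), and then try to factor out $P$ on the right. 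For $s\leq t_0$, I need $(x_s-x_0v_s)P=P(x_s-(-1)^a x_0 v_s)$, which reduces to $v_s P=(-1)^a P v_s$; since $v_s$ anticommutes with every $J_u$ and the total $J$-degree of $P$ is $a$, this is immediate. For $s=t_0+u$, I would split into the $x_0$-part and the $\beta_u$-part: matching the $x_0$-part boils down to $J_u P=(-1)^{b_{t_0+u}+1}P'$, which holds because moving $J_u$ leftward past $J_\tau^{k_{t_0+\tau}}\!\cdots J_{u+1}^{k_{t_0+u+1}}$ gives the factor $(-1)^{b_{t_0+u}}$ and then $J_u\cdot J_u^{k_{t_0+u}-1}\cdot J_u^{-1}$ actually uses $J_u^2=-1$; matching the $\beta_u$-part reduces to $P=(-1)^{b_{t_0+u}+a_{t_0+u}}P'J_u$, which follows from $P'J_u=(-1)^{c_{t_0+u}}P$ (moving $J_u$ rightward past $J_{u-1}^{k_{t_0+u-1}}\!\cdots J_1^{k_{t_0+1}}$) together with the identity $a_{t_0+u}=b_{t_0+u}+c_{t_0+u}$ which makes the total sign trivial.

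The main obstacle, and the only nontrivial content of the argument, is the bookkeeping of anticommutation signs: the exponents $a$, $a_s$, $b_s$ (and the auxiliary $c_s$) in the definition of $\T_\k$ are tailored precisely so that the slice-restriction absorbs the left/right movement of the $v_s$'s and $J_u$'s through $P$, modulo the identity $J_u^2=-1$. It is crucial here that $A$ is associative, so that $P$ can be formed unambiguously and so that right multiplication by $P$ commutes with ``evaluation'' of the Fueter polynomials. Finally, the $J$-monogenicity of $(\T_\k)_J$ is a free consequence: $\P_\k^{\B_J}$ is $J$-monogenic by Subsection~\ref{subsec:monogenicpolynomial}, $\mon_J(\rr^{t_0+\tau+1}_J,A)$ is a right $A$-module in the associative setting, and multiplication on the right by the constant $J_\tau^{k_{t_0+\tau}}\cdots J_1^{k_{t_0+1}}\in A$ therefore preserves $J$-monogenicity. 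This completes the scheme of the proof.
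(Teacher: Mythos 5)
Your proposal is correct and follows essentially the same route as the paper's proof: an induction on $|\k|$ in which the recursive definition of $\T_\k$ is matched against the Fueter recursion for $\P_\k^{\B_J}$, with the inductive step reduced to transporting $v_s$ and $J_u$ through the monomial $J_\tau^{k_{t_0+\tau}}\cdots J_1^{k_{t_0+1}}$ and verifying that the exponents $a$, $a_s$, $b_s$, $c_s$ (with $a_s=b_s+c_s$) absorb all the anticommutation signs and the relation $J_u^2=-1$. The paper merely runs the same computation in the opposite direction, starting from the $\P$-recursion and pushing each $\zeta_s$ rightward through the monomial via the conjugation operator, so the difference is purely organizational.
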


\begin{proof}
Our proof is by induction. When $\k\in\zz^{t_0+\tau}\setminus\nn^{t_0+\tau}$, the thesis follows from the equalities $\T_\k\equiv0$ and $\P_\k^{\B_J}\equiv0$. When $\k=(0,\ldots,0)$, the thesis follows from the equalities $\T_\k\equiv1$,  $J_\tau^{k_{t_0+\tau}}\cdots J_2^{k_{t_0+2}}\,J_1^{k_{t_0+1}}=1$ and $\P_\k^{\B_J}\equiv1$. Let us prove that the thesis holds for $\k\in\nn^{t_0+\tau}\setminus\{(0,\ldots,0)\}$, assuming it to hold for $\k-\epsilon_s$ for all $s\in\{1,\ldots,t_0+\tau\}$. The definition of $\P_\k^{\B_J}$, with the notation $\zeta_s=\zeta_s^{\B_J}$, yields
\begin{align*}
|\k|\P_\k^{\B_J}\,J_\tau^{k_{t_0+\tau}}\cdots J_2^{k_{t_0+2}}\,J_1^{k_{t_0+1}}&=\sum_{s=1}^{t_0+\tau}k_s\P_{\k-\epsilon_s}^{\B_J}\zeta_s\,J_\tau^{k_{t_0+\tau}}\cdots J_2^{k_{t_0+2}}\,J_1^{k_{t_0+1}}\,.
\end{align*}
Now, for $s\in\{1,\ldots,t_0\}$, we have $\zeta_s=x_s-x_0v_s$, whence $\zeta_sJ_u=J_u\zeta_s^c$ for all $u\in\{1,\ldots,\tau\}$. Let us set the notations $C^{2m}(x):=x$ and $C^{2m+1}(x):=x^c$ for all $m\in\nn$ and use again the notations $a:=\sum_{u=t_0+1}^{t_0+\tau}k_u, a_s:=a-k_s$ and $b_s:=\sum_{u=s+1}^{t_0+\tau}k_u$. We compute the sum of the first $t_0$ summands as
\begin{align*}
&\sum_{s=1}^{t_0}k_s\,\P_{\k-\epsilon_s}^{\B_J}\zeta_s\,J_\tau^{k_{t_0+\tau}}\cdots J_2^{k_{t_0+2}}\,J_1^{k_{t_0+1}}\\
&=\sum_{s=1}^{t_0}k_s\,\P_{\k-\epsilon_s}^{\B_J}J_\tau^{k_{t_0+\tau}}\cdots J_2^{k_{t_0+2}}\,J_1^{k_{t_0+1}}C^a(\zeta_s)\\
&=\sum_{s=1}^{t_0}k_s\,(\T_{\k-\epsilon_s})_JC^a(\zeta_s)\,,
\end{align*}
where the last equality follows from our induction hypothesis. Now let us compute the sum of the last $\tau$ summands. For distinct $u,w\in\{1,\ldots,\tau\}$ and for $s=t_0+u$ (whence $u=s-t_0$), we find that $\zeta_s=\beta_u-x_0J_u$ has the following properties: $\zeta_sJ_u=x_0+\beta_uJ_u=J_u\zeta_s$; $\zeta_sJ_w=J_w\zeta_s^c$; and $(\zeta_sJ_u)J_w=J_wC(\zeta_sJ_u)$. We are now ready to compute
\begin{align*}
&\sum_{s=t_0+1}^{t_0+\tau}k_s\,\P_{\k-\epsilon_s}^{\B_J}\zeta_s\,J_\tau^{k_{t_0+\tau}}\cdots J_2^{k_{t_0+2}}\,J_1^{k_{t_0+1}}\\
&=\sum_{s=t_0+1}^{t_0+\tau}k_s\,\P_{\k-\epsilon_s}^{\B_J}J_\tau^{k_{t_0+\tau}}\cdots J_{s-t_0+1}^{k_{s+1}}J_{s-t_0}^{k_s}C^{b_s}(\zeta_s)J_{s-t_0-1}^{k_{s-1}}\cdots J_1^{k_{t_0+1}}\\
&=\sum_{s=t_0+1}^{t_0+\tau}(-1)^{b_s}k_s\,\P_{\k-\epsilon_s}^{\B_J}J_\tau^{k_{t_0+\tau}}\cdots J_{s-t_0+1}^{k_{s+1}}J_{s-t_0}^{k_s-1}C^{b_s}(\zeta_sJ_{s-t_0})J_{s-t_0-1}^{k_{s-1}}\cdots J_1^{k_{t_0+1}}\\
&=\sum_{s=t_0+1}^{t_0+\tau}(-1)^{b_s}k_s\,\P_{\k-\epsilon_s}^{\B_J}J_\tau^{k_{t_0+\tau}}\cdots J_{s-t_0+1}^{k_{s+1}}J_{s-t_0}^{k_s-1}J_{s-t_0-1}^{k_{s-1}}\cdots J_1^{k_{t_0+1}}C^{a_s}(\zeta_sJ_{s-t_0})\\
&=\sum_{s=t_0+1}^{t_0+\tau}(-1)^{b_s}k_s\,(\T_{\k-\epsilon_s})_JC^{a_s}(\zeta_sJ_{s-t_0})\,,
\end{align*}
where we used the fact that $J_{s-t_0}C^{b_s}(\zeta_s)=(-1)^{b_s}C^{b_s}(\zeta_sJ_{s-t_0})$. Overall, we have
\begin{align*}
&|\k|\P_\k^{\B_J}\,J_\tau^{k_{t_0+\tau}}\cdots J_2^{k_{t_0+2}}\,J_1^{k_{t_0+1}}=\sum_{s=1}^{t_0}k_s\,(\T_{\k-\epsilon_s})_JC^a(\zeta_s)+\sum_{s=t_0+1}^{t_0+\tau}(-1)^{b_s}k_s\,(\T_{\k-\epsilon_s})_JC^{a_s}(\zeta_sJ_{s-t_0})\\
&=\sum_{s=1}^{t_0}k_s\T_{\k-\epsilon_s}(x)\left(x_s-(-1)^ax_0v_s\right)+\sum_{s=t_0+1}^{t_0+\tau}(-1)^{b_s}k_s\,(\T_{\k-\epsilon_s})_J(x_0+(-1)^{a_s}\beta_{s-t_0}J_{s-t_0})\\
&=|\k|(\T_\k)_J\,,
\end{align*}
where the last equality follows from the definition of $\T_\k$, taking into account that $\beta_{s-t_0}J_{s-t_0}$ is the restriction to the $J$-slice $\rr^{t_0+\tau+1}_J$ of the function $x^{s-t_0}$.
\end{proof}

After some preparation in the next subsection, we will go back to the polynomials $\T_\k$ in the forthcoming Subsection~\ref{subsec:propertiespolynomial} and finally prove that $\F_k$ is a basis for $U_k$.

\subsection{Adapted partial derivatives}\label{subsec:partialderivatives}

This subsection is devoted to constructing some new differential operators. These operators will play an important role to prove that $\F_k$ is a basis of $U_k$, but also later in the paper. We begin with the next remark, which uses Definition~\ref{def:deltaB} with our current choice of $t_0$ (which we fixed in Subsection~\ref{subsec:Tfans} as a part of the list of steps $T=(t_0,t_1,\ldots,t_\tau)$).

\begin{remark}
Assume $A$ to be associative. Fix $\h\in\nn^{t_0+\tau+1}$, $J\in\torus$ ($J:=\emptyset$ if $\tau=0$) and a domain $G$ in $\rr^{t_0+\tau+1}_J$. Let us consider the operator $\delta_J^\h:=\delta_{\B_J}^\h:\mathscr{C}^{|\h|}(G,A)\to\mathscr{C}^0(G,A)$. If $\tau=0$, then
\begin{align*}
\delta_\emptyset^\h&=\nabla_{\B_\emptyset}^\h=\,\partial_{x_0}^{h_0}\partial_{x_1}^{h_1}\ldots\partial_{x_{t_0}}^{h_{t_0}}\,.
\end{align*}
If, instead, $\tau\geq1$ and $J=(J_1,\ldots,J_\tau)$, then
\begin{align*}
\delta_J^{\h}&=(J_\tau^{h_{t_0+\tau}}\cdots J_1^{h_{t_0+1}})^{-1}\nabla_{\B_J}^{\h}\\
&=\partial_{x_0}^{h_0}\partial_{x_1}^{h_1}\ldots\partial_{x_{t_0}}^{h_{t_0}}(-J_1\partial_{\beta_1})^{h_{t_0+1}}\ldots(-J_\tau\partial_{\beta_\tau})^{h_{t_0+\tau}}\,.
\end{align*}
\end{remark}

The notations in the last remark refer to the decomposition~\eqref{eq:decomposedvariable} of the variable $x$. We are now ready for the announced construction.

\begin{definition}\label{def:delta}
Assume $A$ to be associative. Fix $\h\in\nn^{t_0+\tau+1}$ and a domain $\Omega$ in $V$. If $\tau=0$, then we set $\delta^{\h}:=\delta_\emptyset^{\h}=\nabla_{\B_\emptyset}^\h=\nabla_\B^\h$. If $\tau\geq1$, then for any $f\in\reg_T(\Omega,A)$ we define the function $\delta^{\h}f:\Omega\to A$ to fulfill the equalities
\[(\delta^{\h}f)_J=\delta_J^{\h}f_J=(J_\tau^{h_{t_0+\tau}}\cdots J_1^{h_{t_0+1}})^{-1}\nabla_{\B_J}^{\h}f_J\]
in $\Omega_J$, for all $J\in\torus$.
\end{definition}

Definition~\ref{def:delta} will only be fully justified after the next theorem will be proven.

\begin{theorem}\label{thm:deltawellposed}
Assume $A$ to be associative. Fix $\tau\geq1$, a list $T=(t_0,t_1,\ldots,t_\tau)$ of $\tau$ steps, a domain $\Omega$ in $V$, and a function $f\in\reg_T(\Omega,A)$. For all $\h\in\nn^{t_0+\tau+1}$ and all $J,J'\in\torus$, the functions $\delta_J^{\h}f_J$ and $\delta_{J'}^{\h}f_{J'}$ coincide in $\Omega_J\cap\Omega_{J'}$.
\end{theorem}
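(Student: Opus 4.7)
The plan is to proceed by induction on the cardinality of
\[
H \;:=\; \{h \in \{1,\ldots,\tau\} : J_h \ne \pm J'_h\}.
\]
The base case $|H|=0$ is immediate: $\rr^{t_0+\tau+1}_J = \rr^{t_0+\tau+1}_{J'}$, so $f_J = f_{J'}$ on this common slice; and whenever $J_h = -J'_h$ one has $\beta'_h = -\beta_h$, hence $-J'_h\partial_{\beta'_h} = -J_h\partial_{\beta_h}$, so the operators $\delta_J^\h$ and $\delta_{J'}^\h$ are literally identical. For the inductive step, I pick $h \in H$ and define $J''\in\torus$ by $J''_u := J_u$ for $u \ne h$ and $J''_h := J'_h$. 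Since $\rr^{t_0+\tau+1}_J \cap \rr^{t_0+\tau+1}_{J'} = \Span(v_0,\ldots,v_{t_0},\{J_u\}_{u\notin H})$ is contained in $\rr^{t_0+\tau+1}_{J''}$, any $p \in \Omega_J \cap \Omega_{J'}$ automatically lies in $\Omega_{J''}$; chaining the ``one-component'' comparison $(J,J'')$ with the inductive hypothesis applied to $(J'',J')$ (which has one fewer discrepancy) yields the conclusion.

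For the remaining one-component case, in which $J$ and $J'$ agree off index $h$ with $J_h \neq \pm J'_h$, let $W := \rr^{t_0+\tau+1}_J \cap \rr^{t_0+\tau+1}_{J'}$, a hypercomplex subspace with basis $\B_W := (v_0,\ldots,v_{t_0},J_1,\ldots,\widehat{J_h},\ldots,J_\tau)$. Using associativity of $A$ and that partial derivatives commute with left multiplication by constants, I first rewrite
\[
\delta_J^\h f_J \;=\; \Sigma_J^\h \cdot D_J^\h f_J,
\]
where
\[
\Sigma_J^\h := \prod_{u=1}^\tau (-J_u)^{h_{t_0+u}}, \qquad D_J^\h := \partial_{x_0}^{h_0}\cdots\partial_{x_{t_0}}^{h_{t_0}}\partial_{\beta_1}^{h_{t_0+1}}\cdots\partial_{\beta_\tau}^{h_{t_0+\tau}},
\]
with the product in $\Sigma_J^\h$ taken in the specified order. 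Because $D_J^\h$ is a pure scalar differential operator, $D_J^\h f_J$ is itself $J$-monogenic. Split $D_J^\h = \tilde D \cdot \partial_{\beta_h}^{m}$ with $m := h_{t_0+h}$, where $\tilde D$ collects only the derivatives tangent to $W$. Harmonicity $\Delta_J f_J = 0$ (Remark~\ref{rmk:J-harmonic}) yields $\partial_{\beta_h}^2 f_J = -\Delta_{\B_W}f_J$, and the Cauchy--Riemann equation $\debar_J f_J = 0$ yields $\partial_{\beta_h} f_J = J_h \debar_{\B_W}f_J$ (using $J_h^{-1}=-J_h$); iterating, and using that $\Delta_{\B_W}$ commutes with both $J_h$ and $\debar_{\B_W}$, one obtains
\[
\partial_{\beta_h}^{2r} f_J = (-1)^r \Delta_{\B_W}^r f_J, \qquad \partial_{\beta_h}^{2r+1} f_J = (-1)^r J_h \Delta_{\B_W}^r \debar_{\B_W} f_J.
\]

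After substitution, the only occurrences of $J_h$ in $\delta_J^\h f_J(p)$ are the factor $(-J_h)^m$ inside $\Sigma_J^\h$ and, when $m = 2r+1$, the additional $J_h$ produced by $\partial_{\beta_h}^m f_J$. I then use the anticommutation $J_u J_h = -J_h J_u$ for $u \ne h$ to transport this stray $J_h$ through the tail $(-J_{h+1})^{h_{t_0+h+1}}\cdots(-J_\tau)^{h_{t_0+\tau}}$ of $\Sigma_J^\h$, picking up the sign $(-1)^{\sum_{u>h} h_{t_0+u}}$, and it then pairs with $(-J_h)^m$ into $J_h^2 = -1$. What remains is a product of scalars depending only on $\{J_u\}_{u\ne h}$, multiplied by a purely tangent differential expression in $f_J$ evaluated at $p$; the latter depends only on $f_J|_W = f|_W = f_{J'}|_W$, so running the identical calculation with $J$ replaced by $J'$ yields the same value, and hence $\delta_J^\h f_J(p) = \delta_{J'}^\h f_{J'}(p)$. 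The main obstacle is precisely the sign/order bookkeeping in the odd-$m$ subcase, where the isolated $J_h$ must be commuted past a possibly long, non-commuting string of imaginary units before it can be absorbed; once that is carried out, the resulting expression is manifestly $J_h$-free and the required equality falls out at once.
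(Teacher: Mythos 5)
Your proof is correct, but it is organized quite differently from the paper's. The paper runs a double induction: an outer induction on $\tau$, whose inductive step restricts $f$ to the shorter lists $\widehat{T}$ via Lemma~\ref{lem:lowerdimlregularity} so as to compare tuples sharing their first component, followed by an inner induction on the exponent $h_{t_0+1}$; the base case $\tau=1$ is Lemma~\ref{lem:deltawhentauis1}, which rewrites $\delta_J^{\h}$ purely in terms of operators tangent to the mirror. You instead induct on the number of indices $u$ with $J_u\neq\pm J'_u$, reduce to the single-discrepancy case through the interpolating tuple $J''$ (your verification that $\rr^{t_0+\tau+1}_J\cap\rr^{t_0+\tau+1}_{J'}\subseteq\rr^{t_0+\tau+1}_{J''}$ is what makes the chaining legitimate), and then handle that case by the same analytic mechanism the paper uses — trading $\partial_{\beta_h}$ for tangential operators via $\Delta_Jf_J=0$ and $\debar_Jf_J=0$ — but relative to the common codimension-one subspace $W$ of the two slices rather than relative to the mirror. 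What your route buys is symmetry in the index $h$ (no privileged first component, no appeal to the restriction lemma) and, as a byproduct, an explicit $J_h$-free closed formula for $\delta_J^{\h}f_J$ on $\Omega_J\cap\Omega_{J'}$ in terms of data intrinsic to $W$, i.e.\ a two-slice analogue of Proposition~\ref{prop:tau1}; the cost is the anticommutation bookkeeping, which you carry out correctly (the net scalar from $(-J_h)^{2r+1}\cdot J_h$ is $(-1)^r$ rather than $-1$, but all that matters is that it is a real constant independent of the choice of $J_h$). Two harmless quibbles: in the degenerate case $t_0=0$, $\tau=1$ the space $W=\rr$ is not a hypercomplex subspace, though the operators $\debar_{\B_W}=\partial_{x_0}$ and $\Delta_{\B_W}=\partial_{x_0}^2$ still make sense and the argument goes through unchanged; and the observation that $D_J^{\h}f_J$ is $J$-monogenic is true but not actually needed, since you apply the harmonicity and Cauchy--Riemann identities to $f_J$ itself before letting the scalar operator $\tilde D$ act.
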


To prove Theorem~\ref{thm:deltawellposed}, we will use the next lemma.

\begin{lemma}\label{lem:deltawhentauis1}
Assume $A$ to be associative and $\tau=1$ (whence $T=(t_0,t_1)=(t_0,n)$). Fix $J=J_1\in\torus=\s_{t_0+1,n}$ and a domain $G\subseteq\rr^{t_0+2}_J$. If $\phi\in\mon_J(G,A)$, then
\begin{align*}
\delta_J^{(h_0,\ldots,h_{t_0},2m)}\phi&=\partial_{x_0}^{h_0}\partial_{x_1}^{h_1}\ldots\partial_{x_{t_0}}^{h_{t_0}}(\partial_{x_0}^2+\partial_{x_1}^2+\ldots+\partial_{x_{t_0}}^2)^m\phi\,,\\
\delta_J^{(h_0,\ldots,h_{t_0},2m+1)}\phi&=\partial_{x_0}^{h_0}\partial_{x_1}^{h_1}\ldots\partial_{x_{t_0}}^{h_{t_0}}(\partial_{x_0}^2+\partial_{x_1}^2+\ldots+\partial_{x_{t_0}}^2)^m(\partial_{x_0}+v_1\partial_{x_1}+\ldots+v_{t_0}\partial_{x_{t_0}})\phi
\end{align*}
for all $h_0,\ldots,h_{t_0},m\in\nn$.
\end{lemma}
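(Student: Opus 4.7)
The plan is to compute $(-J_1\partial_{\beta_1})^{h_{t_0+1}}\phi$ directly, separating the two parities, and then simply prepend the remaining partial derivatives. Since the scalar partial derivatives $\partial_{x_0},\ldots,\partial_{x_{t_0}}$ commute with $-J_1\partial_{\beta_1}$ and with each other, and since derivatives of a $J$-monogenic function are again $J$-monogenic, there is no difficulty in re-ordering these operators.

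The central identity comes from combining two facts. First, because $J_1\in\s_A$ satisfies $J_1^2=-1$, associativity of $A$ yields the operator identity
\[(-J_1\partial_{\beta_1})^2=J_1^2\partial_{\beta_1}^2=-\partial_{\beta_1}^2.\]
Second, Remark~\ref{rmk:J-harmonic} guarantees that any $J$-monogenic function $\psi$ is $J$-harmonic, i.e.\ $(\partial_{x_0}^2+\partial_{x_1}^2+\ldots+\partial_{x_{t_0}}^2+\partial_{\beta_1}^2)\psi=0$. Writing $D:=\partial_{x_0}^2+\partial_{x_1}^2+\ldots+\partial_{x_{t_0}}^2$, this reads $-\partial_{\beta_1}^2\psi=D\psi$. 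Therefore
\[(-J_1\partial_{\beta_1})^2\psi=D\psi\quad\text{for every $J$-monogenic $\psi$.}\]
Moreover, because $D$ is a real scalar differential operator, it commutes with $\debar_J$, so $D\phi$ is again $J$-monogenic whenever $\phi$ is.

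I would then prove by induction on $m$ that $(-J_1\partial_{\beta_1})^{2m}\phi=D^m\phi$ for every $\phi\in\mon_J(G,A)$. The base case $m=0$ is trivial, and the inductive step is
\[(-J_1\partial_{\beta_1})^{2(m+1)}\phi=(-J_1\partial_{\beta_1})^{2m}\bigl((-J_1\partial_{\beta_1})^2\phi\bigr)=(-J_1\partial_{\beta_1})^{2m}(D\phi)=D^m(D\phi)=D^{m+1}\phi,\]
where the penultimate equality uses the inductive hypothesis applied to the $J$-monogenic function $D\phi$. This yields the even formula after premultiplying by $\partial_{x_0}^{h_0}\cdots\partial_{x_{t_0}}^{h_{t_0}}$. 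For the odd formula one applies one further copy of $-J_1\partial_{\beta_1}$ to $D^m\phi$, and invokes the defining equation of $J$-monogenicity, $\debar_J(D^m\phi)=0$, namely
\[-J_1\partial_{\beta_1}(D^m\phi)=(\partial_{x_0}+v_1\partial_{x_1}+\ldots+v_{t_0}\partial_{x_{t_0}})(D^m\phi),\]
where again we use that $D^m\phi$ is $J$-monogenic.

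The only mildly subtle point is verifying that at each inductive step the function we feed into the inductive hypothesis is still $J$-monogenic; this is immediate because $D$ and each $\partial_{x_s}$ are real scalar operators and commute with $\debar_J$. I do not expect any serious obstacle: the entire argument rests on the two elementary commuting identities $J_1^2=-1$ and $\Delta_J\phi=0$, together with the stability of $\mon_J(G,A)$ under real partial differentiation.
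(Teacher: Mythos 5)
Your proposal is correct and follows essentially the same route as the paper's proof: both arguments replace $(-J_1\partial_{\beta_1})^2$ by $\partial_{x_0}^2+\ldots+\partial_{x_{t_0}}^2$ via $J$-harmonicity and a single copy of $-J_1\partial_{\beta_1}$ by $\partial_{x_0}+v_1\partial_{x_1}+\ldots+v_{t_0}\partial_{x_{t_0}}$ via the monogenicity equation, with an induction on $m$ whose step relies on the stability of $\mon_J(G,A)$ under these scalar operators. The only difference is organizational (you induct on the $\beta_1$-derivatives alone and prepend the $x$-derivatives afterward, while the paper carries the full multi-index), which is immaterial since all the operators involved commute.
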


\begin{proof}
Since the function $\phi:G\to A$ is $J$-monogenic, we have
\[0\equiv\debar_J\phi=(\partial_{x_0}+v_1\partial_{x_1}+\ldots+v_{t_0}\partial_{x_{t_0}}+J_1\partial_{\beta_1})\phi\]
(whence $0\equiv\Delta_J\phi=(\partial_{x_0}^2+\partial_{x_1}^2+\ldots+\partial_{x_{t_0}}^2+\partial_{\beta_1}^2)\phi$). These equalities will allow us to prove the thesis, by induction on $m$.

For $\h=(h_0,\ldots,h_{t_0},0)$, the equality $\delta_J^{\h}\phi=(\partial_{x_0}^{h_0}\partial_{x_1}^{h_1}\ldots\partial_{x_{t_0}}^{h_{t_0}})\phi$ is the very definition of $\delta_J^{\h}$.

Now assume the thesis true for $\h=(h_0,\ldots,h_{t_0},2m)$ and notice that $\delta_J^{\h}\phi=(-1)^m\nabla_{\B_J}^\h\phi$ is still a $J$-monogenic function. For $\h'=(h_0,\ldots,h_{t_0},2m+1)$ and $\h''=(h_0,\ldots,h_{t_0},2m+2)$, we compute
\begin{align*}
\delta_J^{\h'}\phi&=(\partial_{x_0}^{h_0}\partial_{x_1}^{h_1}\ldots\partial_{x_{t_0}}^{h_{t_0}}(-J_1\partial_{\beta_1})^{2m+1})\phi\\
&=(-J_1\partial_{\beta_1})\delta_J^{\h}\phi\\
&=(\partial_{x_0}+v_1\partial_{x_1}+\ldots+v_{t_0}\partial_{x_{t_0}})\delta_J^{\h}\phi\,,\\
\delta_J^{\h''}\phi&=(\partial_{x_0}^{h_0}\partial_{x_1}^{h_1}\ldots\partial_{x_{t_0}}^{h_{t_0}}(-J_1\partial_{\beta_1})^{2m+2})\phi\\
&=-\partial_{\beta_1}^2\delta_J^{\h}\phi\\
&=(\partial_{x_0}^2+\partial_{x_1}^2+\ldots+\partial_{x_{t_0}}^2)\delta_J^{\h}\phi\,,
\end{align*}
whence the desired conclusion follows.
\end{proof}

\begin{proof}[Proof of Theorem~\ref{thm:deltawellposed}]
We will prove the thesis by induction on $\tau$.

For $\tau=1$, we apply Lemma~\ref{lem:deltawhentauis1} to the function $f_J\in\mon_J(\Omega_J,A)$ and to the function $f_{J'}\in\mon_{J'}(\Omega_{J'},A)$. It follows immediately that $\delta_J^{\h}f_J$ and $\delta_{J'}^{\h}f_{J'}$ coincide in the intersection $\Omega_J\cap\Omega_{J'}$ of their domains.

We now take the induction step from $\tau-1$ to $\tau\geq2$. Let us apply Lemma~\ref{lem:lowerdimlregularity} with $\sigma=1$: for every $\widehat J=J_1\in\s_{t_0+1,t_1}$ the restriction $f_{|_{\widehat\Omega_{\widehat J}}}$ is a $\widehat{T}$-regular function, where $\widehat{T}:=(t_0+1,t_2,\ldots,t_\tau)$ is a list of $\tau-1$ steps. For any $(J_2,\ldots,J_\tau),(J_2',\ldots,J_\tau')\in\s_{t_1+1,t_2}\times\cdots\times\s_{t_{\tau-1}+1,t_{\tau}}$, our induction hypothesis yields that the further restrictions $f_{(J_1,J_2,\ldots,J_\tau)},f_{(J_1,J_2',\ldots,J_\tau')}$ of $f_{|_{\widehat\Omega_{\widehat J}}}$ are such that, for all $\h\in\nn^{t_0+\tau+1}$, the functions
\begin{align*} 
\delta_{(J_2,\ldots,J_\tau)}^{\h}f_{(J_1,J_2,\ldots,J_\tau)}&=\partial_{x_0}^{h_0}\partial_{x_1}^{h_1}\ldots\partial_{x_{t_0}}^{h_{t_0}}\partial_{\beta_1}^{h_{t_0+1}}(-J_2\partial_{\beta_2})^{h_{t_0+2}}\ldots(-J_\tau\partial_{\beta_\tau})^{h_{t_0+\tau}}f_{(J_1,J_2,\ldots,J_\tau)}\,,
\\
\delta_{(J_2',\ldots,J_\tau')}^{\h}f_{(J_1,J_2',\ldots,J_\tau')}&=\partial_{x_0}^{h_0}\partial_{x_1}^{h_1}\ldots\partial_{x_{t_0}}^{h_{t_0}}\partial_{\beta_1}^{h_{t_0+1}}(-J_2'\partial_{\beta_2'})^{h_{t_0+2}}\ldots(-J_\tau'\partial_{\beta_\tau'})^{h_{t_0+\tau}}f_{(J_1,J_2',\ldots,J_\tau')}\,,
\end{align*}
coincide in $\Omega_{(J_1,J_2,\ldots,J_\tau)}\cap\Omega_{(J_1,J_2',\ldots,J_\tau')}$. For any $J=(J_1,J_2,\ldots,J_\tau),J'=(J_1',J_2',\ldots,J_\tau')\in\torus$, let us prove that $\delta_J^\h f_J$ and $\delta_{J'}^\h f_{J'}$ coincide in $\Omega_J\cap\Omega_{J'}$ by induction on $h_{t_0+1}$.
\begin{itemize}
\item If $\h=(h_0,\ldots,h_{t_0},0,h_{t_0+2},\ldots,h_{t_\tau})$, then
\begin{align*} 
\delta_J^\h f_J&=\partial_{x_0}^{h_0}\partial_{x_1}^{h_1}\ldots\partial_{x_{t_0}}^{h_{t_0}}(-J_2\partial_{\beta_2})^{h_{t_0+2}}\ldots(-J_\tau\partial_{\beta_\tau})^{h_{t_0+\tau}}f_J\,,
\\
\delta_{J'}^\h f_{J'}&=\partial_{x_0}^{h_0}\partial_{x_1}^{h_1}\ldots\partial_{x_{t_0}}^{h_{t_0}}(-J_2'\partial_{\beta_2'})^{h_{t_0+2}}\ldots(-J_\tau'\partial_{\beta_\tau'})^{h_{t_0+\tau}}f_{J'}\,,
\end{align*}
coincide in $\Omega_J\cap\Omega_{J'}$. This is obvious when $J_1'=\pm J_1$, which yields the equalities $f_{J'}=f_{(J_1,J_2',\ldots,J_\tau')}$ and $\Omega_J\cap\Omega_{J'}=\Omega_{(J_1,J_2,\ldots,J_\tau)}\cap\Omega_{(J_1,J_2',\ldots,J_\tau')}$. It is also true when $J_1'\neq\pm J_1$, which yields the proper inclusion $\Omega_J\cap\Omega_{J'}\subsetneq\Omega_{(J_1,J_2,\ldots,J_\tau)}\cap\Omega_{(J_1,J_2',\ldots,J_\tau')}$ and the chain of equalities
\begin{align*}
&\left(\partial_{x_0}^{h_0}\partial_{x_1}^{h_1}\ldots\partial_{x_{t_0}}^{h_{t_0}}(-J_2'\partial_{\beta_2'})^{h_{t_0+2}}\ldots(-J_\tau'\partial_{\beta_\tau'})^{h_{t_0+\tau}}f_{J'}\right)_{|_{\Omega_J\cap\Omega_{J'}}}\\
&=\left(\partial_{x_0}^{h_0}\partial_{x_1}^{h_1}\ldots\partial_{x_{t_0}}^{h_{t_0}}(-J_2'\partial_{\beta_2'})^{h_{t_0+2}}\ldots(-J_\tau'\partial_{\beta_\tau'})^{h_{t_0+\tau}}f_{(J_1,J_2',\ldots,J_\tau')}\right)_{|_{\Omega_J\cap\Omega_{J'}}}\\
&=\left(\partial_{x_0}^{h_0}\partial_{x_1}^{h_1}\ldots\partial_{x_{t_0}}^{h_{t_0}}(-J_2\partial_{\beta_2})^{h_{t_0+2}}\ldots(-J_\tau\partial_{\beta_\tau})^{h_{t_0+\tau}}f_J\right)_{|_{\Omega_J\cap\Omega_{J'}}}\,.
\end{align*}
\item Now assume the thesis proven for all $\h$ of the form $\h=(h_0,\ldots,h_{t_0},2m,h_{t_0+2},\ldots,h_{t_\tau})$. We set $\h':=(h_0,\ldots,h_{t_0},2m+1,h_{t_0+2},\ldots,h_{t_\tau})$ and compute
\begin{align*} 
\delta_J^{\h'} f_J&=(-J_1\partial_{\beta_1})\,\delta_J^\h f_J=-J_1\,\delta_J^\h\,\partial_{\beta_1}\,f_J\\	
&=-J_1\,\delta_J^\h\,J_1\,(\partial_{x_0}+v_1\partial_{x_1}+\ldots+v_{t_0}\partial_{x_{t_0}}+J_2\partial_{\beta_2}+\ldots+J_\tau\partial_{\beta_\tau})\,f_J\\
&=(-1)^{a_0}\delta_J^{\h_0}f_J+(-1)^{a_1}\left(v_1\delta_J^{\h_1}f_J+\ldots+v_{t_0}\delta_J^{\h_{t_0}}f_J\right)\\
&\quad+(-1)^{a_2}\delta_J^{\h_{t_0+2}}f_J+\ldots+(-1)^{a_\tau}\delta_J^{\h_{t_0+\tau}}f_J\,,
\end{align*}
for appropriate natural numbers $a_0,a_1,\ldots,a_\tau$ and for
\begin{align*} 
\h_0&:=(h_0+1,h_1,\ldots,h_{t_0},2m,h_{t_0+2},\ldots,h_{t_\tau})\,,\\
\h_1&:=(h_0,h_1+1,\ldots,h_{t_0},2m,h_{t_0+2},\ldots,h_{t_\tau})\,,\\
\vdots\\
\h_{t_0}&:=(h_0,h_1,\ldots,h_{t_0}+1,2m,h_{t_0+2},\ldots,h_{t_\tau})\,,\\
\h_{t_0+2}&:=(h_0,h_1,\ldots,h_{t_0},2m,h_{t_0+2}+1,\ldots,h_{t_\tau})\,,\\
\vdots\\
\h_{t_0+\tau}&:=(h_0,h_1,\ldots,h_{t_0},2m,h_{t_0+2},\ldots,h_{t_\tau}+1)\,.
\end{align*}
Similar computations prove that
\begin{align*} 
\delta_{J'}^{\h'} f_{J'}&=(-1)^{a_0}\delta_{J'}^{\h_0}f_{J'}+(-1)^{a_1}\left(v_1\delta_{J'}^{\h_1}f_{J'}+\ldots+v_{t_0}\delta_{J'}^{\h_{t_0}}f_{J'}\right)\\
&\quad+(-1)^{a_2}\delta_{J'}^{\h_{t_0+2}}f_{J'}+\ldots+(-1)^{a_\tau}\delta_{J'}^{\h_{t_0+\tau}}f_{J'}\,.
\end{align*}
For any $s\in\{0,\ldots,t_0,t_0+2,\ldots,t_\tau\}$, since the $(t_0+1)$-component of $\h_s$ equals $2m$, our induction hypothesis guarantees that the functions $\delta_J^{\h_s}f_J$ and $\delta_{J'}^{\h_s}f_{J'}$ coincide in $\Omega_J\cap\Omega_{J'}$. We immediately conclude that $\delta_J^{\h'} f_J$ and $\delta_{J'}^{\h'} f_{J'}$ coincide in $\Omega_J\cap\Omega_{J'}$. To complete the current induction step, we set $\h'':=(h_0,\ldots,h_{t_0},2m+2,h_{t_0+2},\ldots,h_{t_\tau})$ and compute
\begin{align*} 
\delta_J^{\h''} f_J&=-\partial_{\beta_1}^2\,\delta_J^\h\,f_J=\delta_J^\h\,(-\partial_{\beta_1}^2)\,f_J\\	
&=\delta_J^{\h}\,(\partial_{x_0}^2+\partial_{x_1}^2+\ldots+\partial_{x_{t_0}}^2+\partial_{\beta_2}^2+\ldots+\partial_{\beta_\tau}^2)\,f_J\\
&=\delta_J^{\k_0}f_J+\delta_J^{\k_1}f_J+\ldots+\delta_J^{\k_{t_0}}f_J-\delta_J^{\k_{t_0+2}}f_J-\ldots-\delta_J^{\k_{t_0+\tau}}f_J\,,
\end{align*}
where
\begin{align*}
\k_0&:=(h_0+2,h_1,\ldots,h_{t_0},2m,h_{t_0+2},\ldots,h_{t_\tau})\,,\\
\k_1&:=(h_0,h_1+2,\ldots,h_{t_0},2m,h_{t_0+2},\ldots,h_{t_\tau})\,,\\
\vdots\\
\k_{t_0}&:=(h_0,h_1,\ldots,h_{t_0}+2,2m,h_{t_0+2},\ldots,h_{t_\tau})\,,\\
\k_{t_0+2}&:=(h_0,h_1,\ldots,h_{t_0},2m,h_{t_0+2}+2,\ldots,h_{t_\tau})\,,\\
\vdots\\
\k_{t_0+\tau}&:=(h_0,h_1,\ldots,h_{t_0},2m,h_{t_0+2},\ldots,h_{t_\tau}+2)\,.
\end{align*}
Similar computations prove that
\begin{align*} 
\delta_{J'}^{\h''} f_{J'}&=\delta_{J'}^{\k_0}f_{J'}+\delta_{J'}^{\k_1}f_{J'}+\ldots+\delta_{J'}^{\k_{t_0}}f_{J'}-\delta_{J'}^{\k_{t_0+2}}f_{J'}-\ldots-\delta_{J'}^{\k_{t_0+\tau}}f_{J'}\,.
\end{align*}
Again: for any $s\in\{0,\ldots,t_0,t_0+2,\ldots,t_\tau\}$, our induction hypothesis guarantees that the functions $\delta_J^{\k_s}f_J$ and $\delta_{J'}^{\k_s}f_{J'}$ coincide in $\Omega_J\cap\Omega_{J'}$. It follows that $\delta_J^{\h''} f_J$ and $\delta_{J'}^{\h''} f_{J'}$ coincide in $\Omega_J\cap\Omega_{J'}$, as desired. This completes the induction step from $\tau-1$ to $\tau\geq2$.
\end{itemize}
The thesis is now proven for all $\tau\in\nn$, as desired.
\end{proof}

We can use Lemma~\ref{lem:deltawhentauis1} once more, to establish the next result.

\begin{proposition}\label{prop:tau1}
Assume $A$ to be associative. If $\tau=1$ and $f\in\reg_T(\Omega,A)$, then
\begin{align*}
\delta^{(h_0,\ldots,h_{t_0},2m)}f&=\partial_{x_0}^{h_0}\partial_{x_1}^{h_1}\ldots\partial_{x_{t_0}}^{h_{t_0}}(\partial_{x_0}^2+\partial_{x_1}^2+\ldots+\partial_{x_{t_0}}^2)^mf\\
\delta^{(h_0,\ldots,h_{t_0},2m+1)}f&=\partial_{x_0}^{h_0}\partial_{x_1}^{h_1}\ldots\partial_{x_{t_0}}^{h_{t_0}}(\partial_{x_0}^2+\partial_{x_1}^2+\ldots+\partial_{x_{t_0}}^2)^m(\partial_{x_0}+v_1\partial_{x_1}+\ldots+v_{t_0}\partial_{x_{t_0}})f
\end{align*}
for all $h_0,\ldots,h_{t_0},m\in\nn$. As a consequence: for any $\h\in\nn^{t_0+2}$, the function $\delta^\h f_{|_{\Omega\cap\rr_{0,t_0}}}$ is completely determined by $f_{|_{\Omega\cap\rr_{0,t_0}}}$.
\end{proposition}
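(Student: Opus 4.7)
The plan is to reduce the proposition to Lemma~\ref{lem:deltawhentauis1} applied slice by slice. The crucial observation is that, for $\tau=1$, the right-hand sides of the two formulas involve only the partial derivatives $\partial_{x_0},\ldots,\partial_{x_{t_0}}$ along the mirror directions together with left multiplications by the fixed elements $v_1,\ldots,v_{t_0}$. All of these are intrinsic to $V$ and do not depend on any particular choice of $J\in\torus$.

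First, I would fix an arbitrary $J\in\torus$ and restrict both sides of the proposed equalities to $\Omega_J$. By Definition~\ref{def:delta}, $(\delta^\h f)_J=\delta_J^\h f_J$, and Lemma~\ref{lem:deltawhentauis1}, applied to the $J$-monogenic function $\phi=f_J\in\mon_J(\Omega_J,A)$, expresses $\delta_J^\h f_J$ as exactly the right-hand side of the proposed formulas, with $\phi=f_J$ in place of $f$. Since $\rr_{0,t_0}\subseteq\rr^{t_0+2}_J$, the vectors $v_0,\ldots,v_{t_0}$ are all tangent to $\rr^{t_0+2}_J$; by Remark~\ref{rmk:incrementalratio}, for every $s\in\{0,\ldots,t_0\}$ and every $g\in\mathscr{C}^1(\Omega,A)$ one has $(\partial_{x_s}g)_{|_{\Omega_J}}=\partial_{x_s}(g_{|_{\Omega_J}})$. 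Iterating this commutation through the finite composition of operators appearing in the formulas, the restriction to $\Omega_J$ of the right-hand side applied to $f$ equals the right-hand side of Lemma~\ref{lem:deltawhentauis1} applied to $f_J$. Hence the proposed equalities hold on every $\Omega_J$. Since $\tau=1$ and the decomposition~\eqref{eq:decomposedvariable} supplies, for every $x\in V$, some $J\in\torus$ with $x\in\rr^{t_0+2}_J$, we have $V=\bigcup_{J\in\torus}\rr^{t_0+2}_J$ and so $\Omega=\bigcup_{J\in\torus}\Omega_J$, so the identities hold throughout $\Omega$.

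For the final statement, note that the right-hand sides in both formulas involve only left multiplications by fixed elements of $A$ and the mirror partial derivatives $\partial_{x_s}$ with $0\leq s\leq t_0$. By Remark~\ref{rmk:incrementalratio} again, the value of $\partial_{x_s}g$ at a point $p\in\Omega\cap\rr_{0,t_0}$ is the limit of incremental ratios along the direction $v_s\in\rr_{0,t_0}$, so $(\partial_{x_s}g)_{|_{\Omega\cap\rr_{0,t_0}}}$ is determined by $g_{|_{\Omega\cap\rr_{0,t_0}}}$. Iterating this observation through the finitely many operators appearing in the explicit formulas just proven, we conclude that $(\delta^\h f)_{|_{\Omega\cap\rr_{0,t_0}}}$ is completely determined by $f_{|_{\Omega\cap\rr_{0,t_0}}}$.

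I do not anticipate a real obstacle: the only subtlety is keeping track of the two coordinate systems on $V$ and on $\rr^{t_0+2}_J$, and verifying that they agree along the mirror so that restriction commutes with the mirror derivatives; this is where the hypothesis $\tau=1$ is used, since it is what causes Lemma~\ref{lem:deltawhentauis1} to eliminate every $\partial_{\beta_1}$ in favour of mirror-direction derivatives.
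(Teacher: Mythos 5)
Your proposal is correct and follows essentially the same route as the paper's proof: apply Lemma~\ref{lem:deltawhentauis1} to each restriction $f_J\in\mon_J(\Omega_J,A)$, use the defining identity $(\delta^{\h}f)_J=\delta_J^{\h}f_J$ together with the commutation $(\partial_{x_s}f)_J=\partial_{x_s}f_J$ for $s\in\{0,\ldots,t_0\}$, and then deduce the final statement from the analogous commutation of the mirror derivatives with restriction to $\Omega\cap\rr_{0,t_0}$. No gaps.
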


\begin{proof}
The first statement follows by applying Lemma~\ref{lem:deltawhentauis1} to the function $f_J\in\mon_J(\Omega_J,A)$, if we take into account that $\delta^\h f$ is defined so that $(\delta^\h f)_J=\delta_J^\h f_J$ and that $\partial_{x_s}f_J=(\partial_{x_s}f)_J$ for all $s\in\{0,\ldots,t_0\}$. The first statement and the equalities $(\partial_{x_s}f)_{|_{\Omega\cap\rr_{0,t_0}}}=\partial_{x_s}(f_{|_{\Omega\cap\rr_{0,t_0}}})$ (valid for all $s\in\{0,\ldots,t_0\}$) yield the second statement.
\end{proof}

This proposition is consistent with~\cite{xsgeneralizedpartialslice}, which dealt with the special case with $A=C\ell(0,n),V=\rr^{n+1}$ and $\tau=1$: in that special case,~\cite[Theorem 3.27]{xsgeneralizedpartialslice} showed that $f\in\reg_T(B(0,R),A)$ is uniquely determined by its restriction to $B(0,R)\cap\rr_{0,t_0}$.

None of the phenomena described in Proposition~\ref{prop:tau1} generalizes to the case $\tau\geq2$, which is of interest here. Our forthcoming Example~\ref{ex:deltanotbasedonmirror}, where $A=C\ell(0,4),V=\rr^5$ and $\tau=2$, shows that the operator $\delta^\h$ cannot, in general, be expressed in terms of iterates of $\partial_{x_0},\partial_{x_1},\ldots,\partial_{x_{t_0}}$ only. Additionally, it shows that, for $f\in\reg_T(\Omega,A)$, the function $\delta^\h f_{|_{\Omega\cap\rr_{0,t_0}}}$ is not uniquely determined by $f_{|_{\Omega\cap\rr_{0,t_0}}}$.

\subsection{Properties of polynomial functions $\T_\k$}\label{subsec:propertiespolynomial}

We are now ready to prove that $\F_k$ is a basis of $U_k$ for any $k\in\nn$.

\begin{theorem}\label{thm:polynomialexpansion}
Assume $A$ to be associative. For every $k\in\nn$, the family $\F_k$ is a basis for $U_k$. Namely, for every $P\in U_k$,
\begin{equation}\label{eq:polynomialexpansion}
P(x)=\sum_{|\k|=k}\T_\k(x)\,c_\k,\quad c_\k:=\frac{1}{\k!}\delta^{(0,\k)}P(0)
\end{equation}
for all $x\in V$. In particular, $\delta^{(0,\k)}\T_\k(0)=1$ and $\delta^{(0,\k)}\T_{\k'}(0)=0$ when $\k\neq\k'$. 
\end{theorem}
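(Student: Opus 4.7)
The plan is to leverage Lemma~\ref{lem:PTnabladelta}, which on each slice $\rr^{t_0+\tau+1}_J$ identifies $(\T_\k)_J$ with the Fueter polynomial $\P_\k^{\B_J}$ up to a fixed constant right factor, combined with the basis property of Proposition~\ref{prop:basismonogenicpolynomials} and the slice-independence of $\delta^{(0,\k)}$ established in Theorem~\ref{thm:deltawellposed}. Concretely, I would first check that each $\T_\k$ with $|\k|=k$ belongs to $U_k$, then prove spanning by expanding slice-wise and gluing, and finally derive the Kronecker-type duality $\delta^{(0,\k')}\T_\k(0)=\k!\,\delta_{\k,\k'}$ (so that the coefficient $c_\k=\tfrac{1}{\k!}\delta^{(0,\k)}\T_{\k'}(0)$ equals $\delta_{\k,\k'}$) by the same slice-wise reduction.

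For the first point, $k$-homogeneity of $\T_\k$ follows by induction on $|\k|$ directly from the recursive definition, as each recursive step multiplies a polynomial of degree $|\k|-1$ by one of the linear factors $x_s-(-1)^a x_0 v_s$ or $x_0+(-1)^{a_s}x^{s-t_0}$. For $T$-regularity, Lemma~\ref{lem:PTnabladelta} gives $(\T_\k)_J=\P_\k^{\B_J}\,J_\tau^{k_{t_0+\tau}}\cdots J_1^{k_{t_0+1}}$, and since $\P_\k^{\B_J}$ is left-monogenic with respect to $\B_J$ (it is a Fueter polynomial for this hypercomplex basis, cf.\ Proposition~\ref{prop:basismonogenicpolynomials}) and right-multiplication by a constant of $A$ preserves $\debar_{\B_J}$, the restriction $(\T_\k)_J$ is $J$-monogenic for every $J\in\torus$.

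For spanning, fix $P\in U_k$ and an arbitrary $J\in\torus$. The restriction $P_J$ is $k$-homogeneous and $J$-monogenic, so Proposition~\ref{prop:basismonogenicpolynomials} yields
\[P_J(x)=\sum_{|\k|=k}\P_\k^{\B_J}(x)\,\tfrac{1}{\k!}\,\nabla_{\B_J}^{(0,\k)}P_J(0).\]
Substituting $\P_\k^{\B_J}=(\T_\k)_J\,(J_\tau^{k_{t_0+\tau}}\cdots J_1^{k_{t_0+1}})^{-1}$ from Lemma~\ref{lem:PTnabladelta} and recognizing the definition of $\delta_J^{(0,\k)}$ in the coefficient, this rewrites as
\[P_J(x)=\sum_{|\k|=k}(\T_\k)_J(x)\,\tfrac{1}{\k!}\,\delta_J^{(0,\k)}P_J(0).\]
Since $0\in\rr_{0,t_0}$ lies in every slice $\rr^{t_0+\tau+1}_J$, Theorem~\ref{thm:deltawellposed} ensures that $c_\k:=\tfrac{1}{\k!}\delta_J^{(0,\k)}P_J(0)$ does not depend on $J$ and equals $\tfrac{1}{\k!}\delta^{(0,\k)}P(0)$ in the sense of Definition~\ref{def:delta}. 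Because $V=\bigcup_{J\in\torus}\rr^{t_0+\tau+1}_J$, the slice-wise expansions glue to the global identity~\eqref{eq:polynomialexpansion} on all of $V$.

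For the duality at the origin, I would apply $\delta_J^{(0,\k')}$ to $(\T_{\k})_J$ and pass the constant factor $J_\tau^{k_{t_0+\tau}}\cdots J_1^{k_{t_0+1}}$ through the real differential operator $\nabla_{\B_J}^{(0,\k')}$, thereby reducing to evaluating $\nabla_{\B_J}^{(0,\k')}\P_\k^{\B_J}$; by Remark~\ref{rmk:propertiesfueterpolynomials} this is identically $\k!$ when $\k'=\k$ and identically $0$ whenever some coordinate of $\k'$ strictly exceeds the corresponding coordinate of $\k$, a situation forced whenever $|\k'|=|\k|$ and $\k'\neq\k$. Combined with the spanning step, this yields linear independence of $\F_k$ and completes the proof. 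The main delicate point is ensuring that the coefficients extracted on different slices agree and come from a single globally defined operator $\delta^{(0,\k)}$, rather than from disjoint $\delta_J^{(0,\k)}$'s; this is precisely the content of Theorem~\ref{thm:deltawellposed}, and without it the slice-wise expansions could not assemble into an identity valid on all of $V$.
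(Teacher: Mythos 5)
Your proof is correct and follows essentially the same route as the paper's: membership of each $\T_\k$ in $U_k$ and the expansion~\eqref{eq:polynomialexpansion} are both obtained by combining Lemma~\ref{lem:PTnabladelta} with Proposition~\ref{prop:basismonogenicpolynomials} and the well-posedness of $\delta^{(0,\k)}$ from Theorem~\ref{thm:deltawellposed}, the only organizational difference being that the paper proves linear independence directly from that of $\{\P_\k^{\B_J}\}_{|\k|=k}$ on a single slice rather than via your duality relations at the origin. Note that your normalization $\delta^{(0,\k')}\T_\k(0)=\k!\,\delta_{\k,\k'}$ is the correct one: the ``In particular'' clause of the statement, read literally as $\delta^{(0,\k)}\T_\k(0)=1$, is accurate only when $\k!=1$.
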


\begin{proof}
Fix $k\in \nn$. We take several steps to prove our thesis.

Let us first prove the inclusion $\F_k\subseteq U_k$. By construction, each function $\T_\k$ is a $|\k|$-homogenous polynomial. By  Lemma~\ref{lem:PTnabladelta}, for all $J\in\torus$, the restriction $(\T_\k)_J$ is a $J$-monogenic polynomial function. Thus, $\T_\k\in\reg_T(V,A)$. The desired inclusion follows.

We now aim at proving that the elements of $\F_k$ are linearly independent. For $\{c_\k\}_{|\k|=k}\subset A$, assume $P(x):=\sum_{|\k|=k}\T_\k(x)\,c_\k$ to vanish identically in $V$. By Lemma~\ref{lem:PTnabladelta}, for any $J\in\torus$, the restriction
\[P_J=\sum_{|\k|=k}(\T_\k)_J\,c_\k=\sum_{|\k|=k}\P_\k^{\B_J}\,J_\tau^{k_{t_0+\tau}}\cdots J_2^{k_{t_0+2}}\,J_1^{k_{t_0+1}}\,c_\k\]
vanishes identically in $\rr^{t_0+\tau+1}_J$. By Proposition~\ref{prop:basismonogenicpolynomials}, $J_\tau^{k_{t_0+\tau}}\cdots J_2^{k_{t_0+2}}\,J_1^{k_{t_0+1}}\,c_\k=0$ for all $\k\in\nn^{t_0+\tau}$ with $|\k|=k$. Since $J_s$ has inverse $-J_s$ for all $s\in\{1,\ldots,\tau\}$, we conclude that $c_\k=0$ for all $\k\in\nn^{t_0+\tau}$, as desired.

Let us now prove that formula~\eqref{eq:polynomialexpansion} is true for all $P\in U_k$, whence the family $\F_k$ spans $U_k$. It suffices to prove that the polynomial function $\widetilde{P}:=\sum_{|\k|=k}\T_\k\,\frac{1}{\k!}\delta^{(0,\k)}P(0)$ coincides with $P$. This is true, because Lemma~\ref{lem:PTnabladelta}, Definition~\ref{def:delta} and Proposition~\ref{prop:basismonogenicpolynomials} yield
\begin{align*}
\widetilde{P}_J&=\sum_{|\k|=k}(\T_\k)_J\,\frac{1}{\k!}\delta_J^{(0,\k)}P(0)=\sum_{|\k|=k}\P_\k^{\B_J}\,J_\tau^{k_{t_0+\tau}}\cdots J_2^{k_{t_0+2}}\,J_1^{k_{t_0+1}}\,\frac{1}{\k!}\delta_J^{(0,\k)}P(0)\\
&=\sum_{|\k|=k}\P^{\B_J}_\k\frac{1}{\k!}\nabla_{\B_J}^{(0,\k)}P_J(0)=P_J
\end{align*}
for all $J\in\torus$.
\end{proof}

We are now in a position to give an example where two $T$-regular functions coincide on the mirror but have distinct adapted partial derivatives.

\begin{example}\label{ex:deltanotbasedonmirror}
Let $A=C\ell(0,4)$, $V=\rr^5$, $\B=(e_\emptyset,e_1,e_2,e_3,e_4)$, $T=(0,2,4)$ (whence $\tau=2$). Within $\reg_{(0,2,4)}(\rr^5,C\ell(0,4))$, let us consider $\F_1=\{\T_{\epsilon_1},\T_{\epsilon_2}\}$, where $\epsilon_1=(1,0)$ and $\epsilon_2=(0,1)$ and $\T_{\epsilon_1},\T_{\epsilon_2}$ are the $(0,2,4)$-Cullen variables $\T_{\epsilon_1}(x)=x_0+x^1=x_0+x_1e_1+x_2e_2$, $\T_{\epsilon_2}(x)=x_0+x^2=x_0+x_3e_3+x_4e_4$. While $\delta^{(1,0,0)}\T_{\epsilon_1}\equiv1\equiv\delta^{(1,0,0)}\T_{\epsilon_2}$, Theorem~\ref{thm:polynomialexpansion} guarantees that
\begin{align*}
\delta^{(0,\epsilon_1)}\T_{\epsilon_1}\equiv1\neq0\equiv\delta^{(0,\epsilon_1)}\T_{\epsilon_2}\\
\delta^{(0,\epsilon_2)}\T_{\epsilon_1}\equiv0\neq1\equiv\delta^{(0,\epsilon_2)}\T_{\epsilon_2}
\end{align*}
despite the fact that $\T_{\epsilon_1}(x)=x_0=\T_{\epsilon_2}(x)$ for all $x$ in the mirror $\rr_{0,0}=\rr$.
\end{example}

Before concluding this section, we use the family $\F_1$ to understand which $T,\widetilde T$ produce $\reg_T(\Omega,A)=\reg_{\widetilde T}(\Omega,A)$.

\begin{example}
We saw in Example~\ref{ex:quaternions3} that, for functions $\hh\to\hh$, $T$-regularity is Fueter-regularity exactly when $T\in\{(3),(2,3),(1,2,3),(0,1,2,3)\}$, while $T$-regularity is slice-regularity if, and only if, $T=(0,3)$. We also saw that $T=(1,3)$ yields the same class as $T=(0,1,3)$.
\end{example}

The phenomenon appearing in the previous example is consistent with the following fact. If $T=(t_0,t_1,\ldots,t_\tau)$ with $t_0>0$ and if we set $\widetilde T=(\widetilde t_0,\widetilde t_1,\ldots,\widetilde t_{\widetilde \tau})=(t_0-1,t_0,t_1,\ldots,t_\tau)$ (whence $\widetilde \tau=\tau+1$), then:
\begin{itemize}
\item the $T$-Fueter variables, excluding the last one, are exactly the $\widetilde T$-Fueter variables;
\item the last $T$-Fueter variable, namely $\T_{\epsilon_{t_0}}(x)=x_{t_0}-x_0v_{t_0}=(x_0+x_{t_0}v_{t_0})(-v_{t_0})$, coincides, up to the multiplicative constant $-v_{t_0}$, with the first $\widetilde T$-Cullen variable, i.e., $\widetilde \T_{\epsilon_{\widetilde t_0+1}}(x)=x_0+x_{\widetilde t_0+1}v_{\widetilde t_0+1}=x_0+x_{t_0}v_{t_0}$;
\item the $T$-Cullen variables are exactly the $\widetilde T$-Cullen variables, first one excluded.
\end{itemize}
We are going to prove that this mechanism, along with its iterations, is the only way to produce from $T$ a longer $\widetilde T$ such that $\reg_T(\Omega,A)=\reg_{\widetilde T}(\Omega,A)$. This will be a corollary to the next theorem.

\begin{theorem}\label{thm:fuetervscullen}
Assume $A$ to be associative. Let $\tau,\widetilde\tau\in\{0,\ldots,n\}$ and let $T=(t_0,\ldots,t_\tau)$ and $\widetilde T=(\widetilde t_0,\ldots,\widetilde t_{\widetilde\tau})$ be two lists of steps for $V$.
\begin{enumerate}
\item For $1\leq s\leq t_0$, the $T$-Fueter variable $\T_{\epsilon_s}(x)=x_s-x_0v_s$ is $\widetilde T$-regular if, and only if, either $s\leq\widetilde t_0$ or $\widetilde t_{u-1}+1=s=\widetilde t_u$ for some $u\geq1$. 
\item For $1\leq s\leq\tau$, consider the $T$-Cullen variable $\T_{\epsilon_{t_0+s}}(x)=x_0+x^s=x_0+v_{t_{s-1}+1}x_{t_{s-1}+1}+\ldots+v_{t_s}x_{t_s}$. Then $\T_{\epsilon_{t_0+s}}$ is $\widetilde T$-regular if, and only if, either $t_{s-1}+1=t_s\leq\widetilde t_0$ or there exists $u\in\{1,\ldots,\widetilde \tau\}$ such that $(t_{s-1},t_s)=(\widetilde t_{u-1},\widetilde t_u)$.
\end{enumerate}
\end{theorem}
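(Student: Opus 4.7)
The strategy is to test $\widetilde T$-regularity directly: fix an arbitrary $\widetilde J=(\widetilde J_1,\ldots,\widetilde J_{\widetilde\tau})\in\widetilde\torus$, restrict the given degree-one polynomial to the slice $\rr^{\widetilde t_0+\widetilde\tau+1}_{\widetilde J}$, and apply $\debar_{\widetilde J}$. Using the decomposition~\eqref{eq:decomposedvariable} of a generic $x$ in the slice, each coordinate $x_\ell$ of $V$ restricts to a linear function in the slice coordinates $(x_0,x_1,\ldots,x_{\widetilde t_0},\widetilde\beta_1,\ldots,\widetilde\beta_{\widetilde\tau})$: $x_\ell$ stays a free variable when $\ell\leq\widetilde t_0$, while $x_\ell=\widetilde\beta_u(\widetilde J_u)_\ell$ when $\widetilde t_{u-1}+1\leq\ell\leq\widetilde t_u$, where $(\widetilde J_u)_\ell\in\rr$ denotes the $v_\ell$-component of $\widetilde J_u$. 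For any linear function $f=\sum_r z_r a_r$ of the slice variables $(z_r)$ with coefficients $a_r\in A$, the operator $\debar_{\widetilde J}$ returns $\sum_r w_r a_r$, where $w_r$ is the element of the hypercomplex basis $\widetilde\B_{\widetilde J}$ paired with $z_r$. Since our polynomials have degree one, $\widetilde T$-regularity is equivalent to the vanishing of $\debar_{\widetilde J}$ applied to the restriction, uniformly in $\widetilde J$.

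For the Fueter variable $\T_{\epsilon_s}(x)=x_s-x_0v_s$, this yields $\debar_{\widetilde J}\T_{\epsilon_s}=v_s-v_s=0$ when $s\leq\widetilde t_0$, and $\debar_{\widetilde J}\T_{\epsilon_s}=\widetilde J_u\,(\widetilde J_u)_s-v_s$ when $s$ lies in the $u$-th $\widetilde T$-block $[\widetilde t_{u-1}+1,\widetilde t_u]$. The latter vanishes for every admissible $\widetilde J_u$ if and only if that block is the singleton $\{s\}$ (i.e.\ $\widetilde t_{u-1}+1=s=\widetilde t_u$): if the block has dimension $\geq 2$, one can take $\widetilde J_u\in\s_{\widetilde t_{u-1}+1,\widetilde t_u}$ orthogonal to $v_s$, so $(\widetilde J_u)_s=0$ and the expression reduces to $-v_s\neq 0$; whereas if the block is $\{s\}$ then $\widetilde J_u=\pm v_s$ and $\widetilde J_u\,(\widetilde J_u)_s=v_s$.

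For the Cullen variable $\T_{\epsilon_{t_0+s}}(x)=x_0+\sum_{\ell\in I_s}v_\ell x_\ell$, with $I_s:=[t_{s-1}+1,t_s]$, the analogous computation produces
\[
\debar_{\widetilde J}\T_{\epsilon_{t_0+s}}
 = 1-r+\sum_{u\in U}\widetilde J_u\sum_{\ell\in I_s^u}v_\ell\,(\widetilde J_u)_\ell,
\]
where $r:=|I_s\cap\{1,\ldots,\widetilde t_0\}|$, $I_s^u:=I_s\cap[\widetilde t_{u-1}+1,\widetilde t_u]$, and $U:=\{u\geq 1:I_s^u\neq\emptyset\}$. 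I would then split the analysis. If some $u_0\in U$ satisfies $I_s^{u_0}\subsetneq[\widetilde t_{u_0-1}+1,\widetilde t_{u_0}]$, pick $\ell_0\in I_s^{u_0}$ and $\ell_1\in[\widetilde t_{u_0-1}+1,\widetilde t_{u_0}]\setminus I_s$; keeping the remaining $\widetilde J_u$'s fixed, the choice $\widetilde J_{u_0}=v_{\ell_0}$ contributes $v_{\ell_0}^2=-1$ from the $u_0$-term, while $\widetilde J_{u_0}=v_{\ell_1}$ contributes $0$, so the expression cannot vanish for both choices. Otherwise each $u\in U$ has $I_s^u=[\widetilde t_{u-1}+1,\widetilde t_u]$, hence $\sum_\ell v_\ell(\widetilde J_u)_\ell=\widetilde J_u$ and $\widetilde J_u\widetilde J_u=-1$; the expression collapses to the constant $1-r-|U|$, which vanishes precisely when $r+|U|=1$. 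Since $I_s$ is a consecutive interval of integers, this leaves two possibilities: $(r,|U|)=(1,0)$, forcing $I_s=\{t_s\}$ with $t_s\leq\widetilde t_0$; and $(r,|U|)=(0,1)$, forcing $I_s$ to coincide with a single $\widetilde T$-block, so that $(t_{s-1},t_s)=(\widetilde t_{u-1},\widetilde t_u)$. These are exactly the two stated conditions. The one subtle point is the first subcase of the Cullen analysis, where one must carefully isolate the $u_0$-contribution and exhibit two test values of $\widetilde J_{u_0}$ whose contributions differ by a nonzero real constant.
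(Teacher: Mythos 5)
Your proposal is correct. Its first half (the Fueter variables) coincides with the paper's own argument: the dichotomy between a singleton $\widetilde T$-block and a block of dimension at least two, with the choice of $\widetilde J_u$ orthogonal to $v_s$ in the latter case, is exactly what the paper does. For the Cullen variables the underlying computation is also the same --- the paper likewise evaluates $\debar_{\widetilde J}(\T_{\epsilon_{t_0+s}})_{\widetilde J}$ as $1$ plus contributions of the form $\widetilde J_u$ times the projection of $\widetilde J_u$ onto $\rr_{t_{s-1}+1,t_s}$ --- but you organize the case analysis differently and more economically. The paper splits according to the position of $\widetilde t_0$ relative to $t_{s-1}$ and $t_s$ (three cases, the last with three sub-cases) and, in each non-regular situation, exhibits one explicit $\widetilde J$ making the expression a strictly negative integer. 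You instead work with the single formula $1-r+\sum_{u\in U}\widetilde J_u\sum_{\ell\in I_s^u}v_\ell(\widetilde J_u)_\ell$ and split only on whether $I_s$ cuts some $\widetilde T$-block properly. In the proper-cut case your ``two test values'' perturbation of $\widetilde J_{u_0}$ (comparing $v_{\ell_0}$ against $v_{\ell_1}$, whose $u_0$-contributions differ by the nonzero real $-1$) replaces the paper's explicit negative-value computations and simultaneously absorbs the paper's case $t_{s-1}+1\leq\widetilde t_0<t_s$ and its sub-case $\rr_{t_{s-1}+1,t_s}\subsetneq\rr_{\widetilde t_{u-1}+1,\widetilde t_u}$; in the complementary case the expression collapses to the constant $1-r-|U|$ and the count $r+|U|=1$ recovers exactly the two stated alternatives, using (as you note) that $I_s$ is an interval and that $\{1,\ldots,\widetilde t_0\}$ together with the $\widetilde T$-blocks partition $\{1,\ldots,n\}$. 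The only point worth making explicit in a write-up is that the components of $\widetilde J\in\widetilde\torus$ can be varied independently, since the $\widetilde T$-torus is a product of spheres; this is what legitimizes freezing all $\widetilde J_u$ with $u\neq u_0$ in the perturbation step.
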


\begin{proof}
Take any $J$ in the $\widetilde T$-torus $\widetilde\torus$ and consider the operator $\debar_J=\partial_{x_0}+v_1\partial_{x_1}+\ldots+v_{\widetilde t_0}\partial_{x_{\widetilde t_0}}+J_1\partial_{\beta_1}+\ldots+J_{\widetilde \tau}\partial_{\beta_{\widetilde \tau}}$ associated to $\widetilde T$.
\begin{enumerate}
\item We first deal, for $1\leq s\leq t_0$, with the $T$-Fueter variable $\T_{\epsilon_s}(x)=x_s-x_0v_s$. We separate three cases.
\begin{enumerate}
\item If $s\leq\widetilde t_0$, then for any $J\in\widetilde\torus$ we have
\[(\T_{\epsilon_s})_J(x_0+v_1x_1+\ldots+v_{\widetilde t_0}x_{\widetilde t_0}+J_1\beta_1+\ldots+J_{\widetilde \tau}\beta_{\widetilde \tau})=x_s-x_0v_s\,,\]
whence
\[\debar_J(\T_{\epsilon_s})_J=(\partial_{x_0}+v_s\partial_{x_s})(x_s-x_0v_s)\equiv v_s-v_s=0\,.\]
\item If there exists $u\geq1$ such that $\widetilde t_{u-1}+1=s=\widetilde t_u$, then $\s_{\widetilde t_{u-1}+1,\widetilde t_u}=\{\pm v_s\}$ and, for any $J\in\widetilde\torus$, we have $J_u=\pm v_s$ and
\[(\T_{\epsilon_s})_J(x_0+v_1x_1+\ldots+v_{\widetilde t_0}x_{\widetilde t_0}+J_1\beta_1+\ldots+J_{\widetilde \tau}\beta_{\widetilde \tau})=\pm\beta_u-x_0v_s\,,\]
whence
\[\debar_J(\T_{\epsilon_s})_J=(\partial_{x_0}+J_u\partial_{\beta_u})(\pm\beta_u-x_0v_s)\equiv \pm J_u-v_s=0\,.\]
\item If there exists $u\geq1$ such that $\widetilde t_{u-1}+1\leq s\leq\widetilde t_u$ and $\widetilde t_{u-1}+1<\widetilde t_u$, then $\s_{\widetilde t_{u-1}+1,\widetilde t_u}$ is a sphere of dimension at least $1$. Picking $J\in\widetilde\torus$ with $J_u\perp v_s$, we obtain
\[(\T_{\epsilon_s})_J(x_0+v_1x_1+\ldots+v_{\widetilde t_0}x_{\widetilde t_0}+J_1\beta_1+\ldots+J_{\widetilde \tau}\beta_{\widetilde \tau})=-x_0v_s\,,\]
whence
\[\debar_J(\T_{\epsilon_s})_J=\partial_{x_0}(-x_0v_s)\equiv-v_s\neq0\,.\]
\end{enumerate}
\item We now deal with the $T$-Cullen variable $\T_{\epsilon_{t_0+s}}(x)=x_0+x^s=x_0+v_{t_{s-1}+1}x_{t_{s-1}+1}+\ldots+v_{t_s}x_{t_s}$ for $1\leq s\leq\tau$. We separate three cases.
\begin{enumerate}
\item If $t_s\leq\widetilde t_0$, then for any $J\in\widetilde\torus$ we have
\[(\T_{\epsilon_{t_0+s}})_J(x_0+v_1x_1+\ldots+v_{\widetilde t_0}x_{\widetilde t_0}+J_1\beta_1+\ldots+J_{\widetilde \tau}\beta_{\widetilde \tau})=x_0+v_{t_{s-1}+1}x_{t_{s-1}+1}+\ldots+v_{t_s}x_{t_s}\,,\]
whence
\[\debar_J(\T_{\epsilon_{t_0+s}})_J\equiv 1+v_{t_{s-1}+1}^2+\ldots+v_{t_s}^2=1-(t_s-t_{s-1})=t_{s-1}+1-t_s\,.\]
Thus, $\T_{\epsilon_{t_0+s}}$ is $\widetilde T$-regular with $t_s\leq\widetilde t_0$ if, and only if, $t_{s-1}+1=t_s$.
\item If $t_{s-1}+1\leq\widetilde t_0<t_s$, then for any $J\in\widetilde\torus$ we have
\begin{align*}
&(\T_{\epsilon_{t_0+s}})_J(x_0+v_1x_1+\ldots+v_{\widetilde t_0}x_{\widetilde t_0}+J_1\beta_1+\ldots+J_{\widetilde \tau}\beta_{\widetilde \tau})\\
&=x_0+v_{t_{s-1}+1}x_{t_{s-1}+1}+\ldots+v_{\widetilde t_0}x_{\widetilde t_0}\\
&\quad+v_{\widetilde t_0+1}\langle v_{\widetilde t_0+1},J_1\beta_1+\ldots+J_{\widetilde \tau}\beta_{\widetilde \tau}\rangle+\ldots
+v_{t_s}\langle v_{t_s},J_1\beta_1+\ldots+J_{\widetilde \tau}\beta_{\widetilde \tau}\rangle\,,
\end{align*}
whence
\begin{align*}
\debar_J(\T_{\epsilon_{t_0+s}})_J&\equiv 1+v_{t_{s-1}+1}^2+\ldots+v_{\widetilde t_0}^2+J_1\left(v_{\widetilde t_0+1}\langle v_{\widetilde t_0+1},J_1\rangle+\ldots+v_{t_s}\langle v_{t_s},J_1\rangle\right)\\
&\quad+\ldots+J_{\widetilde\tau}\left(v_{\widetilde t_0+1}\langle v_{\widetilde t_0+1},J_{\widetilde\tau}\rangle+\ldots+v_{t_s}\langle v_{t_s},J_{\widetilde\tau}\rangle\right)\\
&=t_{s-1}+1-\widetilde t_0+J_1\,\mbox{proj}_{\rr_{\widetilde t_0+1,t_s}}(J_1)+\ldots+J_{\widetilde\tau}\,\mbox{proj}_{\rr_{\widetilde t_0+1,t_s}}(J_{\widetilde\tau})\\
&=t_{s-1}+1-\widetilde t_0+J_1\,\mbox{proj}_{\rr_{\widetilde t_0+1,t_s}}(J_1)+\ldots+J_u\,\mbox{proj}_{\rr_{\widetilde t_0+1,t_s}}(J_u)\,,
\end{align*}
where $u$ is the maximal element of $\{1,\ldots,\widetilde\tau\}$ such that $t_s\geq\widetilde t_{u-1}+1$. Let us choose $J\in\torus$ such that $J_1=v_{\widetilde t_0+1},\ldots,J_u=v_{\widetilde t_0+u}$: then for all $s\in\{1,\ldots,u\}$ we have $J_s\in\rr_{\widetilde t_0+1,t_s}$, whence $J_s\,\mbox{proj}_{\rr_{\widetilde t_0+1,t_s}}(J_s)=J_s\,J_s=-1$. With this choice,
\[\debar_J(\T_{\epsilon_{t_0+s}})_J=t_{s-1}+1-\widetilde t_0+J_1^2+\ldots +J_u^2=t_{s-1}+1-\widetilde t_0-u\leq -u<0\,.\]
Therefore, $\T_{\epsilon_{t_0+s}}$ is never $\widetilde T$-regular when $t_{s-1}+1\leq\widetilde t_0<t_s$.
\item If $\widetilde t_0\leq t_{s-1}$, then for any $J\in\widetilde\torus$ we have
\begin{align*}
&(\T_{\epsilon_{t_0+s}})_J(x_0+v_1x_1+\ldots+v_{\widetilde t_0}x_{\widetilde t_0}+J_1\beta_1+\ldots+J_{\widetilde \tau}\beta_{\widetilde \tau})\\
&=x_0+v_{t_{s-1}+1}\langle v_{t_{s-1}+1},J_1\beta_1+\ldots+J_{\widetilde \tau}\beta_{\widetilde \tau}\rangle+\ldots
+v_{t_s}\langle v_{t_s},J_1\beta_1+\ldots+J_{\widetilde \tau}\beta_{\widetilde \tau}\rangle\,,
\end{align*}
whence
\begin{align*}
\debar_J(\T_{\epsilon_{t_0+s}})_J&\equiv 1+J_1\left(v_{t_{s-1}+1}\langle v_{t_{s-1}+1},J_1\rangle+\ldots+v_{t_s}\langle v_{t_s},J_1\rangle\right)\\
&\quad+\ldots+J_{\widetilde \tau}\left(v_{t_{s-1}+1}\langle v_{t_{s-1}+1},J_{\widetilde \tau}\rangle+\ldots+v_{t_s}\langle v_{t_s},J_{\widetilde \tau}\rangle\right)\\
&=1+J_1\,\mbox{proj}_{\rr_{t_{s-1}+1,t_s}}(J_1)+\ldots+J_{\widetilde \tau}\,\mbox{proj}_{\rr_{t_{s-1}+1,t_s}}(J_{\widetilde \tau})\,.
\end{align*}
We separate three sub-cases.
\begin{enumerate}
\item[(c.1)] If $\rr_{t_{s-1}+1,t_s}=\rr_{\widetilde t_{u-1}+1,\widetilde t_u}$ for some $u\in\{1,\ldots,\widetilde \tau\}$, then
\[\debar_J(\T_{\epsilon_{t_0+s}})_J\equiv1+J_u\,\mbox{proj}_{\rr_{t_{s-1}+1,t_s}}(J_u)=1+J_u^2=0\,.\]
Therefore, $\T_{\epsilon_{t_0+s}}$ is $\widetilde T$-regular when $(t_{s-1},t_s)=(\widetilde t_{u-1},\widetilde t_u)$ for some $u\in\{1,\ldots,\widetilde \tau\}$.
\item[(c.2)] If $\rr_{t_{s-1}+1,t_s}\subsetneq\rr_{\widetilde t_{u-1}+1,\widetilde t_u}$ for some $u\in\{1,\ldots,\widetilde \tau\}$, then
\[\debar_J(\T_{\epsilon_{t_0+s}})_J\equiv1+J_u\,\mbox{proj}_{\rr_{t_{s-1}+1,t_s}}(J_u)\]
equals $1$ if we choose $J_u\in\s_{\widetilde t_{u-1}+1,\widetilde t_u}$ with $J_u\perp\rr_{t_{s-1}+1,t_s}$. Therefore, $\T_{\epsilon_{t_0+s}}$ is not $\widetilde T$-regular when there exists $u\in\{1,\ldots,\widetilde \tau\}$ such that $\widetilde t_{u-1}<t_{s-1}<t_s\leq\widetilde t_u$ or $\widetilde t_{u-1}\leq t_{s-1}<t_s<\widetilde t_u$.
\item[(c.3)] Assume now that, for all $u\in\{1,\ldots,\widetilde \tau\}$, the subspace $\rr_{t_{s-1}+1,t_s}$ is not contained in $\rr_{\widetilde t_{u-1}+1,\widetilde t_u}$. As a consequence, $v_{t_{s-1}+1}\in\rr_{\widetilde t_{u-1}+1,\widetilde t_u}$ and $v_{t_s}\in\rr_{\widetilde t_{w-1}+1,\widetilde t_w}$ for some $u,w\in\{1,\ldots,\widetilde \tau\}$ with $u<w$ and
\[\debar_J(\T_{\epsilon_{t_0+s}})_J\equiv1+J_u\,\mbox{proj}_{\rr_{t_{s-1}+1,t_s}}(J_u)+J_{u+1}^2+\ldots+J_{w-1}^2+J_w\,\mbox{proj}_{\rr_{t_{s-1}+1,t_s}}(J_w)\,.\]
Here, the sum $J_{u+1}^2+\ldots+J_{w-1}^2$ should be read as $0$ when $u+1=w$. Let us choose $J\in\torus$ with $J_u=v_{t_{s-1}+1},J_w=v_{t_s}$: then
\[\debar_J(\T_{\epsilon_{t_0+s}})_J\equiv1+v_{t_{s-1}+1}^2+J_{u+1}^2+\ldots+J_{w-1}^2+v_{t_s}^2=1-(w-u+1)=u-w<0\,.\]
Therefore, $\T_{\epsilon_{t_0+s}}$ is not $\widetilde T$-regular when there exist $u,w\in\{1,\ldots,\widetilde \tau\}$ with $u<w$ such that $\widetilde t_{u-1}\leq t_{s-1}<\widetilde t_u$ and $\widetilde t_{w-1}\leq t_s<\widetilde t_w$.\qedhere
\end{enumerate}
\end{enumerate}
\end{enumerate}
\end{proof}

We are now ready for the announced classification.

\begin{corollary}\label{cor:classification}
Assume $A$ to be associative. Let $\tau,\widetilde\tau\in\{0,\ldots,n\}$ and let $T=(t_0,\ldots,t_\tau)$ and $\widetilde T=(\widetilde t_0,\ldots,\widetilde t_{\widetilde\tau})$ be two lists of steps for $V$. The inclusion $\reg_T(\Omega,A)\subseteq\reg_{\widetilde T}(\Omega,A)$ is equivalent to the equality $\reg_T(\Omega,A)=\reg_{\widetilde T}(\Omega,A)$ and to the following property: one among the lists $T,\widetilde T$ comprises the other, possibly preceded by some steps of the form $(m,m+1)$.
\end{corollary}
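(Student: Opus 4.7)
The plan is to establish the cycle $(3) \Rightarrow (2) \Rightarrow (1) \Rightarrow (3)$, with the middle implication being trivial. The two key ingredients will be Theorem~\ref{thm:polynomialexpansion} (so that every $T$-Fueter and $T$-Cullen variable is $T$-regular) and Theorem~\ref{thm:fuetervscullen} (which decides exactly when each such variable is $\widetilde T$-regular).

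For $(3) \Rightarrow (2)$, I would assume without loss of generality that $\widetilde T$ has the form $(t_0 - p,\, t_0 - p + 1,\, \ldots,\, t_0 - 1,\, t_0,\, t_1,\, \ldots,\, t_\tau)$ for some $p \geq 0$. Then for every $\widetilde J \in \widetilde\torus$ the first $p$ components lie in the two-point spheres $\s_{\widetilde t_{j-1}+1,\widetilde t_j} = \{\pm v_{t_0 - p + j}\}$, so setting $J := (\widetilde J_{p+1}, \ldots, \widetilde J_{\widetilde\tau}) \in \torus$ one obtains $\rr^{\widetilde t_0 + \widetilde\tau + 1}_{\widetilde J} = \rr^{t_0 + \tau + 1}_{J}$, and the hypercomplex bases $\B_{\widetilde J}, \B_J$ differ only by signs of individual vectors. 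Since Remark~\ref{rmk:incrementalratio} shows that sign flips of a basis vector $v_s$ simultaneously reverse $\partial_s$, the operators $\debar_{\widetilde J}$ and $\debar_J$ coincide, so $\widetilde J$-monogenicity and $J$-monogenicity on the common slice describe the same condition. The extension of any $J \in \torus$ to some $\widetilde J \in \widetilde\torus$ (for instance, by $\widetilde J_j := v_{t_0 - p + j}$ for $j \leq p$) completes the bijective correspondence, yielding $\reg_T(\Omega, A) = \reg_{\widetilde T}(\Omega, A)$.

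The delicate direction is $(1) \Rightarrow (3)$. Since every $T$-Fueter and $T$-Cullen variable lies in $\reg_T(\Omega, A) \subseteq \reg_{\widetilde T}(\Omega, A)$, Theorem~\ref{thm:fuetervscullen} yields, for each $s \in \{1, \ldots, \tau\}$, a Cullen dichotomy (C1) $t_{s-1}+1 = t_s \leq \widetilde t_0$ or (C2) $(t_{s-1}, t_s) = (\widetilde t_{u-1}, \widetilde t_u)$ for some $u \geq 1$; and for each $s \in \{1, \ldots, t_0\}$, a Fueter dichotomy (F1) $s \leq \widetilde t_0$ or (F2) $s = \widetilde t_u = \widetilde t_{u-1}+1$ for some $u \geq 1$. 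The combinatorial heart of the argument is to show that $\Lambda := \{s : (\text{C2}) \text{ holds}\}$ is a contiguous tail of $\{1, \ldots, \tau\}$: any (C2)-index $s$ has $t_s = \widetilde t_{u(s)} \geq \widetilde t_0 + 1$, whereas any (C1)-index $s'$ has $t_{s'} \leq \widetilde t_0$, so a (C1)-index cannot succeed a (C2)-index; moreover, consecutive members of $\Lambda$ must produce consecutive matching indices in $\widetilde T$, since $\widetilde t_{u(s)} = t_s = \widetilde t_{u(s+1)-1}$.

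With this structure in place, three subcases conclude the proof. If $\tau \notin \Lambda$ (vacuously so when $\tau = 0$), combining (C1) at $s = \tau$ with the Fueter dichotomy forces $\widetilde T = (n)$ and $T = (t_0, t_0+1, \ldots, n)$, so that $T$ comprises $\widetilde T$ preceded by consecutive $(m, m+1)$ steps. If $\tau \in \Lambda$ with first element $s_0 \geq 2$, the (C1) chain below $s_0$ combined with the boundary matching forces $\widetilde t_0 = t_{s_0-1}$, and $T$ comprises $\widetilde T$ preceded by the consecutive steps $t_0, t_0+1, \ldots, \widetilde t_0 - 1$. If $s_0 = 1$, the matching on all of $\Lambda$ gives $\widetilde t_{k+i} = t_i$ for $i = 0, \ldots, \tau$ with $k := \widetilde\tau - \tau \geq 0$; here the Fueter dichotomy becomes essential, since the indices $s \in \{\widetilde t_0 + 1, \ldots, t_0\}$ all fail (F1) and thus satisfy (F2), which inductively forces $\widetilde t_j = \widetilde t_0 + j$ for $j \leq k$, exhibiting $\widetilde T$ as $T$ preceded by consecutive $(m, m+1)$ steps. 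The main obstacle is this last subcase, where the Fueter constraints must be carefully invoked to pin down the prepended elements as an exact arithmetic progression of step size one.
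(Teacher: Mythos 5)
Your overall strategy coincides with the paper's: both directions hinge on Theorem~\ref{thm:fuetervscullen} applied to the elements of $\F_1$, and the passage from the structural property to the equality of function classes rests on the observation, already contained in~\eqref{eq:CRwellposed}, that replacing a mirror basis vector $v_s$ by the two-point sphere $\{\pm v_s\}$ leaves the Cauchy--Riemann operators unchanged. Your bookkeeping via the tail $\Lambda$ is a legitimate reorganization of the paper's dichotomy $\widetilde t_0<t_0$ versus $t_0\le\widetilde t_0$, and subcases B and C are correct.

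There is, however, a concrete error in subcase A. When $\tau=0$ the Cullen dichotomy is vacuous (there is no index $s=\tau$ in $\{1,\ldots,\tau\}=\emptyset$), so nothing forces $\widetilde t_0=n$, and the conclusion ``$\widetilde T=(n)$'' is false in general. Take $A=\hh$, $T=(3)$ and $\widetilde T=(2,3)$: the inclusion $\reg_{(3)}(\Omega,\hh)\subseteq\reg_{(2,3)}(\Omega,\hh)$ holds (both classes are the Fueter-regular functions, see Example~\ref{ex:quaternions3}), yet $\widetilde T\neq(3)$, and it is $\widetilde T$ that comprises $T$ preceded by a step of the form $(m,m+1)$ --- the opposite of what your subcase A asserts. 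The repair uses only the Fueter dichotomy you already set up: for $T=(n)$ every $s\in\{\widetilde t_0+1,\ldots,n\}$ fails (F1), hence satisfies (F2), which inductively forces $\widetilde t_j=\widetilde t_0+j$ and $\widetilde\tau=n-\widetilde t_0$, so $\widetilde T=(\widetilde t_0,\widetilde t_0+1,\ldots,n)$ comprises $T=(n)$ preceded by consecutive steps. With this boundary case handled separately (and the conclusion stated in the symmetric form the corollary actually asserts), your argument goes through and is essentially the paper's.
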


\begin{proof}
Let us assume $\reg_T(\Omega,A)\subseteq\reg_{\widetilde T}(\Omega,A)$ and apply Theorem~\ref{thm:fuetervscullen}.
\begin{itemize}
\item Assume $\widetilde t_0<t_0$. Since the last $t_0-\widetilde t_0$ $T$-Fueter variables $\T_{\epsilon_{\widetilde t_0+1}},\ldots,\T_{\epsilon_{t_0}}$ are $\widetilde T$-regular, property {\it 1} in Theorem~\ref{thm:fuetervscullen} yields, for the first $t_0-\widetilde t_0+1$ elements of $\widetilde T$:
\[\left(\widetilde t_0,\widetilde t_1,\widetilde t_2,\ldots,\widetilde t_{t_0-\widetilde t_0-2},\widetilde t_{t_0-\widetilde t_0-1},\widetilde t_{t_0-\widetilde t_0}\right)=\left(\widetilde t_0,\widetilde t_0+1,\widetilde t_0+2,\ldots,t_0-2,t_0-1,t_0\right)\,.\]
Since the $T$-Cullen variables $\T_{\epsilon_{t_0+1}},\ldots,\T_{\epsilon_{t_0+\tau}}$ are $\widetilde T$-regular, property {\it 2} in Theorem~\ref{thm:fuetervscullen} yields that $\widetilde T$ includes the whole list $\left(t_0,t_1,\ldots,t_\tau\right)$. Taking into account that $\widetilde t_0<\widetilde t_1<\ldots<\widetilde t_{\widetilde\tau}=n$ and that $t_\tau=n$, it follows immediately that $\widetilde\tau=\tau+t_0-\widetilde t_0$ and that
\begin{align*}
\widetilde T&=\left(\widetilde t_0,\widetilde t_1,\widetilde t_2,\ldots,\widetilde t_{t_0-\widetilde t_0-2},\widetilde t_{t_0-\widetilde t_0-1},\widetilde t_{t_0-\widetilde t_0},\widetilde t_{t_0-\widetilde t_0+1},\ldots,\widetilde t_{t_0-\widetilde t_0+\tau}\right)\\
&=\left(\widetilde t_0,\widetilde t_0+1,\widetilde t_0+2,\ldots,t_0-2,t_0-1,t_0,t_1,\ldots,t_\tau\right)\,.
\end{align*}
In other words: the list $\widetilde T$ comprises some steps of the form $(m,m+1)$, juxtaposed with the whole list $T$. In such a case, \[\widetilde\torus=\{\pm v_{\widetilde t_0+1}\}\times\{\pm v_{\widetilde t_0+2}\}\times\cdots\times\{\pm v_{t_0-1}\}\times\{\pm v_{t_0}\}\times\torus\,.\]
Taking into account~\eqref{eq:CRwellposed}, we conclude that $\reg_T(\Omega,A)=\reg_{\widetilde T}(\Omega,A)$.
\item Assume $t_0\leq\widetilde t_0$. Since the last $t_\tau-\widetilde t_0$ $T$-Cullen variables $\T_{\epsilon_{\widetilde t_0+1}},\ldots,\T_{\epsilon_{t_\tau}}$ are $\widetilde T$-regular, property {\it 2} in Theorem~\ref{thm:fuetervscullen} yields that $\widetilde t_0,\widetilde t_1,\ldots,\widetilde t_{\widetilde \tau}=n$ are the last $\widetilde \tau+1$ elements of $T$. Since the preceding $T$-Cullen variables $\T_{\epsilon_{t_0+1}},\ldots,\T_{\epsilon_{\widetilde t_0}}$ (if any) are $\widetilde T$-regular, property {\it 1} in Theorem~\ref{thm:fuetervscullen} yields, for the first $\widetilde t_0-t_0+1$ elements of $T$:
\[\left(t_0,t_1,t_2,\ldots,t_{\widetilde t_0-t_0-2},t_{\widetilde t_0-t_0-1},t_{\widetilde t_0-t_0}\right)=\left(t_0,t_0+1,t_0+2,\ldots,\widetilde t_0-2,\widetilde t_0-1,\widetilde t_0\right)\,.\]
Taking into account that $t_0<t_1<\ldots<t_\tau=n$ and that $\widetilde t_{\widetilde\tau}=n$, it follows immediately that $\tau=\widetilde\tau+\widetilde t_0-t_0$ and that
\begin{align*}
T&=\left(t_0,t_1,t_2,\ldots,t_{\widetilde t_0-t_0-2},t_{\widetilde t_0-t_0-1},t_{\widetilde t_0-t_0},t_{\widetilde t_0-t_0+1},\ldots,t_{\widetilde t_0-t_0+\widetilde \tau}\right)\\
&=\left(t_0,t_0+1,t_0+2,\ldots,\widetilde t_0-2,\widetilde t_0-1,\widetilde t_0,\widetilde t_1,\ldots,\widetilde t_{\widetilde \tau}\right)\,.
\end{align*}
In other words: the list $T$ comprises some steps of the form $(m,m+1)$, followed by the whole list $\widetilde T$. In such a case, \[\torus=\{\pm v_{t_0+1}\}\times\{\pm v_{t_0+2}\}\times\cdots\times\{\pm v_{\widetilde t_0-1}\}\times\{\pm v_{\widetilde t_0}\}\times\widetilde\torus\,.\]
Taking into account~\eqref{eq:CRwellposed}, we conclude that $\reg_T(\Omega,A)=\reg_{\widetilde T}(\Omega,A)$.
\end{itemize}
The proof is now complete.
\end{proof}

In addition to the set-wise classification provided in Corollary~\ref{cor:classification}, we plan to perform in a forthcoming paper a classification of $\{\reg_T(\Omega,A):T\mathrm{\ list\ of\ steps}\}$ up to bijections. For instance, we constructed in Example~\ref{ex:quaternions3} an explicit bijection $\reg_{(0,2,3)}(\hh,\hh)\to\reg_{(0,1,3)}(\hh,\hh)$, based on the orthonormal change of basis from $(1,i,j,k)$ to $(1,k,-j,i)$.


\section{$T$-functions and strongly $T$-regular functions}\label{sec:Tfunctions}

This section defines and studies, on a $T$-symmetric set $\Omega_D$, classes of functions $\Omega_D\to A$ having some special symmetries. Throughout the section, in addition to Assumption~\ref{ass:alternative}, we assume $D$ to be a subset of $\rr_{0,t_0}\times\rr^\tau$, invariant under the reflection $(\alpha,\beta)\mapsto(\alpha,\overline{\beta}^h)$ for every $h\in\{1,\ldots,\tau\}$. We recall that we have defined: in Definition~\ref{def:reflections}, the symbols $\beta\,J$ and $\overline{\beta}^h$ for all $\beta\in\rr^\tau,J\in\torus,h\in\{1,\ldots,\tau\}$; in Definition~\ref{def:Tsymmetric}, the symbol $\Omega_D:=\{\alpha+\beta\,J:(\alpha,\beta)\in D\}$, as well as the notion of $T$-symmetric set.


\subsection{$T$-stem functions}\label{subsec:Tstem}

As a preparation to work with functions $\Omega_D\to A$, we deal in this subsection with functions $D\to A\otimes\rr^{2^\tau}$.

\begin{remark}
The tensor product $A\otimes\rr^{2^\tau}$ is a bilateral $A$-module. Indeed, let $(E_K)_{K\in\mathscr{P}(\tau)}$ denote the canonical real vector basis of $\rr^{2^\tau}$: if $a\in A$ and if $C=\sum_{K\in\mathscr{P}(\tau)}E_KC_K\in A\otimes\rr^{2^\tau}$, we set $a\,C:=\sum_{K\in\mathscr{P}(\tau)}E_K(a\,C_K)$ and $Ca:=\sum_{K\in\mathscr{P}(\tau)}E_K(C_Ka)$.
\end{remark}

Our choice of the notation $(E_K)_{K\in\mathscr{P}(\tau)}$ is to avoid possible confusion with the basis of $A$ in the special case when $A=C\ell(0,n)$. Let us recall from~\cite{unifiednotion} the notion of $T$-stem function, which subsumes the notion of stem function of~\cite[Definition 4]{perotti} and follows the lines of its multivariate generalization~\cite[Definition 2.2]{gpseveral}. 

\begin{definition}
Let $F:D\to A\otimes\rr^{2^\tau}$ be a map $F=\sum_{K\in\mathscr{P}(\tau)}E_KF_K$ with components $F_K:D\to A$. The map $F$ is called a \emph{$T$-stem function} if
\[F_K(\alpha,\overline{\beta}^h)=\left\{
\begin{array}{ll}
F_K(\alpha,\beta)&\mathrm{if\ }h\not\in K\\
-F_K(\alpha,\beta)&\mathrm{if\ }h\in K
\end{array}
\right.\]
for all $K\in\mathscr{P}(\tau)$, for all $(\alpha,\beta)\in D$, and for all $h\in\{1,\ldots,\tau\}$. For such a function $F$: we say that $F$ belongs to $\mathscr{C}^\infty(D,A\otimes\rr^{2^\tau})$ if $F_K\in\mathscr{C}^\infty(D,A)$ for all $K\in\mathscr{P}(\tau)$; we say that $F$ is real analytic if $F_K$ is real analytic for every $K\in\mathscr{P}(\tau)$.
\end{definition}

Clearly, the set of $T$-stem functions is a right $A$-module. Moreover, given a $T$-stem function $F$ on $D$ and a point $\rho\in\rr_{0,t_0}$, setting $G(\alpha,\beta):=F(\alpha+\rho,\beta)$ defines a $T$-stem function $G$ on $D-(\rho,0)$. We add the following remarks. We point out that $D$ is also invariant under the composition of reflections $\beta\mapsto\overline{\beta}^H$ constructed in Definition~\ref{def:reflections}.

\begin{remark}
If $F$ is a $T$-stem function on $D$, then $F_K(\alpha,\overline{\beta}^H)=(-1)^{|H\cap K|}\,F_K(\alpha,\beta)$ for all $H,K\in\mathscr{P}(\tau)$ and $(\alpha,\beta)\in D$.
\end{remark}

Let us set, for $\beta=(\beta_1,\ldots,\beta_\tau)\in\rr^\tau$, the additional notation $\beta^2:=(\beta_1^2,\ldots,\beta_\tau^2)\in\rr^\tau$.

\begin{remark}\label{rmk:whitney}
Let $F=\sum_{K\in\mathscr{P}(\tau)}E_KF_K:D\to A\otimes\rr^{2^\tau}$ be a $T$-stem function of class $\mathscr{C}^\infty$ (or real analytic). Set $D':=\{(\alpha,\beta^2):(\alpha,\beta)\in D\}$. By Whitney's Theorem~\cite[page 160]{whitney}, there exist an open neighborhood $W$ of $D'$ in $\rr_{0,t_0}\times\rr^\tau$, with $D'=\{(\alpha,\gamma)\in W:\gamma_1,\ldots,\gamma_\tau\geq0\}$, and a finite sequence $\{G_K\}_{K\in\mathscr{P}(\tau)}$ in $\mathscr{C}^\infty(W,A)$ (or consisting of real analytic functions $G_K:W\to A$, respectively) such that, for all $(\alpha,\beta)\in D$, the following equalities hold true: $F_\emptyset(\alpha,\beta)=G_\emptyset(\alpha,\beta^2)$ and
\[F_K(\alpha,\beta)=\beta_{k_1}\cdots\beta_{k_p}\,G_K(\alpha,\beta^2)\]
if $K=\{k_1,\ldots,k_p\}$ with $1\leq k_1<\ldots<k_p\leq\tau$.
\end{remark}


\subsection{$T$-functions and strongly $T$-regular functions}\label{subsec:Tfunctions}

This subsection is devoted to the announced construction of classes of functions $\Omega_D\to A$ having some special symmetries: they are called \emph{$T$-functions}. Here, and in the rest of the current section, we assume $A$ to be associative.

\begin{definition}\label{def:Jk}
Assume $A$ to be associative. Let $J\in\torus,K\in\mathscr{P}(\tau)$. If $K=\emptyset$, we set $J_\emptyset:=1$. For $K\neq\emptyset$, say $K=\{k_1,\ldots,k_p\}$ with $1\leq k_1<\ldots<k_p\leq\tau$, we define $J_K:=J_{k_1}J_{k_2}\cdots J_{k_{p-1}}J_{k_p}$.
\end{definition}

For $J\in\torus$ and $K\in\mathscr{P}(\tau)$ fixed, the map $a\mapsto J_Ka$ is a right $A$-module isomorphism with inverse $a\mapsto J_K^{-1}a$. Here, $J_K^{-1}$ denotes the multiplicative inverse of the element $J_K$ of $A$. We are now ready to restate, in the associative case, the definition given in~\cite{unifiednotion} of $T$-function. This notion subsumes the notion of slice function,~\cite[Definition 5]{perotti}, in its associative sub-case. The definition follows the lines of~\cite[Definition 2.5]{gpseveral}, in its associative sub-case.

\begin{definition}\label{def:Tfunction}
Assume $A$ to be associative. Let $F=\sum_{K\in\mathscr{P}(\tau)}E_KF_K:D\to A\otimes\rr^{2^\tau}$ be a $T$-stem function. The \emph{induced} function $f=\I(F):\Omega_D\to A$, is defined at $x=\alpha+\beta\,J\in\Omega_D$ by the formula
\[f(x):=\sum_{K\in\mathscr{P}(\tau)}J_K\,F_K(\alpha,\beta)\,.\]
A function induced by a $T$-stem function is called a \emph{$T$-function}. We denote the class of $T$-functions $\Omega_D\to A$ by the symbol $\slice(\Omega_D,A)$. If $\Omega_D$ is a domain in $V$, then the elements of the intersection $\sr(\Omega_D,A):=\slice(\Omega_D,A)\cap\reg_T(\Omega_D,A)$ are called \emph{strongly $T$-regular} functions.
\end{definition}

The notions of $T$-function and strongly $T$-regular function are interesting when $\tau\geq1$. In the special case $\tau=0$, every subset of $V$ is $T$-symmetric, every domain $\Omega$ in $V$ is a $T$-symmetric domain and every function $f:\Omega\to A$ is a $T$-function, induced by a $T$-stem function $F=F_\emptyset$, which coincides with $f$ up to identifying $A\otimes\rr^0$ with $A$.

About the map $\I$ introduced in Definition~\ref{def:Tfunction}, we prove the following proposition.

\begin{proposition}\label{prop:isomorphismI}
Assume $A$ to be associative. The map $\I$ from the class of $T$-stem functions on $D$ to $\slice(\Omega_D,A)$ is well-defined. Moreover, the set $\slice(\Omega_D,A)$ is a right $A$-module and $\I$ is a right $A$-module isomorphism. Finally, $\sr(\Omega_D,A)$ is a right $A$-module.
\end{proposition}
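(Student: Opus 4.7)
The plan is to handle the four assertions of the proposition in the order in which they naturally depend on each other, treating the well-definedness and injectivity of $\I$ as the substantive work and the rest as bookkeeping.

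First I would settle well-definedness of $\I(F)$ by addressing the ambiguity in the decomposition $x=\alpha+\beta\,J$ described in Remark~\ref{rmk:decomposedvariable}. The decomposition of $x^h$ into $\beta_h J_h$ is rigid up to two moves: simultaneously flipping $\beta_h\mapsto-\beta_h$ and $J_h\mapsto -J_h$, and, whenever $x^h=0$, replacing $J_h$ by any other element of $\s_{t_{h-1}+1,t_h}$. For the first move, the stem-function relation gives $F_K(\alpha,\overline{\beta}^h)=(-1)^{[h\in K]}F_K(\alpha,\beta)$, while $J_K$ picks up the same sign $(-1)^{[h\in K]}$ (since $J_h$ appears in $J_K$ exactly when $h\in K$), so each summand $J_KF_K$ is unchanged. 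For the second move, with $\beta_h=0$, the summands with $h\notin K$ do not involve $J_h$, while the summands with $h\in K$ vanish: indeed the stem property with $\overline{\beta}^h=\beta$ forces $F_K(\alpha,\beta)=-F_K(\alpha,\beta)$. Thus $f(x)$ is independent of the chosen decomposition.

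Next I would verify that $\slice(\Omega_D,A)$ is a right $A$-module and that $\I$ is a right $A$-linear map. From the pointwise formula in Definition~\ref{def:Tfunction} one reads off $\I(F+G)=\I(F)+\I(G)$ and $\I(F\cdot a)=\I(F)\cdot a$, where $(Fa)_K:=F_Ka$ is plainly still a $T$-stem function; hence $\slice(\Omega_D,A)$ is closed under the module operations and $\I$ is a morphism of right $A$-modules. Surjectivity of $\I$ onto $\slice(\Omega_D,A)$ holds by the very definition of $T$-function.

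The main step is injectivity of $\I$. Assume $f=\I(F)\equiv 0$ and fix any $J^*=(J_1^*,\ldots,J_\tau^*)\in\torus$. For every $H\in\mathscr{P}(\tau)$ the tuple $J^{(H)}$ obtained by flipping $J_h^*\mapsto -J_h^*$ for $h\in H$ still lies in $\torus$, and
\[
\alpha+\beta\,J^{(H)}=\alpha+\overline{\beta}^H\,J^*\in\Omega_D,
\]
so evaluating $f$ at this point and applying the $T$-stem symmetries yields
\[
0=\sum_{K\in\mathscr{P}(\tau)}(-1)^{|H\cap K|}J^*_K\,F_K(\alpha,\beta)
\qquad\text{for every }H\in\mathscr{P}(\tau).
\]
The coefficient matrix $\bigl((-1)^{|H\cap K|}\bigr)_{H,K\in\mathscr{P}(\tau)}$ is a Hadamard matrix of order $2^\tau$, hence invertible, so $J^*_KF_K(\alpha,\beta)=0$ for each $K$. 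Since $A$ is associative, each $J_h^*\in\s_A$ has multiplicative inverse $-J_h^*$, so $J^*_K$ is invertible in $A$ and Remark~\ref{rmk:endomorphisms} guarantees $\L_{J^*_K}$ is an isomorphism; therefore $F_K(\alpha,\beta)=0$ for every $K\in\mathscr{P}(\tau)$ and every $(\alpha,\beta)\in D$, i.e.\ $F\equiv 0$. This proves $\I$ is a right $A$-module isomorphism.

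Finally, the identity $\sr(\Omega_D,A)=\slice(\Omega_D,A)\cap\reg_T(\Omega_D,A)$ presents it as the intersection of two right $A$-submodules inside the right $A$-module of all functions $\Omega_D\to A$, using Remark~\ref{rmk:mirrortranslation} for the second factor, so $\sr(\Omega_D,A)$ is itself a right $A$-module. The only delicate point in the whole argument is the injectivity step above, where one must choose the $2^\tau$ tuples $J^{(H)}$ carefully so that the resulting linear system separates the components $F_K$; the rest reduces to the symmetries of $T$-stem functions and formal linearity.
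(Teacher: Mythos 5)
Your proof is correct and follows essentially the same route as the paper: the paper delegates well-definedness and injectivity of $\I$ to Lemma~\ref{lem:isomorphismIna} and Theorem~\ref{thm:representationformulana} (stated there for general alternative algebras), where well-definedness is checked against exactly the two ambiguities of Remark~\ref{rmk:decomposedvariable} that you identify, and injectivity follows from the orthogonality relation $\sum_{H\in\mathscr{P}(\tau)}(-1)^{|K\cap H|+|H\cap K'|}=2^\tau\,\delta_{K,K'}$ — precisely the invertibility of your Hadamard matrix, which the paper exploits further to write the explicit Representation Formula for $F_K$. The remaining right $A$-module bookkeeping is identical in both arguments.
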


\begin{proof}
In the forthcoming Lemma~\ref{lem:isomorphismIna} and Theorem~\ref{thm:representationformulana}, under weaker hypotheses, we will prove that $\I$ is a well-defined real space isomorphism. Additionally, it is clear from Definition~\ref{def:Tfunction} that, for any $T$-stem function $F:D\to A\otimes\rr^{2^\tau}$ and any $a\in A$, $\I(F a)=\I(F)a$. It follows at once that $\slice(\Omega_D,A)$ is a right $A$-module and that $\I$ is a right $A$-module isomorphism. Finally, Remark~\ref{rmk:mirrortranslation} guarantees that $\reg_T(\Omega_D,A)$ is a right $A$-module, whence $\sr(\Omega_D,A)=\slice(\Omega_D,A)\cap\reg_T(\Omega_D,A)$ is a right $A$-module, too.
\end{proof}

The properties of $T$-stem functions on $D$, along with Remark~\ref{rmk:mirrortranslation}, allow us to make the following observation.

\begin{remark}\label{rmk:mirrortranslationofslice}
Fix $p\in\rr_{0,t_0}$. If $f\in\slice(\Omega_D,A)$ (or $f\in\sr(\Omega_D,A)$), then setting $g(x):=f(x+p)$ defines a $g\in\slice(\Omega_D-p,A)$ (a $g\in\sr(\Omega_D-p,A)$, respectively).
\end{remark}

 For a $T$-function $f=\I(F)$, we now prove a Representation Formula along tori of the form $\torus_{\alpha+\beta I}=\Omega_{\{(\alpha,\beta)\}}=\alpha+\beta\,\torus$ with $\alpha\in\rr_{0,t_0},\beta\in\rr^\tau,I\in\torus$ (see Definition~\ref{def:Tsymmetric}). In connection to this formula, we also recover from $f$ the inducing $T$-stem function $F$. 

\begin{theorem}[Representation Formula for $T$-functions, associative case]\label{thm:representationformula}
Assume $A$ to be associative. If $f\in\slice(\Omega_D,A)$, then $f=\I(F)$ where $F=\sum_{K\in\mathscr{P}(\tau)}E_KF_K$ is a $T$-stem function whose $K$-component is
\[F_K(\alpha,\beta)=2^{-\tau}I_K^{-1}\,\sum_{H\in\mathscr{P}(\tau)}(-1)^{|K \cap H|}\,f(\alpha+\overline{\beta}^H\,I)\,.\]
 As a consequence: for all $(\alpha,\beta)\in D$ and all $I,J\in\torus$,
\begin{align}\label{eq:representationformula}
f(\alpha+\beta\,J)&=2^{-\tau}\sum_{K,H\in\mathscr{P}(\tau)}(-1)^{|K \cap H|}\,J_K\,I_K^{-1}\,f(\alpha+\overline{\beta}^H\,I)\\
&=\sum_{H\in\mathscr{P}(\tau)}\gamma_H\,f(\alpha+\overline{\beta}^H\,I)\,,\notag
\end{align}
where
\[\gamma_H:=2^{-\tau}\sum_{K\in\mathscr{P}(\tau)}(-1)^{|K \cap H|}\,J_K\,I_K^{-1}\,.\]
\end{theorem}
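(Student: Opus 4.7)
The plan is to exploit the fact, already guaranteed by Proposition~\ref{prop:isomorphismI}, that the induced-function map $\I$ is a right $A$-module isomorphism: for $f\in\slice(\Omega_D,A)$ there is a unique $T$-stem function $F=\sum_{K\in\mathscr{P}(\tau)}E_KF_K$ with $f=\I(F)$, so it suffices to produce the stated expression for $F_K$ in terms of $f$ and then substitute back to derive the representation formula.

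First I would fix any $I\in\torus$ and apply the very definition of $\I$ with $J$ replaced by $I$ and $\beta$ replaced by $\overline\beta^H$, for each $H\in\mathscr{P}(\tau)$. Using the $T$-stem symmetry $F_K(\alpha,\overline\beta^H)=(-1)^{|H\cap K|}F_K(\alpha,\beta)$ recorded in the remark of Subsection~\ref{subsec:Tstem}, this yields
\[
f(\alpha+\overline\beta^H\,I)=\sum_{K'\in\mathscr{P}(\tau)}(-1)^{|H\cap K'|}\,I_{K'}\,F_{K'}(\alpha,\beta).
\]
Now I would take the linear combination $\sum_{H\in\mathscr{P}(\tau)}(-1)^{|K\cap H|}f(\alpha+\overline\beta^H I)$ and interchange the order of summation, reducing everything to the orthogonality identity
\[
\sum_{H\in\mathscr{P}(\tau)}(-1)^{|H\cap(K\triangle K')|}=\prod_{h=1}^{\tau}\bigl(1+(-1)^{\mathbf{1}_{h\in K\triangle K'}}\bigr)=2^{\tau}\,\delta_{K,K'},
\]
where $K\triangle K'$ is the symmetric difference. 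This collapses the sum to $2^{\tau}I_K F_K(\alpha,\beta)$. Since each $I_k$ is an imaginary unit with $I_k^{-1}=-I_k$ and $A$ is associative, the product $I_K$ is invertible, so dividing yields the claimed formula for $F_K(\alpha,\beta)$.

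For the consequence, I would simply substitute the recovered $F_K$ back into $f(\alpha+\beta J)=\sum_K J_K F_K(\alpha,\beta)$; this gives the first displayed equation of~\eqref{eq:representationformula} at once, and the second form follows by interchanging the summations over $K$ and $H$ and grouping the coefficients of $f(\alpha+\overline\beta^H I)$ into $\gamma_H$. The only minor subtlety worth noting is the invertibility and precise form of $I_K$; everything else is a direct consequence of the $T$-stem symmetry and the finite sign-orthogonality relation on $\mathscr{P}(\tau)$, which is the single nontrivial ingredient.
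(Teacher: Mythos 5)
Your proof is correct and follows essentially the same route as the paper's: the paper carries out the identical computation (evaluate $f=\I(F)$ at the reflected points $\alpha+\overline{\beta}^H\,I$, use the $T$-stem symmetry, interchange sums, and collapse the double sum via the orthogonality relation $\sum_{H}(-1)^{|K\cap H|+|H\cap K'|}=2^\tau\,\delta_{K,K'}$, then invert the multiplication by $I_K$), only it does so once in the more general nonassociative setting of Theorem~\ref{thm:representationformulana} and deduces the associative case by specialization. One caution: do not invoke Proposition~\ref{prop:isomorphismI} for the isomorphism property at the outset, since in the paper that proposition is itself established via this very Representation Formula; your argument does not actually need it, because the definition of $\slice(\Omega_D,A)$ already supplies \emph{some} $T$-stem function $F$ with $f=\I(F)$, and your computation then shows its components must be given by the stated formula (which yields injectivity of $\I$ as a byproduct rather than a hypothesis).
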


Theorem~\ref{thm:representationformula} is a special case of the forthcoming Theorem~\ref{thm:representationformulana}, whose hypotheses are weaker. In particular: if we fix $I\in\torus$, then every $f\in\slice(\Omega_D,A)$ is completely determined by its restriction $f_I$. 

We now draw two useful consequences. The first one concerns the norm $\Vert f\Vert$ of a continuous $T$-function $f$. We recall that $\omega=\omega_{\B,\B'}\geq1$ is a constant such that $\Vert xa\Vert\leq\omega\,\Vert x\Vert \,\Vert a\Vert$ for all $x\in V,a\in A$ (see Remark~\ref{rmk:norminequality}). Moreover, by Proposition~\ref{prop:norm}: if $A$ is associative and $\B'$ is a fitted distinguished basis of $A$, then $\omega=1$, whence $(1+\omega^2)^\tau=2^\tau$.

\begin{proposition}\label{prop:normTfunction}
Assume $A$ to be associative. Fix $f\in\slice(\Omega_D,A)$ and $I\in\torus$. For every nonempty subset $D'$ of $D$ and for $\Omega':=\Omega_{D'}\subseteq\Omega_D$,
\[\sup_{\Omega'}\Vert f\Vert\leq(1+\omega^2)^\tau\,\sup_{\Omega'_I}\Vert f\Vert\,.\]
\end{proposition}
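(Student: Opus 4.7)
My plan is to reduce the estimate to the Representation Formula of Theorem~\ref{thm:representationformula}, which already expresses $f$ on all of $\Omega_D$ in terms of its restriction to the $I$-slice. For any $x=\alpha+\beta J\in\Omega'$ (so $(\alpha,\beta)\in D'$ and $J\in\torus$), the formula gives
\[
f(x)=\sum_{H\in\mathscr{P}(\tau)}\gamma_H\,f(\alpha+\overline{\beta}^H I),\qquad
\gamma_H=2^{-\tau}\sum_{K\in\mathscr{P}(\tau)}(-1)^{|K\cap H|}J_K I_K^{-1}.
\]
The first thing to check is that every evaluation point $\alpha+\overline{\beta}^H I$ actually lies in $\Omega'_I$: indeed, writing $\alpha+\overline{\beta}^H I=\alpha+\beta J'$ with $J'_h=-I_h$ for $h\in H$ and $J'_h=I_h$ otherwise, one has $J'\in\torus$ and $(\alpha,\beta)\in D'$, so the point belongs to $\Omega_{D'}\cap\rr^{t_0+\tau+1}_I=\Omega'_I$.

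The heart of the argument is the uniform bound $\Vert J_K I_K^{-1} a\Vert\le\omega^{2|K|}\,\Vert a\Vert$ for every $a\in A$. In the associative setting, each $J_h$ and $I_h$ is an imaginary unit of $V$ of norm $1$ (by Theorem~\ref{thm:hypercomplexbasis}), and a direct computation shows $(I_{k_1}\cdots I_{k_p})(I_{k_p}\cdots I_{k_1})=(-1)^p$, so $I_K^{-1}=(-1)^{|K|}I_{k_p}\cdots I_{k_1}$ is, up to a sign, a product of $|K|$ norm-one elements of $V$. Iterating the basic inequality $\Vert u\,b\Vert\le\omega\,\Vert u\Vert\,\Vert b\Vert$ of Remark~\ref{rmk:norminequality} once per imaginary factor placed on the left yields $\Vert J_K\,c\Vert\le\omega^{|K|}\Vert c\Vert$ and, applied to $c=I_K^{-1}a$, gives $\Vert I_K^{-1}a\Vert\le\omega^{|K|}\Vert a\Vert$; concatenating produces the advertised $\omega^{2|K|}$.

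Putting everything together and using the triangle inequality together with Proposition~\ref{prop:propertiesintegral}-style bounds gives, for $x\in\Omega'$,
\[
\Vert f(x)\Vert
\le 2^{-\tau}\sum_{H\in\mathscr{P}(\tau)}\sum_{K\in\mathscr{P}(\tau)}\omega^{2|K|}\,\Vert f(\alpha+\overline{\beta}^H I)\Vert
\le 2^{-\tau}\cdot 2^\tau\cdot\Bigl(\sum_{K\in\mathscr{P}(\tau)}\omega^{2|K|}\Bigr)\sup_{\Omega'_I}\Vert f\Vert,
\]
and the binomial identity $\sum_{K\in\mathscr{P}(\tau)}\omega^{2|K|}=\sum_{k=0}^{\tau}\binom{\tau}{k}\omega^{2k}=(1+\omega^2)^\tau$ yields the desired estimate after taking the supremum over $x\in\Omega'$. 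The only nontrivial step is the norm inequality for $J_KI_K^{-1}$; the rest is pure bookkeeping. Notice that when $\B'$ is fitted and distinguished, Proposition~\ref{prop:norm} gives $\omega=1$ and the constant degenerates to $(1+1)^\tau=2^\tau$, consistent with the classical slice-monogenic case.
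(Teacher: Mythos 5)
Your proof is correct and follows essentially the same route as the paper's: the paper proves this as Proposition~\ref{prop:normTfunctionna} in the nonassociative setting, applying the Representation Formula, bounding each summand by $\omega^{2|K|}\,\Vert f(\alpha+\overline{\beta}^H\,I)\Vert$ via iterated use of Remark~\ref{rmk:norminequality}, and summing $\sum_{K}\omega^{2|K|}=(1+\omega^2)^\tau$ exactly as you do. Your explicit check that the evaluation points $\alpha+\overline{\beta}^H\,I$ lie in $\Omega'_I$ is a small point the paper leaves implicit, but otherwise the two arguments coincide.
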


A second useful consequence of the Representation Formula for $T$-functions is, that strongly $T$-regular functions are real analytic.

\begin{proposition}\label{prop:analyticTfunction}
Assume $A$ to be associative. If $f$ is strongly $T$-regular, then $f$ is real analytic.
\end{proposition}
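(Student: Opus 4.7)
The plan is to combine three earlier ingredients: the Representation Formula for $T$-functions (Theorem~\ref{thm:representationformula}), the Whitney-type factorization of real-analytic $T$-stem functions (Remark~\ref{rmk:whitney}), and the real analyticity of $J$-monogenic functions (Remark~\ref{rmk:J-harmonic}).

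First I would write $f=\I(F)$ for some $T$-stem function $F=\sum_{K\in\mathscr{P}(\tau)}E_KF_K$, as guaranteed by strong $T$-regularity. Since $f$ is $T$-regular, for a fixed $I\in\torus$ the restriction $f_I$ is $I$-monogenic on $(\Omega_D)_I$, hence real analytic by Remark~\ref{rmk:J-harmonic}. The Representation Formula yields
\[F_K(\alpha,\beta)=2^{-\tau}I_K^{-1}\sum_{H\in\mathscr{P}(\tau)}(-1)^{|K\cap H|}\,f_I\bigl(\alpha+\overline{\beta}^H\,I\bigr)\,.\]
Because each map $(\alpha,\beta)\mapsto\alpha+\overline{\beta}^H\,I$ is real-linear from $D$ into $\rr^{t_0+\tau+1}_I$, every summand is real analytic in $(\alpha,\beta)$. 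Thus $F$ is a real-analytic $T$-stem function on $D$.

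Next I invoke Remark~\ref{rmk:whitney}: there exist an open neighborhood $W\supseteq D'=\{(\alpha,\beta^2):(\alpha,\beta)\in D\}$ and real-analytic $G_K:W\to A$ such that, for $K=\{k_1<\ldots<k_p\}\in\mathscr{P}(\tau)$,
\[F_K(\alpha,\beta)=\beta_{k_1}\cdots\beta_{k_p}\,G_K(\alpha,\beta^2)\,,\]
with the convention $F_\emptyset(\alpha,\beta)=G_\emptyset(\alpha,\beta^2)$. For $x=\alpha+\beta_1 J_1+\cdots+\beta_\tau J_\tau\in\Omega_D$, associativity of $A$ together with the reality of the $\beta_h$ lets me rearrange
\[J_K\,F_K(\alpha,\beta)=(\beta_{k_1}J_{k_1})\cdots(\beta_{k_p}J_{k_p})\,G_K(\alpha,\beta^2)=x^{k_1}x^{k_2}\cdots x^{k_p}\,G_K(\alpha,\beta^2)\,.\]
Summing over $K$, I obtain the intrinsic representation
\[f(x)=\sum_{K\in\mathscr{P}(\tau)}x^{k_1}\cdots x^{k_p}\,G_K\bigl(x^0,\Vert x^1\Vert^2,\ldots,\Vert x^\tau\Vert^2\bigr)\,,\]
in which no auxiliary $J\in\torus$ and no sign choice for $\beta$ appears.

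Real analyticity of $f$ on $\Omega_D$ now follows from the fact that $x^0=\alpha$ is $\rr$-linear in the Cartesian coordinates of $x$, each $\Vert x^h\Vert^2=\beta_h^2$ is a real polynomial in those coordinates, and the map $x\mapsto(x^0,\Vert x^1\Vert^2,\ldots,\Vert x^\tau\Vert^2)$ carries $\Omega_D$ into $D'\subseteq W$, so composition with $G_K$ is real analytic, and multiplication by the polynomial factor $x^{k_1}\cdots x^{k_p}$ preserves analyticity. I expect the only delicate step to be the rearrangement $J_K F_K(\alpha,\beta)\rightsquigarrow x^{k_1}\cdots x^{k_p}\,G_K(\alpha,\beta^2)$: it is the point at which associativity of $A$ is used essentially, and it is precisely what prevents an identical argument from going through in the nonassociative case treated later in Section~\ref{sec:nonassociative}.
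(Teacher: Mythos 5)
Your proof is correct and follows essentially the same route as the paper's: the Representation Formula to recover the stem function $F$, real analyticity of the $I$-monogenic restriction $f_I$ to deduce real analyticity of each $F_K$, Whitney's theorem to factor $F_K(\alpha,\beta)=\beta_{k_1}\cdots\beta_{k_p}G_K(\alpha,\beta^2)$, and finally the intrinsic rewriting of $f$ in terms of $x^0$, $\Vert x^h\Vert^2$ and the linear maps $x\mapsto x^h$. One small correction to your closing comment: associativity is not actually essential to the rearrangement $J_KF_K(\alpha,\beta)\rightsquigarrow x^{k_1}\cdots x^{k_p}G_K(\alpha,\beta^2)$ --- the paper proves this very statement in the alternative nonassociative setting (Proposition~\ref{prop:analyticTfunctionna}) by the identical argument, simply keeping the nested parenthesization $x^{k_1}(x^{k_2}(\cdots(x^{k_p}G_K)\cdots))$ throughout so that no reassociation is ever performed.
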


Our current associativity assumption does not really play a role in Proposition~\ref{prop:normTfunction} and Proposition~\ref{prop:analyticTfunction}. We will therefore restate and prove these results as the forthcoming Proposition~\ref{prop:normTfunctionna} and Proposition~\ref{prop:analyticTfunctionna}, after defining $T$-functions and $T$-regular functions over general alternative $*$-algebras.


\subsection{Mirror $T$-stem functions}\label{subsec:mirrorTstem}

This subsection is devoted to some technical material, which will be useful in the forthcoming Proposition~\ref{prop:TkTfunction} to prove that the polynomial functions $\T_\k$ (defined in Subsection~\ref{subsec:T_k}) are $T$-functions, whence strongly $T$-regular. The proof of Proposition~\ref{prop:TkTfunction} will be based on the induction hypothesis that, if $|\k'|=|\k|-1$, then $\T_{\k'}=\I(F^{\k'})$ where $F^{\k'}$ is a $T$-stem function of a special kind, studied in the current subsection.

We recall that $A\otimes\rr^{2^\tau}$ is a bilateral $A$-module and give the next definition.

\begin{definition}\label{def:mirrorTstem}
Assume $A$ to be associative and let $A'$ denote the real subalgebra of $A$ generated by the mirror $\rr_{0,t_0}$. A $T$-stem function $F:D\to A\otimes\rr^{2^\tau}$ is called a \emph{mirror $T$-stem function} if it takes values in $A'\otimes\rr^{2^\tau}$.
\end{definition}

We endow the real subalgebra $A'$ of $A$ with a convenient system of generators.

\begin{remark}
Assume $A$ to be associative. We define $(v_H)_{H\in\mathscr{P}(t_0)}\subset A'$, as follows: $v_H:=1$ if $H=\emptyset$ and $v_H:=v_{h_1}\cdots v_{h_p}$ if $H=\{h_1,\ldots,h_p\}$ with $1\leq h_1<\ldots<h_p\leq t_0$.
Since the real vector space $\rr_{0,t_0}$ is the span of the anti-commuting imaginary units $v_0,v_1\ldots,v_{t_0}$, the finite sequence $(v_H)_{H\in\mathscr{P}(t_0)}$ is a system of generators for the real vector space $A'$. Moreover, there exists a subset $\mathscr{Q}\subseteq\mathscr{P}(t_0)$ such that $(v_H)_{H\in\mathscr{Q}}$ is a real vector basis of $A'$.
\end{remark}

\begin{example}
Let $A=C\ell(0,3)$ and $t_0=3$. If $v_1=e_1,v_2=e_2$ and $v_3=e_3$, then $(v_H)_{H\in\mathscr{P}(3)}$ is the standard basis of $A'=A$. If, instead, $v_1=e_1,v_2=e_2$ and $v_3=e_{12}$, then $(v_H)_{H\in\mathscr{P}(3)}$ is a system of $8$ generators for the $4$-dimensional subspace $A'=C\ell(0,2)$. In the latter case, a basis of $A'$ is $(v_H)_{H\in\mathscr{Q}}$ with $\mathscr{Q}=\mathscr{P}(2)\subsetneq\mathscr{P}(3)$.
\end{example}

It will be convenient to consider, for each mirror $T$-stem function, not only its components with respect to the basis $(v_H)_{H\in\mathscr{Q}}$ of $A'$ but also its (non unique) components with respect to the system $(v_H)_{H\in\mathscr{P}(t_0)}$ of generators for $A'$.

\begin{lemma}\label{lem:nonuniquedecomposition}
Given $(G_H)_{H\in\mathscr{P}(t_0)}$, where $G_H=\sum_{K\in\mathscr{P}(\tau)}E_KG_{H,K}:D\to A\otimes\rr^{2^\tau}$ is a $T$-stem function with real-valued $E_K$-components $G_{H,K}:D\to\rr$, the function $F:=\sum_{H\in\mathscr{P}(t_0)}G_Hv_H$ is a mirror $T$-stem function. Conversely, every mirror $T$-stem function $F$ on $D$ can be expressed in the form $F=\sum_{H\in\mathscr{P}(t_0)}G_Hv_H$, where $G_H=\sum_{K\in\mathscr{P}(\tau)}E_KG_{H,K}$ are $T$-stem functions $D\to\rr\otimes\rr^{2^\tau}$.
\end{lemma}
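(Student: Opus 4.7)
The plan is to verify both directions by a direct linear-algebra argument, using that the symmetry condition defining a $T$-stem function depends only on the index $K$ (not on the auxiliary index $H$), and that the values lie in $A'$ precisely because the $v_H$'s do.

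First I would handle the forward implication. Starting from the data $(G_H)_{H\in\mathscr{P}(t_0)}$ with each $G_H=\sum_{K\in\mathscr{P}(\tau)}E_KG_{H,K}$ a $T$-stem function having \emph{real-valued} components, I compute the $E_K$-component of $F=\sum_HG_Hv_H$. Because $G_{H,K}$ takes values in $\rr$, it commutes with every $v_H$ and passes freely through the right $A$-module structure of $A\otimes\rr^{2^\tau}$, yielding $F_K=\sum_{H\in\mathscr{P}(t_0)}G_{H,K}\,v_H$. Since each $v_H\in A'$ and $A'$ is a real vector subspace of $A$, the function $F_K$ takes values in $A'$, so $F$ takes values in $A'\otimes\rr^{2^\tau}$. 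The $T$-stem symmetry $F_K(\alpha,\overline{\beta}^h)=(-1)^{[h\in K]}F_K(\alpha,\beta)$ follows termwise: the sign given by the stem-function condition for $G_{H,K}$ depends only on whether $h\in K$, and is therefore the same for every $H$, so it factors out of the sum $\sum_H G_{H,K}(\alpha,\overline{\beta}^h)\,v_H$.

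For the converse I would fix a mirror $T$-stem function $F=\sum_{K}E_KF_K$, so each $F_K:D\to A'$. Using the real vector basis $(v_H)_{H\in\mathscr{Q}}$ of $A'$, I define real-valued functions $G_{H,K}:D\to\rr$ for $H\in\mathscr{Q}$ by the unique expansion $F_K(\alpha,\beta)=\sum_{H\in\mathscr{Q}}G_{H,K}(\alpha,\beta)\,v_H$, and extend by $G_{H,K}:\equiv0$ for $H\in\mathscr{P}(t_0)\setminus\mathscr{Q}$. Setting $G_H:=\sum_{K\in\mathscr{P}(\tau)}E_KG_{H,K}$ immediately yields $F=\sum_{H\in\mathscr{P}(t_0)}G_Hv_H$. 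It remains to check that each $G_H$ is itself a $T$-stem function. Applying the stem-function identity to $F_K$, the equality $\sum_{H\in\mathscr{Q}}G_{H,K}(\alpha,\overline{\beta}^h)\,v_H=(-1)^{[h\in K]}\sum_{H\in\mathscr{Q}}G_{H,K}(\alpha,\beta)\,v_H$ combined with the linear independence of $(v_H)_{H\in\mathscr{Q}}$ gives $G_{H,K}(\alpha,\overline{\beta}^h)=(-1)^{[h\in K]}G_{H,K}(\alpha,\beta)$ for each $H\in\mathscr{Q}$; for $H\notin\mathscr{Q}$, the identity is trivial because both sides vanish.

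I do not expect a substantive obstacle: the only subtlety is purely one of bookkeeping, namely keeping clear the distinction between the spanning set $(v_H)_{H\in\mathscr{P}(t_0)}$ used to write down the decomposition (which accounts for the \emph{non-uniqueness} advertised in the lemma's name) and the actual basis $(v_H)_{H\in\mathscr{Q}}$, which is what allows one to extract the scalar components $G_{H,K}$ unambiguously in the converse direction. One also has to be careful that since $G_{H,K}$ is real-valued, the identifications between the left and right $A$-module structures on $A\otimes\rr^{2^\tau}$ used when collecting $G_Hv_H$ cause no ambiguity.
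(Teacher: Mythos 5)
Your argument is correct and follows essentially the same route as the paper's proof: in the forward direction you collect the $E_K$-components as $F_K=\sum_{H}G_{H,K}v_H$ and check the sign condition termwise, and in the converse you expand each $F_K$ in the basis $(v_H)_{H\in\mathscr{Q}}$, use linear independence to transfer the symmetry to the scalar components, and extend by zero outside $\mathscr{Q}$. Nothing is missing.
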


\begin{proof}
Let us prove the first implication: under the assumptions made, the function $F:=\sum_{H\in\mathscr{P}(t_0)}G_Hv_H$ is a mirror $T$-stem function on $D$. This fact follows from the equality $F=\sum_{K\in\mathscr{P}(\tau)}E_KF_K:D\to A'\otimes\rr^{2^\tau}$, where $F_K:=\sum_{H\in\mathscr{P}(t_0)}G_{H,K}v_H:D\to A'$, and from the following argument: for $(\alpha,\beta)\in D$,
\[F_K(\alpha,\overline{\beta}^h)=\sum_{H\in\mathscr{P}(t_0)}G_{H,K}(\alpha,\overline{\beta}^h)\,v_H\]
equals $-F_K(\alpha,\beta)$ when $h\in K$; it equals $F_K(\alpha,\beta)$ when $h\in\{1,\ldots,\tau\}\setminus K$.

Conversely, consider any mirror $T$-stem function $F=\sum_{K\in\mathscr{P}(\tau)}E_KF_K:D\to A'\otimes\rr^{2^\tau}$. Of course, with respect to the basis $(v_H)_{H\in\mathscr{Q}}$ of $A'$, the $K$-component $F_K:D\to A'$ decomposes as $F_K=\sum_{H\in\mathscr{Q}}G_{H,K}v_H$ for some unique functions $G_{H,K}:D\to\rr$. Take any $(\alpha,\beta)\in D$: for $h\in K$, the equality
\[0=F_K(\alpha,\overline{\beta}^h)+F_K(\alpha,\beta)=\sum_{H\in\mathscr{Q}}(G_{H,K}(\alpha,\overline{\beta}^h)+G_{H,K}(\alpha,\beta))\,v_H\]
yields that $G_{H,K}(\alpha,\overline{\beta}^h)=-G_{H,K}(\alpha,\beta)$ for all $H\in\mathscr{Q}$; for $h\in\{1,\ldots,\tau\}\setminus K$, the equality
\[0=F_K(\alpha,\overline{\beta}^h)-F_K(\alpha,\beta)=\sum_{H\in\mathscr{Q}}(G_{H,K}(\alpha,\overline{\beta}^h)-G_{H,K}(\alpha,\beta))\,v_H\]
yields that $G_{H,K}(\alpha,\overline{\beta}^h)=G_{H,K}(\alpha,\beta)$ for all $H\in\mathscr{Q}$. Therefore, for every $H\in\mathscr{Q}$, setting $G_H:=\sum_{K\in\mathscr{P}(\tau)}E_KG_{H,K}:D\to\rr\otimes\rr^{2^\tau}$ defines a $T$-stem function. Additionally: for every $H\in\mathscr{P}(t_0)\setminus\mathscr{Q}$, we define a $T$-stem function $G_H:D\to\rr\otimes\rr^{2^\tau}$ as $G_H:\equiv0$. By construction, $F=\sum_{H\in\mathscr{Q}}G_Hv_H=\sum_{H\in\mathscr{P}(t_0)}G_Hv_H$. The proof is now complete.
\end{proof}

It is useful to make the next remark, where we adopt the notation $X\bigtriangleup Y:=(X\setminus Y)\cup(Y\setminus X)$ for the symmetric difference of two sets $X$ and $Y$, as well as the notation $J_K$ set up in Definition~\ref{def:Jk}.

\begin{remark}\label{rmk:systemofgeneratorsformirror}
For all $H\in\mathscr{P}(t_0)$ and all $h\in\{1,\ldots,t_0\}$,
\[v_Hv_h=(-1)^{\sigma(H,h)}v_{H\bigtriangleup\{h\}}\,,\] 
where we set $\sigma(H,h)$ to be $0$ or $1$ depending on whether there is an even or odd number of elements of $H$ larger than, or equal to, $h$. Additionally, for a fixed $h$, the map $H\mapsto H\bigtriangleup\{h\}$ is an involutive bijection of $\mathscr{P}(t_0)$ onto itself.

Now fix $J=(J_1,\ldots,J_\tau)\in\torus$ and $u\in\{1,\ldots,\tau\}$. For all $K\in\mathscr{P}(\tau)$, we similarly have that
\[J_KJ_u=(-1)^{\sigma(K,u)}J_{K\bigtriangleup\{u\}}\]
and that the map $K\mapsto K\bigtriangleup\{u\}$ is an involutive bijection from $\mathscr{P}(\tau)$ onto itself. Moreover, for all $H\in\mathscr{P}(t_0)$,
\[v_HJ_u=(-1)^{|H|}J_uv_H\]
because $J_u$ anti-commutes with $v_h$ for all $h\in\{1,\ldots,t_0\}$.
\end{remark}

Thanks to the previous remark, we make the following observations.

\begin{remark}\label{rmk:M}
Assume $A$ to be associative. The set $\mathfrak{M}$ of mirror $T$-stem functions on $D$ is a real vector space. We have $\mathfrak{M}\,v_h=\mathfrak{M}$ for all $h\in\{1,\ldots,t_0\}$ and $\mathfrak{M}\,\phi=\mathfrak{M}$ for all functions $\phi:D\to\rr_{0,t_0}$ with $\phi(\alpha,\beta)$ constant in $\beta$.
\end{remark}

To prove the forthcoming Proposition~\ref{prop:TkTfunction}, we will also need the following technical lemma. We recall that $\I$ denotes the map constructed in Definition~\ref{def:Tfunction} and proven a right $A$-module isomorphism in Proposition~\ref{prop:isomorphismI}.

\begin{lemma}\label{lem:M}
Assume $A$ to be associative. Let $F$ be a mirror $T$-stem function on $D$. For any $u\in\{1,\ldots,\tau\}$, there exists a mirror $T$-stem function ${\,\!^uF}$ on $D$ such that: for all $(\alpha,\beta)\in D,J\in\torus$,
\[\I(F)(\alpha+\beta\,J)\,\beta_u\,J_u=\I({\,\!^uF})(\alpha+\beta\,J)\,.\]
\end{lemma}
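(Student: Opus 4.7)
The plan is as follows. By Lemma~\ref{lem:nonuniquedecomposition}, I may expand $F=\sum_{H\in\mathscr{P}(t_0)}G_Hv_H$, where each $G_H=\sum_{K\in\mathscr{P}(\tau)}E_K G_{H,K}$ is a $T$-stem function on $D$ with real-valued components $G_{H,K}\colon D\to\rr$. Consequently, each $K$-component of $F$ reads $F_K=\sum_H G_{H,K}v_H$. I substitute this into the definition of $\I$ to get
\[\I(F)(\alpha+\beta\,J)\,\beta_u\,J_u=\sum_{K\in\mathscr{P}(\tau)}J_K\Bigl(\sum_{H\in\mathscr{P}(t_0)}G_{H,K}(\alpha,\beta)\,v_H\Bigr)\beta_u\,J_u.\]

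The heart of the argument is then a purely algebraic rearrangement, using the rules from Remark~\ref{rmk:systemofgeneratorsformirror}. First, since $J_u\in\Span(v_{t_{u-1}+1},\ldots,v_{t_u})$ anti-commutes with every generator $v_1,\ldots,v_{t_0}$ of $A'$, one has $v_HJ_u=(-1)^{|H|}J_u v_H$; since $G_{H,K}(\alpha,\beta)\,\beta_u\in\rr$ is central, I can pull everything past $J_u$ to obtain
\[\I(F)(\alpha+\beta\,J)\,\beta_u\,J_u=\sum_{K}(-1)^{\sigma(K,u)}J_{K\bigtriangleup\{u\}}\sum_{H}(-1)^{|H|}\beta_u\,G_{H,K}(\alpha,\beta)\,v_H,\]
after applying $J_KJ_u=(-1)^{\sigma(K,u)}J_{K\bigtriangleup\{u\}}$. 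The involutive substitution $K'=K\bigtriangleup\{u\}$ re-indexes the outer sum and suggests defining
\[{\,\!^uF_{K'}}(\alpha,\beta):=(-1)^{\sigma(K'\bigtriangleup\{u\},u)}\,\beta_u\sum_{H\in\mathscr{P}(t_0)}(-1)^{|H|}\,G_{H,K'\bigtriangleup\{u\}}(\alpha,\beta)\,v_H,\]
so that $\I({\,\!^uF})(\alpha+\beta J)=\I(F)(\alpha+\beta J)\,\beta_u J_u$ by construction.

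It remains to verify that ${\,\!^uF}=\sum_{K'}E_{K'}\,{\,\!^uF_{K'}}$ is a mirror $T$-stem function. The mirror condition is automatic, since each $v_H\in A'$. For the stem condition, I need to check that ${\,\!^uF_{K'}}(\alpha,\overline{\beta}^h)=(-1)^{[h\in K']}\,{\,\!^uF_{K'}}(\alpha,\beta)$ for every $h\in\{1,\ldots,\tau\}$. This reduces to a sign check on the scalar factor $\beta_u\,G_{H,K'\bigtriangleup\{u\}}(\alpha,\beta)$. If $h\neq u$, then $\beta_u$ is unchanged under $\beta\mapsto\overline{\beta}^h$, and $G_{H,K'\bigtriangleup\{u\}}$ contributes the sign $(-1)^{[h\in K'\bigtriangleup\{u\}]}=(-1)^{[h\in K']}$, as required. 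If $h=u$, then $\beta_u$ itself contributes $-1$, while $G_{H,K'\bigtriangleup\{u\}}$ contributes $(-1)^{[u\in K'\bigtriangleup\{u\}]}=(-1)^{[u\notin K']}$; the product gives $(-1)^{[u\in K']}$, again as required.

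The only potential obstacle is bookkeeping of signs through the reflections, which I handled above by splitting into the cases $h=u$ and $h\neq u$; apart from this, everything is formal once the $v_H$ versus $J_u$ commutation rule has been applied. In particular, the fact that $F$ is mirror-valued (so the coefficients $G_{H,K}$ are real and hence central) is essential: it is what allows $\beta_u$ to be absorbed into the stem component and $J_u$ to be slid past $v_H$ with a clean sign.
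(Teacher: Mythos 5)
Your proposal is correct and follows essentially the same route as the paper's proof: the same decomposition $F=\sum_H G_Hv_H$ from Lemma~\ref{lem:nonuniquedecomposition}, the same commutation rules from Remark~\ref{rmk:systemofgeneratorsformirror}, the re-indexing $K'=K\bigtriangleup\{u\}$, and the identical definition of ${\,\!^uF}$ (your ${\,\!^uF_{K'}}$ is exactly the paper's $(-1)^{\sigma(K'\bigtriangleup\{u\},u)}\,\beta_u\,\widetilde{F}_{K'\bigtriangleup\{u\}}$). Your case analysis $h=u$ versus $h\neq u$ for the stem symmetry matches the paper's verification as well.
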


\begin{proof}
According to Lemma~\ref{lem:nonuniquedecomposition}, $F$ can be expressed as $F=\sum_{H\in\mathscr{P}(t_0)}G_Hv_H$, where $G_H=\sum_{K\in\mathscr{P}(\tau)}E_KG_{H,K}$ are $T$-stem functions $D\to\rr\otimes\rr^{2^\tau}$. In particular, the $K$-component of $F$ is $F_K=\sum_{H\in\mathscr{P}(t_0)}G_{H,K}v_H$. For all $K\in\mathscr{P}(\tau)$, let us set $\widetilde{F}_K:=\sum_{H\in\mathscr{P}(t_0)}(-1)^{|H|}G_{H,K}v_H$, so that $F_K\,J_u=J_u\,\widetilde{F}_K$ by Remark~\ref{rmk:systemofgeneratorsformirror}. Thus,
\begin{align*}
\I(F)(\alpha+\beta\,J)\,\beta_u\,J_u&=\sum_{K\in\mathscr{P}(\tau)}J_K\,F_K(\alpha,\beta)\,\beta_u\,J_u=\sum_{K\in\mathscr{P}(\tau)}J_K\,J_u\,\beta_u\,\widetilde{F}_K(\alpha,\beta)\\
&=\sum_{K\in\mathscr{P}(\tau)}(-1)^{\sigma(K,u)}\,J_{K\bigtriangleup\{u\}}\,\beta_u\,\widetilde{F}_K(\alpha,\beta)\\
&=\sum_{K'\in\mathscr{P}(\tau)}J_{K'}\,(-1)^{\sigma(K'\bigtriangleup\{u\},u)}\,\beta_u\,\widetilde{F}_{K'\bigtriangleup\{u\}}(\alpha,\beta)\,.
\end{align*}
For the third and fourth equalities, we used Remark~\ref{rmk:systemofgeneratorsformirror} again.
Let us define ${\,\!^uF}:D\to A\otimes\rr^{2^\tau}$ by setting
\[{\,\!^uF}:=\sum_{K\in\mathscr{P}(\tau)}E_{K}\,(-1)^{\sigma(K\bigtriangleup\{u\},u)}\,\beta_u\,\widetilde{F}_{K\bigtriangleup\{u\}}\,.\]
If we prove that ${\,\!^uF}$ is a mirror $T$-stem function, then the desired equality $\I(F)(\alpha+\beta\,J)\,\beta_u\,J_u=\I({\,\!^uF})(\alpha+\beta\,J)$ will follow immediately. Since $\widetilde{F}_{K\bigtriangleup\{u\}}=\sum_{H\in\mathscr{P}(t_0)}(-1)^{|H|}G_{H,{K\bigtriangleup\{u\}}}v_H$, we have ${\,\!^uF}=\sum_{H\in\mathscr{P}(t_0)}{\,\!^uG}_Hv_H$, where
\begin{align*}
&{\,\!^uG}_H:=\sum_{K\in\mathscr{P}(\tau)}E_K\,{\,\!^uG}_{H,K}:D\to \rr\otimes\rr^{2^\tau}\,,\\
&{\,\!^uG}_{H,K}:=(-1)^{|H|+\sigma(K\bigtriangleup\{u\},u)}\,\beta_u\,G_{H,K\bigtriangleup\{u\}}:D\to\rr\,.
\end{align*}
Thanks to Lemma~\ref{lem:nonuniquedecomposition}, we are left with proving that each ${\,\!^uG}_H$ is a $T$-stem function. In other words, it suffices to verify the following symmetries:
\[{\,\!^uG}_{H,K}(\alpha,\overline{\beta}^h)=\left\{
\begin{array}{ll}
{\,\!^uG}_{H,K}(\alpha,\beta)&\mathrm{if\ }h\in\{1,\dots,\tau\}\setminus K\\
-{\,\!^uG}_{H,K}(\alpha,\beta)&\mathrm{if\ }h\in K
\end{array}\,.
\right.\]
We first assume $h\not\in K$, whence $h\in K\bigtriangleup\{h\}$ and $h\not\in K\bigtriangleup\{u\}$ for $u\neq h$: then
\begin{align*}
\left(\beta_u\,G_{H,K\bigtriangleup\{u\}}\right)_{|_{(\alpha,\overline{\beta}^h)}}&=-\beta_u\,(-G_{H,K\bigtriangleup\{u\}}(\alpha,\beta))=\beta_u\,G_{H,K\bigtriangleup\{u\}}(\alpha,\beta)&\mathrm{if\ }u=h\,,\\
\left(\beta_u\,G_{H,K\bigtriangleup\{u\}}\right)_{|_{(\alpha,\overline{\beta}^h)}}&=\beta_u\,G_{H,K\bigtriangleup\{u\}}(\alpha,\beta)&\mathrm{if\ }u\neq h\,,
\end{align*}
as desired. Assume, instead, $h\in K$, whence $h\not\in K\bigtriangleup\{h\}$ and $h\in K\bigtriangleup\{u\}$ for $u\neq h$: then
\begin{align*}
\left(\beta_u\,G_{H,K\bigtriangleup\{u\}}\right)_{|_{(\alpha,\overline{\beta}^h)}}&=-\beta_u\,G_{H,K\bigtriangleup\{u\}}(\alpha,\beta)&\mathrm{if\ }u=h\,,\\
\left(\beta_u\,G_{H,K\bigtriangleup\{u\}}\right)_{|_{(\alpha,\overline{\beta}^h)}}&=\beta_u\,(-G_{H,K\bigtriangleup\{u\}}(\alpha,\beta))=-\beta_u\,G_{H,K\bigtriangleup\{u\}}(\alpha,\beta)&\mathrm{if\ }u\neq h\,,
\end{align*}
as desired. The proof is now complete.
\end{proof}


\section{Series expansion and representation formula}\label{sec:seriesexpansion}

The aim of this section is studying $T$-regular functions more in depth, under suitable hypotheses about their domains. Throughout this section, we assume $A$ to be associative.

Subsection~\ref{subsec:seriesexpansion} provides $T$-regular functions with a series expansion on each ball centered at a point of the mirror. An Identity Principle and a Maximum Modulus Principle valid on $T$-slice domains follow. Subsection~\ref{subsec:representationformula} proves that $T$-regular functions on $T$-symmetric $T$-slice domains are automatically strongly $T$-regular, whence real analytic.


\subsection{Series expansion}\label{subsec:seriesexpansion}

In this subsection, we will expand $T$-regular functions into series, using the polynomial functions $\T_\k$ constructed in Section~\ref{sec:polynomials}. As a preparation for these series expansions, we prove that the $\T_\k$'s are strongly $T$-regular. 

\begin{proposition}\label{prop:TkTfunction}
Assume $A$ to be associative. For any $\k\in\zz^{t_0+\tau}$, the function $\T_\k:V\to A$ is strongly $T$-regular. Moreover, $\T_\k$ is induced by a mirror $T$-stem function.
\end{proposition}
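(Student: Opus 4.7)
My plan is to argue by induction on $k:=|\k|$, taking as the inductive hypothesis the stronger statement actually claimed by the proposition: that $\T_\k=\I(F^\k)$ for some mirror $T$-stem function $F^\k$ on all of $\rr_{0,t_0}\times\rr^\tau$. Since Theorem~\ref{thm:polynomialexpansion} already establishes $\T_\k\in U_k\subset\reg_T(V,A)$, once we show $\T_\k\in\slice(V,A)$ we will have $\T_\k\in\sr(V,A)$ automatically.

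The base cases are immediate: if $\k\notin\nn^{t_0+\tau}$ then $\T_\k\equiv0=\I(0)$, and if $\k=(0,\ldots,0)$ then $\T_\k\equiv1$ is induced by the mirror $T$-stem function $F=E_\emptyset$. For the inductive step with $|\k|\geq1$, I would use the defining recursion
\[
|\k|\,\T_\k(x)=\sum_{s=1}^{t_0}k_s\,\T_{\k-\epsilon_s}(x)\bigl(x_s-(-1)^ax_0v_s\bigr)+\sum_{s=t_0+1}^{t_0+\tau}(-1)^{b_s}k_s\,\T_{\k-\epsilon_s}(x)\bigl(x_0+(-1)^{a_s}x^{s-t_0}\bigr)
\]
and expand each summand as a linear combination of four elementary types of products of the form $\T_{\k-\epsilon_s}(x)\cdot\phi$, where $\phi$ is one of: (i) a real coordinate $x_0$ or $x_s$ with $s\leq t_0$, (ii) such a coordinate times $v_s$ on the right with $s\leq t_0$, or (iii) the quantity $\beta_u J_u=x^u$ for $u=s-t_0\in\{1,\ldots,\tau\}$. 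On $x=\alpha+\beta J$ the coordinates $x_0,\ldots,x_{t_0}$ are real-valued functions depending only on $\alpha$, so viewed as scalar multipliers on the stem function $F^{\k-\epsilon_s}$ they commute with all $J_K$, preserve the sign conditions defining a $T$-stem function, and map $A'$-valued components to $A'$-valued components. Right-multiplication by $v_s$ likewise preserves the $T$-stem symmetries and the mirror property since $v_s\in\rr_{0,t_0}\subset A'$; this is essentially the content of Remark~\ref{rmk:M}. For the remaining factor $\beta_u J_u$ I would invoke Lemma~\ref{lem:M} directly: it produces the mirror $T$-stem function ${\,\!^uF^{\k-\epsilon_s}}$ with $\I(F^{\k-\epsilon_s})(x)\,\beta_u J_u=\I({\,\!^uF^{\k-\epsilon_s}})(x)$.

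Putting these identities together, each summand on the right-hand side of the recursion is of the form $\I(G)$ for a mirror $T$-stem function $G$. Since $\mathfrak{M}$ is a real vector space (Remark~\ref{rmk:M}) and $\I$ is a right $A$-module isomorphism (Proposition~\ref{prop:isomorphismI}), the full sum is $\I(F)$ with $F\in\mathfrak{M}$; dividing by the positive integer $|\k|$ yields $\T_\k=\I\bigl(|\k|^{-1}F\bigr)$ with $|\k|^{-1}F\in\mathfrak{M}$, completing the induction. The one place that requires real care is bookkeeping: verifying that the formal operation ``multiply the stem function $F^{\k-\epsilon_s}$ on the right by the scalar function $(\alpha,\beta)\mapsto x_s$'' genuinely implements multiplication of the induced function by $x_s$, which amounts to checking $\I(F\cdot\phi)=\I(F)\cdot\phi$ for real-valued $\phi$ constant in $\beta$ and $\I(F\cdot a)=\I(F)\cdot a$ for $a\in A'$; both are one-line computations from the definition $\I(F)(x)=\sum_K J_KF_K(\alpha,\beta)$ using that real scalars and elements $v_s\in\rr_{0,t_0}$ pass through the sum appropriately. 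I do not expect a genuine obstacle beyond this, since Lemma~\ref{lem:M} was designed to absorb precisely the noncommutative interaction between the $J_u$'s and the mirror components.
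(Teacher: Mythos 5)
Your proposal is correct and follows essentially the same route as the paper's proof: induction on $|\k|$ with the strengthened hypothesis that $\T_{\k}$ is induced by a \emph{mirror} $T$-stem function, the same base cases, the same decomposition of the recursion into multipliers valued in $\rr_{0,t_0}$ and constant in $\beta$ (absorbed via Remark~\ref{rmk:M}) plus the factors $\beta_u J_u$ (absorbed via Lemma~\ref{lem:M}). The bookkeeping identity you flag, namely $\I(F\phi)=\I(F)\phi$ for $\phi:D\to\rr_{0,t_0}$ constant in $\beta$, is exactly the step the paper performs implicitly when passing from $\I(F^{\k-\epsilon_s})\cdot\phi_s$ to $\I(F^{\k-\epsilon_s}\phi_s)$, so there is no gap.
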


\begin{proof}
We already established that the $\T_\k$'s are $T$-regular. To prove that they are strongly $T$-regular, we need to prove that they are $T$-functions, i.e., that for all $\k\in\nn^{t_0+\tau}$ there exists a $T$-stem function $F^\k=\sum_{K\in\mathscr{P}(\tau)}E_KF^\k_K:\rr_{0,t_0}\times\rr^\tau\to A\otimes\rr^{2^\tau}$ such that $\T_\k=\I(F^\k)$. We are actually going to prove this fact for a \emph{mirror} $T$-stem function $F^\k$.

For $\k\in\zz^{t_0+\tau}\setminus\nn^{t_0+\tau}$, it suffices to set $F^\k:\equiv0$. For $\k=(0,0,\ldots,0)$, we set $F^\k:\equiv E_{\emptyset}$. Now let us prove the thesis for $\k\in\nn^{t_0+\tau}$, assuming it true for $\k-\epsilon_s$ for all $s\in\{1,\ldots,t_0+\tau\}$. Using the induction hypothesis and Remark~\ref{rmk:M}, we make the following computation (where we omit the variable $x=\alpha+\beta\,J$ for the sake of readability). For $a:=\sum_{w=t_0+1}^{t_0+\tau}k_w, a_s:=a-k_s$ and $b_s:=\sum_{w=s+1}^{t_0+\tau}k_w$ (whence $a_s+b_s=\sum_{t_0<w<s}k_w+2b_s$), we have
\begin{align*}
&|\k|\,\T_\k=\sum_{s=1}^{t_0}k_s\,\T_{\k-\epsilon_s}\cdot\left(x_s-(-1)^a\,x_0v_s\right)+\sum_{s=t_0+1}^{t_0+\tau}(-1)^{b_s}\,k_s\,\T_{\k-\epsilon_s}\cdot\left(x_0+(-1)^{a_s}\,\beta_{s-t_0}J_{s-t_0}\right)\\
&=\sum_{s=1}^{t_0}k_s\,\I(F^{\k-\epsilon_s})\cdot\left(x_s-(-1)^a\,x_0v_s\right)+\sum_{s=t_0+1}^{t_0+\tau}(-1)^{b_s}\,k_s\,\I(F^{\k-\epsilon_s})\cdot\left(x_0+(-1)^{a_s}\,\beta_{s-t_0}J_{s-t_0}\right)\\
&=\sum_{s=1}^{t_0}\I(F^{\k-\epsilon_s}\phi_s)+\sum_{s=t_0+1}^{t_0+\tau}\I(F^{\k-\epsilon_s}\psi_s)+\sum_{s=t_0+1}^{t_0+\tau}(-1)^{a_s+b_s}\,k_s\,\I(F^{\k-\epsilon_s})\beta_{s-t_0}J_{s-t_0}\\
&=\I\left(\sum_{s=1}^{t_0}F^{\k-\epsilon_s}\phi_s+\sum_{s=t_0+1}^{t_0+\tau}F^{\k-\epsilon_s}\psi_s\right)+\sum_{u=1}^{\tau}(-1)^{d_u}k_{t_0+u}\,\I(F^{\k-\epsilon_{t_0+u}})\beta_uJ_u\,,
\end{align*}
where
\[\phi_s(\alpha,\beta):=k_s\,\left(x_s-(-1)^a\,x_0v_s\right),\quad\psi_s(\alpha,\beta):=(-1)^{b_s}\,k_s\,x_0,\quad d_u:=\sum_{t_0<w<t_0+u}k_w\,.\]
Now, Lemma~\ref{lem:M} defines, for $F=F^{\k-\epsilon_{t_0+u}}$ and for every $u\in\{1,\ldots,\tau\}$, a mirror $T$-stem function $\!^uF$ such that $\I(F)\beta_uJ_u=\I({\,\!^uF})$. We define $F^\k$ by means of the equality
\[|\k|\,F^\k=\sum_{s=1}^{t_0}F^{\k-\epsilon_s}\phi_s+\sum_{s=t_0+1}^{t_0+\tau}F^{\k-\epsilon_s}\psi_s+\sum_{u=1}^{\tau}(-1)^{d_u}k_{t_0+u}\,\!^uF^{\k-\epsilon_{t_0+u}}\,.\]
Using Remark~\ref{rmk:M}, we see that $F^\k$ is a mirror $T$-stem function. Moreover, $\T_\k=\I(F^\k)$.
\end{proof}

We are finally ready for the announced series expansion. We recall that $\omega=\omega_{\B,\B'}\geq1$ is a constant such that $\Vert xa\Vert\leq\omega\,\Vert x\Vert \,\Vert a\Vert$ for all $x\in V,a\in A$ (see Remark~\ref{rmk:norminequality}). Moreover, by Proposition~\ref{prop:norm}: if $A$ is associative and $\B'$ is a fitted distinguished basis of $A$, then $\omega=1$, whence $(1+\omega^2)^\tau=2^\tau$.

\begin{theorem}[Series expansion]\label{thm:seriesexpansion}
Assume $A$ to be associative. Let $\Omega$ be a domain in $V$ and $f\in\reg_T(\Omega,A)$. If $\Omega$ contains an open ball $B=B(p,R)$ of radius $R>0$ centered at a point $p$ in the mirror $\rr_{0,t_0}$, then the following series expansion is valid for $x\in B$:
\[f(x)=\sum_{k\in\nn}\sum_{|\k|=k}\T_\k(x-p)\frac{1}{\k!}\delta^{(0,\k)}f(p)\,.\]
Here, the series converges normally in $B$ because
\[\max_{\overline{B}(p,r_1)}\Big\Vert\sum_{|\k|=k}\T_\k(x-p)\frac{1}{\k!}\delta^{(0,\k)}f(p)\Big\Vert\leq(1+\omega^2)^\tau\,\omega^2\,\sqrt{2}\;\binom{k+m}{m}\,\left(\frac{r_1}{r_2}\right)^k\,\max_{\partial B_I(p,r_2)}\Vert f_I\Vert\]
whenever $0<r_1<r_2<R$ and $I\in\torus$.
\end{theorem}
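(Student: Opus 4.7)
The plan combines the slice-wise Taylor expansion of Remark~\ref{rmk:alternatetaylor} with the $T$-function machinery of Section~\ref{sec:Tfunctions}. The key conceptual point is that, although $f$ is merely $T$-regular, the partial sums of the proposed expansion are automatically strongly $T$-regular, which unlocks Proposition~\ref{prop:normTfunction}.

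First I would fix an arbitrary $J \in \torus$. Because $p$ lies in the mirror $\rr_{0,t_0}$, it belongs to $\rr^{t_0+\tau+1}_J$ and $B_J = B \cap \rr^{t_0+\tau+1}_J$ is an open ball of radius $R$ centred at $p$ inside this $(t_0+\tau+1)$-dimensional slice. The restriction $f_J$ is $J$-monogenic on $\Omega_J \supseteq B_J$, so Remark~\ref{rmk:alternatetaylor}, applied with the hypercomplex basis $\B_J$ in the role of $\B$ and $m = t_0+\tau$, yields
\[
f_J(x) = \sum_{k\in\nn} \sum_{|\k|=k} \P_\k^{\B_J}(x-p)\, J_\tau^{k_{t_0+\tau}} \cdots J_1^{k_{t_0+1}}\, \tfrac{1}{\k!}\, \delta_{\B_J}^{(0,\k)} f_J(p) \qquad (x \in B_J),
\]
together with the convergence estimate bounded by $\max_{\partial B_J(p,r_2)}\|f_J\|$. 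Lemma~\ref{lem:PTnabladelta} rewrites $\P_\k^{\B_J}(x-p)\,J_\tau^{k_{t_0+\tau}}\cdots J_1^{k_{t_0+1}}$ as $(\T_\k)_J(x-p)$, and Definition~\ref{def:delta} together with Theorem~\ref{thm:deltawellposed} identifies $\delta_{\B_J}^{(0,\k)} f_J(p)$ with $\delta^{(0,\k)} f(p)$, a value that is slice-independent precisely because $p$ lies in the mirror. Since the resulting right-hand side no longer depends on $J$ and every $x \in B$ belongs to some $B_J$ by Remark~\ref{rmk:decomposedvariable}, the desired pointwise expansion holds throughout $B$.

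For the global estimate I would introduce the partial sum $S_k(x) := \sum_{|\k|=k} \T_\k(x-p)\,\tfrac{1}{\k!}\,\delta^{(0,\k)} f(p)$. Each $\T_\k$ is a $T$-function on $V$ by Proposition~\ref{prop:TkTfunction}, and since $\slice(V,A)$ is a right $A$-module (Proposition~\ref{prop:isomorphismI}), so is $S_k$. A direct verification via Remark~\ref{rmk:decomposedvariable} shows that $\overline{B}(p,r_1)$ is $T$-symmetric: its profile $D' = \{(\alpha,\beta) \in \rr_{0,t_0} \times \rr^\tau : \|\alpha-p\|^2 + |\beta|^2 \leq r_1^2\}$ is invariant under every reflection $\beta \mapsto \overline{\beta}^h$. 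Applying Proposition~\ref{prop:normTfunction} to $S_k$ over this $T$-symmetric set yields
\[
\max_{\overline{B}(p,r_1)} \|S_k\| \leq (1+\omega^2)^\tau \max_{\overline{B}_I(p,r_1)} \|S_k\|
\]
for any $I \in \torus$. Chaining this with the slice-wise bound from Remark~\ref{rmk:alternatetaylor} in the $I$-slice produces the stated inequality, and the normal convergence assertion follows immediately from the summability of $\binom{k+m}{m}(r_1/r_2)^k$.

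The main obstacle is the passage from slice-wise bounds, which naturally involve $\max\|f_J\|$ for the particular slice used in the Taylor expansion of the argument $x$, to a bound controlled by $\max\|f_I\|$ on a single fixed slice $I$. No such inter-slice comparison is available for a general $T$-regular $f$; the resolution is conceptually clean, namely to apply Proposition~\ref{prop:normTfunction} not to $f$ itself but to the strongly $T$-regular partial sums $S_k$, at the cost of the harmless factor $(1+\omega^2)^\tau$.
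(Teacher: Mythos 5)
Your proposal is correct and follows essentially the same route as the paper's proof: slice-wise expansion via Remark~\ref{rmk:alternatetaylor} and Lemma~\ref{lem:PTnabladelta}, identification of the coefficients $\delta^{(0,\k)}f(p)$ through Definition~\ref{def:delta}, and then the key step of applying Proposition~\ref{prop:normTfunction} to the strongly $T$-regular partial sums (via Proposition~\ref{prop:TkTfunction} and the module structure) rather than to $f$ itself, before concluding with the root test. The only cosmetic difference is that the paper explicitly invokes Remark~\ref{rmk:mirrortranslationofslice} to justify that composing $\T_\k$ with the translation $x\mapsto x-p$ preserves strong $T$-regularity, a detail you leave implicit.
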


\begin{proof}
Set $c_\k:=\frac{1}{\k!}\delta^{(0,\k)}f(p)$ for all $\k\in\nn^{t_0+\tau}$. For any $J=(J_1,\ldots,J_\tau)\in\torus$, we have $p\in\rr_{0,t_0}\subseteq\rr^{t_0+\tau+1}_J$ and $B_J$ is an open ball centered at $p$ in $\rr^{t_0+\tau+1}_J$, contained in $\Omega_J$. According to Definition~\ref{def:delta}, $c_\k=\frac{1}{\k!}\delta_J^{(0,\k)}f_J(p)=\frac{1}{\k!}\delta_{\B_J}^{(0,\k)}f_J(p)$ for all $\k\in\nn^{t_0+\tau}$. By Remark~\ref{rmk:alternatetaylor} and Lemma~\ref{lem:PTnabladelta},
\[f_J(x)=\sum_{k\in\nn}\sum_{|\k|=k}\P_\k^{\B_J}(x-p)J_\tau^{k_{t_0+\tau}}\cdots J_1^{k_{t_0+1}}\frac{1}{\k!}\delta_{\B_J}^{(0,\k)}f_J(p)=\sum_{k\in\nn}\sum_{|\k|=k}(\T_\k)_J(x-p)\,c_\k\,,\]
where the series converges normally in $B_J$. Therefore, the thesis will be proven if we can prove normal convergence of the series $\sum_{k\in\nn}\sum_{|\k|=k}\T_\k(x-p)\,c_\k$ in $B$.

Let us fix $r_1$ with $0<r_1<R$, set $C:=\overline{B}(p,r_1)$ as well as $p_k(x):=\sum_{|\k|=k}\T_\k(x-p)\,c_\k$, and prove that the number series $\sum_{k\in\nn}\max_C\Vert p_k\Vert$ converges. As a first step, we prove that each $p_k$ belongs to the set $\sr(V,A)$ of strongly $T$-regular functions on $V$. Indeed: Proposition~\ref{prop:TkTfunction} guarantees that $\T_\k\in\sr(V,A)$ for all $\k\in\nn^{t_0+\tau}$; Remark~\ref{rmk:mirrortranslationofslice} guarantees, since $p\in\rr_{0,t_0}$ and $V-p=V$, that $\sr(V,A)$ is invariant under composition with the translation $x\mapsto x-p$; and Proposition~\ref{prop:isomorphismI} guarantees that $\sr(V,A)$ is a right $A$-module. We are now ready to estimate $\max_C\Vert p_k\Vert$. For $I\in\torus$ fixed, Proposition~\ref{prop:normTfunction} yields the inequality $\max_C\Vert p_k\Vert\leq(1+\omega^2)^\tau\,\max_{C_I}\Vert p_k\Vert$. By applying Remark~\ref{rmk:alternatetaylor} to the $I$-monogenic function $\phi=f_I$ and taking into account again Lemma~\ref{lem:PTnabladelta}, we find that
\[\max_C\Vert p_k\Vert\leq(1+\omega^2)^\tau\,\max_{C_I}\Vert p_k\Vert\leq(1+\omega^2)^\tau\,\omega^2\,\sqrt{2}\;\binom{k+m}{m}\,\left(\frac{r_1}{r_2}\right)^k\,\max_{\partial B_I(p,r_2)}\Vert f_I\Vert\
\]
for any $r_2$ such that $r_1<r_2<R$. Since $\lim_{k\to+\infty}\frac{k+m}{k}\frac{r_1}{r_2}=\frac{r_1}{r_2}<1$, the
ratio test shows that number series $\sum_{k\in\nn}\max_C\Vert p_k\Vert$ converges, as desired. The proof is now complete.
\end{proof}

Besides its independent interest, Theorem~\ref{thm:seriesexpansion} allows to prove an Identity Principle over $T$-slice domains (see Definition~\ref{def:Tslicedomain}).

\begin{theorem}[Identity Principle]\label{thm:identityprinciple}
Assume $A$ to be associative. Let $\Omega\subseteq V$ be a $T$-slice domain and $f,g\in\reg_T(\Omega,A)$. If there exists $J\in\torus$ such that the $J$-slice $\Omega_J$ (whose dimension is $t_0+\tau+1$) contains a set of Hausdorff dimension $s\geq t_0+\tau$ where $f_J$ and $g_J$ coincide, then $f=g$ throughout $\Omega$.
\end{theorem}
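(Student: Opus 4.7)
The plan is to reduce to proving $h := f - g \equiv 0$, where $h \in \reg_T(\Omega, A)$, and then exploit the structure of a $T$-slice domain to propagate the vanishing of $h_J$ from one slice to every other slice, with the polynomial series of Theorem~\ref{thm:seriesexpansion} serving as the bridge.

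First I would observe that $\rr^{t_0+\tau+1}_J$ is a hypercomplex subspace of $A$ of dimension $t_0+\tau+1$, so in the language of Theorem~\ref{thm:identitymonogenic} its ``dimension parameter'' is $m = t_0+\tau$. Since $\Omega_J$ is connected (as $\Omega$ is a $T$-slice domain) and $h_J$ is $J$-monogenic on $\Omega_J$ and coincides with $0$ on a set of Hausdorff dimension $s \geq t_0+\tau = m$, Theorem~\ref{thm:identitymonogenic} yields $h_J \equiv 0$ throughout $\Omega_J$.

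Next I would use that $\Omega$ meets the mirror to pick $p \in \Omega \cap \rr_{0,t_0}$ and a radius $R>0$ with $B(p,R) \subset \Omega$. Applying Theorem~\ref{thm:seriesexpansion} to $h$ at the mirror point $p$ gives
\[
h(x) = \sum_{k \in \nn} \sum_{|\k|=k} \T_\k(x-p) \, \tfrac{1}{\k!} \delta^{(0,\k)} h(p), \qquad x \in B(p,R).
\]
By Definition~\ref{def:delta}, for every $\k$ one has $\delta^{(0,\k)}h(p) = \delta_J^{(0,\k)} h_J(p)$; since $h_J$ vanishes identically on $\Omega_J \supseteq B(p,R)_J$, every coefficient is zero, so $h \equiv 0$ on the whole ball $B(p,R) \subset V$.

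Finally, for any $J' \in \torus$, the restriction $h_{J'}$ is $J'$-monogenic on the connected domain $\Omega_{J'}$ and vanishes on the nonempty open subset $B(p,R)_{J'}$ (nonempty because $p \in \rr_{0,t_0} \subseteq \rr^{t_0+\tau+1}_{J'}$), which has Hausdorff dimension $t_0+\tau+1 \geq t_0+\tau$. A second application of Theorem~\ref{thm:identitymonogenic} forces $h_{J'} \equiv 0$ on $\Omega_{J'}$. By Remark~\ref{rmk:decomposedvariable} the slices $\rr^{t_0+\tau+1}_{J'}$ cover $V$, whence $\Omega = \bigcup_{J' \in \torus} \Omega_{J'}$, and we conclude $h \equiv 0$ on $\Omega$. (When $\tau=0$ the statement is already Theorem~\ref{thm:identitymonogenic}.)

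The main obstacle is the propagation from the single slice $\Omega_J$ to the remaining slices $\Omega_{J'}$. The naive idea---using the mirror $\Omega \cap \rr_{0,t_0}$ as common coincidence set---fails as soon as $\tau \geq 2$, because the mirror has Hausdorff dimension only $t_0+1 < t_0+\tau$, falling short of the threshold required by the monogenic Identity Principle. The key conceptual step is therefore to \emph{thicken} the coincidence set before propagating: the $T$-regular series expansion at a mirror point, whose coefficients $\delta^{(0,\k)}h(p)$ are intrinsically tied to any one slice via $\delta_J^{(0,\k)}h_J(p)$, upgrades $h_J\equiv 0$ to $h\equiv 0$ on an open ball of $V$, which then intersects every slice $\Omega_{J'}$ in an open set of full dimension $t_0+\tau+1$.
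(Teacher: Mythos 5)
Your proof is correct and follows essentially the same route as the paper's: kill $h_J$ on the slice $\Omega_J$ via the monogenic Identity Principle, use $\delta^{(0,\k)}h(p)=\delta_J^{(0,\k)}h_J(p)=0$ together with the series expansion of Theorem~\ref{thm:seriesexpansion} to get $h\equiv 0$ on a ball centered at a mirror point, and then propagate to every other slice $\Omega_{J'}$ by a second application of Theorem~\ref{thm:identitymonogenic}. Your closing remark on why the mirror alone is too thin for $\tau\geq 2$ is a correct and useful observation, but the argument itself matches the paper's.
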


\begin{proof}
We remark that $\Omega_J$ is a domain in $\rr^{t_0+\tau+1}_J$ because $\Omega$ is a $T$-slice domain. Since the difference $f_J-g_J$ vanishes in a subset of $\Omega_J$ having Hausdorff dimension $s\geq t_0+\tau$, Theorem~\ref{thm:identitymonogenic} guarantees that $f_J-g_J\equiv0$ throughout $\Omega_J$. The $T$-slice domain $\Omega$ certainly includes a point $p\in\rr_{0,t_0}$, whence an open ball $B=B(p,R)$ with $R>0$. By Definition~\ref{def:delta},
\[\delta^{(0,\k)}(f-g)(p)=\delta_J^{(0,\k)}(f_J-g_J)(p)=0\]
for all $\k\in\nn^{t_0+\tau}$. Let us apply Theorem~\ref{thm:seriesexpansion} to $f-g$ in $B$: since $\delta^{(0,\k)}(f-g)(p)=0$ for all $\k\in\nn^{t_0+\tau}$, it follows that $f-g\equiv0$ in $B$. For every $J'\in\torus$, we conclude that $f_{J'}-g_{J'}\equiv0$ in $B_{J'}$, which is an open subset of the domain $\Omega_{J'}$. Theorem~\ref{thm:identitymonogenic} guarantees that $f_{J'}-g_{J'}\equiv0$ throughout $\Omega_{J'}$. Thus, $f-g\equiv0$ in $\Omega$, as desired.
\end{proof}

For slice-regular quaternionic functions, the Identity Principle~\ref{thm:identityprinciple} was proven in~\cite{advances} over open balls centered at the origin, in~\cite{poli} over slice domains and in~\cite{altavillawithoutreal} over symmetric domains not intersecting the real line. For slice-regular octonionic functions on open balls centered at the origin, the Identity Principle~\ref{thm:identityprinciple}
was established in~\cite{rocky}. For an Identity Principle for slice-regular functions over a general alternative $*$-algebra, covering all symmetric domains, see~\cite{gpsalgebra}. The Identity Principle for slice-monogenic functions was proven in~\cite{israel}. We are now in a position to establish the following property of $T$-regular functions on a $T$-slice domain.

\begin{proposition}[Maximum Modulus Principle]
Assume $A$ to be associative. Let $\Omega$ be a $T$-slice domain in $V$ and $f\in\reg_T(\Omega,A)$. If the function $\Vert f\Vert:\Omega\to\rr$ has a global maximum point in $\Omega$, then $f$ is constant in $\Omega$.
\end{proposition}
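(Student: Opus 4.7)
The plan is to reduce the $T$-regular Maximum Modulus Principle to its monogenic counterpart (Theorem~\ref{thm:monogenicmaximummodulus}) on a single slice, and then propagate the resulting rigidity across $\Omega$ via the Identity Principle (Theorem~\ref{thm:identityprinciple}). The whole argument should take just a few lines, so I expect no substantial obstacle; the delicate point is only making sure the hypotheses of both ingredients are met.

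First, let $x_0\in\Omega$ be a global maximum point of $\Vert f\Vert$. In the case $\tau=0$, we have $\rr^{t_0+\tau+1}_\emptyset=V$ and $f=f_\emptyset$ is simply a function left-monogenic with respect to $\B$ on the domain $\Omega$, so Theorem~\ref{thm:monogenicmaximummodulus} applied directly to $f$ yields the conclusion. In the case $\tau\geq1$, choose $J\in\torus$ with $x_0\in\rr^{t_0+\tau+1}_J$ (this is possible by the decomposition~\eqref{eq:decomposedvariable} of $x_0$). Because $\Omega$ is a $T$-slice domain, $\Omega_J$ is a domain in $\rr^{t_0+\tau+1}_J$ containing $x_0$, and the restriction $f_J:\Omega_J\to A$ is $J$-monogenic by the definition of $T$-regularity. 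Since $\Vert f_J\Vert$ attains its global maximum at $x_0\in\Omega_J$, Theorem~\ref{thm:monogenicmaximummodulus} forces $f_J$ to be constantly equal to some $c\in A$ on $\Omega_J$.

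Next, consider the constant function $g\equiv c$ on $\Omega$, which is trivially $T$-regular, and compare it with $f$. The difference $f-g\in\reg_T(\Omega,A)$ vanishes identically on the $J$-slice $\Omega_J$, which has Hausdorff dimension $t_0+\tau+1\geq t_0+\tau$. Hence the Identity Principle (Theorem~\ref{thm:identityprinciple}) applies and gives $f\equiv c$ throughout $\Omega$, as desired.
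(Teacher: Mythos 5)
Your proof is correct and follows essentially the same route as the paper: pick $J\in\torus$ with the maximum point in $\Omega_J$, apply the monogenic Maximum Modulus Principle (Theorem~\ref{thm:monogenicmaximummodulus}) to $f_J$ to get $f_J\equiv f(p)$, and then invoke the Identity Principle (Theorem~\ref{thm:identityprinciple}) against the constant function to propagate this to all of $\Omega$. The only (harmless) difference is that you treat $\tau=0$ separately, which the paper leaves implicit.
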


\begin{proof}
Let $p$ be the global maximum point of $\Vert f\Vert:G\to\rr$ and let $J\in\torus$ be such that $p\in\Omega_J$. In particular, $p$ is a global maximum point for $\Vert f_J\Vert:\Omega_J\to\rr$. By applying Theorem~\ref{thm:monogenicmaximummodulus} to the $J$-monogenic function $f_J$, we conclude that $f_J\equiv f_J(p)=f(p)$ in $\Omega_J$. The Identity Principle~\ref{thm:identityprinciple} now yields that $f\equiv f(p)$ throughout $\Omega$.
\end{proof}

For slice-regular quaternionic functions, the Maximum Modulus Principle had been proven in~\cite{advances} over open balls centered at the origin, in~\cite{zerosopen} over slice domains and in~\cite{altavillawithoutreal} over symmetric domains not intersecting the real line. For slice-regular octonionic functions, it was established in~\cite{rocky} over open balls centered at the origin and in~\cite{wang} over slice domains. For slice-monogenic functions, it was proven in~\cite{renxu}.


\subsection{Representation formula on $T$-symmetric $T$-slice domains}\label{subsec:representationformula}

This subsection proves that $T$-regular functions on $T$-symmetric $T$-slice domains are automatically strongly $T$-regular (see Definition~\ref{def:Tfunction}), whence real analytic. This property subsumes a renowned property of quaternionic slice-regular functions, proven in~\cite[Theorem 3.1]{advancesrevised} (see also~\cite{ghilonislicebyslice}). It also subsumes the analogous property of Clifford slice-monogenic functions (see~\cite[Theorem 2.2.18]{librodaniele2} and references therein).

We recall that, in Definition~\ref{def:Tsymmetric}, we have defined $T$-symmetric sets as sets of the form $\Omega_D:=\{\alpha+\beta\,J:(\alpha,\beta)\in D\}$ for some $D\subseteq\rr_{0,t_0}\times\rr^\tau$. Throughout the present subsection, we assume $D$ to be a nonempty open subset of $\rr_{0,t_0}\times\rr^\tau$, invariant under the reflection $(\alpha,\beta)\mapsto(\alpha,\overline{\beta}^h)$ for every $h\in\{1,\ldots,\tau\}$. We point out that the $T$-symmetric open set $\Omega_D$ is a $T$-slice domain if, and only if, $D$ is connected and intersects $\rr_{0,t_0}\times\{0\}$. If this is the case, we are going to prove that $\reg_T(\Omega_D,A)=\sr(\Omega_D,A)$. As a preparation for the proof of this equality, we make a remark and establish a technical lemma.

\begin{remark}\label{rmk:technical}
Fix $H,K\in\mathscr{P}(\tau)$. Then
\[(-1)^{|(K\bigtriangleup\{u\})\cap H|}=\left\{
\begin{array}{ll}
(-1)^{|K \cap H|}&\mathrm{if\ }u\in\{1,\ldots,\tau\}\setminus H\\
-(-1)^{|K \cap H|}&\mathrm{if\ }u\in H
\end{array}
\right.\,.\]
This is because: if $u\not\in H$, then $(K\bigtriangleup\{u\})\cap H=K\cap H$; if $u\in H\setminus K$, then $|(K\bigtriangleup\{u\})\cap H|=|K\cap H|+1$; and if $u\in H\cap K$, then $|(K\bigtriangleup\{u\})\cap H|=|K\cap H|-1$. 
\end{remark}

Our technical lemma concerns the coefficient of $f(\alpha+\overline{\beta}^H\,I)$ in the Representation Formula~\eqref{eq:representationformula}.

\begin{lemma}\label{lem:technical}
Assume $A$ to be associative. Fix $H\in\mathscr{P}(\tau),I,J\in\torus,s\in\{1,\ldots,t_0\}$ and $u\in\{1,\ldots,\tau\}$. If we set
\[\gamma_H:=2^{-\tau}\sum_{K\in\mathscr{P}(\tau)}(-1)^{|K \cap H|}\,J_K\,I_K^{-1}\,,\]
then $v_s\,\gamma_H=\gamma_H\,v_s$ and
\[J_u\,\gamma_H=\left\{
\begin{array}{ll}
\gamma_H\,I_u&\mathrm{if\ }u\in\{1,\ldots,\tau\}\setminus H\\
-\gamma_H\,I_u&\mathrm{if\ }u\in H
\end{array}
\right.\,.\]
\end{lemma}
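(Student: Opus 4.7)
The first assertion $v_s\gamma_H=\gamma_H v_s$ is the easy half. My plan is to observe that $v_s$ (for $s\in\{1,\ldots,t_0\}$) anticommutes with each basis element $v_j$ for $j\in\{t_0+1,\ldots,n\}$, by the defining property of the hypercomplex basis $\B$. Since every $J_u$ is a real linear combination of $v_{t_{u-1}+1},\ldots,v_{t_u}$, and likewise every $I_u$ (and $I_u^{-1}=-I_u$), bilinearity gives $v_sJ_u=-J_uv_s$ and $v_sI_u^{-1}=-I_u^{-1}v_s$. Iterating through the $|K|$-fold products then yields $v_sJ_K=(-1)^{|K|}J_Kv_s$ and $v_sI_K^{-1}=(-1)^{|K|}I_K^{-1}v_s$, so the two signs cancel in $v_sJ_KI_K^{-1}=J_KI_K^{-1}v_s$ for every $K\in\mathscr{P}(\tau)$; summing gives $v_s\gamma_H=\gamma_Hv_s$.

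The second assertion is the substantive one, and my plan is to rewrite it as $J_u\gamma_HI_u^{-1}=(-1)^{[u\in H]}\gamma_H$, where $[u\in H]\in\{0,1\}$ is the indicator. The key term-by-term identity I would establish is
\[
J_uJ_KI_K^{-1}I_u^{-1}\;=\;J_{K\bigtriangleup\{u\}}\,I_{K\bigtriangleup\{u\}}^{-1}\qquad(K\in\mathscr{P}(\tau)).
\]
For the $J$-factor, I would apply the same anticommutation argument that underlies Remark~\ref{rmk:systemofgeneratorsformirror} (equivalently, invoke that remark after noting $J_uJ_K=(-1)^{|K|}J_KJ_u$ when $u\notin K$ and handling $u\in K$ by exploiting $J_u^2=-1$), producing a sign $(-1)^{\ell(K,u)}$ such that $J_uJ_K=(-1)^{\ell(K,u)}J_{K\bigtriangleup\{u\}}$. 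For the $I$-factor, associativity of $A$ lets me write $I_K^{-1}I_u^{-1}=(I_uI_K)^{-1}$; applying Remark~\ref{rmk:systemofgeneratorsformirror} to $I_uI_K$ produces the same sign $(-1)^{\ell(K,u)}$, whence $I_K^{-1}I_u^{-1}=(-1)^{\ell(K,u)}I_{K\bigtriangleup\{u\}}^{-1}$. The two occurrences of $(-1)^{\ell(K,u)}$ cancel, giving the displayed identity.

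Substituting into the definition of $\gamma_H$,
\[
J_u\gamma_HI_u^{-1}\;=\;2^{-\tau}\sum_{K\in\mathscr{P}(\tau)}(-1)^{|K\cap H|}\,J_{K\bigtriangleup\{u\}}\,I_{K\bigtriangleup\{u\}}^{-1}.
\]
I would then change summation variable to $K'=K\bigtriangleup\{u\}$ (an involution of $\mathscr{P}(\tau)$) and apply Remark~\ref{rmk:technical} to replace $(-1)^{|(K'\bigtriangleup\{u\})\cap H|}$ by $(-1)^{[u\in H]}(-1)^{|K'\cap H|}$. The factor $(-1)^{[u\in H]}$ is independent of $K'$ and pulls out of the sum, leaving $(-1)^{[u\in H]}\gamma_H$. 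Multiplying on the right by $I_u$ then produces the two stated cases.

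The main obstacle is the combinatorial sign bookkeeping needed to match the rearrangement of the $J$-product against the rearrangement of the reversed $I$-product. The cleanest way around this, as sketched above, is to convert $I_K^{-1}I_u^{-1}$ to $(I_uI_K)^{-1}$ via associativity so that the identical combinatorial rule from Remark~\ref{rmk:systemofgeneratorsformirror} applies on both sides, making the cancellation of signs transparent.
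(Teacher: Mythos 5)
Your proof is correct and follows essentially the same route as the paper's: the same anticommutation and symmetric-difference sign bookkeeping, the same use of associativity to rewrite the inverse of a product, the reindexing $K\mapsto K\bigtriangleup\{u\}$, and Remark~\ref{rmk:technical} for the final sign. The only cosmetic difference is that you conjugate by $I_u$ at the outset (reducing the claim to $J_u\,\gamma_H\,I_u^{-1}=\pm\gamma_H$), so the two signs cancel term by term via $J_uJ_KI_K^{-1}I_u^{-1}=J_{K\bigtriangleup\{u\}}I_{K\bigtriangleup\{u\}}^{-1}$, whereas the paper carries the factor $I_u$ on the right through the computation.
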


\begin{proof}
For $K\in\mathscr{P}(\tau)$, the element $v_s$ anti-commutes with $J_k$ and with $I_k$ for all $k\in K$, whence
\[v_s\,J_K\,I_K^{-1}=(-1)^{|K|}\,J_K\,v_s\,I_K^{-1}=J_K\,I_K^{-1}\,v_s\,.\]
It follows at once that $v_s\,\gamma_H=\gamma_H\,v_s$.

Proving the formula relating $J_u\,\gamma_H$ to $\gamma_H\,I_u$ requires several steps. For $K\in\mathscr{P}(\tau)$, we remark that
\[J_u\,J_K=(-1)^{\sigma(u,K)}\,J_{K\bigtriangleup\{u\}}\]
where $\sigma(u,K)$ is $0$ or $1$ according to whether the number of elements in $K$ less than, or equal to, $u$ is even or odd. For all $K'\in\mathscr{P}(\tau)$, we remark that $(-1)^{\sigma(u,K'\bigtriangleup\{u\})}=-(-1)^{\sigma(u,K')}$ and that
\[I_{K'}^{-1}\,I_u=-(I_u\,I_{K'})^{-1}=-((-1)^{\sigma(u,K')}\,I_{K'\bigtriangleup\{u\}})^{-1}=(-1)^{\sigma(u,K'\bigtriangleup\{u\})}I_{K'\bigtriangleup\{u\}}^{-1}\,.\]
We are now ready to begin the computation of $J_u\,\gamma_H$, as follows:
\begin{align*}
J_u\,\gamma_H&=2^{-\tau}\sum_{K\in\mathscr{P}(\tau)}(-1)^{|K \cap H|}\,J_u\,J_K\,I_K^{-1}\\
&=2^{-\tau}\sum_{K\in\mathscr{P}(\tau)}(-1)^{|K \cap H|}\,(-1)^{\sigma(u,K)}\,J_{K\bigtriangleup\{u\}}\,I_K^{-1}\\
&=2^{-\tau}\sum_{K'\in\mathscr{P}(\tau)}(-1)^{|(K'\bigtriangleup\{u\})\cap H|}\,(-1)^{\sigma(u,K'\bigtriangleup\{u\})}\,J_{K'}\,I_{K'\bigtriangleup\{u\}}^{-1}\\
&=2^{-\tau}\sum_{K'\in\mathscr{P}(\tau)}(-1)^{|(K'\bigtriangleup\{u\})\cap H|}\,J_{K'}\,I_{K'}^{-1}\,I_u\,.
\end{align*}
By Remark~\ref{rmk:technical}, if $u\not\in H$, then
\[J_u\,\gamma_H=2^{-\tau}\sum_{K'\in\mathscr{P}(\tau)}(-1)^{|K'\cap H|}\,J_{K'}\,I_{K'}^{-1}\,I_u=\gamma_H\,I_u\,,\]
as stated. By the same remark, if $u\in H$, then
\[J_u\,\gamma_H=-2^{-\tau}\sum_{K'\in\mathscr{P}(\tau)}(-1)^{|K'\cap H|}\,J_{K'}\,I_{K'}^{-1}\,I_u=-\gamma_H\,I_u\,,\]
as desired. The proof is now complete.
\end{proof}

We are now ready to prove that every $T$-regular function $f$ on a $T$-symmetric $T$-slice domain $\Omega_D$ is automatically strongly $T$-regular.

\begin{theorem}[Representation Formula]
Assume $A$ to be associative and the $T$-symmetric set $\Omega_D$ to be a $T$-slice domain. If $f\in\reg_T(\Omega_D,A)$, then $f$ is strongly $T$-regular and formula~\eqref{eq:representationformula} holds true for all $(\alpha,\beta)\in D$ and all $I,J\in\torus$. As a consequence, $f$ is real analytic.
\end{theorem}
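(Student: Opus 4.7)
The plan is to construct a candidate $T$-stem function from $f$ using the formula of Theorem~\ref{thm:representationformula}, show that the induced $T$-function is $T$-regular and agrees with $f$ on an $I$-slice, and then invoke the Identity Principle.

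The case $\tau=0$ is trivial, since every function is then a $T$-function. For $\tau\geq 1$, fix $I\in\torus$ and define $F=\sum_{K\in\mathscr{P}(\tau)}E_K F_K\colon D\to A\otimes\rr^{2^\tau}$ by
\[F_K(\alpha,\beta):=2^{-\tau}I_K^{-1}\sum_{H\in\mathscr{P}(\tau)}(-1)^{|K\cap H|}\,f\bigl(\alpha+\overline{\beta}^H\,I\bigr),\]
which is well-defined because $D$ is invariant under each reflection $\beta\mapsto\overline{\beta}^h$. The $T$-stem function symmetries $F_K(\alpha,\overline{\beta}^h)=\pm F_K(\alpha,\beta)$ follow from the change of variable $H\mapsto H\bigtriangleup\{h\}$ in the defining sum combined with the combinatorial identity of Remark~\ref{rmk:technical}. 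Let $g:=\I(F)\in\slice(\Omega_D,A)$.

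Substituting $J=I$ in the expansion $g(\alpha+\beta\,J)=\sum_K J_K F_K(\alpha,\beta)$ and using $\sum_{K\in\mathscr{P}(\tau)}(-1)^{|K\cap H|}=2^\tau\delta_{H,\emptyset}$ gives $g_I=f_I$ on $\Omega_I$. The main step is to verify that $g\in\reg_T(\Omega_D,A)$. Writing $g(\alpha+\beta\,J)=\sum_H\gamma_H\,f(\alpha+\overline{\beta}^H\,I)$ with $\gamma_H=2^{-\tau}\sum_K(-1)^{|K\cap H|}J_K I_K^{-1}$, apply $\debar_J=\sum_{s=0}^{t_0}v_s\partial_{x_s}+\sum_{u=1}^\tau J_u\partial_{\beta_u}$ termwise. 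By Lemma~\ref{lem:technical}, $v_s$ commutes with $\gamma_H$, while $J_u\gamma_H=\pm\gamma_H I_u$ with sign $+$ if $u\notin H$ and $-$ if $u\in H$. This sign is precisely cancelled by the chain-rule sign produced when $\partial_{\beta_u}$ acts on $f(\alpha+\overline{\beta}^H\,I)$, since the coefficient of $I_u$ in $\alpha+\overline{\beta}^H\,I$ is $\pm\beta_u$ under the same sign convention. These cancellations collapse $(\debar_J g)(\alpha+\beta\,J)$ to $\sum_H\gamma_H\,(\debar_I f_I)(\alpha+\overline{\beta}^H\,I)$, which vanishes because $f_I\in\mon_I(\Omega_I,A)$.

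With $g\in\reg_T(\Omega_D,A)$ and $g_I=f_I$ on $\Omega_I$, the Identity Principle (Theorem~\ref{thm:identityprinciple}) applied to $f-g$ on the $T$-slice domain $\Omega_D$ forces $f=g$, since $\Omega_I$ has Hausdorff dimension $t_0+\tau+1\geq t_0+\tau$. Hence $f=\I(F)\in\slice(\Omega_D,A)\cap\reg_T(\Omega_D,A)=\sr(\Omega_D,A)$; formula~\eqref{eq:representationformula} then follows by applying Theorem~\ref{thm:representationformula} to $f$; and real analyticity is Proposition~\ref{prop:analyticTfunction}. The main obstacle is the sign bookkeeping in the $\debar_J g$ computation, where signs from the definition of $\gamma_H$, the commutation identities of Lemma~\ref{lem:technical}, and the chain rule on $f\circ(\alpha+\overline{\beta}^H I)$ must conspire to assemble $\debar_I f_I$ at the reflected points. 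Lemma~\ref{lem:technical} is tailored exactly for this cancellation, so once its identities are in place the verification becomes an essentially algebraic manipulation.
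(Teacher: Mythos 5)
Your proposal is correct and follows essentially the same route as the paper's proof: define $F_K$ from $f_I$ by the averaging formula, verify the stem-function symmetries via Remark~\ref{rmk:technical}, check $\I(F)_I=f_I$ with the combinatorial identity, establish $T$-regularity of $\I(F)$ through the sign cancellations of Lemma~\ref{lem:technical}, and conclude with the Identity Principle. The only difference is ordering (the paper states the claim first and verifies it last), which is purely presentational.
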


\begin{proof}
Let us fix $I\in\torus$ and define
\[F_K(\alpha,\beta):=2^{-\tau}\,I_K^{-1}\,\sum_{H\in\mathscr{P}(\tau)}(-1)^{|K \cap H|}\,f_I(\alpha+\overline{\beta}^H\,I)\]
for every $K\in\mathscr{P}(\tau)$ and for $(\alpha,\beta)\in D$, as well as $F:=\sum_{K\in\mathscr{P}(\tau)}E_KF_K:D\to A\otimes\rr^{2^\tau}$. We claim that $F$ is a $T$-stem function and that $\widetilde{f}:=\I(F)$ is strongly $T$-regular. We now prove that $\widetilde{f}_I=f_I$. Indeed, for all $(\alpha,\beta)\in D$,
\begin{align*}
\widetilde{f}_I(\alpha+\beta I)&=\sum_{K\in \mathscr{P}(\tau)}I_K\,F_K(\alpha,\beta)=2^{-\tau}\sum_{K,H\in\mathscr{P}(\tau)}(-1)^{|K \cap H|}\,f_I(\alpha+\overline{\beta}^H\,I)\\
&=2^{-\tau}\,2^\tau\,f_I(\alpha+\overline{\beta}^\emptyset\,I)=f_I(\alpha+\beta I)
\end{align*}
because~\cite[Lemma 2.11]{gpseveral} implies that
\[\sum_{K\in\mathscr{P}(\tau)}(-1)^{|K\cap H|}=\left\{
\begin{array}{ll}
2^\tau&\mathrm{if\ }H=\emptyset\\
0&\mathrm{if\ }H\neq\emptyset
\end{array}
\right.\,.\]
Since $\widetilde{f}_I=f_I$, the Identity Principle~\ref{thm:identityprinciple} implies that $\widetilde{f}=f$ throughout the $T$-slice domain $\Omega_D$. The first part of the statement immediately follows. The last part of the statement now follows from Proposition~\ref{prop:analyticTfunction}.

We are left with proving our claim that $F$ is a $T$-stem function inducing a strongly $T$-regular function $\widetilde{f}$. For all $h\in\{1,\ldots,\tau\}$, we compute
\begin{align*}
F_K(\alpha,\overline{\beta}^h)&=2^{-\tau}\,I_K^{-1}\,\sum_{H\in\mathscr{P}(\tau)}(-1)^{|K \cap H|}\,f_I(\alpha+\overline{\beta}^{H\bigtriangleup\{h\}}\,I)\\
&=2^{-\tau}\,I_K^{-1}\,\sum_{H'\in\mathscr{P}(\tau)}(-1)^{|K\cap(H'\bigtriangleup\{h\})|}\,f_I(\alpha+\overline{\beta}^{H'}\,I)\,.
\end{align*}
By Remark~\ref{rmk:technical}: if $h\not\in K$, then
\[F_K(\alpha,\overline{\beta}^h)=2^{-\tau}\,I_K^{-1}\,\sum_{H'\in\mathscr{P}(\tau)}(-1)^{|K\cap H'|}\,f_I(\alpha+\overline{\beta}^{H'}\,I)=F_K(\alpha,\beta)\,;\]
if $h\in K$, then
\[F_K(\alpha,\overline{\beta}^h)=-2^{-\tau}\,I_K^{-1}\,\sum_{H'\in\mathscr{P}(\tau)}(-1)^{|K\cap H'|}\,f_I(\alpha+\overline{\beta}^{H'}\,I)=-F_K(\alpha,\beta)\,.\]
This completes the proof of the fact that $F$ is a $T$-stem function.

Let us now prove that $\widetilde{f}=\I(F)$ is $T$-regular (whence strongly $T$-regular) by fixing $J\in\torus$ and showing that $\debar_J\widetilde{f}_J\equiv0$. By Definition~\ref{def:Jmonogenic} and by Remark~\ref{rmk:incrementalratio},
\[\debar_J:=\debar_{\B_J}=\sum_{s=0}^{t_0}v_s\,D_{v_s}+\sum_{u=1}^\tau J_u\,D_{J_u}\,,\]
where for each $v$ in the basis $\B_J$ we use the temporary notation $D_v:\mathscr{C}^1(\Omega_J,A)\to\mathscr{C}^0(\Omega_J,A)$ with
\[D_v\phi(x):=\lim_{\rr\ni \varepsilon\to0}\varepsilon^{-1}\left(\phi(x+\varepsilon v)-\phi(x)\right)\,.\]
We want to apply the $\debar_J$ operator to $\widetilde{f}_J$. Let $(\alpha,\beta)\in D$: by formula~\eqref{eq:representationformula},
\[\widetilde{f}(\alpha+\beta\,J)=\sum_{H\in\mathscr{P}(\tau)}\gamma_H\,\widetilde{f}(\alpha+\overline{\beta}^H\,I)=\sum_{H\in\mathscr{P}(\tau)}\gamma_H\,f_I(\alpha+\overline{\beta}^H\,I)\,.\]
By Lemma~\ref{lem:technical},
\[(v_s\,D_{v_s}\widetilde{f}_J)(\alpha+\beta\,J)=\sum_{H\in\mathscr{P}(\tau)}\gamma_H\,(v_s\,D_{v_s}f_I)(\alpha+\overline{\beta}^H\,I)\]
for all $s\in\{0,\ldots,t_0\}$. Now let $u\in\{1,\ldots,\tau\}$ and let us compute $(J_u\,D_{J_u}\widetilde{f}_J)(\alpha+\beta\,J)$. We begin by defining, for $\varepsilon\in\rr$, the element $\beta_{u,\epsilon}\in\rr^\tau$ by means of the equality $\beta_{u,\epsilon}\,J=\beta\,J+\varepsilon J_u$. Thus,
\begin{align*}
(J_u\,D_{J_u}\widetilde{f}_J)(\alpha+\beta\,J)&=J_u\,\lim_{\rr\ni \varepsilon\to0}\varepsilon^{-1}\left(\widetilde{f}_J(\alpha+\beta_{u,\epsilon}\,J)-\widetilde{f}_J(\alpha+\beta\,J)\right)\\
&=\sum_{H\in\mathscr{P}(\tau)}J_u\,\gamma_H\,\lim_{\rr\ni \varepsilon\to0}\varepsilon^{-1}\left(f_I(\alpha+\overline{\beta_{u,\epsilon}}^H\,I)-f_I(\alpha+\overline{\beta}^H\,I)\right)\,.
\end{align*}
For $H\not\ni u$, using Lemma~\ref{lem:technical}, we find that
\begin{align*}
&J_u\,\gamma_H\,\lim_{\rr\ni \varepsilon\to0}\varepsilon^{-1}\left(f_I(\alpha+\overline{\beta_{u,\epsilon}}^H\,I)-f_I(\alpha+\overline{\beta}^H\,I)\right)\\
&=\gamma_H\,I_u\,\lim_{\rr\ni \varepsilon\to0}\varepsilon^{-1}\left(f_I(\alpha+\overline{\beta}^H\,I+\varepsilon I_u)-f_I(\alpha+\overline{\beta}^H\,I)\right)\\
&=\gamma_H\,I_u\,(D_{I_u}f_I)(\alpha+\overline{\beta}^H\,I)\,.
\end{align*}
For $H\ni u$, Lemma~\ref{lem:technical} yields
\begin{align*}
&J_u\,\gamma_H\,\lim_{\rr\ni \varepsilon\to0}\varepsilon^{-1}\left(f_I(\alpha+\overline{\beta_{u,\epsilon}}^H\,I)-f_I(\alpha+\overline{\beta}^H\,I)\right)\\
&=-\gamma_H\,I_u\,\lim_{\rr\ni \varepsilon\to0}\varepsilon^{-1}\left(f_I(\alpha+\overline{\beta}^H\,I-\varepsilon I_u)-f_I(\alpha+\overline{\beta}^H\,I)\right)\\
&=-\gamma_H\,I_u\,(-D_{I_u}f_I)(\alpha+\overline{\beta}^H\,I)=\gamma_H\,I_u\,(D_{I_u}f_I)(\alpha+\overline{\beta}^H\,I)\,.
\end{align*}
This proves that
\[(J_u\,D_{J_u}\widetilde{f}_J)(\alpha+\beta\,J)=\sum_{H\in\mathscr{P}(\tau)}\gamma_H\,I_u\,(D_{I_u}f_I)(\alpha+\overline{\beta}^H\,I)\,.\]
Using the equality $\debar_I=\sum_{s=0}^{t_0}v_s\,D_{v_s}+\sum_{u=1}^\tau I_u\,D_{I_u}$, we conclude that
\[(\debar_J\widetilde{f}_J)(\alpha+\beta\,J)=\sum_{H\in\mathscr{P}(\tau)}\gamma_H\,(\debar_If_I)(\alpha+\overline{\beta}^H\,I)\,.\]
Since $f$ is $T$-regular, $f_I$ is $I$-monogenic, i.e., $\debar_If_I\equiv0$. Overall, we conclude that $\debar_J\widetilde{f}_J\equiv0$, as desired. This completes the proof of our claim and the proof of the theorem.
\end{proof}


\section{Foundations for the nonassociative theory}\label{sec:nonassociative}

Let us go back to the general case when our $*$-algebra $A$ is alternative, but not necessarily associative. We are going to construct and study $T$-functions and strongly $T$-regular functions under this weaker hypothesis. Some preliminaries are in order.

\begin{definition}
For $J\in\torus,a\in A,K\in\mathscr{P}(\tau)$, we define $[J,a]_K$ and $]J,a[_K$ as follows. We define $[J,a]_\emptyset:=a=:\,]J,a[_\emptyset$. For $K\neq\emptyset$, say $K=\{k_1,\ldots,k_p\}$ with $1\leq k_1<\ldots<k_p\leq\tau$, we define
\begin{align*}
[J,a]_K&:=J_{k_1}(J_{k_2}(\ldots(J_{k_{p-1}}(J_{k_p}\,a))\ldots))\,,\\
]J,a[_K&:=J_{k_p}^{-1}(J_{k_{p-1}}^{-1}(\ldots(J_{k_2}^{-1}(J_{k_1}^{-1}\,a))\ldots))\,.
\end{align*}
\end{definition}

\begin{remark}\label{rmk:openclosedbrackets}
For $J\in\torus$ and $K\in\mathscr{P}(\tau)$ fixed, the map $a\mapsto[J,a]_K$ is a real vector space isomorphism from $A$ to itself, whose inverse is $a\mapsto\,]J,a[_K\,$ thanks to Artin's Theorem, see~\cite[Theorem 3.1]{schafer}.
\end{remark}

Throughout the section, in addition to Assumption~\ref{ass:alternative}, we assume $D$ to be a subset of $\rr_{0,t_0}\times\rr^\tau$, invariant under the reflection $(\alpha,\beta)\mapsto(\alpha,\overline{\beta}^h)$ for every $h\in\{1,\ldots,\tau\}$. We recall that we have defined: in Definition~\ref{def:reflections}, the symbols $\beta\,J$ and $\overline{\beta}^h$ for all $\beta\in\rr^\tau,J\in\torus,h\in\{1,\ldots,\tau\}$; in Definition~\ref{def:Tsymmetric}, the symbol $\Omega_D:=\{\alpha+\beta\,J:(\alpha,\beta)\in D\}$ for all $D\subseteq\rr_{0,t_0}\times\rr^\tau$. Using the notion of $T$-stem function from Subsection~\ref{subsec:Tstem}, we now generalize Definition~\ref{def:Tfunction} to the current nonassociative setting. This generalized definition, which subsumes the notion of slice function of~\cite[Definition 5]{perotti} and follows the lines of its multivariate generalization~\cite[Definition 2.5]{gpseveral}, has been announced in~\cite{unifiednotion}.

\begin{definition}\label{def:Tfunctionna}
Let $F=\sum_{K\in\mathscr{P}(\tau)}E_KF_K:D\to A\otimes\rr^{2^\tau}$ be a $T$-stem function. The \emph{induced} function $f=\I(F):\Omega_D\to A$, is defined at $x=\alpha+\beta\,J\in\Omega_D$ by the formula
\[f(x):=\sum_{K\in\mathscr{P}(\tau)}\left[J,F_K(\alpha,\beta)\right]_K\,.\]
A function induced by a $T$-stem function is called a \emph{$T$-function}. We denote the class of $T$-functions $\Omega_D\to A$ by the symbol $\slice(\Omega_D,A)$. If $\Omega_D$ is a domain in $V$, then the elements of the intersection $\sr(\Omega_D,A):=\slice(\Omega_D,A)\cap\reg_T(\Omega_D,A)$ are called \emph{strongly $T$-regular} functions.
\end{definition}

When $A$ is associative, Definition~\ref{def:Tfunctionna} is consistent with Definition~\ref{def:Tfunction} because in such a case the equality $[J,a]_K=J_Ka$ holds true for all $K\in\mathscr{P}(\tau)$ and all $a\in A$.

Once again, the notions of $T$-function and strongly $T$-regular function are interesting when $\tau\geq1$. In the special case $\tau=0$, every subset of $V$ is $T$-symmetric, every domain $\Omega$ in $V$ is a $T$-symmetric domain and every function $f:\Omega\to A$ is a $T$-function, induced by a $T$-stem function $F=F_\emptyset$, which coincides with $f$ up to identifying $A\otimes\rr^0$ with $A$.

We now provide a first study of the map $\I$ introduced in Definition~\ref{def:Tfunctionna}.

\begin{lemma}\label{lem:isomorphismIna}
The map $\I$ from the class of $T$-stem functions on $D$ to $\slice(\Omega_D,A)$ is well-defined and surjective. Moreover, the set $\slice(\Omega_D,A)$ is a real vector space and $\I$ is real linear map. Finally, $\sr(\Omega_D,A)$ is a real vector space.
\end{lemma}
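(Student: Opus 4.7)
The heart of the matter is the well-posedness of the definition of $\I(F)$: a point $x\in\Omega_D$ can admit several expressions of the form $\alpha+\beta\,J$, and I have to verify that the value of $\sum_{K\in\mathscr{P}(\tau)}[J,F_K(\alpha,\beta)]_K$ does not depend on the chosen representation. By Remark~\ref{rmk:decomposedvariable}, the decomposition $x=\alpha+\beta\,J$ is unique up to the following moves: for each $h\in\{1,\ldots,\tau\}$ with $\beta_h\neq0$, one may simultaneously replace $\beta_h$ by $-\beta_h$ and $J_h$ by $-J_h$; and for each $h$ with $\beta_h=0$, the unit $J_h\in\s_{t_{h-1}+1,t_h}$ may be chosen arbitrarily. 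It is therefore enough to check invariance under these two moves.

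First I would treat the simultaneous sign flip. Fix $h\in\{1,\ldots,\tau\}$, let $J'$ be obtained from $J$ by replacing $J_h$ with $-J_h$, and compare $[J,F_K(\alpha,\beta)]_K$ with $[J',F_K(\alpha,\overline{\beta}^h)]_K$. If $h\notin K$ then $J_h$ never appears in the nested left multiplication defining $[\cdot,\cdot]_K$, so $[J',a]_K=[J,a]_K$; moreover $F_K(\alpha,\overline{\beta}^h)=F_K(\alpha,\beta)$ by the $T$-stem condition. If $h\in K$, then $J_h$ appears exactly once as a left factor in the nested product $[J,a]_K$, so the $\rr$-linearity of left multiplication (which does not require associativity) gives $[J',a]_K=-[J,a]_K$; on the other hand, the $T$-stem condition yields $F_K(\alpha,\overline{\beta}^h)=-F_K(\alpha,\beta)$, and the two signs cancel. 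Summing over $K$ gives the required invariance. Next I handle the case $\beta_h=0$: the $T$-stem condition specialized to $\overline{\beta}^h=\beta$ forces $F_K(\alpha,\beta)=-F_K(\alpha,\beta)$, hence $F_K(\alpha,\beta)=0$, whenever $h\in K$; thus only the terms with $h\notin K$ survive, and in those the bracket $[J,\,\cdot\,]_K$ does not see $J_h$ at all. This completes the well-posedness argument and shows $\I$ maps $T$-stem functions into functions $\Omega_D\to A$. By construction every element of $\slice(\Omega_D,A)$ is of the form $\I(F)$, so $\I$ is surjective.

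The remaining statements are formal. Real linearity of $\I$ follows from the $\rr$-linearity of each map $a\mapsto[J,a]_K$ (Remark~\ref{rmk:openclosedbrackets}) together with the fact that $(F+G)_K=F_K+G_K$ and $(rF)_K=rF_K$ for $r\in\rr$. Since the space of $T$-stem functions on $D$ is clearly a real vector space and $\I$ is real linear and surjective, $\slice(\Omega_D,A)=\I(\{T\text{-stem functions on }D\})$ is a real vector space. Finally, $\reg_T(\Omega_D,A)$ is already known to be a real vector space (Remark~\ref{rmk:mirrortranslation}), so $\sr(\Omega_D,A)=\slice(\Omega_D,A)\cap\reg_T(\Omega_D,A)$ is a real vector space as the intersection of two real subspaces of the space of all $A$-valued functions on $\Omega_D$.

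The only genuinely nontrivial step is the well-posedness verification above; the potential obstruction is that, absent associativity, one cannot simply factor the $J_K$'s out and argue as in Proposition~\ref{prop:isomorphismI}. The fix is that the bracket $[J,\,\cdot\,]_K$ is built as a sequence of left multiplications, each of which is $\rr$-linear; this is exactly what is needed to pick up one sign per occurrence of $J_h$ and to pair it correctly with the sign dictated by the $T$-stem symmetry.
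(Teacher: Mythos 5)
Your proposal is correct and follows essentially the same route as the paper: well-posedness is checked against the two ambiguities in the decomposition $x=\alpha+\beta\,J$ identified in Remark~\ref{rmk:decomposedvariable} (simultaneous sign flip of $\beta_h$ and $J_h$, and the free choice of $J_h$ when $\beta_h=0$), using the $T$-stem symmetries together with the $\rr$-linearity of each left multiplication in the nested bracket $[J,\cdot]_K$, and the remaining claims follow formally exactly as in the paper's proof. Your closing observation about why one sign is picked up per occurrence of $J_h$ in the nonassociative setting is precisely the point the paper's computation $[\widetilde{J},F_K(\alpha,\overline{\beta}^h)]_K=-[J,-F_K(\alpha,\beta)]_K$ encodes.
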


\begin{proof}
Let us show that $\I(F)$ is well-defined for each $T$-stem function $F:D\to A\otimes\rr^{2^\tau}$. We begin by proving two properties valid for any $(\alpha,\beta)\in D,J\in\torus,K\in\mathscr{P}(\tau)$.
\begin{enumerate}
\item Assume $\beta_k=0$ for some $k\in\{1,\ldots,\tau\}$. If $k\in K$, the symmetry $F_K(\alpha,\overline{\beta}^k)=-F_K(\alpha,\beta)$ implies $F_K(\alpha,\beta)=0$, whence $[J,F_K(\alpha,\beta)]_K=0$. If $k\not\in K$, the expression $[J,F_K(\alpha,\beta)]_K$ still does not depend on the choice of $J_k$.
\item Let us apply the reflection $(\alpha,\beta)\mapsto(\alpha,\overline{\beta}^h)$ and the reflection $J=(J_1,\ldots,J_h,\ldots,J_\tau)\mapsto\widetilde{J}=(J_1,\ldots,-J_h,\ldots,J_\tau)$. If $h\in K$, then
\[[\widetilde{J},F_K(\alpha,\overline{\beta}^h)]_K=-[J,-F_K(\alpha,\beta)]_K=[J,F_K(\alpha,\beta)]_K\,.\]
If $h\not\in K$, then
\[[\widetilde{J},F_K(\alpha,\overline{\beta}^h)]_K=[J,F_K(\alpha,\beta)]_K\,.\]
\end{enumerate}
Suppose that, for some $(\alpha,\beta),(\alpha,\beta')\in D,J,J'\in\torus$, the equality $\alpha+\beta J=\alpha+\beta' J'$ holds: thanks to Remark~\ref{rmk:decomposedvariable} it is possible to prove, by finitely many applications of property {1} and property {2}, that $[J,F_K(\alpha,\beta)]_K=[J',F_K(\alpha,\beta')]_K$ for all $K\in\mathscr{P}(\tau)$. It follows that $\I$ is well-defined.

The map $\I$ is surjective by the very definition of $\slice(\Omega_D,A)$.

Additionally, Definition~\ref{def:Tfunctionna} and Remark~\ref{rmk:openclosedbrackets} immediately imply that, for all $\lambda,\mu\in\rr$ and all $T$-stem functions $F,G:D\to A\otimes\rr^{2^\tau}$, the equality $\I(\lambda F+\mu G)=\lambda \I(F)+\mu \I(G)$ holds true. It follows at once that $\slice(\Omega_D,A)$ is a real vector space and that $\I$ is a real linear map.

Finally, Remark~\ref{rmk:mirrortranslation} guarantees that $\reg_T(\Omega_D,A)$ is a real vector space, whence $\sr(\Omega_D,A)=\slice(\Omega_D,A)\cap\reg_T(\Omega_D,A)$ is a real vector space, too.
\end{proof}

As in the associative case, the following property can be established using the properties of $T$-stem functions on $D$, along with Remark~\ref{rmk:mirrortranslation}.

\begin{remark}\label{rmk:mirrortranslationofslicena}
Fix $p\in\rr_{0,t_0}$. If $f\in\slice(\Omega_D,A)$ (or $f\in\sr(\Omega_D,A)$), then setting $g(x):=f(x+p)$ defines a $g\in\slice(\Omega_D-p,A)$ (a $g\in\sr(\Omega_D-p,A)$, respectively).
\end{remark}

We now state and prove the generalization to the present nonassociative setting of Theorem~\ref{thm:representationformula}. In particular, for each $T$-function we recover a unique inducing $T$-stem function, thus proving that the real linear map $\I$ is an isomorphism. 

\begin{theorem}[Representation Formula for $T$-functions]\label{thm:representationformulana}
If $f=\I(F)\in\slice(\Omega_D,A)$, then the $K$-component of $F$ is
\[F_K(\alpha,\beta)=2^{-\tau}\sum_{H\in\mathscr{P}(\tau)}(-1)^{|K \cap H|}\;]I, f(\alpha+\overline{\beta}^H\,I)[_K\,.\]
As a consequence, the $T$-stem function $F$ inducing $f$ is unique and $\I$ is a real vector space isomorphism. Moreover,
\begin{equation}\label{eq:representationformulna}
f(\alpha+\beta\,J)=2^{-\tau}\sum_{K,H\in\mathscr{P}(\tau)}(-1)^{|K \cap H|}\,\Big[\,J\,,\,]I, f(\alpha+\overline{\beta}^H\,I)[_K\,\Big]_K
\end{equation}
for all $(\alpha,\beta)\in D$ and all $I,J\in\torus$.
\end{theorem}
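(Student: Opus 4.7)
The plan is to invert the linear system expressing the values $f(\alpha+\overline{\beta}^H\,I)$ (as $H$ ranges over $\mathscr{P}(\tau)$) in terms of the components $F_K(\alpha,\beta)$, then use the orthogonality identity $\sum_{H\in\mathscr{P}(\tau)}(-1)^{|L\cap H|}=2^\tau$ if $L=\emptyset$ and $0$ otherwise, cited from~\cite[Lemma 2.11]{gpseveral}.

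Concretely, I would fix $(\alpha,\beta)\in D$ and $I\in\torus$. From Definition~\ref{def:Tfunctionna} applied at the point $\alpha+\overline{\beta}^H I\in\Omega_D$ and from the $T$-stem symmetry (iterated over the reflections $\beta\mapsto\overline{\beta}^h$ for $h\in H$, each of which contributes a factor $-1$ exactly when the associated $h$ belongs to $K$), I obtain
\[
f(\alpha+\overline{\beta}^H I)=\sum_{K\in\mathscr{P}(\tau)}[I,F_K(\alpha,\overline{\beta}^H)]_K=\sum_{K\in\mathscr{P}(\tau)}(-1)^{|K\cap H|}\,[I,F_K(\alpha,\beta)]_K.
\]
Next I apply the real linear map $]I,\cdot[_{K'}$ (Remark~\ref{rmk:openclosedbrackets}) to both sides, multiply by $(-1)^{|K'\cap H|}$ and sum over $H\in\mathscr{P}(\tau)$. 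Using the elementary identity $|K\cap H|+|K'\cap H|\equiv|(K\bigtriangleup K')\cap H|\pmod{2}$, this yields
\[
\sum_{H\in\mathscr{P}(\tau)}(-1)^{|K'\cap H|}\,]I,f(\alpha+\overline{\beta}^H I)[_{K'}=\sum_{K\in\mathscr{P}(\tau)}\Big(\sum_{H\in\mathscr{P}(\tau)}(-1)^{|(K\bigtriangleup K')\cap H|}\Big)\,]I,[I,F_K(\alpha,\beta)]_K[_{K'}.
\]
The cited orthogonality identity kills every summand with $K\neq K'$, leaving only the diagonal term $2^\tau\,]I,[I,F_{K'}(\alpha,\beta)]_{K'}[_{K'}$.

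The remaining step---and the one place where alternativity is needed in a nontrivial way---is checking that $]I,[I,a]_{K'}[_{K'}=a$ for every $a\in A$. Writing $K'=\{k_1,\dots,k_p\}$ with $k_1<\dots<k_p$, this reduces to verifying that the innermost application of $I_{k_1}^{-1}$ to $I_{k_1}(I_{k_2}(\cdots(I_{k_p}a)\cdots))$ equals $I_{k_2}(\cdots(I_{k_p}a)\cdots)$, and then iterating. Each such cancellation involves only the two elements $I_{k_j}$ and $I_{k_j}(\cdots(I_{k_p}a)\cdots)$, so by Artin's theorem (see~\cite[Theorem 3.1]{schafer}) the subalgebra they generate is associative and the identity $x^{-1}(xy)=y$ holds, matching the content of Remark~\ref{rmk:openclosedbrackets}. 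After the $p$ cancellations, one recovers $a$.

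Combining the two computations gives the explicit formula for $F_{K'}(\alpha,\beta)$, which in particular shows that $F$ is uniquely determined by $f=\I(F)$. Together with Lemma~\ref{lem:isomorphismIna} (which supplies surjectivity, real linearity, and well-posedness of $\I$), this proves $\I$ is a real vector space isomorphism. Finally, formula~\eqref{eq:representationformulna} is obtained by substituting the expression for $F_K(\alpha,\beta)$ back into the defining equality $f(\alpha+\beta\,J)=\sum_{K}[J,F_K(\alpha,\beta)]_K$ and interchanging the order of summation. The anticipated obstacle is really only the bookkeeping with the bracket operations $[I,\cdot]_K$ and $]I,\cdot[_{K'}$ in the nonassociative setting; the crux, namely $]I,[I,\cdot]_{K'}[_{K'}=\mathrm{id}$, is handled cleanly by Artin's theorem applied one pair at a time.
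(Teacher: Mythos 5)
Your proposal is correct and follows essentially the same route as the paper: expand $f(\alpha+\overline{\beta}^H I)$ via the stem symmetry $F_K(\alpha,\overline{\beta}^H)=(-1)^{|K\cap H|}F_K(\alpha,\beta)$, collapse the double sum with the orthogonality identity of~\cite[Lemma 2.11]{gpseveral} (your symmetric-difference reformulation $|K\cap H|+|K'\cap H|\equiv|(K\bigtriangleup K')\cap H|\pmod 2$ is an equivalent restatement), and undo the bracket via Remark~\ref{rmk:openclosedbrackets}. The only cosmetic difference is that you apply the real linear map $]I,\cdot[_{K'}$ before summing over $H$ while the paper does so afterwards, which changes nothing.
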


\begin{proof}
Let us prove the first statement. We begin by computing, under the hypothesis $f=\I(F)$,
\begin{align*}
\sum_{H\in\mathscr{P}(\tau)}(-1)^{|K \cap H|}\,f(\alpha+\overline{\beta}^H\,I)&=\sum_{H\in\mathscr{P}(\tau)}(-1)^{|K \cap H|}\,\sum_{K'\in\mathscr{P}(\tau)}[I,F_{K'}(\alpha,\overline{\beta}^H)]_{K'}\\
&=\sum_{H\in\mathscr{P}(\tau)}(-1)^{|K \cap H|}\,\sum_{K'\in\mathscr{P}(\tau)}(-1)^{|H\cap K'|}\,[I,F_{K'}(\alpha,\beta)]_{K'}\\
&=\sum_{K'\in\mathscr{P}(\tau)}[I,F_{K'}(\alpha,\beta)]_{K'}\,\sum_{H\in\mathscr{P}(\tau)}(-1)^{|K \cap H|+|H\cap K'|}\\
&=2^\tau\,[I,F_K(\alpha,\beta)]_K
\end{align*}
for all $(\alpha,\beta)\in D,I\in\torus$. Here, we used the fact, proven in~\cite[Lemma 2.11]{gpseveral}, that
\[\sum_{H\in\mathscr{P}(\tau)}(-1)^{|K\cap H|+|H\cap K'|}=\left\{
\begin{array}{ll}
2^\tau&\mathrm{if\ }K'=K\\
0&\mathrm{if\ }K'\neq K
\end{array}
\right.\,.\]
Using Remark~\ref{rmk:openclosedbrackets}, we derive
\begin{align*}
F_K(\alpha,\beta)&=2^{-\tau}\;\Big]I,\sum_{H\in\mathscr{P}(\tau)}(-1)^{|K \cap H|}\,f(\alpha+\overline{\beta}^H\,I)\Big[_K\\
&=2^{-\tau}\sum_{H\in\mathscr{P}(\tau)}(-1)^{|K \cap H|}\;]I, f(\alpha+\overline{\beta}^H\,I)[_K\,,
\end{align*}
which is the first statement. As a consequence, $\I$ is injective and a real vector space isomorphism. Formula~\eqref{eq:representationformulna} follows immediately, if we take into account that $f(\alpha+\beta\,J)=\sum_{K\in\mathscr{P}(\tau)}[J,F_K(\alpha,\beta)]_K$ for all $(\alpha,\beta)\in D,J\in\torus$.
\end{proof}

As a consequence of the last theorem, if we fix $I\in\torus$, then every $f\in\slice(\Omega_D,A)$ is completely determined by its restriction $f_I$.

We now prove, in our present nonassociative setting, Proposition~\ref{prop:normTfunction}, which we restate for the reader's convenience. We recall that $\omega=\omega_{\B,\B'}\geq1$ is a constant such that $\Vert xa\Vert\leq\omega\,\Vert x\Vert \,\Vert a\Vert$ for all $x\in V,a\in A$ (see Remark~\ref{rmk:norminequality}).

\begin{proposition}\label{prop:normTfunctionna}
Fix $f\in\slice(\Omega_D,A)$ and $I\in\torus$. For every nonempty subset $D'$ of $D$ and for $\Omega':=\Omega_{D'}\subseteq\Omega_D$,
\[\sup_{\Omega'}\Vert f\Vert\leq(1+\omega^2)^\tau\,\sup_{\Omega'_I}\Vert f\Vert\,.\]
\end{proposition}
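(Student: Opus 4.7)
The plan is to apply the Representation Formula~\eqref{eq:representationformulna} from Theorem~\ref{thm:representationformulna} term-by-term and estimate each bracket using Remark~\ref{rmk:norminequality}. Fix $(\alpha,\beta)\in D'$ and $J\in\torus$, so that a generic point of $\Omega'$ has the form $\alpha+\beta\,J$; Theorem~\ref{thm:representationformulna} expresses $f(\alpha+\beta\,J)$ as a signed sum over $(K,H)\in\mathscr{P}(\tau)^2$ of $4^\tau$ terms of the shape $\bigl[J,\,]I,f(\alpha+\overline{\beta}^H\,I)[_K\,\bigr]_K$, rescaled by $2^{-\tau}$.

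The first step I would carry out is checking that each evaluation point $\alpha+\overline{\beta}^H\,I$ belongs to $\Omega'_I$, even though $D'$ is not assumed invariant under the reflection $(\alpha,\beta)\mapsto(\alpha,\overline{\beta}^H)$. The key observation is that reflecting the $\beta$-coordinates is the same as flipping signs in $I$: if I introduce $I^H\in\torus$ by setting $(I^H)_h:=-I_h$ for $h\in H$ and $(I^H)_h:=I_h$ for $h\notin H$, then $\alpha+\overline{\beta}^H\,I=\alpha+\beta\cdot I^H$. Since $(\alpha,\beta)\in D'$ and $I^H\in\torus$, this point lies in $\Omega_{D'}=\Omega'$, and it is manifestly in $\rr^{t_0+\tau+1}_I$, so it lies in $\Omega'_I$.

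Next, I would bound the bracketing operators. By Corollary~\ref{cor:compact}, each $J_h$ and each $I_h^{-1}=-I_h$ is an element of the hypercomplex subspace $V$ of norm one. Since $[J,\cdot]_K$ and $]I,\cdot[_K$ are each built from $|K|$ successive left multiplications by such unit elements of $V$, iterating Remark~\ref{rmk:norminequality} yields $\Vert[J,a]_K\Vert\leq\omega^{|K|}\Vert a\Vert$ and $\Vert]I,a[_K\Vert\leq\omega^{|K|}\Vert a\Vert$ for every $a\in A$. Composing these two estimates gives
\[\bigl\Vert\bigl[J,\,]I,f(\alpha+\overline{\beta}^H\,I)[_K\,\bigr]_K\bigr\Vert\leq\omega^{2|K|}\,\Vert f(\alpha+\overline{\beta}^H\,I)\Vert\leq\omega^{2|K|}\,\sup_{\Omega'_I}\Vert f\Vert.\]

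Finally, I would sum up. The outer $H$-sum contributes $2^\tau$ identical bounds, while the $K$-sum contributes $\sum_{K\in\mathscr{P}(\tau)}\omega^{2|K|}=\sum_{k=0}^\tau\binom{\tau}{k}\omega^{2k}=(1+\omega^2)^\tau$ by the binomial theorem. The factor $2^{-\tau}$ in front of the Representation Formula cancels the $2^\tau$ coming from the $H$-sum, yielding $\Vert f(\alpha+\beta\,J)\Vert\leq(1+\omega^2)^\tau\sup_{\Omega'_I}\Vert f\Vert$; taking supremum over $\Omega'$ on the left gives the claim. The only subtle step is the identification of the reflected points as elements of $\Omega'_I$; everything else is routine bookkeeping using Remark~\ref{rmk:norminequality}.
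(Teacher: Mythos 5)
Your proof is correct and follows essentially the same route as the paper: apply the Representation Formula~\eqref{eq:representationformulna}, bound each nested bracket by $\omega^{2|K|}$ via iterated use of Remark~\ref{rmk:norminequality}, and sum $\sum_{K}\omega^{2|K|}=(1+\omega^2)^\tau$ while the $2^{-\tau}$ absorbs the $H$-sum. Your explicit verification that $\alpha+\overline{\beta}^H\,I=\alpha+\beta\,I^H\in\Omega'_I$ is a point the paper leaves implicit, and it is handled correctly.
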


\begin{proof}
The Representation Formula~\eqref{eq:representationformulna} applies to $f$: for all $J\in\torus,(\alpha,\beta)\in D$ we have
\begin{align*}
f(\alpha+\beta\,J)=2^{-\tau}\sum_{K,H\in\mathscr{P}(\tau)}(-1)^{|K \cap H|}\,\Big[\,J\,,\,]I, f(\alpha+\overline{\beta}^H\,I)[_K\,\Big]_K\,.
\end{align*}
Thus,
\[\sup_{\Omega'}\Vert f\Vert\leq2^{-\tau}\sum_{K,H\in\mathscr{P}(\tau)}\sup_{(\alpha,\beta)\in D',J\in\torus}\,\left\Vert\Big[\,J\,,\,]I, f(\alpha+\overline{\beta}^H\,I)[_K\,\Big]_K\right\Vert\,.\]
If $K=\{k_1,\ldots,k_p\}$ with $1\leq k_1<\ldots<k_p\leq\tau$ and if we set $a:=f(\alpha+\overline{\beta}^H\,I),b:=\,]I,a[_K$, then
\begin{align*}
\left\Vert[J,b]_K\right\Vert&=\left\Vert J_{k_1}(J_{k_2}(\ldots(J_{k_{p-1}}(J_{k_p}\,b))\ldots))\right\Vert\leq\omega\left\Vert J_{k_2}(\ldots(J_{k_{p-1}}(J_{k_p}\,b))\ldots)\right\Vert\leq\ldots\\
&\leq\omega^{|K|}\Vert b\Vert=\omega^{|K|}\big\Vert\,]I, a[_K\big\Vert=\omega^{|K|}\left\Vert I_{k_p}^{-1}(I_{k_{p-1}}^{-1}(\ldots(I_{k_2}^{-1}(I_{k_1}^{-1}\,a))\ldots))\right\Vert\leq\ldots\leq\omega^{2|K|}\Vert a\Vert\,.
\end{align*}
We conclude that
\begin{align*}
\sup_{\Omega'}\Vert f\Vert&\leq2^{-\tau}\,\sum_{K,H\in\mathscr{P}(\tau)}\omega^{2|K|}\,\sup_{(\alpha,\beta)\in D'}\left\Vert f(\alpha+\overline{\beta}^H\,I)\right\Vert\\
&=\Big(2^{-\tau}\,\sum_{H\in\mathscr{P}(\tau)}\sup_{(\alpha,\beta)\in D'}\left\Vert f(\alpha+\overline{\beta}^H\,I)\right\Vert\Big)\,\Big(\sum_{K\in\mathscr{P}(\tau)}\omega^{2|K|}\Big)\\
&\leq\sup_{\Omega'_I}\Vert f\Vert\;\sum_{s=0}^\tau\sum_{|K|=s}\omega^{2s}=\sup_{\Omega'_I}\Vert f\Vert\;\sum_{s=0}^\tau\binom{\tau}{s}\omega^{2s}\\
&=(1+\omega^2)^\tau\,\sup_{\Omega'_I}\Vert f\Vert\,.
\end{align*}
Here, we took into account that $|\mathscr{P}(\tau)|=2^\tau$ and that there are exactly $\binom{\tau}{s}$ elements $K\in\mathscr{P}(\tau)$ such that $|K|=s$.
\end{proof}

The multiplicative constant $(1+\omega^2)^\tau$ appearing in the estimate of Proposition~\ref{prop:normTfunctionna} reduces to $2^\tau$ in the cases described in the next remark. These include the case of real octonions $\oo$ with the standard basis $\B'$.

\begin{remark}
Assume the trace function $t:A\to A$ to take values in the associative nucleus of $A$. If $\B'$ is a fitted distinguished basis of $A$, then Proposition~\ref{prop:norm} guarantees that $\omega=1$, whence $(1+\omega^2)^\tau=2^\tau$.
\end{remark}

We now prove the generalization to the present nonassociative setting of Proposition~\ref{prop:analyticTfunction}.

\begin{proposition}\label{prop:analyticTfunctionna}
If $f$ is strongly $T$-regular, then $f$ is real analytic.
\end{proposition}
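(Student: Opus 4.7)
The plan is to combine the Representation Formula of Theorem~\ref{thm:representationformulana} with Whitney's decomposition from Remark~\ref{rmk:whitney}, so as to reduce real analyticity of $f$ on $\Omega_D$ to that of a single slice restriction $f_I$. Write $f=\I(F)$ with $F=\sum_{K\in\mathscr{P}(\tau)}E_KF_K$.

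First, I would fix any $I\in\torus$. Since $f\in\reg_T(\Omega_D,A)$, the restriction $f_I$ is $I$-monogenic and hence real analytic on $\Omega_{D,I}$ by Remark~\ref{rmk:J-harmonic}. Consequently, for every $H\in\mathscr{P}(\tau)$ the map $(\alpha,\beta)\mapsto f(\alpha+\overline{\beta}^H I)$ is real analytic on $D$, and because $a\mapsto\,]I,a[_K$ is a fixed real-linear endomorphism of $A$, Theorem~\ref{thm:representationformulana} displays each component
\[F_K(\alpha,\beta)=2^{-\tau}\sum_{H\in\mathscr{P}(\tau)}(-1)^{|K\cap H|}\,]I,f(\alpha+\overline{\beta}^H I)[_K\]
as a real-analytic function on $D$. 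I would then invoke Whitney's Theorem (Remark~\ref{rmk:whitney}) to produce real-analytic functions $G_K\colon W\to A$, defined on an open neighborhood $W$ of $D'=\{(\alpha,\beta^2):(\alpha,\beta)\in D\}$, such that
\[F_K(\alpha,\beta)=\beta_{k_1}\cdots\beta_{k_p}\,G_K(\alpha,\beta^2)\]
whenever $K=\{k_1<\cdots<k_p\}$.

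Finally, I would reassemble $f(x)=\sum_{K\in\mathscr{P}(\tau)}[J,F_K(\alpha,\beta)]_K$ for $x=\alpha+\beta J\in\Omega_D$ in a form whose real-analytic dependence on $x$ is manifest. Because real scalars lie in the associative nucleus of $A$, the identity $J_{k_i}(\beta_{k_i}\,y)=(\beta_{k_i}J_{k_i})\,y=x^{k_i}\,y$ applied inductively from the innermost bracket outward yields
\[[J,\beta_{k_1}\cdots\beta_{k_p}\,G_K(\alpha,\beta^2)]_K=x^{k_1}\bigl(x^{k_2}\bigl(\cdots\bigl(x^{k_p}\,G_K(\alpha,\beta^2)\bigr)\cdots\bigr)\bigr).\]
The projections $x\mapsto x^{k_i}$ and $x\mapsto\alpha$ are linear, while $x\mapsto\beta_h^2=\Vert x^h\Vert^2$ is polynomial in the coordinates of $x$; hence $x\mapsto(\alpha,\beta^2)$ is a real-analytic map from $\Omega_D$ into $W$, and composing with $G_K$ yields a real-analytic $A$-valued function. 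Since the multiplication in $A$ is bilinear, iterating the resulting products and summing over $K$ presents $f$ as a real-analytic function on $\Omega_D$. The main obstacle I anticipate lies in this last bracket-rearrangement step: rewriting the nested nonassociative products $[J,\cdot]_K$ in terms of the $x^{k_i}$ relies essentially on alternativity, via which the real factors $\beta_{k_i}$ may be absorbed one-by-one into the imaginary units $J_{k_i}$ at every level of the bracket.
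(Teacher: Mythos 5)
Your proposal is correct and follows essentially the same route as the paper: real analyticity of $f_I$ via $I$-monogenicity, the Representation Formula to transfer this to the components $F_K$, Whitney's Theorem to factor out $\beta_{k_1}\cdots\beta_{k_p}$, and reassembly into nested products of the linear projections $x\mapsto x^{k_i}$ with $G_K(x^0,\Vert x^1\Vert^2,\ldots,\Vert x^\tau\Vert^2)$. One minor quibble: absorbing the real scalars $\beta_{k_i}$ into the brackets uses only the $\rr$-bilinearity of the multiplication (real scalars commute and associate with everything in any real algebra), not alternativity, so the "main obstacle" you flag is in fact automatic.
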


\begin{proof}
Assume $f=\I(F)$ to be strongly $T$-regular. For $I\in\torus$ fixed, Theorem~\ref{thm:representationformulana} guarantees that the $K$-component $F_K$ of $F$ is
\[F_K(\alpha,\beta)=2^{-\tau}\sum_{H\in\mathscr{P}(\tau)}(-1)^{|K \cap H|}\;]I, f_I(\alpha+\overline{\beta}^H\,I)[_K\,.\]
Now, since $f$ is $T$-regular, its restriction $f_I$ is $I$-monogenic, whence real analytic by Remark~\ref{rmk:J-harmonic}. It follows at once that each $K$-component $F_K:D\to A$ of $F$ is real analytic. In this situation, Remark~\ref{rmk:whitney} guarantees that
\[F_K(\alpha,\beta)=\beta_{k_1}\cdots\beta_{k_p}\,G_K(\alpha,\beta^2)\]
for appropriate real analytic functions $\{G_K\}_{K\in\mathscr{P}(\tau)}$. The equality $f=\I(F)$ now implies that
\begin{align*}
&f(\alpha+\beta\,J)=\sum_{K\in\mathscr{P}(\tau)}\left[J,F_K(\alpha,\beta)\right]_K\\
&=F_{\emptyset}(\alpha,\beta)+\sum_{1\leq p\leq\tau}\sum_{1\leq k_1<\ldots<k_p\leq\tau} J_{k_1}(J_{k_2}(\ldots(J_{k_{p-1}}(J_{k_p}F_{k_1\ldots k_p}(\alpha,\beta)))\ldots))\\
&=G_{\emptyset}(\alpha,\beta^2)+\sum_{1\leq p\leq\tau}\sum_{1\leq k_1<\ldots<k_p\leq\tau} \beta_{k_1}J_{k_1}(\beta_{k_2}J_{k_2}(\ldots(\beta_{k_{p-1}}J_{k_{p-1}}(\beta_{k_p}J_{k_p}G_{k_1\ldots k_p}(\alpha,\beta^2)))\ldots))
\end{align*}
for all $(\alpha,\beta)\in D,J\in\torus$. Referring to the decomposition of the variable $x\in\Omega_D$ performed in Remark~\ref{rmk:decomposedvariable}, we conclude that
\begin{align*}
f(x)&=G_{\emptyset}(x^0,\Vert x^1\Vert^2,\ldots,\Vert x^\tau\Vert^2)\\
&\quad+\sum_{1\leq p\leq\tau}\sum_{1\leq k_1<\ldots<k_p\leq\tau} x^{k_1}(x^{k_2}(\ldots(x^{k_{p-1}}(x^{k_p}G_{k_1\ldots k_p}(x^0,\Vert x^1\Vert^2,\ldots,\Vert x^\tau\Vert^2)))\ldots))\,.
\end{align*}
Since $V\to\rr_{0,t_0}\times\rr^\tau,\ x\mapsto(x^0,\Vert x^1\Vert^2,\ldots,\Vert x^\tau\Vert^2)$ is a real polynomial map, it is clear that each map
\[\Omega_D\to A,\quad x\mapsto G_K(x^0,\Vert x^1\Vert^2,\ldots,\Vert x^\tau\Vert^2)\]
is real analytic. Since $V\to V\subseteq A,\ x=x^0+x^1+\ldots+x^\tau\mapsto x^{k}$ is a real linear map for all $k\in\{1,\ldots,\tau\}$, we conclude that $f$ is real analytic, as desired.
\end{proof}

To keep the main focus of the present work on the associative case, we postpone to a future paper any further study of the properties of $T$-regular functions over nonassociative $*$-algebras.


\section*{Appendix}\label{appendix}
\setcounter{section}{3}

This appendix provides proofs for the results presented in Section~\ref{sec:monogenicproperties}. For the reader's convenience, the statements are repeated here.


\subsection*{Proofs of the results in Subsection~\ref{subsec:monogenicpolynomial}}

\setcounter{theorem}{2}
\setcounter{equation}{3}

\begin{proposition}
Assume $A$ to be associative. For $\k=(k_1,\ldots,k_m)\in\zz^m$, the following properties hold true.
\begin{enumerate}
\item There exists a map $p_\k=(p_\k^0,p_\k^1,\ldots,p_\k^m):\rr^{m+1}\to\rr^{m+1}$ such that $\P_\k^\B=L_\B\circ p_\k\circ L_\B^{-1}$ for any hypercomplex basis $\B$ of a hypercomplex subspace $M$ of $A$.
\item The equality $(k_u+1)\,p^s_\k=k_s\,p^u_{\k+\epsilon_u-\epsilon_s}$ holds true for all distinct $s,u\in\{1,\ldots,m\}$.
\item For all $x\in M$,
\begin{align}
&\sum_{s=1}^mk_s\,v_s\,\P_{\k-\epsilon_s}^\B(x)=\sum_{s=1}^mk_s\,\P_{\k-\epsilon_s}^\B(x)\,v_s\,,
\\
&|\k|\,\P_\k^\B(x)=\sum_{s=1}^mk_s\,\zeta_s^\B\,\P_{\k-\epsilon_s}^\B(x)
\end{align}
\item The equality $\partial_s\P_\k^\B=k_s\,\P_{\k-\epsilon_s}^\B$ holds true for all $s\in\{1,\ldots,m\}$.
\end{enumerate}
\end{proposition}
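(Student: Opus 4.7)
The plan is to prove all four properties simultaneously by induction on $|\k|$. The base case $\k=(0,\ldots,0)$ (together with the trivial case $\k\notin\nn^m$) is immediate. In the inductive step, I will establish property 3 first (both equalities), then properties 1 and 4, and finally property 2; each step uses what has just been proved together with the inductive hypothesis for smaller $|\k|$.

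The central tool is the symmetric product formula
\[
\P_\k^\B=\frac{\k!}{|\k|!}\sum_{\sigma}\zeta_{\sigma_1}^\B\cdots\zeta_{\sigma_{|\k|}}^\B\,,
\]
where $\sigma$ ranges over all distinct orderings of a word containing $k_s$ copies of the letter $s$. A combinatorial check starting from the defining recursion, based on the identity $k_s\cdot(\k-\epsilon_s)!=\k!$, establishes this formula by induction. Equation~\eqref{eq:technical2} follows at once, since the same sum can equally be reorganized by the first letter rather than the last, yielding $|\k|\,\P_\k^\B=\sum_s k_s\,\zeta_s^\B\,\P_{\k-\epsilon_s}^\B$. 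Subtracting this equation from the defining recursion and using that the real parts $x_s$ of $\zeta_s^\B=x_s-x_0v_s$ are central, one obtains
\[
0=-x_0\sum_s k_s\,\big(\P_{\k-\epsilon_s}^\B v_s-v_s\,\P_{\k-\epsilon_s}^\B\big)\,,
\]
and cancellation of the polynomial factor $x_0$ gives~\eqref{eq:technical1}.

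For property 1, write $\P_{\k-\epsilon_s}^\B=\sum_u p_{\k-\epsilon_s}^u(x_0,\ldots,x_m)\,v_u$ via the inductive hypothesis. The defining recursion becomes
\[
|\k|\,\P_\k^\B=\sum_s k_s x_s\,\P_{\k-\epsilon_s}^\B-x_0\sum_s k_s\,\P_{\k-\epsilon_s}^\B v_s\,,
\]
whose first sum plainly lies in $M$ with basis-independent coordinates. For the second sum, the hypercomplex basis relations $v_u v_s+v_s v_u=-2\delta_{us}$ for $u,s\in\{1,\ldots,m\}$ and $v_0 v_s+v_s v_0=2v_s$ (which follow from $t(v_s)=0$, $n(v_s)=1$, and $t(v_s v_u^c)=0$ using the associativity of $A$) yield
\[
\P_{\k-\epsilon_s}^\B v_s+v_s\,\P_{\k-\epsilon_s}^\B=2\,p_{\k-\epsilon_s}^0\,v_s-2\,p_{\k-\epsilon_s}^s\in M\,.
\]
Combining with~\eqref{eq:technical1}, this gives $\sum_s k_s\,\P_{\k-\epsilon_s}^\B v_s=\sum_s k_s\big(p_{\k-\epsilon_s}^0\,v_s-p_{\k-\epsilon_s}^s\big)\in M$, so the entire right-hand side of the recursion lies in $M$ with coordinates that are polynomial functions of $(x_0,\ldots,x_m)$ independent of $\B$. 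Property 4 is then obtained by applying $\partial_s$ to~\eqref{eq:technical2}: using $\partial_s\zeta_u=\delta_{us}$, the inductive hypothesis on property 4, and the inductive hypothesis on~\eqref{eq:technical2} applied to $\k-\epsilon_s$, a short rearrangement reduces $|\k|\,\partial_s\P_\k^\B$ to $k_s|\k|\,\P_{\k-\epsilon_s}^\B$.

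Finally, property 2 is proved by induction. Property 1 provides the explicit coordinate recursion $|\k|\,p_\k^u=\sum_t k_t x_t\,p_{\k-\epsilon_t}^u-x_0\,k_u\,p_{\k-\epsilon_u}^0$ for $u\geq 1$. Writing this recursion for both $p_\k^s$ and $p_{\k+\epsilon_u-\epsilon_s}^u$ and comparing, the $x_0$-terms cancel identically, and matching the coefficient of each $x_t$ splits into three sub-cases ($t=u$, $t=s$, $t\neq s,u$), each of which reduces directly to the inductive hypothesis on property 2 applied to the smaller index $\k-\epsilon_t$. The main obstacle will be the combined step proving~\eqref{eq:technical1} and inserting it into property 1: although the individual products $\P_{\k-\epsilon_s}^\B v_s$ generically carry bivector components outside $M$, the weighted sum $\sum_s k_s\,\P_{\k-\epsilon_s}^\B v_s$ collapses back into $M$ thanks to~\eqref{eq:technical1}, and this symmetric cancellation is the mechanism that keeps the Fueter polynomials $M$-valued.
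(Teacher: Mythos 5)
Your proof is correct, but it is organized around a different key lemma than the paper's. The paper defines the coordinate maps $p_\k$ by an explicit recursion, states the coordinate symmetry $k_u\,p^s_{\k-\epsilon_u}=k_s\,p^u_{\k-\epsilon_s}$ (essentially property \emph{2}) as a claim, and uses that claim to cancel the bivector terms $v_sv_u$ in the inductive proof of property \emph{1}; only afterwards does it derive~\eqref{eq:technical1} and~\eqref{eq:technical2} from the coordinate representation. You instead take the symmetrized product formula $\P_\k^\B=\frac{\k!}{|\k|!}\sum_{\sigma}\zeta_{\sigma_1}^\B\cdots\zeta_{\sigma_{|\k|}}^\B$ as the central tool (the paper never introduces it): regrouping by first versus last letter gives~\eqref{eq:technical2} immediately, subtracting the two recursions and cancelling the polynomial factor $x_0$ gives~\eqref{eq:technical1}, and the anticommutation relations $v_uv_s+v_sv_u=-2\delta_{us}$ then let you fold $\sum_s k_s\P_{\k-\epsilon_s}^\B v_s$ back into $M$ without ever invoking property \emph{2}. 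This decouples property \emph{1} from property \emph{2}, which you recover at the end from the same coordinate recursion the paper uses (your three sub-cases $t=u$, $t=s$, $t\neq s,u$ match the paper's induction for its claim). I checked the sketched steps — the combinatorial induction for the symmetrized formula via $k_s\,(\k-\epsilon_s)!=\k!$, the derivation of property \emph{4} from~\eqref{eq:technical2}, and the coefficient comparison for property \emph{2} — and they all go through; the resulting coordinate recursion for $p_\k$ coincides with the paper's definition. Your route makes the left/right symmetry of the Fueter polynomials manifest at the cost of introducing and verifying the closed-form symmetrized representation, whereas the paper stays entirely within the one-sided recursion but must interleave the proofs of properties \emph{1} and \emph{2}.
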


\begin{proof}[Proof of Proposition~\ref{prop:fueterpolynomialsindependofbasis}]
We prove each property separately.
\begin{enumerate}
\item Let us set $p_\k:\equiv(0,0,\ldots,0)$ when $\k\in\zz^m\setminus\nn^m$, $p_\k:\equiv(1,0,\ldots,0)$ when $\k=(0,\ldots,0)$ and
\begin{align*}
|\k|\,p_\k^0&:=\sum_{w=1}^mk_w\left(p_{\k-\epsilon_w}^0x_w+p_{\k-\epsilon_w}^wx_0\right)\\
|\k|\,p_\k^u&:=-k_u\,p_{\k-\epsilon_u}^0x_0+\sum_{w=1}^mk_w\,p_{\k-\epsilon_w}^ux_w
\end{align*}
for all $u\in\{1,\ldots,m\}$ when $\k\in\nn^m\setminus\{(0,\ldots,0)\}$. With this definition, we claim that
\[k_u\,p_{\k-\epsilon_u}^s=k_s\,p_{\k-\epsilon_s}^u\]
for all $\k\in\zz^m,s,u\in\{1,\ldots,m\}$, which is equivalent to property {\it 2}. We now proceed by induction on $\k$.

For $\k\in\zz^m\setminus\nn^m$, we find that $L_\B\circ p_\k\circ L_\B^{-1}\equiv0v_0+0v_1+\ldots+0v_m=0\equiv\P_\k^\B$.

For $\k=(0,\ldots,0)$, we find that $L_\B\circ p_\k\circ L_\B^{-1}\equiv1v_0+0v_1+\ldots+0v_m=1\equiv\P_\k^\B$.

Let us now prove the thesis for $\k\in\nn^m\setminus\{(0,\ldots,0)\}$, assuming it true for $\k-\epsilon_s$ for all $s\in\{1,\ldots,m\}$. Using the definition of $\P_\k^\B$, the induction hypothesis, the definition of $p_\k$ and the claim, we find that
\begin{align*}
&|\k|\,\P_\k^\B\circ L_\B=\sum_{s=1}^mk_s(\P_{\k-\epsilon_s}^\B\circ L_\B)\cdot(x_s-x_0v_s)=\sum_{s=1}^m(L_\B\circ p_{\k-\epsilon_s})\cdot k_s\,(x_s-x_0v_s)\\
&=\sum_{s=1}^m\left(\sum_{u=0}^mv_u\,p_{\k-\epsilon_s}^u\right) k_s\,(x_s-x_0v_s)\\
&=\sum_{s=1}^mk_s\,p_{\k-\epsilon_s}^0x_s-\sum_{s=1}^mv_s\,k_s\,p_{\k-\epsilon_s}^0x_0+\sum_{u=1}^mv_u\left(\sum_{s=1}^mk_s\,p_{\k-\epsilon_s}^ux_s\right)-x_0\sum_{s,u=1}^mv_u\,v_s\,k_s\,p_{\k-\epsilon_s}^u\\
&=\sum_{s=1}^m\left(k_s\,p_{\k-\epsilon_s}^0x_s+k_s\,p_{\k-\epsilon_s}^sx_0\right)+\sum_{u=1}^mv_u\left(-k_u\,p_{\k-\epsilon_u}^0x_0+\sum_{s=1}^mk_s\,p_{\k-\epsilon_s}^ux_s\right)\\
&-x_0\sum_{1\leq s<u\leq m}\left(v_u\,v_s\,k_s\,p_{\k-\epsilon_s}^u+v_s\,v_u\,k_u\,p_{\k-\epsilon_u}^s\right)\\
&=\sum_{s=1}^mk_s\left(p_{\k-\epsilon_s}^0x_s+p_{\k-\epsilon_s}^sx_0\right)+\sum_{u=1}^mv_u\left(-k_u\,p_{\k-\epsilon_u}^0x_0+\sum_{s=1}^mk_s\,p_{\k-\epsilon_s}^ux_s\right)\\
&\quad+x_0\,\sum_{1\leq s<u\leq m}v_s\,v_u\,\left(k_s\,p_{\k-\epsilon_s}^u-k_u\,p_{\k-\epsilon_u}^s\right)\\
&=|\k|\,p_\k^0+\sum_{u=1}^mv_u\,|\k|\,p_\k^u+x_0\sum_{1\leq s<u\leq m}v_s\,v_u\,0\\
&=|\k|\,\sum_{u=0}^mv_u\,p_\k^u=|\k|\,L_\B\circ p_\k\,,
\end{align*}
as desired.
\item We prove this property, which also settles our previous claim, by induction on $\k$. The property is clearly true when $\k\in\zz^m\setminus\nn^m$ or $\k=(0,\ldots,0)$, which implies that (for all distinct $s,u\in\{1,\ldots,m\}$) $\k+\epsilon_u-\epsilon_s\in\zz^m\setminus\nn^m$ and $p^s_\k\equiv0\equiv p^u_{\k+\epsilon_u-\epsilon_s}$. We now prove the property for $\k\in\nn^m\setminus\{(0,\ldots,0)\}$, assuming it true for $\k+\epsilon_u-\epsilon_s-\epsilon_w$ for all $s,u,w\in\{1,\ldots,m\}$. Using first the definition of $p_\k^s$, then three separate instances of the induction hypothesis and finally the definition of $p^u_{\k+\epsilon_u-\epsilon_s}$, we find that
\begin{align*}
|\k|\,(k_u+1)\,p_\k^s&=(k_u+1)\,\left(-k_s\,p_{\k-\epsilon_s}^0x_0+\sum_{w=1}^mk_w\,p_{\k-\epsilon_w}^sx_w\right)\\
&=k_s\,\left(-(k_u+1)\,p_{\k-\epsilon_s}^0x_0+(k_u+1)\,p_{\k-\epsilon_s}^sx_s\right)\\
&\quad+(k_u+1)\,k_u\,p_{\k-\epsilon_u}^sx_u+\sum_{w\in\{1,\ldots,m\}\setminus\{u,s\}}k_w\,(k_u+1)\,p_{\k-\epsilon_w}^sx_w\\
&=k_s\,\left(-(k_u+1)\,p_{\k-\epsilon_s}^0x_0+(k_s-1)\,p_{\k+\varepsilon_u-2\epsilon_s}^ux_s\right)\\
&\quad+(k_u+1)\,k_s\,p_{\k-\epsilon_s}^ux_u+\sum_{w\in\{1,\ldots,m\}\setminus\{u,s\}}k_w\,k_s\,p_{\k+\epsilon_u-\epsilon_s-\epsilon_w}^ux_w\\
&=k_s\,\Big(-(k_u+1)\,p_{\k-\epsilon_s}^0x_0+(k_s-1)\,p_{\k+\varepsilon_u-2\epsilon_s}^ux_s\\
&\quad+(k_u+1)\,p_{\k-\epsilon_s}^ux_u+\sum_{w\in\{1,\ldots,m\}\setminus\{u,s\}}k_w\,p_{\k+\epsilon_u-\epsilon_s-\epsilon_w}^ux_w\Big)\\
&=k_s\,|\k+\epsilon_u-\epsilon_s|\,p^u_{\k+\epsilon_u-\epsilon_s}=|\k|\,k_s\,p^u_{\k+\epsilon_u-\epsilon_s}\,,
\end{align*}
whence the announced property follows immediately. This completes the induction step.
\item Using
\[\P_\k^\B\circ L_\B=L_\B\circ p_\k=\sum_{s=0}^mv_sp_\k^s=\sum_{s=0}^mp_\k^sv_s\,,\]
we prove formula~\eqref{eq:technical1} by the next computation:
\begin{align*}
\sum_{u=1}^mk_u\,v_u\,\P_{\k-\epsilon_u}^\B\circ L_\B&=\sum_{u=1}^mk_u\,v_u\,\sum_{s=0}^mv_s\,p_{\k-\epsilon_u}^s\\
&=\sum_{u=1}^mk_u\,p_{\k-\epsilon_u}^0\,v_u+\sum_{s,u=1}^mk_u\,p_{\k-\epsilon_u}^s\,v_u\,v_s\\
&=\sum_{s=1}^mk_s\,p_{\k-\epsilon_s}^0\,v_s+\sum_{s,u=1}^mk_s\,p_{\k-\epsilon_s}^u\,v_u\,v_s\\
&=\sum_{s=1}^mk_s\,\left(\sum_{u=0}^mp_{\k-\epsilon_s}^u\,v_u\right)\,v_s\\
&=\sum_{s=1}^mk_s\,\left(\P_{\k-\epsilon_s}^\B\circ L_\B\right)\,v_s\,.
\end{align*}
Here, the third equality follows from property {\it 2} (more precisely, from our previous claim). Taking into account the definition $\zeta_s^\B:=x_s-x_0\,v_s$ and the definition of $\P_\k^\B$, we prove formula~\eqref{eq:technical2} by the next computation:
\begin{align*}
\sum_{s=1}^mk_s\,\zeta_s^\B\,\P_{\k-\epsilon_s}^\B(x)&=\sum_{s=1}^mk_s\,x_s\,\P_{\k-\epsilon_s}^\B(x)-x_0\,\sum_{s=1}^mk_s\,v_s\,\P_{\k-\epsilon_u}^\B(x)\\
&=\sum_{s=1}^mk_s\,\P_{\k-\epsilon_s}^\B(x)\,x_s-x_0\,\sum_{s=1}^mk_s\,\P_{\k-\epsilon_u}^\B(x)\,v_s\\
&=\sum_{s=1}^mk_s\,\P_{\k-\epsilon_s}^\B(x)\,\zeta_s^\B=|\k|\,\P_\k^\B(x)\,.
\end{align*}
For the second equality, we used formula~\eqref{eq:technical1}.
\item We prove this property by induction on $\k$. The equality is obviously true when $\k\in\zz^m\setminus\nn^m$ or $\k=(0,0,\ldots,0)$, which implies $\partial_s\P_\k^\B\equiv0\equiv\P_{\k-\epsilon_s}^\B$. We prove it for $\k\in\nn^m\setminus\{(0,0,\ldots,0)\}$, assuming it true for $\P_{\k-\epsilon_u}^\B$ for all $u\in\{1,\ldots,m\}$, by means of the equalities
\begin{align*}
|\k|\,\partial_s\P_\k^\B(x)&=\sum_{u=1}^mk_u\,\partial_s\left(\P_{\k-\epsilon_u}^\B(x)\,\zeta_u^\B\right)\\
&=\sum_{u=1}^mk_u\left(\partial_s\P_{\k-\epsilon_u}^\B(x)\right)\,\zeta_u^\B+k_s\,\P_{\k-\epsilon_s}^\B(x)\\
&=\sum_{u\neq s}k_u\,k_s\P_{\k-\epsilon_s-\epsilon_u}^\B(x)\,\zeta_u^\B+k_s\,(k_s-1)\,\P_{\k-2\epsilon_s}^\B(x)+k_s\,\P_{\k-\epsilon_s}^\B(x)\\
&=k_s\,\left(|\k-\epsilon_s|\,\P_{\k-\epsilon_s}^\B(x)+\P_{\k-\epsilon_s}^\B(x)\right)\\
&=k_s\,|\k|\,\P_{\k-\epsilon_s}^\B(x)\,.
\end{align*}
The proof is now complete.\qedhere
\end{enumerate}
\end{proof}

\setcounter{theorem}{5}

\begin{proposition}
Assume $A$ to be associative and fix $k\in\nn$. Then $\left\{\P_\k^\B\right\}_{|\k|=k}$ is a right $A$-basis for $U^\B_k$. Namely, for all $P\in U^\B_k$, the equality
\begin{equation}
P(x)=\sum_{|\k|=k}\P_\k^\B(x)\,\frac{1}{\k!}\nabla_\B^{(0,\k)}P(0)
\end{equation}
holds true at all $x\in M$. 
\end{proposition}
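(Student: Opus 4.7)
My plan is to proceed in three stages: first establish that each $\P_\k^\B$ with $|\k|=k$ belongs to $U^\B_k$, then prove linear independence of the family using Remark~\ref{rmk:propertiesfueterpolynomials}, and finally prove spanning by exploiting the left-monogenicity relation to reduce arbitrary partial derivatives at the origin to the ``pure-spatial'' ones $\nabla_\B^{(0,\k)}$.

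For the first stage, the $k$-homogeneity of $\P_\k^\B$ is immediate from its recursive definition and the fact that $\zeta_s^\B$ is a degree-$1$ polynomial. Left-monogenicity is where I would combine several tools from Proposition~\ref{prop:fueterpolynomialsindependofbasis}. Using property {\it 4} and equation~\eqref{eq:technical1},
\[\debar_\B\P_\k^\B=\partial_0\P_\k^\B+\sum_{s=1}^mv_s\,\partial_s\P_\k^\B=\partial_0\P_\k^\B+\sum_{s=1}^m k_s\,\P_{\k-\epsilon_s}^\B\,v_s\,.\]
To compute $\partial_0\P_\k^\B$ I would combine Euler's identity for the $k$-homogeneous map $\P_\k^\B$ with the recursion $|\k|\,\P_\k^\B=\sum_s k_s\,\P_{\k-\epsilon_s}^\B(x_s-x_0v_s)$. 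Euler gives $k\,\P_\k^\B=x_0\partial_0\P_\k^\B+\sum_{s\geq1}k_s x_s\,\P_{\k-\epsilon_s}^\B$, while the recursion gives $k\,\P_\k^\B=\sum_{s\geq1}k_s x_s\,\P_{\k-\epsilon_s}^\B-x_0\sum_{s\geq1}k_s\,\P_{\k-\epsilon_s}^\B v_s$. Subtracting yields $x_0\,\partial_0\P_\k^\B=-x_0\sum_{s\geq1}k_s\,\P_{\k-\epsilon_s}^\B v_s$, hence (since both sides are polynomials) $\partial_0\P_\k^\B=-\sum_{s\geq1}k_s\,\P_{\k-\epsilon_s}^\B v_s$, which cancels the preceding sum and gives $\debar_\B\P_\k^\B\equiv0$.

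For linear independence, if $\sum_{|\k|=k}\P_\k^\B\,a_\k\equiv0$, I would apply $\nabla_\B^{(0,\k')}$ and evaluate at the origin. By Remark~\ref{rmk:propertiesfueterpolynomials}, $\nabla_\B^{(0,\k')}\P_{\k'}^\B\equiv\k'!$ and $\nabla_\B^{(0,\k')}\P_\k^\B\equiv0$ whenever some component of $\k'$ exceeds the corresponding component of $\k$; when $|\k|=|\k'|=k$ and $\k\neq\k'$ this latter case necessarily occurs. Hence $\k'!\,a_{\k'}=0$, giving $a_{\k'}=0$ for every multi-index.

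For spanning, given $P\in U^\B_k$, set $Q:=\sum_{|\k|=k}\P_\k^\B\,\frac{1}{\k!}\nabla_\B^{(0,\k)}P(0)\in U^\B_k$ and consider $R:=P-Q$. By the orthogonality relations just noted, $\nabla_\B^{(0,\k)}R(0)=0$ for every $\k$ with $|\k|=k$. The hard part—and the main obstacle—is arguing that a $k$-homogeneous left-monogenic polynomial $R$ vanishes as soon as all its ``pure-spatial'' derivatives $\nabla_\B^{(0,\k)}R(0)$ do. I would handle this as follows: $R$ is a $k$-homogeneous polynomial map, hence determined by $\{\nabla_\B^\h R(0):|\h|=k\}$; left-monogenicity gives the operator identity $\partial_0=-\sum_{s=1}^m v_s\partial_s$ on $R$ (and on every iterated partial derivative of $R$, since monogenicity is preserved under $\partial_s$). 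Iterating this identity $h_0$ times lets one write, for any $\h=(h_0,\k)$ with $|\h|=k$,
\[\nabla_\B^\h R(0)=\sum_{|\widetilde\k|=k} c_{\h,\widetilde\k}\,\nabla_\B^{(0,\widetilde\k)}R(0)\]
for coefficients $c_{\h,\widetilde\k}\in A$ built from products of the $v_s$. Since every term on the right vanishes by construction, all order-$k$ derivatives of $R$ at $0$ vanish, so $R\equiv0$ by homogeneity and thus $P=Q$, giving~\eqref{eq:monogenicpolynomialexpansion}. Combined with the independence step, this shows $\{\P_\k^\B\}_{|\k|=k}$ is a right $A$-basis of $U^\B_k$.
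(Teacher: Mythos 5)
Your proof is correct, but it departs from the paper's argument in two places, most substantially in the spanning step. For left-monogenicity of $\P_\k^\B$, the paper runs a direct induction on $\k$, expanding $\debar_\B\big(\P_{\k-\epsilon_s}^\B\,\zeta_s^\B\big)$ by the product rule and invoking~\eqref{eq:technical1} for the final cancellation; you instead extract the closed formula $\partial_0\P_\k^\B=-\sum_{s\geq1}k_s\,\P_{\k-\epsilon_s}^\B\,v_s$ by comparing Euler's identity with the defining recursion (the cancellation of $x_0$ is legitimate since the set $\{x_0\neq0\}$ is dense and both sides are polynomial), which makes the verification essentially non-inductive once Proposition~\ref{prop:fueterpolynomialsindependofbasis} is in hand. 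The linear-independence step is identical to the paper's. For spanning, the paper proves~\eqref{eq:monogenicpolynomialexpansion} by induction on $k$: it derives $kP=\sum_{s=1}^m\zeta_s^\B\,\partial_sP$ from Euler's formula plus monogenicity, applies the inductive hypothesis to $\partial_sP\in U^\B_{k-1}$, and closes the induction with the symmetrized recursion~\eqref{eq:technical2}. You instead set $R:=P-Q$ and show $R\equiv0$ by a derivative-reduction argument: iterating $\partial_0=-\sum_s v_s\partial_s$ (valid on $R$ and on each $\nabla_\B^{(0,\k)}R$, since these operators have real coefficients and hence commute with $\debar_\B$) expresses every $\nabla_\B^{\h}R(0)$ as a left $A$-combination of the pure-spatial derivatives $\nabla_\B^{(0,\widetilde\k)}R(0)$, all of which vanish, so all order-$k$ Taylor coefficients of the $k$-homogeneous map $R$ vanish. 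Your route entirely avoids~\eqref{eq:technical2} and is arguably more conceptual (it is the standard ``a monogenic homogeneous polynomial is determined by its spatial derivative data'' principle), while the paper's induction has the advantage of producing the expansion with its explicit coefficients constructively and of exhibiting the role of the identity $|\k|\,\P_\k^\B=\sum_s k_s\,\zeta_s^\B\,\P_{\k-\epsilon_s}^\B$. Both arguments are sound.
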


\begin{proof}[Proof of Proposition~\ref{prop:basismonogenicpolynomials}]
For all $\k\in\nn^m$, the function $\P_\k^\B$ is a $k$-homogenous polynomial map by construction. We now check, by induction on $\k$, that $\P_\k^\B$ is left-monogenic with respect to $\B$. The Fueter polynomial $\P_{(0,0,\ldots,0)}^\B\equiv1$ is obviously left-monogenic with respect to $\B$. We can prove the same property for $\P_\k^\B$, assuming it true for $\P_{\k-\epsilon_s}^\B$ for all $s\in\{1,\ldots,m\}$, by means of the equalities
\begin{align*}
|\k|\,\debar_\B\P_\k^\B&=\sum_{s=1}^mk_s\,\debar_\B\left(\P_{\k-\epsilon_s}^\B(x)\zeta_s^\B\right)\\
&=\sum_{s=1}^mk_s\sum_{u=0}^mv_u\,\partial_u\left(\P_{\k-\epsilon_s}^\B(x)\zeta_s^\B\right)\\
&=\sum_{s=1}^mk_s\sum_{u=0}^mv_u\left(\left(\partial_u\P_{\k-\epsilon_s}^\B(x)\right)\zeta_s^\B+\P_{\k-\epsilon_s}^\B(x)\,\partial_u\zeta_s^\B\right)\\
&=\sum_{s=1}^mk_s\left(\debar_\B\P_{\k-\epsilon_s}^\B(x)\right)\zeta_s^\B+\sum_{s=1}^mk_s\left(\P_{\k-\epsilon_s}^\B(x)\,\partial_0\zeta_s^\B+v_s\,\P_{\k-\epsilon_s}^\B(x)\,\partial_s\zeta_s^\B\right)\\
&=0+\sum_{s=1}^mk_s\left(\P_{\k-\epsilon_s}^\B(x)(-v_s)+v_s\P_{\k-\epsilon_s}^\B(x)\right)\\
&\equiv0\,.
\end{align*}
For the last equality, we have applied formula~\eqref{eq:technical1}. We have therefore proven that $\left\{\P_\k^\B\right\}_{|\k|=k}\subset U^\B_k$.

We now prove that formula~\eqref{eq:monogenicpolynomialexpansion} holds true for all $P\in U^\B_k$. As a preparation, we make a remark. If $P:M\to A$ is a $k$-homogeneous polynomial map, i.e., $P(tx)=t^kP(x)$ for all $t>0$, then differentiating with respect to $t$ and evaluating at $t=1$ proves Euler's formula
\[\sum_{s=0}^mx_s\,\partial_sP(x)=k\,P(x)\,.\]
Combining this property with the equality $0\equiv\debar_\B P=\sum_{s=0}^mv_s\partial_sP$, we find that
\begin{align*}
k\,P(x)&=x_0\,\partial_0P(x)+x_s\,\sum_{s=1}^m\partial_sP(x)=-x_0\,\sum_{s=1}^mv_s\,\partial_sP(x)+\sum_{s=1}^mx_s\,\partial_sP(x)\\
&=\sum_{s=1}^m(x_s-x_0v_s)\,\partial_sP(x)=\sum_{s=1}^m\zeta_s^\B\,\partial_sP(x)\,.
\end{align*}
We are now ready to prove~\eqref{eq:monogenicpolynomialexpansion}, by induction on $k$. If $k=0$, equality~\eqref{eq:monogenicpolynomialexpansion} is true because $\P_{(0,0,\ldots,0)}\equiv1$ and $\nabla_\B^{(0,0,\ldots,0)}P=P$. If~\eqref{eq:monogenicpolynomialexpansion} is true for all $P\in U^\B_k$, we can prove it for any $P\in U^\B_{k+1}$ by means of the following chain of equalities:
\begin{align*}
(k+1)P(x)&=\sum_{s=1}^m\zeta_s^\B\,\partial_sP(x)\\
&=\sum_{s=1}^m\zeta_s^\B\,\sum_{|\k|=k}\P_\k^\B(x)\,\frac{1}{\k!}\nabla_\B^{(0,\k)}(\partial_sP)(0)\\
&=\sum_{s=1}^m\zeta_s^\B\,\sum_{|\k|=k}\P_\k^\B(x)\,\frac{1}{\k!}\nabla_\B^{(0,\k+\epsilon_s)}P(0)\\
&=\sum_{s=1}^m\zeta_s^\B\,\sum_{|\k'|=k+1,\,k'_s\neq0}\P_{\k'-\epsilon_s}^\B(x)\,\frac{1}{(\k'-\epsilon_s)!}\nabla_\B^{(0,\k')}P(0)\\
&=\sum_{|\k'|=k+1}\sum_{s=1}^mk'_s\,\zeta_s^\B\,\P_{\k'-\epsilon_s}^\B(x)\,\frac{1}{\k'!}\nabla_\B^{(0,\k')}P(0)\\
&=(k+1)\sum_{|\k'|=k+1}\P_{\k'}^\B(x)\,\frac{1}{\k'!}\nabla_\B^{(0,\k')}P(0)\,.
\end{align*}
For the last equality, we have applied formula~\eqref{eq:technical2} to $\P_{\k'}^\B$. This completes the proof of formula~\eqref{eq:monogenicpolynomialexpansion}.

We are left with proving that $\left\{\P_\k^\B\right\}_{|\k|=k}$ is a right $A$-basis for $U^\B_k$. Clearly, formula~\eqref{eq:monogenicpolynomialexpansion} implies that $\left\{\P_\k^\B\right\}_{|\k|=k}$ is a set of generators for the right $A$-module $U^\B_k$. We now prove that the elements of $\left\{\P_\k^\B\right\}_{|\k|=k}$ are linearly independent. Assume $\{a_\k\}_{|\k|=k}\subset A$ to be such that $\sum_{|\k|=k}\P_\k^\B(x)\,a_\k\equiv0$. If $\k'\in\nn^m$ has $|\k'|=k$, then
\[0\equiv\sum_{|\k|=k}\nabla_\B^{(0,\k')}\P_\k^\B(x)\,a_\k\equiv\k'!\,a_{\k'}\]
because of Remark~\ref{rmk:propertiesfueterpolynomials} and because $\k'\neq\k$ (with $|\k'|=k=|\k|$) implies $k'_s>k_s$ for at least one $s\in\{1,\ldots,m\}$. It follows that $a_{\k'}=0$, as desired. This completes the proof of the fact that $\left\{\P_\k^\B\right\}_{|\k|=k}$ is a right $A$-basis for $U^\B_k$.
\end{proof}


\subsection*{Proofs of the results in Subsection~\ref{subsec:monogenicintegral}}

\setcounter{theorem}{7}
\setcounter{equation}{12}

\begin{proposition}
Assume $A$ to be associative and fix a domain $G$ in the hypercomplex subspace $M$ of $A$. The following properties hold true for all integrable $\phi,\psi:G\to A$, all $a,b\in A$ and all disjoint domains $G_1,G_2$ in $M$:
\begin{enumerate}
\item $G=G_1\cup G_2\Rightarrow\int_{G}\phi\,d\sigma=\int_{G_1}\phi\,d\sigma+\int_{G_2}\phi\,d\sigma$.
\item $\int_{G}(a\phi+b\psi)\,d\sigma=a\int_{G}\phi\,d\sigma+b\int_{G}\psi\,d\sigma$.
\item $\int_{G}(\phi a+\psi b)\,d\sigma=\left(\int_{G}\phi\,d\sigma\right)a+\left(\int_{G}\psi\,d\sigma\right)b$.
\item $\left(\int_{G}\phi\,d\sigma\right)^c=\int_{G}\phi^c\,d\sigma$.
\item $\Vert\int_{G}\phi\,d\sigma\Vert\leq\int_{G}\Vert \phi\Vert\,d\sigma$.
\end{enumerate}
\end{proposition}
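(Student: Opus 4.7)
The plan is to reduce every claim to the real-valued, componentwise situation using the Hilbert space isomorphism $L_{\B'}:\rr^{d+1}\to A$. By the very Definition~\ref{def:volumeintegral}, the operator $\int_G\cdot\,d\sigma$ is obtained by conjugating the standard componentwise integral $\rr^{d+1}$-valued integral $\int_G L_{\B'}^{-1}\circ\phi\,d\sigma$ by $L_{\B'}$. Thus every statement about $\int_G$ can be deduced from a corresponding statement about the $\rr$-valued Lebesgue integral, together with the $\rr$-linearity (and bijectivity) of $L_{\B'}$ and of the various maps it intertwines with.

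For property {\it 1}, additivity over disjoint $G_1,G_2$ with $G=G_1\cup G_2$ is immediate from the same statement applied to each real component. For properties {\it 2} and {\it 3}, the key observation is that, by Remark~\ref{rmk:endomorphisms}, left and right multiplication by a fixed element of $A$ correspond under $L_{\B'}$ to the $\rr$-linear endomorphisms $\L_a,\L_b,\R_a,\R_b$ of $\rr^{d+1}$. So $L_{\B'}^{-1}(a\phi+b\psi)=\L_a\circ L_{\B'}^{-1}\circ\phi+\L_b\circ L_{\B'}^{-1}\circ\psi$, and componentwise integration commutes with any $\rr$-linear endomorphism; pushing $L_{\B'}$ back through the equality yields property {\it 2}, and an identical argument with $\R_a,\R_b$ yields property {\it 3}. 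Property {\it 4} is analogous: the $*$-involution corresponds, via $L_{\B'}$, to the $\rr$-linear endomorphism $\mathcal{CON}$ of $\rr^{d+1}$ from Remark~\ref{rmk:endomorphisms}, which therefore commutes with the componentwise integral.

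The only point that does not reduce to a purely linear manipulation is property {\it 5}, and this will be the main (though still routine) obstacle. The strategy there is a standard Bochner-type argument in the Euclidean space $(A,\langle\cdot,\cdot\rangle_{\B'})$. Setting $I:=\int_G\phi\,d\sigma$, one may assume $I\neq0$ and put $u:=I/\Vert I\Vert$, so that $\Vert u\Vert=1$. Then, since $\langle\cdot,u\rangle$ is an $\rr$-linear functional on $A$, it commutes with the integral (by the componentwise definition again), giving
\[\Vert I\Vert=\langle I,u\rangle=\int_G\langle\phi(x),u\rangle\,d\sigma_x\leq\int_G\Vert\phi(x)\Vert\,\Vert u\Vert\,d\sigma_x=\int_G\Vert\phi\Vert\,d\sigma,\]
by Cauchy-Schwarz in $(A,\langle\cdot,\cdot\rangle_{\B'})$ and monotonicity of the real integral. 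This yields property {\it 5} and completes the proof.
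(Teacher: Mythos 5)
Your proposal is correct and follows essentially the same route as the paper: reduce everything to the componentwise real integral via $L_{\B'}$, use commutation with the linear endomorphisms $\L_a$, $\R_a$, $\mathcal{CON}$ for properties \emph{2}--\emph{4}, and prove property \emph{5} by pairing the integral with its own (normalized) value and applying Cauchy--Schwarz. The only cosmetic difference is that you run the duality argument directly in $(A,\langle\cdot,\cdot\rangle_{\B'})$ while the paper runs it in $\rr^{d+1}$ and then invokes that $L_{\B'}$ is an isometry; these are interchangeable.
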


\begin{proof}[Proof of Proposition~\ref{prop:propertiesintegral}]
We first establish the following facts, valid for all integrable functions $\phi_0,\ldots,\phi_d:G\to\rr$ and for all real linear endomorphisms $\mathcal{F}:\rr^{d+1}\to\rr^{d+1}$:
\begin{align}
&\int_{G_1\cup G_2}\left(\phi_0,\ldots,\phi_d\right)\,d\sigma=\int_{G_1}\left(\phi_0,\ldots,\phi_d\right)\,d\sigma+\int_{G_2}\left(\phi_0,\ldots,\phi_d\right)\,d\sigma\label{eq:integraloverunion}\\
&\mathcal{F}\left(\int_{G}\left(\phi_0,\ldots,\phi_d\right)\,d\sigma\right)=\mathcal{F}\left(\int_{G}\phi_0\,d\sigma,\ldots,\int_{G}\phi_d\,d\sigma\right)=\int_{G}\mathcal{F}\left(\phi_0,\ldots,\phi_d\right)\,d\sigma\,,\label{eq:integralandendomorphismcommute}\\
&\left\Vert\int_{G}\left(\phi_0,\ldots,\phi_d\right)\,d\sigma\right\Vert_{\rr^{d+1}}\leq\int_{G}\left\Vert\left(\phi_0,\ldots,\phi_d\right)\right\Vert_{\rr^{d+1}}\,d\sigma\,.\label{eq:normofintegral}
\end{align}
Formula~\eqref{eq:integraloverunion} follows immediately from the fact that $\int_{G_1\cup G_2}\phi_s\,d\sigma=\int_{G_1}\phi_s\,d\sigma+\int_{G_2}\phi_s\,d\sigma$ for all $s\in\{0,\ldots,d\}$. To establish~\eqref{eq:integralandendomorphismcommute}, it suffices to find real coefficients $(\alpha_{su})_{s,u\in\{0,\ldots,d\}}$ such that the $s$-th component of $\mathcal{F}\left(\phi_0,\ldots,\phi_d\right)$ is $\alpha_{s0}\phi_0+\ldots+\alpha_{sd}\phi_d$
and to remark that
\[\alpha_{s0}\int_{G}\phi_0\,d\sigma+\ldots+\alpha_{sd}\int_{G}\phi_d\,d\sigma=\int_{G}(\alpha_{s0}\phi_0+\ldots+\alpha_{sd}\phi_d)\,d\sigma\]
for each $s\in\{0,\ldots,d\}$. Moreover,~\eqref{eq:normofintegral} can be proven as follows: if $p=\int_{G}\left(\phi_0,\ldots,\phi_d\right)\,d\sigma$ has $\left\Vert p\right\Vert_{\rr^{d+1}}=0$, then the inequality is obvious; else, the inequality follows from the next chain of inequalities:
\begin{align*}
\left\Vert p\right\Vert_{\rr^{d+1}}^2&=\langle p,p\rangle_{\rr^{d+1}}=\left\langle p,\left(\int_{G}\phi_0\,d\sigma,\ldots,\int_{G}\phi_d\,d\sigma\right)\right\rangle_{\rr^{d+1}}\\
&=p_0\int_{G}\phi_0\,d\sigma+\ldots+p_d\int_{G}\phi_d\,d\sigma=\int_{G}(p_0\phi_0+\ldots+p_d\phi_d)\,d\sigma\\
&=\int_{G}\left\langle p,\left(\phi_0,\ldots,\phi_d\right)\right\rangle_{\rr^{d+1}}\,d\sigma\leq \left\Vert p\right\Vert_{\rr^{d+1}}\,\int_{G}\left\Vert\left(\phi_0,\ldots,\phi_d\right)\right\Vert_{\rr^{d+1}}\,d\sigma\,.
\end{align*}
We now prove each of the properties listed in Proposition~\ref{prop:propertiesintegral} separately, using the notations $\mathcal{CON},\L_a,\R_a$ set in Remark~\ref{rmk:endomorphisms}.
\begin{enumerate}
\item Formula~\eqref{eq:integraloverunion}, along with Definition~\ref{def:volumeintegral}, immediately yields the desired equality.
\item By construction, $\phi\mapsto\int_{G}\phi\,d\sigma$ is real linear. It is also left $A$-linear because
\begin{align*}
a\int_{G}\phi\,d\sigma&=a\,L_{\B'}\left(\int_{G}L_{\B'}^{-1}\phi\,d\sigma\right)=(L_{\B'}\circ\L_a)\left(\int_{G}L_{\B'}^{-1}\phi\,d\sigma\right)\\
&=L_{\B'}\left(\int_{G}(\L_a\circ L_{\B'}^{-1})\phi\,d\sigma\right)=L_{\B'}\left(\int_{G}L_{\B'}^{-1}(a\phi)\,d\sigma\right)\\
&=\int_{G}a\,\phi\,d\sigma\,.
\end{align*}
Here, we have used~\eqref{eq:integralandendomorphismcommute} with $\mathcal{F}=\L_a$.
\item The map $\phi\mapsto\int_{G}\phi\,d\sigma$ is also right $A$-linear because
\begin{align*}
\left(\int_{G}\phi\,d\sigma\right)a&=L_{\B'}\left(\int_{G}L_{\B'}^{-1}\phi\,d\sigma\right)\,a=(L_{\B'}\circ\R_a)\left(\int_{G}L_{\B'}^{-1}\phi\,d\sigma\right)\\
&=L_{\B'}\left(\int_{G}(\R_a\circ L_{\B'}^{-1})\phi\,d\sigma\right)=L_{\B'}\left(\int_{G}L_{\B'}^{-1}(\phi a)\,d\sigma\right)\\
&=\int_{G}\phi\,a\,d\sigma\,.
\end{align*}
Here, we have used~\eqref{eq:integralandendomorphismcommute} with $\mathcal{F}=\R_a$.
\item Using~\eqref{eq:integralandendomorphismcommute} with $\mathcal{F}=\mathcal{CON}$, we find that
\begin{align*}
\left(\int_{G}\phi\,d\sigma\right)^c&=\left(L_{\B'}\left(\int_{G}L_{\B'}^{-1}\phi\,d\sigma\right)\right)^c=(L_{\B'}\circ\mathcal{CON})\left(\int_{G}L_{\B'}^{-1}\phi\,d\sigma\right)\\
&=L_{\B'}\left(\int_{G}(\mathcal{CON}\circ L_{\B'}^{-1})\phi\,d\sigma\right)=L_{\B'}\left(\int_{G}L_{\B'}^{-1}(\phi^c)\,d\sigma\right)\\
&=\int_{G}\phi^c\,d\sigma\,.
\end{align*}
\item Using the fact that $L_{\B'}:\rr^{d+1}\to M$ is an isometry, as well as inequality~\eqref{eq:normofintegral}, we get
\[\left\Vert\int_{G}\phi\,d\sigma\right\Vert=\left\Vert L_{\B'}\left(\int_{G}L_{\B'}^{-1}\phi\,d\sigma\right)\right\Vert=\left\Vert\int_{G}L_{\B'}^{-1}\phi\,d\sigma\right\Vert_{\rr^{d+1}}\leq\int_{G}\Vert L_{\B'}^{-1}\phi\Vert_{\rr^{d+1}}\,d\sigma=\int_{G}\Vert \phi\Vert\,d\sigma\,,\]
as desired.\qedhere
\end{enumerate}
\end{proof}

\setcounter{theorem}{9}

\begin{lemma}
$\left\Vert\int_{\partial B^{m+1}(0,1)}\phi(w)\,|do_w|\right\Vert\leq\int_{\partial B^{m+1}(0,1)}\Vert\phi(w)\Vert\,|do_w|$.
\end{lemma}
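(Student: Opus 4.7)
The plan is to mimic the argument used in Proposition~\ref{prop:propertiesintegral}(5), transferring the standard Cauchy-Schwarz estimate for $\rr^{d+1}$-valued surface integrals across the Hilbert space isometry $L_{\B'}:\rr^{d+1}\to A$. The integral $\int_{\partial B^{m+1}(0,1)}\phi(w)\,|do_w|$ is defined in~\cite[\S A.2.1.3]{librogurlebeck2} componentwise: writing $\phi=\sum_{s=0}^dv_s\phi_s$ with $\phi_s:\partial B^{m+1}(0,1)\to\rr$, one sets
\[\int_{\partial B^{m+1}(0,1)}\phi(w)\,|do_w|=L_{\B'}\Big(\int_{\partial B^{m+1}(0,1)}\phi_0(w)\,|do_w|,\ldots,\int_{\partial B^{m+1}(0,1)}\phi_d(w)\,|do_w|\Big)\,.\]
Since $L_{\B'}$ is an isometry between $\rr^{d+1}$ (with its standard Euclidean norm) and $A$ (with $\Vert\cdot\Vert=\Vert\cdot\Vert_{\B'}$), proving the stated inequality is equivalent to proving the purely real analog
\[\Big\Vert\int_{\partial B^{m+1}(0,1)}(\phi_0(w),\ldots,\phi_d(w))\,|do_w|\Big\Vert_{\rr^{d+1}}\leq\int_{\partial B^{m+1}(0,1)}\Vert(\phi_0(w),\ldots,\phi_d(w))\Vert_{\rr^{d+1}}\,|do_w|\,.\]

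First I would set $p:=\int_{\partial B^{m+1}(0,1)}(\phi_0(w),\ldots,\phi_d(w))\,|do_w|\in\rr^{d+1}$. If $\Vert p\Vert_{\rr^{d+1}}=0$ the claim is trivial, so I may assume $\Vert p\Vert_{\rr^{d+1}}>0$. Then I would compute
\[\Vert p\Vert_{\rr^{d+1}}^2=\langle p,p\rangle_{\rr^{d+1}}=\sum_{s=0}^dp_s\int_{\partial B^{m+1}(0,1)}\phi_s(w)\,|do_w|=\int_{\partial B^{m+1}(0,1)}\langle p,(\phi_0(w),\ldots,\phi_d(w))\rangle_{\rr^{d+1}}\,|do_w|\,,\]
where the last equality uses real linearity of the scalar surface integral. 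Applying the pointwise Cauchy-Schwarz inequality in $\rr^{d+1}$ under the integral sign yields
\[\Vert p\Vert_{\rr^{d+1}}^2\leq\Vert p\Vert_{\rr^{d+1}}\int_{\partial B^{m+1}(0,1)}\Vert(\phi_0(w),\ldots,\phi_d(w))\Vert_{\rr^{d+1}}\,|do_w|\,,\]
and dividing through by $\Vert p\Vert_{\rr^{d+1}}>0$ gives the real-valued estimate.

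Finally, I would apply $L_{\B'}$, invoking its isometry property together with the equalities $\Vert L_{\B'}^{-1}\phi(w)\Vert_{\rr^{d+1}}=\Vert\phi(w)\Vert$ pointwise, to transfer the inequality back to $A$ and conclude the lemma. There is no real obstacle here: the argument is essentially an adaptation of the proof of formula~\eqref{eq:normofintegral} within the proof of Proposition~\ref{prop:propertiesintegral}, with the volume form $d\sigma$ on $G$ replaced by the surface element $|do_w|$ on $\partial B^{m+1}(0,1)$. The only point that deserves care is making sure that the componentwise definition of the $A$-valued surface integral in~\cite[\S A.2.1.3]{librogurlebeck2} matches the one used implicitly here, so that the isometry argument applies verbatim.
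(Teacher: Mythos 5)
Your proposal is correct and follows exactly the route of the paper's own proof, which likewise expresses the $A$-valued surface integral componentwise through the isometry $L_{\B'}$ and then invokes the same Cauchy--Schwarz argument used to establish inequality~\eqref{eq:normofintegral} in the proof of Proposition~\ref{prop:propertiesintegral}. You have simply written out in full the step the paper delegates to ``the same technique''; nothing is missing.
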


\begin{proof}[Proof of Lemma~\ref{lem:surfaceintegral}]
If we set, for all integrable functions $\phi_0,\ldots,\phi_d:\partial B^{m+1}(0,1)\to\rr$,
\[\int_{\partial B^{m+1}(0,1)}\left(\phi_0,\ldots,\phi_d\right)\,|do_w|:=\left(\int_{\partial B^{m+1}(0,1)}\phi_0\,|do_w|,\ldots,\int_{\partial B^{m+1}(0,1)}\phi_d\,|do_w|\right)\in\rr^{d+1}\,,\]
then $\int_{\partial B^{m+1}(0,1)}\phi(w)\,|do_w|=L_{\B'}\left(\int_{G}L_{\B'}^{-1}\circ\phi\,\,|do_w|\right)$. The thesis can be proven with the same technique used in the proof of Proposition~\ref{prop:propertiesintegral}.
\end{proof}

\begin{theorem}[Gauss]
Assume $A$ to be associative and fix a bounded domain $G$ in the hypercomplex subspace $M$ of $A$, with a $\mathscr{C}^1$ boundary $\partial G$. Then
\[\int_{\partial G}\psi\,dx^*\phi = \int_{G}\left((\psi\debar_\B)\,\phi+\psi\,(\debar_\B \phi)\right)\,d\sigma\]
for any $\phi,\psi\in\mathscr{C}^1(\overline{G},A)$.
\end{theorem}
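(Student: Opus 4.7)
The plan is to reduce the statement to the classical Stokes theorem on $\rr^{m+1}$ applied componentwise, by identifying $\psi\,dx^*\phi$ as an $m$-form with values in $A$ whose exterior derivative is exactly the integrand on the right-hand side, times the volume form $d\sigma$. Write $\phi=\sum_s v_s\phi_s$ and $\psi=\sum_s \psi_s v_s$ with real $\mathscr{C}^1$ components $\phi_s,\psi_s$ on $\overline{G}$, and consider each scalar coefficient of the $A$-valued $m$-form $\psi\,dx^*\phi$ obtained after expanding $dx^*=\sum_{s=0}^mv_s\,dx_s^*$. On such scalar coefficients, the classical Stokes formula applies, and Definition~\ref{def:volumeintegral} lets us reassemble the result into the $A$-valued equality we want.

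The core computation is the identity
\[d(\psi\,dx^*\phi)=\bigl((\psi\debar_\B)\phi+\psi(\debar_\B\phi)\bigr)\,d\sigma.\]
First I would establish the elementary wedge identities $dx_s\wedge dx_u^*=\delta_{su}\,d\sigma$, which follow directly from the sign convention in the definition of $dx_u^*$. Using these, together with the Leibniz rule $d(\psi\,v_s\,dx_s^*\,\phi)=(d\psi)\wedge v_s\,dx_s^*\,\phi+\psi\,v_s\,dx_s^*\wedge d\phi$ applied scalar-component by scalar-component, one gets
\[d(\psi\,v_s\,dx_s^*\,\phi)=(\partial_s\psi)\,v_s\,\phi\,d\sigma+\psi\,v_s\,(\partial_s\phi)\,d\sigma.\]
Summing over $s=0,\ldots,m$ and using associativity of $A$ to regroup the middle factor with either its left or its right neighbour yields $\sum_s(\partial_s\psi)v_s\,\phi+\sum_s\psi\,v_s(\partial_s\phi)=(\psi\debar_\B)\phi+\psi(\debar_\B\phi)$, exactly as in Definition~\ref{def:monogenic}.

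With that identity in hand, I would invoke the classical Gauss/Stokes theorem on the bounded domain $G\subset\rr^{m+1}\simeq M$ with $\mathscr{C}^1$ boundary, applied separately to each of the $d+1$ real scalar coefficients (indexed by $\B'$) of the $A$-valued $m$-form $\psi\,dx^*\phi$. This gives, for each scalar component, $\int_{\partial G}(\psi\,dx^*\phi)_s=\int_G d(\psi\,dx^*\phi)_s$. Recombining via $L_{\B'}$ and using properties {\it 2} and {\it 3} of Proposition~\ref{prop:propertiesintegral} (linearity of the volume integral with respect to left and right $A$-multiplication) delivers the theorem.

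The one delicate point, and the main obstacle, is the bookkeeping in the Leibniz step: $\psi$, $v_s$ and $\phi$ are all $A$-valued, and the product $\psi\,v_s\,\phi$ that appears after contracting $dx_s\wedge dx_s^*=d\sigma$ must be unambiguously defined. This is where the associativity assumption on $A$ is used in an essential way, letting us move parentheses freely and identify $(\partial_s\psi)\,v_s\,\phi$ and $\psi\,v_s\,(\partial_s\phi)$ with the $s$-th summands of $(\psi\debar_\B)\phi$ and $\psi(\debar_\B\phi)$ respectively. Beyond this, the argument is a verbatim transcription of the Clifford-algebraic proof in~\cite[\S A.2]{librogurlebeck2}, which goes through unchanged because nothing there uses specific features of $C\ell(0,m)$ beyond the hypercomplex-basis relations $v_sv_t+v_tv_s=0$ and $v_s^2=-1$ ($s\neq0$), both of which hold by Theorem~\ref{thm:hypercomplexbasis} in our hypercomplex subspace $M$.
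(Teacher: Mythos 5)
Your proposal is correct and follows essentially the same route as the paper: establish the pointwise identity $d(\psi\,dx^*\phi)=\bigl((\psi\debar_\B)\,\phi+\psi\,(\debar_\B\phi)\bigr)\,d\sigma$ via the graded Leibniz rule and the wedge relations $dx_s\wedge dx_u^*=\delta_{su}\,d\sigma$, then apply the classical Stokes theorem componentwise in the coordinates given by $L_\B$ and $L_{\B'}$. One bookkeeping remark: the Leibniz rule for a $0$-form followed by an $m$-form followed by a $0$-form is $d(\psi\,dx_s^*\,\phi)=d\psi\wedge dx_s^*\,\phi+(-1)^m\,\psi\,dx_s^*\wedge d\phi$, and the factor $(-1)^m$ you omitted is exactly cancelled by $dx_s^*\wedge dx_s=(-1)^m\,d\sigma$, so your displayed conclusion $(\partial_s\psi)\,v_s\,\phi\,d\sigma+\psi\,v_s\,(\partial_s\phi)\,d\sigma$ is correct even though the intermediate formula as written is off by $(-1)^m$.
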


\begin{proof}[Proof of Theorem~\ref{thm:gauss}]
Using~\cite[Proposition A.1.12]{librogurlebeck2}, we compute
\[d(\psi\,dx^*\phi)=d\psi\wedge(dx^*\,\phi)+\psi\,d(dx^*\,\phi)=d\psi\wedge dx^*\phi+(-1)^m\psi\,dx^*\wedge d\phi\,.\]
Now,
\begin{align*}
d\psi\wedge dx^*&=\left(\sum_{s=0}^m\partial_s\psi\,dx_s\right)\wedge\left(\sum_{u=0}^mv_u\,dx_u^*\right)\\
&=\sum_{s,u=0}^m\partial_s\psi\,v_u\,dx_s\wedge dx_u^*=\sum_{s=0}^m\partial_s\psi\,v_s\,dx_s\wedge dx_s^*\\
&=\sum_{s=0}^m\partial_s\psi\,v_s\,d\sigma = (\psi\debar_\B)\,d\sigma\,,\\
dx^*\wedge d\phi&=\left(\sum_{u=0}^mv_u\,dx_u^*\right)\wedge\left(\sum_{s=0}^m\partial_s\phi\,dx_s\right)\\
&=\sum_{u,s=0}^mv_u\,\partial_s\phi\,dx_u^*\wedge\,dx_s=\sum_{s=0}^mv_s\,\partial_s\phi\,dx_s^*\wedge\,dx_s\\
&=\sum_{s=0}^mv_s\,\partial_s\phi\,(-1)^md\sigma = (-1)^m(\debar_\B \phi)\,d\sigma\,,
\end{align*}
whence
\[d(\psi\,dx^*\phi)=(\psi\debar_\B)\,d\sigma\,\phi+\psi\,(\debar_\B \phi)\,d\sigma = \left((\psi\debar_\B)\,\phi+\psi\,(\debar_\B \phi)\right)\,d\sigma\,.\]
The thesis now immediately follows from Stokes' theorem,~\cite[Theorem A.2.18]{librogurlebeck2}.
\end{proof}

\setcounter{theorem}{12}

\begin{lemma}
If we fix $x\in M$, then the function
\[M\setminus\{x\} \to A\,,\quad y\mapsto E_m(y-x)\]
is both left- and right-monogenic with respect to $\B$.
\end{lemma}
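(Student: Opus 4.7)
My plan is to reduce to the case $x=0$ and then do a direct computation, exploiting the fact that Theorem~\ref{thm:hypercomplexbasis} identifies the quadratic norm on $M$ with the real Euclidean norm.

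First, since the operator $\debar_\B$ is a constant-coefficient differential operator (Remark~\ref{rmk:incrementalratio}), it commutes with the translation $y \mapsto y-x$; the same holds for $\psi \mapsto \psi \debar_\B$. Thus it suffices to prove that $y \mapsto E_m(y)$ is both left- and right-monogenic on $M \setminus \{0\}$.

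Write $y = \sum_{s=0}^m y_s v_s$, so that $y^c = y_0 - \sum_{s=1}^m y_s v_s$ (because $v_s \in \s_A$ implies $v_s^c = -v_s$ for $s \geq 1$), and $\|y\|^2 = \sum_s y_s^2$. By Theorem~\ref{thm:hypercomplexbasis}, $n(y) = y y^c = \|y\|^2$ and $n(y^c) = y^c y = \|y\|^2$. A direct computation of partial derivatives gives
\[
\sigma_m \, \partial_s E_m(y) = (\partial_s y^c) \, \|y\|^{-(m+1)} - (m+1) \, y_s \, y^c \, \|y\|^{-(m+3)},
\]
where $\partial_0 y^c = 1$ and $\partial_s y^c = -v_s$ for $1 \leq s \leq m$.

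For left-monogenicity, sum against $v_s$ on the left. Using $v_s^2 = -1$ for $s \geq 1$ (since $v_s \in \s_A$ gives $t(v_s)=0$ and $n(v_s)=1$), the first piece yields
\[
\sum_{s=0}^m v_s (\partial_s y^c) = 1 - \sum_{s=1}^m v_s^2 = m+1.
\]
For the second piece, since each $y_s$ is a real scalar and $A$ is associative,
\[
\sum_{s=0}^m v_s y_s y^c = \Bigl( \sum_{s=0}^m v_s y_s \Bigr) y^c = y \, y^c = \|y\|^2.
\]
Combining, $\sigma_m \, \debar_\B E_m(y) = (m+1)\|y\|^{-(m+1)} - (m+1)\|y\|^2 \|y\|^{-(m+3)} = 0$.

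For right-monogenicity, sum against $v_s$ on the right. The first piece gives $\sum_{s=0}^m (\partial_s y^c) v_s = 1 - \sum_{s=1}^m v_s^2 = m+1$ by the same calculation. For the second piece, using associativity and $y^c y = n(y^c) = \|y\|^2$ from Theorem~\ref{thm:hypercomplexbasis},
\[
\sum_{s=0}^m y_s y^c v_s = y^c \Bigl( \sum_{s=0}^m y_s v_s \Bigr) = y^c y = \|y\|^2.
\]
Hence $\sigma_m \, (E_m \debar_\B)(y) = 0$ as well, completing the proof. No real obstacle arises; the only substantive input beyond calculus is Theorem~\ref{thm:hypercomplexbasis}, which is what makes the classical-looking identity $y y^c = y^c y = \|y\|^2$ available on the hypercomplex subspace $M$ even though $n$ need not be multiplicative on all of $A$.
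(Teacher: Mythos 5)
Your proof is correct and follows essentially the same route as the paper's: reduce to $x=0$ by translation invariance, differentiate $y^c\|y\|^{-(m+1)}$ directly, and close the computation using $yy^c=y^cy=\|y\|^2$ from Theorem~\ref{thm:hypercomplexbasis} together with $v_s^2=-1$. The only difference is cosmetic (you keep $y^c$ intact rather than expanding it into components as the paper does), so nothing further is needed.
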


\begin{proof}[Proof of Lemma~\ref{lem:cauchykernel}]
Since the kernel of $\debar_\B$ is invariant under composition with translations, it suffices to prove that $\debar_\B E_m\equiv0\equiv E_m\debar_\B$. Since $\Vert x\Vert^{m+1}=(\sum_{s=0}^mx_s^2)^{\frac{m+1}2}$, we find that
\[\partial_s(\Vert x\Vert^{-m-1}x_u)=-\Vert x\Vert^{-m-3}(m+1)x_sx_u+\Vert x\Vert^{-m-1}\delta_{su}=\Vert x\Vert^{-m-3}(-(m+1)x_sx_u+\delta_{su}\Vert x\Vert^2)\,.\]
Therefore,
\begin{align*}
\sigma_m\Vert x\Vert^{m+3}\debar_\B E_m(x)&=\Vert x\Vert^{m+3}\sum_{s=0}^mv_s\,\partial_s(\Vert x\Vert^{-m-1}x^c)\\
&=\sum_{s=0}^mv_s\,\Vert x\Vert^{m+3}\partial_s(\Vert x\Vert^{-m-1}x_0)-\sum_{s=0}^m\sum_{u=1}^mv_sv_u\,\Vert x\Vert^{m+3}\partial_s(\Vert x\Vert^{-m-1}x_u)\\
&=-(m+1)\sum_{s=0}^mv_sx_sx_0+\Vert x\Vert^2+(m+1)\sum_{s=0}^m\sum_{u=1}^mv_sv_ux_sx_u-\sum_{u=1}^mv_u^2\Vert x\Vert^2\\
&=-(m+1)xx_0+(m+1)x\sum_{u=1}^mv_ux_u+(m+1)\Vert x\Vert^2\\
&=-(m+1)xx^c+(m+1)\Vert x\Vert^2\\
&\equiv0\,,\\
\sigma_m\Vert x\Vert^{m+3} (E_m\debar_\B)(x)&=\Vert x\Vert^{m+3}\sum_{s=0}^m\partial_s(\Vert x\Vert^{-m-1}x^c)\,v_s\\
&=\sum_{s=0}^m\Vert x\Vert^{m+3}\partial_s(\Vert x\Vert^{-m-1}x_0)\,v_s-\sum_{s=0}^m\sum_{u=1}^m\Vert x\Vert^{m+3}\partial_s(\Vert x\Vert^{-m-1}x_u)\,v_u\,v_s\\
&=-(m+1)\sum_{s=0}^mx_sx_0v_s+\Vert x\Vert^2+(m+1)\sum_{s=0}^m\sum_{u=1}^mx_sx_uv_uv_s-\sum_{u=1}^m\Vert x\Vert^2v_u^2\\
&=-(m+1)x_0x+(m+1)\sum_{u=1}^mx_uv_ux+(m+1)\Vert x\Vert^2\\
&=-(m+1)x^cx+(m+1)\Vert x\Vert^2\\
&\equiv0\,,
\end{align*}
as desired.
\end{proof}

\begin{theorem}[Borel-Pompeiu]
Assume $A$ to be associative and fix a bounded domain $G$ in the hypercomplex subspace $M$ of $A$, with a $\mathscr{C}^1$ boundary $\partial G$. If $\phi\in\mathscr{C}^1(\overline{G},A)$, then
\[\int_{\partial G}E_m(y-x)\,dy^*\,\phi(y) - \int_{G}E_m(y-x)\,\debar_\B \phi(y)\,d\sigma_y
=\left\{\begin{array}{ll}
\phi(x)&\mathrm{if\ } x\in G\\
0&\mathrm{if\ } x\in M\setminus\overline{G}\,.
\end{array}\right.\]
\end{theorem}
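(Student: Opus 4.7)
The plan is to combine Lemma~\ref{lem:cauchykernel}, which furnishes the right-$\B$-monogenicity of the Cauchy kernel $\psi(y):=E_m(y-x)$ away from $y=x$, with the Gauss theorem (Theorem~\ref{thm:gauss}). Since the term $(\psi\debar_\B)\phi$ vanishes wherever $\psi$ is defined, Gauss reduces to the identity
\[\int_{\partial U}E_m(y-x)\,dy^*\phi(y)=\int_U E_m(y-x)\,\debar_\B\phi(y)\,d\sigma_y\]
on any nice subdomain $U\subseteq M\setminus\{x\}$. For $x\in M\setminus\overline{G}$, the kernel is $\mathscr{C}^1$ on all of $\overline{G}$ and this identity directly gives the second case of the statement.

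For $x\in G$, the idea is to pierce $G$ with a small closed ball $\overline{B^{m+1}(x,\varepsilon)}\subset G$, apply Gauss on $G_\varepsilon:=G\setminus\overline{B^{m+1}(x,\varepsilon)}$ where $\psi$ is smooth, and then let $\varepsilon\to0^+$. Keeping track of orientations, $\partial G_\varepsilon$ decomposes into $\partial G$ with its usual outward normal and $\partial B^{m+1}(x,\varepsilon)$ with the outward normal pointing towards $x$; so Gauss yields
\[\int_{\partial G}E_m(y-x)\,dy^*\phi(y)-\int_{\partial B^{m+1}(x,\varepsilon)}E_m(y-x)\,dy^*\phi(y)=\int_{G_\varepsilon}E_m(y-x)\,\debar_\B\phi(y)\,d\sigma_y.\]

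For the right-hand side, I would use Theorem~\ref{thm:hypercomplexbasis} to note that $\Vert E_m(y-x)\Vert=(\sigma_m\Vert y-x\Vert^m)^{-1}$ on $M\setminus\{x\}$, since $\Vert(y-x)^c\Vert=\Vert y-x\Vert$ for $y-x\in M$ (the basis elements $v_1,\dots,v_m$ satisfy $v_s^c=-v_s$, so conjugation acts as an isometry on $M$). Via Remark~\ref{rmk:sphere} in polar coordinates, the singularity $r^{-m}$ is exactly balanced by the Jacobian $r^m$, making the integrand dominated by $\Vert\debar_\B\phi\Vert_\infty/(\sigma_m r^m)\cdot r^m$, which is bounded. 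Dominated convergence then yields $\int_{G_\varepsilon}\to\int_G$ as $\varepsilon\to0^+$.

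The main obstacle, and the essential content of the theorem, is computing $\lim_{\varepsilon\to0^+}\int_{\partial B^{m+1}(x,\varepsilon)}E_m(y-x)\,dy^*\phi(y)$ and showing it equals $\phi(x)$. I would parametrize $y=x+\varepsilon w$ with $w\in\partial B^{m+1}(0,1)\subset M$, apply Remark~\ref{rmk:sphere} to write $dy^*=\varepsilon^m w\,|do_w|$, and substitute $E_m(\varepsilon w)=(\sigma_m\varepsilon^m)^{-1}w^c$ to obtain
\[\int_{\partial B^{m+1}(x,\varepsilon)}E_m(y-x)\,dy^*\phi(y)=\frac{1}{\sigma_m}\int_{\partial B^{m+1}(0,1)}w^c w\,\phi(x+\varepsilon w)\,|do_w|.\]
At this step the hypercomplex structure enters crucially: since $w\in M$ and $\Vert w\Vert=1$, formula~\eqref{eq:cliffordnorm} of Theorem~\ref{thm:hypercomplexbasis} gives $n(w)=1$; and $w\in M\subseteq Q_A$ forces $t(w)\in\rr$, so $w^c=t(w)-w$ commutes with $w$ and $w^c w=ww^c=n(w)=1$. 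The integral therefore collapses to $\sigma_m^{-1}\int_{\partial B^{m+1}(0,1)}\phi(x+\varepsilon w)\,|do_w|$, which converges to $\phi(x)$ by continuity of $\phi$ together with the defining property $\sigma_m=|\partial B^{m+1}(0,1)|$. Rearranging the resulting limit identity yields the first case of the statement.
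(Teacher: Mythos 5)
Your proposal is correct and follows essentially the same route as the paper's proof: Gauss plus right-monogenicity of the kernel for the exterior case, excision of a small ball and a limit $\varepsilon\to0^+$ for the interior case, with the sphere integral collapsing to a mean value of $\phi$ via $w^cw=n(w)=1$ and the volume term controlled by the cancellation of $r^{-m}$ against the Jacobian $r^m$ in polar coordinates. The only cosmetic difference is that you phrase the volume-integral limit as $\int_{G_\varepsilon}\to\int_G$ by dominated convergence, whereas the paper equivalently shows $\int_{B^{m+1}(x,\varepsilon)}\to0$ directly.
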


\begin{proof}[Proof of Theorem~\ref{thm:borelpompeiu}]
If $x\in M\setminus\overline{G}$, the thesis follows directly from Theorem~\ref{thm:gauss} and Lemma~\ref{lem:cauchykernel}. We assume henceforth $x\in G$. For any $\varepsilon>0$ such that $\overline{B}^{m+1}(x,\varepsilon)\subset G$, set $G_\varepsilon:=G\,\setminus\,\overline{B}^{m+1}(x,\varepsilon)$. By Theorem~\ref{thm:gauss} and by Lemma~\ref{lem:cauchykernel}, we find that
\[\int_{\partial G_\varepsilon}E_m(y-x)\,dy^*\,\phi(y) = \int_{G_\varepsilon} E_m(y-x)\,\debar_\B \phi(y)\,d\sigma_y\,,\]
whence
\begin{align*}
&\int_{\partial B^{m+1}(x,\varepsilon)}E_m(y-x)\,dy^*\,\phi(y)-\int_{B^{m+1}(x,\varepsilon)}E_m(y-x)\,\debar_\B \phi(y)\,d\sigma_y\\
&= \int_{\partial G}E_m(y-x)\,dy^*\,\phi(y) - \int_{G} E_m(y-x)\,\debar_\B \phi(y)\,d\sigma_y\,.
\end{align*}
Now, for all $w\in\partial B^{m+1}(0,1)$ we have $E_m(\varepsilon w)=\sigma_m^{-1}\varepsilon^{-m+1}(\varepsilon w)^c=\varepsilon^{-m}
E_m(w)$. Moreover, Remark~\ref{rmk:sphere} guarantees that $dy^*=\varepsilon^m\,w\,|do_w|$, where $|do_w|$ denotes the surface element of the sphere $\partial B^{m+1}(0,1)$. Thus,
\begin{align*}
\int_{\partial B^{m+1}(x,\varepsilon)}E_m(y-x)\,dy^*\,\phi(y)
&=\int_{\partial B^{m+1}(0,1)}E_m(w)\,w\,\phi(x+\varepsilon w)\,|do_w|\\
&=\sigma_m^{-1} \int_{\partial B^{m+1}(0,1)}\phi(x+\varepsilon w)\,|do_w|\,.
\end{align*}
Since $\phi$ is continuous, we conclude that
\[\lim_{\varepsilon\to0}\int_{\partial B^{m+1}(x,\varepsilon)}E_m(y-x)\,dy^*\,\phi(y)=\sigma_m^{-1} \int_{\partial B^{m+1}(0,1)}|do_w|\,\phi(x)=\phi(x)\,.\]
Another application of Remark~\ref{rmk:sphere} yields
\begin{align*}
&\lim_{\varepsilon\to0}\int_{B^{m+1}(x,\varepsilon)} E_m(y-x)\,\debar_\B \phi(y)\,d\sigma_y\\
&=\lim_{\varepsilon\to0}\int_0^\varepsilon \int_{\partial B^{m+1}(0,1)} r^{-m}\,E_m(w)\,\debar_\B \phi(x+r w)\,|do_w|\,r^m\, dr\\
&=\sigma_m^{-1}\,\lim_{\varepsilon\to0}\int_0^\varepsilon \int_{\partial B^{m+1}(0,1)}w^c\,\debar_\B \phi(x+r w)\,|do_w|\, dr\\
&=0
\end{align*}
The thesis immediately follows.
\end{proof}

\setcounter{theorem}{15}

\begin{proposition}[Mean value property]
Assume $A$ to be associative and fix an open ball $B^{m+1}=B^{m+1}(x,R)$ in the hypercomplex subspace $M$ of $A$. If $\phi\in\mathscr{C}^1(\overline{B}^{m+1},A)$ is left-monogenic with respect to $\B$, then
\[\phi(x)=\frac1{\sigma_m}\int_{\partial B^{m+1}(0,1)}\phi(x+R w)\,|do_w|\,.\]
\end{proposition}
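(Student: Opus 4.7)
The plan is to deduce the mean value property directly from the Cauchy Formula (Corollary~\ref{cor:monogeniccauchy}) applied to $\phi$ on $\overline{B}^{m+1}$, by parameterizing the boundary sphere $\partial B^{m+1}=\partial B^{m+1}(x,R)$ via the unit sphere $\partial B^{m+1}(0,1)$.

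First, I would invoke Corollary~\ref{cor:monogeniccauchy}, which yields
\[\phi(x)=\int_{\partial B^{m+1}}E_m(y-x)\,dy^*\,\phi(y)\,.\]
Next, I would change variables by writing $y=x+Rw$ with $w\in\partial B^{m+1}(0,1)$. Then $y-x=Rw$, so that
\[E_m(y-x)=\frac{1}{\sigma_m}\frac{(Rw)^c}{\Vert Rw\Vert^{m+1}}=\frac{1}{\sigma_m\,R^m}\,w^c\,,\]
while Remark~\ref{rmk:sphere} gives $dy^*=R^m\,w\,|do_w|$ along $\partial B^{m+1}$.

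The key algebraic step is to observe that for $w$ on the unit sphere of $M$, we have $w^c w=1$. This follows from Theorem~\ref{thm:hypercomplexbasis}: since $w\in M$ with $\Vert w\Vert=1$, equation~\eqref{eq:cliffordnorm} yields $n(w^c)=\Vert w\Vert^2=1$, and $n(w^c)=w^c(w^c)^c=w^cw$. Hence
\[E_m(y-x)\,dy^*=\frac{1}{\sigma_m\,R^m}\,w^c\cdot R^m\,w\,|do_w|=\frac{1}{\sigma_m}\,|do_w|\,.\]
Substituting into the Cauchy Formula gives the announced identity
\[\phi(x)=\frac{1}{\sigma_m}\int_{\partial B^{m+1}(0,1)}\phi(x+Rw)\,|do_w|\,.\]

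There is no serious obstacle here: the proof is essentially a bookkeeping exercise. The only point that deserves mild care is that $A$ is noncommutative, so one must place $w^c$ to the left of $w$ (not the other way around) in the product $E_m(y-x)\,dy^*$, and then use $n(w^c)=1$ rather than $n(w)=1$ to collapse this product to a scalar. Once this is handled, the formula drops out immediately.
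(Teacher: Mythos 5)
Your proof is correct and follows essentially the same route as the paper's: apply the Cauchy Formula on the ball, use Remark~\ref{rmk:sphere} to write $dy^*=R^m\,w\,|do_w|$ and the homogeneity $E_m(Rw)=R^{-m}E_m(w)$, and collapse $w^c w$ to $1$ via equation~\eqref{eq:cliffordnorm}. The only difference is presentational: the paper simply cites this computation as a byproduct already carried out in the proof of the Borel--Pompeiu Theorem~\ref{thm:borelpompeiu}, whereas you rederive it explicitly, including the correct justification $w^cw=n(w^c)=\Vert w\Vert^2=1$ for unit $w\in M$.
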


\begin{proof}[Proof of Proposition~\ref{prop:monogenicmeanvalue}]
Corollary~\ref{cor:monogeniccauchy} tells us that $\phi(x)=\int_{\partial B^{m+1}(x,\varepsilon)}E_m(y-x)\,dy^*\,\phi(y)$.
Moreover,  we already established, as a byproduct of the proof of Theorem~\ref{thm:borelpompeiu}, the equality $\int_{\partial B^{m+1}(x,\varepsilon)}E_m(y-x)\,dy^*\,\phi(y)=\sigma_m^{-1}\int_{\partial B^{m+1}(0,1)}\phi(x+\varepsilon w)\,|do_w|$ for all $\varepsilon>0$ such that $\overline{B}^{m+1}(x,\varepsilon)\subset G$.
\end{proof}


\subsection*{Proofs of the results in Subsection~\ref{subsec:reproducingkernel}}

To prove the results of Subsection~\ref{subsec:reproducingkernel}, several preliminary steps are needed.

\setcounter{section}{0}
\setcounter{theorem}{0}

The next remark recalls the definition of the Gegenbauer polynomials, see~\cite[Definition 9.22]{librogurlebeck2}, and some of their properties, see~\cite[Proposition 9.23 and Proof of Theorem 9.24]{librogurlebeck2}.

\begin{remark}\label{rmk:gegenbauer}
Fix $\mu\in\rr$ with $\mu>0$ and consider the sequence $\left\{C_h^\mu\right\}_{h\in\nn}$ of polynomial functions $[-1,1]\to\rr$ defined as
\[C_h^\mu(t_1):=\sum_{n=\lfloor\frac h2\rfloor}^h\binom{-\mu}{n}\binom{n}{2n-h}(-2t_1)^{2n-h}\,.\]
For $t_1\in[-1,1]$ fixed, the real power series $\sum_{h\in\nn}C_h^{\mu}(t_1)\,t_2^h$ centered at $0$ in the variable $t_2$ has radius of convergence $1$. Its sum is the function $(-1,1)\to\rr\,,\ t_2\mapsto\left(1-2t_1t_2 +t_2^2\right)^{-\mu}$.
\end{remark}

We recall the following properties from~\cite[\S 8.930, 8.933, 8.935, 8.937]{grashteynryzhik} and~\cite[Theorem 7.33.1]{libroszego}, valid for all $\mu>0$ and all $h\in\nn$.
\begin{itemize}
\item $C_0^\mu(t_1)\equiv1,C_1^\mu(t_1)=2\mu t_1$ and $h\,C_h^\mu(t_1)=2\,(h+\mu-1)\,t_1\,C_{h-1}^\mu(t_1)-(h+2\mu-2)\,C_{h-2}^\mu(t_1)$ for all $t_1\in[-1,1]$.
\item $\frac{d}{dt_1}C_h^\mu=2\,\mu\,C_{h-1}^{\mu+1}$.
\item $\max_{[-1,1]}|C_h^\mu|=C_h^\mu(1)=\binom{h+2\mu-1}{h}$.
\end{itemize}
We will only be interested in the cases when $\mu=\frac{m-1}2$ or $\mu=\frac{m+1}2$. For the latter case, we make a useful remark.

\begin{remark}\label{rmk:roottest}
Since $m\in\nn$, for $h\in\nn$ the expression $\binom{h+m}{h}=\binom{h+m}{m}=\frac{(h+m)(h+m-1)\cdots(h+1)}{m!}$ is polynomial of degree $m$ in the variable $h$, with rational coefficients. It follows at once that $\lim_{h\to+\infty}\binom{h+m}{m}^{\frac1h}=1$.
\end{remark}

We are now ready to prove an important technical lemma. We will use the temporary notations $\partial_{x_s}$ and $\partial_\B^x:=\sum_{s=0}^mv_s^c\,\partial_{x_s}$, instead of the usual $\partial_s$ and $\partial_\B:=\sum_{s=0}^mv_s^c\,\partial_s$, because two variables $x,y\in M$ are considered. 

\begin{lemma}\label{lem:technicalforexpansion}
Let us define $u:M\to M$ by setting $u(x):=\frac{x}{\Vert x\Vert}=\sigma_1E_1(x^c)$ when $x\neq0$, as well as $u(0):=1$. Assume $m\geq2$, set $\Lambda:=\{(x,y)\in M\times M : \Vert x\Vert<\Vert y\Vert\}$ and define $A_h:\Lambda\to\rr$ as 
\[A_h(x,y):=C_h^{\frac{m-1}2}(\langle u(x),u(y)\rangle)\,\Vert x\Vert^h\,.\]
Then, for all $(x,y)\in\Lambda$ and all $s\in\{1,\ldots,m\}$,
\begin{align*}
\Vert y-x\Vert^{-m+1}&=\sum_{h\in\nn}A_h(x,y)\,\Vert y\Vert^{-m-h+1}\,,\\
\partial_\B^x\Vert y-x\Vert^{-m+1}&=\sum_{k\in\nn}\partial_\B^x A_{k+1}(x,y)\,\Vert y\Vert^{-m-k}\,,
\end{align*}
where both series converge normally in $\Lambda$. Moreover, for any $k\in\nn$:
\begin{enumerate}
\item $\Lambda\to M,\ (x,y)\mapsto\partial_\B^x A_{k+1}(x,y)\,\Vert y\Vert^{k+1}$ is a polynomial function, $k$-homogeneous in the real variables $x_0,x_1\ldots,x_m$ and $(k+1)$-homogeneous in the real variables $y_0,y_1\ldots,y_m$;
\item $\Vert\partial_\B^x A_{k+1}(x,y)\Vert\leq \sqrt{2}\,(m-1)\,\binom{k+m}{m}\,\Vert x\Vert^k$ for all $(x,y)\in\Lambda$.
\end{enumerate}
\end{lemma}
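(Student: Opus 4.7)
My plan is to derive the first expansion from the Gegenbauer generating function of Remark~\ref{rmk:gegenbauer} and then obtain the second by differentiating term by term, once the norm bound (2) is in place. Since $x,y\in M$, the identity $\Vert y-x\Vert^2=\Vert y\Vert^2-2\langle x,y\rangle+\Vert x\Vert^2$, combined with $\langle x,y\rangle=\Vert x\Vert\,\Vert y\Vert\,t_1$ for $t_1:=\langle u(x),u(y)\rangle$, rewrites as $\Vert y-x\Vert^2=\Vert y\Vert^2(1-2t_1 t_2+t_2^2)$, where $t_2:=\Vert x\Vert/\Vert y\Vert\in[0,1)$ on $\Lambda$. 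Remark~\ref{rmk:gegenbauer} with $\mu=(m-1)/2>0$ then yields the first series at once. Normal convergence on each compact $K\subset\Lambda$ (where $\sup_K t_2<1$) follows from the bound $|C_h^{(m-1)/2}(t_1)|\leq\binom{h+m-2}{h}$ together with Remark~\ref{rmk:roottest}.

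The core of the argument is the explicit form of $\partial_\B^x A_{k+1}$. Using the chain rule, the derivative formula $(C_h^\mu)'(t)=2\mu\,C_{h-1}^{\mu+1}(t)$, and the elementary identities $\partial_{x_s}t_1=y_s/(\Vert x\Vert\,\Vert y\Vert)-t_1 x_s/\Vert x\Vert^2$, $\partial_{x_s}\Vert x\Vert^{k+1}=(k+1)\Vert x\Vert^{k-1}x_s$, and then collecting $\sum_s v_s^c x_s=x^c$ and $\sum_s v_s^c y_s=y^c$, I would reduce the computation to
\[
\partial_\B^x A_{k+1}(x,y)=\Vert x\Vert^k\bigl\{(m-1)C_k^{(m+1)/2}(t_1)(u(y)^c-t_1 u(x)^c)+(k+1)C_{k+1}^{(m-1)/2}(t_1)u(x)^c\bigr\}.
\]
Since conjugation on $M$ is reflection in the real axis and so preserves $\langle\cdot,\cdot\rangle_{\B'}$, direct computation shows that $u(y)^c-t_1 u(x)^c$ and $u(x)^c$ are orthogonal in $A$ with norms $\sqrt{1-t_1^2}$ and $1$. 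Pythagoras therefore yields
\[
\Vert\partial_\B^x A_{k+1}\Vert^2=\Vert x\Vert^{2k}\bigl\{(m-1)^2|C_k^{(m+1)/2}(t_1)|^2(1-t_1^2)+(k+1)^2|C_{k+1}^{(m-1)/2}(t_1)|^2\bigr\}.
\]
The first summand is at most $(m-1)^2\binom{k+m}{m}^2$ via $|C_k^{(m+1)/2}|\leq\binom{k+m}{m}$; for the second, the binomial identity $(k+1)\binom{k+m-1}{k+1}=\frac{m(m-1)}{k+m}\binom{k+m}{m}\leq(m-1)\binom{k+m}{m}$ combined with $|C_{k+1}^{(m-1)/2}|\leq\binom{k+m-1}{k+1}$ gives a matching bound. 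Summing and taking square roots produces the $\sqrt{2}$ constant in (2).

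Property (1) is then a consequence of the even-parity structure of $C_{k+1}^{(m-1)/2}(t_1)$: substituting $t_1^j=\langle x,y\rangle^j(\Vert x\Vert\,\Vert y\Vert)^{-j}$ into its polynomial expansion and multiplying by $\Vert y\Vert^{k+1}$ cancels every denominator, yielding a polynomial in the real coordinates of $x$ and $y$ that is bihomogeneous of bidegree $(k+1,k+1)$. Since $\partial_\B^x$ acts only on $x$ and lowers its degree by one, $\partial_\B^x A_{k+1}(x,y)\Vert y\Vert^{k+1}$ is polynomial of bidegree $(k,k+1)$. Finally, the second expansion is obtained by differentiating the first series term by term (the $h=0$ term is $x$-constant; reindexing $h=k+1$ produces the stated form), the interchange of $\partial_\B^x$ and $\sum_h$ being legitimate because (2) together with Remark~\ref{rmk:roottest} gives normal convergence of the derivative series on compact subsets of $\Lambda$. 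The main obstacle I anticipate is recognising the orthogonal decomposition of $\partial_\B^x A_{k+1}$: without it, a direct triangle-inequality estimate overshoots the sharp $\sqrt{2}$ constant.
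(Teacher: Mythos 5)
Your proposal is correct and follows essentially the same route as the paper's proof: the Gegenbauer generating function for the first expansion, the chain rule together with $(C_h^\mu)'=2\mu\,C_{h-1}^{\mu+1}$ to obtain the explicit formula for $\partial_\B^x A_{k+1}$, the orthogonal (Pythagorean) decomposition into the $u(x)^c$-component and its orthogonal complement to reach the $\sqrt{2}$ constant via $(k+1)\binom{k+m-1}{k+1}=\frac{m(m-1)}{k+m}\binom{k+m}{m}\leq(m-1)\binom{k+m}{m}$, the parity structure of the Gegenbauer polynomials for the bihomogeneity claim, and normal convergence of the differentiated series to justify termwise differentiation. The "main obstacle" you identify — the orthogonal decomposition — is precisely the observation the paper makes ($u(y)-u(x)\langle u(x),u(y)\rangle$ being the component of the unit vector $u(y)$ orthogonal to the unit vector $u(x)$).
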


\begin{proof}
We remark that $x=\Vert x\Vert\,u(x)$ and that $\Vert u(x)\Vert=1$ for all $x\in M$. For $(x,y)\in \Lambda$, by Theorem~\ref{thm:hypercomplexbasis},
\[\Vert y-x\Vert^2=\Vert y\Vert^2-2\langle x,y\rangle+\Vert x\Vert^2=\Vert y\Vert^2\left(1-2t_1t_2 +t_2^2\right)\,,\]
where
\[t_1:=\langle u(x),u(y)\rangle\in[-1,1],\quad t_2:=\frac{\Vert x\Vert}{\Vert y\Vert}\in[0,1)\,.\]
Thus,
\[\Vert y-x\Vert^{-m+1}=\Vert y\Vert^{-m+1}\left(1-2 t_1 t_2 +t_2^2\right)^{\frac{-m+1}2}\,.\]
Remark~\ref{rmk:gegenbauer} guarantees that, for $(x,y)\in\Lambda$,
\begin{align*}
\Vert y-x\Vert^{-m+1}&=\Vert y\Vert^{-m+1}\left(1-2t_1t_2 +t_2^2\right)^{\frac{-m+1}2}\\
&=\Vert y\Vert^{-m+1}\,\sum_{h\in\nn}C_h^{\frac{m-1}2}(t_1)\,t_2^h\\
&=\sum_{h\in\nn}C_h^{\frac{m-1}2}(\langle u(x),u(y)\rangle)\,\Vert x\Vert^h\,\Vert y\Vert^{-m-h+1}\\
&=\sum_{h\in\nn}A_h(x,y)\,\Vert y\Vert^{-m-h+1}\,,
\end{align*}
where the series converges normally in $\Lambda$, for the following reason. Let $T$ be a compact subset of $\Lambda$ and set $\varepsilon:=\min_{(x,y)\in T}\Vert y\Vert>0, R:=\max_{(x,y)\in T}\frac{\Vert x\Vert}{\Vert y\Vert}<1$: then
\[\left|A_h(x,y)\,\Vert y\Vert^{-m-h+1}\right|\leq\binom{h+m-2}{h}\,\Vert x\Vert^h\,\Vert y\Vert^{-m-h+1}\leq\binom{h+m-2}{m-2}\,\varepsilon^{-m+1}\,R^h\]
for all $(x,y)\in T$. Using Remark~\ref{rmk:roottest}, the root test shows that the number series
\[\sum_{h\in\nn}\max_{(x,y)\in T}\left|A_h(x,y)\,\Vert y\Vert^{-m-h+1}\right|\]
converges. Since the choice of $T$ was arbitrary, we have proven normal convergence in $\Lambda$.

We now remark that $A_0\equiv1$, that $A_1(x,y)\,\Vert y\Vert=2\frac{m-1}{2}\,\langle u(x),u(y)\rangle\,\Vert x\Vert\,\Vert y\Vert=(m-1)\,\langle x,y\rangle$ and that, for any $h\in\nn$,
\begin{align*}
A_h(x,y)\,\Vert y\Vert^h&:=C_h^{\frac{m-1}2}(\langle u(x),u(y)\rangle)\,\Vert x\Vert^h\,\Vert y\Vert^h\\
&=\sum_{j=0}^{\lfloor\frac{h}2\rfloor}\binom{(-m+1)/2}{h-j}\binom{h-j}{h-2j}\left(-2\langle u(x),u(y)\rangle\right)^{h-2j}\,\Vert x\Vert^h\,\Vert y\Vert^h\\
&=\sum_{j=0}^{\lfloor\frac{h}2\rfloor}\binom{(-m+1)/2}{h-j}\binom{h-j}{h-2j}\left(-2\langle x,y\rangle\right)^{h-2j}\,\Vert x\Vert^{2j}\,\Vert y\Vert^{2j}
\end{align*}
is an $h$-homogeneous polynomial map in the real variables $x_0,x_1\ldots,x_m$ and an $h$-homogeneous polynomial map in the real variables $y_0,y_1\ldots,y_m$.

For every $s\in\{1,\ldots,m\}$, we remark that $\partial_{x_s}A_0\equiv0$, that $\partial_{x_s}A_1(x,y)\,\Vert y\Vert=(m-1)\,\langle v_s,y\rangle=(m-1)\,y_s$ and that, for any $k\in\nn$, the function $\partial_{x_s}A_{k+1}(x,y)\,\Vert y\Vert^{k+1}$ is a polynomial function $\Lambda\to\rr$, which is $k$-homogeneous in the real variables $x_0,x_1\ldots,x_m$ and $(k+1)$-homogeneous in the real variables $y_0,y_1\ldots,y_m$. For the operator $\partial_\B^x:=\sum_{s=0}^mv_s^c\,\partial_{x_s}$, it follows that $\partial_\B^x A_0\equiv0$, that $\partial_\B^x A_1(x,y)\,\Vert y\Vert=(m-1)\,y^c$ and that, for any $k\in\nn$, the function $\partial_\B^xA_{k+1}(x,y)\,\Vert y\Vert^{k+1}$ is a polynomial function $\Lambda\to M$, which is $k$-homogeneous in the real variables $x_0,x_1\ldots,x_m$ and $(k+1)$-homogeneous in the real variables $y_0,y_1\ldots,y_m$.

Let us now prove that, for $s\in\{1,\ldots,m\}$, the series $\sum_{k\in\nn}\partial_{x_s}A_{k+1}(x,y)\,\Vert y\Vert^{-m-k}$ and $\sum_{k\in\nn}\partial_\B^x A_{k+1}(x,y)\,\Vert y\Vert^{-m-k}$ converge normally in $\Lambda$: their sums will then automatically equal $\partial_{x_s}\Vert y-x\Vert^{-m+1}$ and $\partial_\B^x\Vert y-x\Vert^{-m+1}$, respectively. We first establish the equalities
\begin{align*}
\partial_{x_s}\Vert x\Vert^{k+1}&=\frac{k+1}2\Vert x\Vert^{k-1}\partial_{x_s}x_s^2=(k+1)\,x_s\,\Vert x\Vert^{k-1}\,,\\
\partial_{x_s}\langle u(x),u(y)\rangle&=\partial_{x_s}(\Vert x\Vert^{-1}\langle x,u(y)\rangle)=-x_s\,\Vert x\Vert^{-3}\,\langle x,u(y)\rangle+\Vert x\Vert^{-1}\,\langle v_s,u(y)\rangle\\
&=(y_s\Vert y\Vert^{-1}-x_s\,\Vert x\Vert^{-1}\,\langle u(x),u(y)\rangle)\,\Vert x\Vert^{-1}\,,
\end{align*}
valid for all $k\in\nn$ and all $(x,y)\in\Lambda$ with $x\neq0$. Then, for the same choices of $k$ and $(x,y)$, we compute
\begin{align*}
\partial_{x_s}A_{k+1}(x,y)&=\partial_{x_s}\left(C_{k+1}^{\frac{m-1}2}(\langle u(x),u(y)\rangle)\,\Vert x\Vert^{k+1}\right)\\
&=\left(\frac{d}{dt_1}C_{k+1}^{\frac{m-1}2}\right)(\langle u(x),u(y)\rangle)\,(y_s\Vert y\Vert^{-1}-x_s\,\Vert x\Vert^{-1}\,\langle u(x),u(y)\rangle)\,\Vert x\Vert^k\\
&\quad+C_{k+1}^{\frac{m-1}2}(\langle u(x),u(y)\rangle)\,(k+1)\,x_s\,\Vert x\Vert^{k-1}\\
&=\Big((m-1)\,C_k^{\frac{m+1}2}(\langle u(x),u(y)\rangle)\,(y_s\Vert y\Vert^{-1}-x_s\,\Vert x\Vert^{-1}\,\langle u(x),u(y)\rangle)\\
&\quad+(k+1)\,C_{k+1}^{\frac{m-1}2}(\langle u(x),u(y)\rangle)\,x_s\,\Vert x\Vert^{-1}\Big)\,\Vert x\Vert^k\in\rr\,,\\
\partial_\B^x A_{k+1}(x,y)&=\sum_{s=0}^mv_s^c\,\partial_{x_s}A_{k+1}(x,y)\\
&=\Big((m-1)\,C_k^{\frac{m+1}2}(\langle u(x),u(y)\rangle)\,\big(u(y)-u(x)\,\langle u(x),u(y)\rangle\big)\\
&\quad+(k+1)\,C_{k+1}^{\frac{m-1}2}(\langle u(x),u(y)\rangle)\,u(x)\Big)^c\,\Vert x\Vert^k\in M\,.
\end{align*}
If $k\geq1$, we also have $\partial_{x_s}A_{k+1}(0,y)=0=\partial_\B^x A_{k+1}(0,y)$ for all $y\in M\setminus\{0\}$. We point out that $u(x)$ is a unitary element of $M$ and that $u(y)-u(x)\,\langle u(x),u(y)\rangle$ is the component orthogonal to $u(x)$ of the unitary vector $u(y)$. Now, fix $s\in\{1,\ldots,m\}$. For all $k\geq1$ and all $(x,y)\in\Lambda$, we obtain the estimate
\begin{align*}
\left|\partial_{x_s}A_{k+1}(x,y)\right|&\leq\left\Vert\partial_\B^x A_{k+1}(x,y)\right\Vert\\
&\leq\sqrt{(m-1)^2\,\binom{k+m}{m}^2+(k+1)^2\,\binom{k+m-1}{m-2}^2}\;\Vert x\Vert^k\\
&=\sqrt{(m-1)^2+(k+1)^2\frac{m^2(m-1)^2}{(k+m)^2(k+1)^2}\,}\,\binom{k+m}{m}\,\Vert x\Vert^k\\
&=(m-1)\,\sqrt{1+\frac{m^2}{(k+m)^2}\,}\,\binom{k+m}{m}\,\Vert x\Vert^k\\
&\leq \sqrt{2}\,(m-1)\,\binom{k+m}{m}\,\Vert x\Vert^k\,.
\end{align*}
This estimate is consistent with~\cite[Formula (11.12)]{librosommen}, because $\sqrt{2}(m-1)\frac{(k+m)(k+1)}{m(m-1)}=\sqrt{2}(\frac1mk^2+(\frac1m+1)k+1)\leq2\sqrt{2}(k^2+1)$ (as a consequence of the inequality $k\leq k^2$ and of our hypothesis $2\leq m$). Additionally, we remark that $\partial_\B^x A_1(x,y)=(m-1)\,u(y)^c$ has
\[\left|\partial_{x_s}A_{k+1}(x,y)\right|\leq\left\Vert\partial_\B^x A_1(x,y)\right\Vert=m-1\leq\sqrt{2}\,(m-1)\,\binom{m}{m}\,\Vert x\Vert^0\]
for all $(x,y)\in\Lambda$. Recalling our previous choices of $T$, $\varepsilon>0$ and $R<1$, we conclude that the inequalities
\begin{align*}
\left|\partial_{x_s}A_{k+1}(x,y)\,\Vert y\Vert^{-m-k}\right|&\leq\left\Vert\partial_\B^x A_{k+1}(x,y)\,\Vert y\Vert^{-m-k}\right\Vert\\
&\leq \sqrt{2}\,(m-1)\,\binom{k+m}{m}\,\Vert x\Vert^k\,\Vert y\Vert^{-m-k}\\
&\leq \sqrt{2}\,(m-1)\,\binom{k+m}{m}\,\varepsilon^{-m}\,R^k
\end{align*}
hold true for all $k\in\nn$ and all $(x,y)\in T$. Using Remark~\ref{rmk:roottest}, the root test immediately shows that the number series
\[\sum_{k\in\nn}\max_{(x,y)\in T}\left|\partial_{x_s}A_{k+1}(x,y)\,\Vert y\Vert^{-m-k}\right|,\qquad\sum_{k\in\nn}\max_{(x,y)\in T}\left\Vert\partial_\B^x A_{k+1}(x,y)\,\Vert y\Vert^{-m-k}\right\Vert\]
both converge. Since the choice of the compact subset $T$ of $\Lambda$ was arbitrary, we conclude that: the real-valued series $\sum_{k\in\nn}\partial_{x_s}A_{k+1}(x,y)\,\Vert y\Vert^{-m-k}$ converges normally in $\Lambda$ to the function $\Lambda\to\rr,\ (x,y)\mapsto\partial_{x_s}\Vert y-x\Vert^{-m+1}$; the $M$-valued series $\sum_{k\in\nn}\partial_\B^x A_{k+1}(x,y)\,\Vert y\Vert^{-m-k}$ converges normally in $\Lambda$ to the function $\Lambda\to M,\ (x,y)\mapsto\partial_\B^x \Vert y-x\Vert^{-m+1}$.
\end{proof}

Our technical preparation allows us to finally prove Theorem~\ref{thm:taylorkernel}.

\setcounter{section}{3}
\setcounter{theorem}{17}

\begin{theorem}
There exists a family $\big\{q_\k\big\}_{\k\in\nn^m}$, where, for $|\k|=k$, $q_{\k}:M\setminus\{0\}\to M$ is a $(k+1)$-homogeneous polynomial function, such that
\[E_m(y-x)=\frac1{\sigma_m}\sum_{k\in\nn}\sum_{|\k|=k}\P_\k^\B(x)\frac{q_\k(y)}{\Vert y\Vert^{m+2k+1}}\]
for all $(x,y)\in\Lambda:=\{(x,y)\in M\times M : \Vert x\Vert<\Vert y\Vert\}$. Here, the series converges normally in $\Lambda$ because
\[\Big\Vert\sum_{|\k|=k}\P_\k^\B(x)\frac{q_\k(y)}{\Vert y\Vert^{m+2k+1}}\Big\Vert\leq\sqrt{2}\,\binom{k+m}{m}\,\,\Vert x\Vert^k\,\Vert y\Vert^{-m-k}\,.\]
In particular, $E_m(y-x)$ is a real analytic function in the real variables $x_0,x_1\ldots,x_m,y_0,y_1\ldots,y_m$.
\end{theorem}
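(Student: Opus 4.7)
The main idea is to bootstrap the series expansion from the scalar function $\Vert y-x\Vert^{-m+1}$, already handled in Lemma~\ref{lem:technicalforexpansion}, to the $M$-valued Cauchy kernel $E_m(y-x)$ by applying the operator $\partial_\B^x$. Assume first that $m \geq 2$. A direct computation in coordinates gives $\partial_{x_s}\Vert y-x\Vert^{-m+1} = (m-1)(y_s-x_s)\Vert y-x\Vert^{-m-1}$, whence
\[
\partial_\B^x\Vert y-x\Vert^{-m+1} = (m-1)\frac{(y-x)^c}{\Vert y-x\Vert^{m+1}} = \sigma_m(m-1)\,E_m(y-x)\,.
\]
Combined with the normally convergent expansion $\Vert y-x\Vert^{-m+1} = \sum_h A_h(x,y)\Vert y\Vert^{-m-h+1}$ from Lemma~\ref{lem:technicalforexpansion}, and the corresponding expansion for $\partial_\B^x$ applied to it, this reduces the theorem to rewriting each term $\partial_\B^x A_{k+1}(x,y)\,\Vert y\Vert^{-m-k}$ in the form $\sum_{|\k|=k}\P_\k^\B(x)\,q_\k(y)/\Vert y\Vert^{m+2k+1}$.

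Next I would show that, for each fixed $y$ with $y \neq 0$, the polynomial $x \mapsto \partial_\B^x A_{k+1}(x,y)$ is left-monogenic with respect to $\B$ and takes values in $M$. Indeed, $\Vert y-x\Vert^{-m+1}$ is the fundamental solution of the standard Laplacian on $M \cong \rr^{m+1}$, hence scalar-valued and harmonic in $x$ on $M\setminus\{y\}$. The normally convergent Taylor expansion $\sum_h A_h(x,y)\Vert y\Vert^{-m-h+1}$ expresses this harmonic function as a sum of homogeneous polynomial terms in $x$, and by uniqueness of homogeneous decomposition each $A_{k+1}(\cdot,y)$ is itself a scalar harmonic polynomial. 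Remark~\ref{rmk:harmonic} applied to scalar-valued functions then gives $\debar_\B\bigl(\partial_\B^x A_{k+1}(\cdot,y)\bigr) = \Delta_\B A_{k+1}(\cdot,y) = 0$, so $\partial_\B^x A_{k+1}(x,y)$ is a $k$-homogeneous left-monogenic polynomial in $x$. Its $M$-valuedness is clear because $\partial_\B^x = \sum_s v_s^c\,\partial_{x_s}$ takes a scalar to an element of $\Span(1,v_1,\ldots,v_m)=M$.

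The heart of the argument is then to apply Proposition~\ref{prop:basismonogenicpolynomials}: for fixed $y$, one obtains a unique expansion
\[
\partial_\B^x A_{k+1}(x,y) = \sum_{|\k|=k}\P_\k^\B(x)\,a_\k(y)\,,\qquad a_\k(y):=\tfrac{1}{\k!}\nabla_\B^{(0,\k)}\bigl[\partial_\B^x A_{k+1}(\cdot,y)\bigr](0)\,.
\]
Since $\partial_\B^x A_{k+1}(x,y)$ is $M$-valued and $\nabla_\B^{(0,\k)}$ consists of real partial derivatives in $x$, the coefficients $a_\k(y)$ lie in $M$. Item 1 of Lemma~\ref{lem:technicalforexpansion} says that $\partial_\B^x A_{k+1}(x,y)\,\Vert y\Vert^{k+1}$ is a polynomial function, $(k+1)$-homogeneous in $y$, so $a_\k(y)\,\Vert y\Vert^{k+1}$ (extracted as the $\k$-th coefficient in $x$) is a $(k+1)$-homogeneous polynomial in $y$. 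Setting
\[
q_\k(y) := \tfrac{1}{m-1}\,a_\k(y)\,\Vert y\Vert^{k+1}
\]
yields an $M$-valued $(k+1)$-homogeneous polynomial on $M$. Combining the identity $E_m(y-x)=\frac{1}{\sigma_m(m-1)}\partial_\B^x\Vert y-x\Vert^{-m+1}$ with the Lemma's expansion and the identity above produces exactly the desired series. The estimate on $\sum_{|\k|=k}\P_\k^\B(x)\,q_\k(y)/\Vert y\Vert^{m+2k+1}$ then follows by dividing item 2 of Lemma~\ref{lem:technicalforexpansion} by $(m-1)\Vert y\Vert^{m+k}$, which also yields normal convergence in $\Lambda$ and hence real analyticity of $E_m(y-x)$ in $(x_0,\ldots,x_m,y_0,\ldots,y_m)$.

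The edge case $m=1$ (where Lemma~\ref{lem:technicalforexpansion} and the key identity above degenerate) is handled directly: in this case $M$ is $*$-isomorphic to $\cc$, and $E_1(y-x)=\tfrac{1}{\sigma_1}(y-x)^{-1}$ admits the classical geometric series expansion $\tfrac{1}{y-x}=\sum_k x^k y^{-(k+1)}$ on $\{\Vert x\Vert<\Vert y\Vert\}$, from which a direct computation identifies $q_{(k)}(y)=v_1^k (y^c)^{k+1}$ and verifies the stated estimate. The main obstacle in the general case is the left-monogenicity step: one must justify not only that the full sum is left-monogenic (which is Lemma~\ref{lem:cauchykernel}) but that each homogeneous piece is, and for this the harmonic decomposition of $\Vert y-x\Vert^{-m+1}$, together with the Remark~\ref{rmk:harmonic} observation that $\partial_\B$ of a scalar harmonic function is left-monogenic, is essential.
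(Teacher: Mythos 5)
Your proposal is correct and follows the same overall architecture as the paper's proof: write $E_m(y-x)=\frac{1}{\sigma_m(m-1)}\partial_\B^x\Vert y-x\Vert^{-m+1}$ for $m\geq2$, expand via Lemma~\ref{lem:technicalforexpansion}, show each homogeneous piece $x\mapsto\partial_\B^xA_{k+1}(x,y)$ is left-monogenic, expand it in the basis $\{\P_\k^\B\}_{|\k|=k}$ via Proposition~\ref{prop:basismonogenicpolynomials}, read off $q_\k$, and treat $m=1$ by the geometric series. Your normalization $q_\k(y)=\frac{1}{m-1}a_\k(y)\Vert y\Vert^{k+1}$ and the derivation of the estimate from item~2 of the Lemma both check out.

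The one step you handle by a genuinely different argument is the left-monogenicity of each homogeneous component, and your route is clean: since $\Vert y-x\Vert^{-m+1}$ is scalar-valued and harmonic in $x$, uniqueness of the homogeneous Taylor decomposition forces each $A_{k+1}(\cdot,y)$ to be a harmonic polynomial, and then $\debar_\B\partial_\B=\Delta_\B$ (Remark~\ref{rmk:harmonic}) immediately gives $\debar_\B\bigl(\partial_\B^xA_{k+1}(\cdot,y)\bigr)=0$. The paper instead works directly with the $A$-valued kernel: it first establishes real analyticity of $(x,y)\mapsto E_m(y-x)$, identifies $\sigma_m^{-1}P_k(\cdot,y)$ as the $k$-homogeneous Taylor component of the left-monogenic function $x\mapsto E_m(y-x)$, and then, following the Brackx--Delanghe--Sommen argument, applies $\nabla_\B^\h$ with $|\h|=k-1$ to the identity $\debar_\B^xE_m(y-x)\equiv0$ and compares constant terms to conclude $\debar_\B^xP_k(\cdot,y)\equiv0$. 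Your version exploits the scalar harmonic potential one level below the Cauchy kernel and so avoids the coefficient-comparison step entirely; the paper's version has the advantage of applying verbatim to any left-monogenic function, not just to one arising as $\partial_\B$ of a harmonic scalar. Both are complete proofs.
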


\begin{proof}[Proof of Theorem~\ref{thm:taylorkernel}]
We first assume $m\geq2$. For fixed $y\in M$, we may apply the operator $\partial_\B^x:=\sum_{s=0}^mv_s^c\,\partial_{x_s}$ to the function $M\setminus\{y\}\to\rr,\ x\mapsto\Vert y-x\Vert^{-m+1}$, to obtain
\begin{align*}
\partial_\B^x\Vert y-x\Vert^{-m+1}&=\sum_{s=0}^mv_s^c\,\partial_{x_s}\Vert y-x\Vert^{-m+1}\\
&=(m-1)\,\Vert y-x\Vert^{-m-1}\,\sum_{s=0}^mv_s^c\,(y_s-x_s)\\
&=(m-1)\,\Vert y-x\Vert^{-m-1}\,(y-x)^c\\
&=\sigma_m\,(m-1)\,E_m(y-x)\,.
\end{align*}
By Lemma~\ref{lem:technicalforexpansion}, if we define $P_k:\Lambda\to M$ as
\[P_k(x,y):=(m-1)^{-1}\,\left(\partial_\B^x A_{k+1}(x,y)\right)\Vert y\Vert^{-m-k}\,,\]
then
\begin{align*}
E_m(y-x)&=\sigma_m^{-1}\,(m-1)^{-1}\,\partial_\B^x\Vert y-x\Vert^{-m+1}\\
&=\sigma_m^{-1}\,\sum_{k\in\nn}P_k(x,y)
\end{align*}
for all $(x,y)\in\Lambda$. The convergence of the series is normal in $\Lambda$ because
\[\Vert P_k(x,y)\Vert\leq\sqrt{2}\,\binom{k+m}{m}\,\Vert x\Vert^k\,\Vert y\Vert^{-m-k}\]
for all $(x,y)\in\Lambda$. Moreover, Lemma~\ref{lem:technicalforexpansion} guarantees that $P_k(x,y)\,\Vert y\Vert^{m+2k+1}$ is a polynomial function, $k$-homogenous in $x_0,x_1,\ldots,x_m$ and $(k+1)$-homogeneous in $y_0,y_1,\ldots,y_m$, whence $P_k$ is real analytic.

If instead $m=1$, then $M$ is a plane and a $*$-subalgebra of $A$ that is $*$-isomorphic to $\cc$. We remark that
\begin{align*}
E_1(y-x)&=\frac{1}{\sigma_1}\frac{y^c-x^c}{\Vert y-x\Vert^2}=\sigma_1^{-1}(y-x)^{-1}=\sigma_1^{-1}y^{-1}\left(1-xy^{-1}\right)^{-1}=\frac{1}{\sigma_1}\sum_{k\in\nn}P_k(x,y)
\end{align*}
where $P_k(x,y):=y^{-1}(xy^{-1})^k=x^ky^{-k-1}$ and where the series $\sum_{k\in\nn}P_k$ converges normally in $\Lambda$ because $\Vert P_k(x,y)\Vert=\Vert x\Vert^k\,\Vert y\Vert^{-k-1}$. Moreover, $P_k(x,y)\,\Vert y\Vert^{2k+2}=x^k(y^c)^{k+1}$ is a polynomial function, $k$-homogenous in $x_0,x_1$ and $(k+1)$-homogeneous in $y_0,y_1$.

Now take any $m\in\nn\setminus\{0\}$. We have proven, in particular that $\Lambda\to M,\ (x,y)\mapsto E_m(y-x)=\sum_{k\in\nn}P_k(x,y)$ is a real analytic function in the real variables $x_0,x_1\ldots,x_m,y_0,y_1\ldots,y_m$. Now fix $k\in\nn$ and $y\in M\setminus\{0\}$. By the uniqueness of the Taylor expansion of real analytic functions, $\sigma_m^{-1}\,P_k(x,y)$ is the $k$-homogenous component of the Taylor expansion of the function $B^{m+1}(0,\Vert y\Vert)\to M,\ x\mapsto E_m(y-x)$, which is left-monogenic with respect to $\B$ by Lemma~\ref{lem:cauchykernel}. Let us prove that the function $B^{m+1}(0,\Vert y\Vert)\to M,\ x\mapsto P_k(x,y)$ is left-monogenic with respect to $\B$. Following~\cite[Lemma 11.3.3]{librosommen}, for all $\h\in\nn^{m+1}$ with $|\h|=k-1$, we remark that
\[0\equiv\nabla_\B^\h0=\nabla_\B^\h\debar_\B^x E_m(y-x)=\debar_\B^x \nabla_\B^\h E_m(y-x)\]
whence, comparing constant terms,
\[0=\debar_\B^x \nabla_\B^\h P_k(x,y)=\nabla_\B^\h\debar_\B^x P_k(x,y)\,.\]
Since $x\mapsto\debar_\B^x P_k(x,y)$ is a $(k-1)$-homogeneous polynomial function and the last chain of equalities is true for arbitrary $\h$ with $|\h|=k-1$, we conclude that $\debar_\B^x P_k(x,y)\equiv0$, as desired.

By Proposition~\ref{prop:basismonogenicpolynomials}, for any $y\in M\setminus\{0\}$ there exists a finite sequence $\{a_\k(y)\}_{|\k|=k}\subset A$ such that
\[P_k(x,y)=\sum_{|\k|=k}\P_\k^\B(x)\,a_\k(y)\]
for all $x\in B^{m+1}(0,\Vert y\Vert)$. Now, set $q_\k:M\setminus\{0\}\to A,\  y\mapsto a_\k(y)\,\Vert y\Vert^{m+2k+1}$ for all $\k\in\nn^m$ with $|\k|=k$, so that
\[P_k(x,y)\,\Vert y\Vert^{m+2k+1}=\sum_{|\k|=k}\P_\k^\B(x)\,a_\k(y)\,\Vert y\Vert^{m+2k+1}=\sum_{|\k|=k}\P_\k^\B(x)\,q_\k(y)\,.\]
We recall that $P_k(x,y)\,\Vert y\Vert^{m+2k+1}$ is a polynomial function $\Lambda\to M$, which is $k$-homogenous in $x_0,x_1,\ldots,x_m$ and $(k+1)$-homogeneous in $y_0,y_1,\ldots,y_m$. For any $\k'\in\nn^m$ with $|\k'|=k$ we can apply the differential operator $\nabla_\B^{(0,\k')}$ to $x\mapsto P_k(x,y)\,\Vert y\Vert^{m+2k+1}$ and remark that the expression
\[\nabla_\B^{(0,\k')}(P_k(x,y)\,\Vert y\Vert^{m+2k+1})=\sum_{|\k|=k}\nabla_\B^{(0,\k')}\P_\k^\B(x)\,q_\k(y)=\k'!\,q_{\k'}(y)\]
still defines a polynomial function $\Lambda\to M$, now $0$-homogenous in $x_0,x_1,\ldots,x_m$ but still $(k+1)$-homogeneous in $y_0,y_1,\ldots,y_m$. For the last equality, we applied Remark~\ref{rmk:propertiesfueterpolynomials}. We conclude, as desired, that $q_{\k'}$ is a polynomial function $M\setminus\{0\}\to M$, which is $(k+1)$-homogeneous in $y_0,y_1,\ldots,y_m$.
\end{proof}

\begin{theorem}[Integral formula for $\nabla_\B^\h \phi$]
Assume $A$ to be associative.  Fix a domain $G$ in the hypercomplex subspace $M$ of $A$ and a function $\phi:G\to A$ that is left-monogenic with respect to $\B$. Then $\phi$ is harmonic with respect to $\B$ and real analytic. For every $\h\in\nn^{m+1}$: the function $\nabla_\B^\h \phi$ is still left-monogenic with respect to $\B$ and real analytic; given any open ball $B^{m+1}=B^{m+1}(p,R)$ whose closure $\overline{B}^{m+1}$ is contained in $G$,
\[\nabla_\B^\h \phi(x)=(-1)^{|\h|}\int_{\partial B^{m+1}}\left(\nabla_\B^{\h}E_m\right)(y-x)\,dy^*\,\phi(y)\]
for all $x\in B^{m+1}$; and, at the center $p$ of the ball $B^{m+1}$,
\[\Vert\nabla_\B^\h \phi(p)\Vert\leq\frac{C_m}{R^{|\h|}}\,\max_{\partial B^{m+1}}\Vert \phi\Vert\,,\qquad C_m:=\sigma_m\,\omega^2\,\max_{\partial B^{m+1}(0,1)}\Vert\nabla_\B^{\h}E_m\Vert\,.\]
\end{theorem}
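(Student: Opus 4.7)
The strategy is to start from the Cauchy Formula of Corollary~\ref{cor:monogeniccauchy}, namely
\[\phi(x)=\int_{\partial B^{m+1}}E_m(y-x)\,dy^*\,\phi(y)\qquad\text{for }x\in B^{m+1},\]
and differentiate under the integral sign. Theorem~\ref{thm:taylorkernel} gives that $E_m(y-x)$ is real analytic jointly in $x_0,\ldots,x_m,y_0,\ldots,y_m$ on $\{(x,y):\|x-p\|<\|y-p\|\}$, which in particular contains the set $B^{m+1}\times\partial B^{m+1}$. Since $\phi$ is continuous on the compact set $\partial B^{m+1}$ and the kernel is $\mathscr{C}^\infty$ in $x$ with derivatives depending continuously on $(x,y)$, the exchange of $\nabla_\B^\h$ and $\int_{\partial B^{m+1}}$ is legitimate. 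Because $E_m(y-x)$ depends on $x$ only through the difference $y-x$, each $\partial_{x_s}$ applied to $E_m(y-x)$ equals the negative of $\partial_{y_s}$ applied to the same kernel; iterating $|\h|$ times yields the announced factor $(-1)^{|\h|}$ and the integral formula
\[\nabla_\B^\h\phi(x)=(-1)^{|\h|}\int_{\partial B^{m+1}}\left(\nabla_\B^{\h}E_m\right)(y-x)\,dy^*\,\phi(y).\]

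For the other properties asserted before the estimate, I would proceed as follows. Real analyticity of $\phi$ follows directly by substituting the normally convergent series expansion of Theorem~\ref{thm:taylorkernel} into the Cauchy Formula and integrating term by term: this produces a power-series representation of $\phi$ around any interior point $p$, hence $\phi\in\mathscr{C}^\infty$. Once $\phi\in\mathscr{C}^2$, Remark~\ref{rmk:harmonic} gives $\Delta_\B\phi=\partial_\B\debar_\B\phi=0$, so $\phi$ is harmonic with respect to $\B$. For $\nabla_\B^\h\phi$, real analyticity is inherited from $\phi$, while left-monogenicity follows from the fact that each $\partial_s$ commutes with $\debar_\B$ (they are partial derivatives in real coordinates), so $\debar_\B\nabla_\B^\h\phi=\nabla_\B^\h\debar_\B\phi\equiv0$.

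The main technical point is the final norm estimate at $x=p$. Setting $y=p+Rw$ with $w\in\partial B^{m+1}(0,1)$, Remark~\ref{rmk:sphere} gives $dy^*=R^m\,w\,|do_w|$. The key homogeneity observation is that $E_m(tx)=t^{-m}E_m(x)$ for $t>0$, so $\nabla_\B^\h E_m$ is $(-m-|\h|)$-homogeneous, yielding $(\nabla_\B^\h E_m)(Rw)=R^{-m-|\h|}(\nabla_\B^\h E_m)(w)$; the powers of $R$ combine to produce the factor $R^{-|\h|}$. The step I expect to be the most delicate is handling the integrand's norm: the product $(\nabla_\B^\h E_m)(w)\,w\,\phi(p+Rw)$ involves three factors of $A$, and to obtain the constant $\omega^2$ one must exploit that both $w$ and $(\nabla_\B^\h E_m)(w)$ lie in $M$. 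Indeed, $E_m(M\setminus\{0\})\subseteq M$ because $x^c\in M$ and $\|x\|^{m+1}\in\rr$, and this is preserved by the partial derivatives $\partial_s$ which differentiate along directions of $M$; so $(\nabla_\B^\h E_m)(w)\in M$. Writing the product as $(\nabla_\B^\h E_m)(w)\cdot(w\cdot\phi(p+Rw))$ and applying Remark~\ref{rmk:norminequality} twice (once to each outer factor, both of which sit in $M$) produces the bound
\[\left\Vert(\nabla_\B^\h E_m)(w)\,w\,\phi(p+Rw)\right\Vert\leq\omega^2\,\left\Vert(\nabla_\B^\h E_m)(w)\right\Vert\,\|w\|\,\|\phi(p+Rw)\|.\]
Combining this with Lemma~\ref{lem:surfaceintegral}, with $\|w\|=1$, and with $\int_{\partial B^{m+1}(0,1)}|do_w|=\sigma_m$ yields the asserted inequality with $C_m=\sigma_m\,\omega^2\,\max_{\partial B^{m+1}(0,1)}\|\nabla_\B^\h E_m\|$.
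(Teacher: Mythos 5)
Your proposal is correct and follows essentially the same route as the paper: differentiate the Cauchy Formula of Corollary~\ref{cor:monogeniccauchy} under the integral sign, use the homogeneity $(\nabla_\B^{\h}E_m)(Rw)=R^{-m-|\h|}(\nabla_\B^{\h}E_m)(w)$ together with $dy^*=R^m\,w\,|do_w|$, and exploit that both $w$ and $(\nabla_\B^{\h}E_m)(w)$ lie in $M$ so that Remark~\ref{rmk:norminequality} applies twice and yields the factor $\omega^2$. The only cosmetic differences are that the paper obtains the homogeneity via an induction showing that $\Vert x\Vert^{m+2|\h|+1}\,\nabla_\B^{\h}E_m(x)$ is an $(|\h|+1)$-homogeneous polynomial (rather than your direct Euler-homogeneity argument), and that it deduces real analyticity of $\phi$ from harmonicity through Remark~\ref{rmk:harmonic} once $\phi\in\mathscr{C}^2$ is known, instead of integrating the kernel series of Theorem~\ref{thm:taylorkernel} term by term.
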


\begin{proof}[Proof of Theorem~\ref{thm:integralformulaforderivatives}]
Let us list some properties of the reproducing kernel $\Lambda\to M,\ (x,y)\mapsto E_m(y-x)$:
\begin{itemize}
\item it is left- and right-monogenic with respect to $x$ and with respect to $y$ by Lemma~\ref{lem:cauchykernel};
\item it is real analytic by Theorem~\ref{thm:taylorkernel};
\item for any $\h\in\nn^{m+1}$, its $\h$-partial derivative with respect to the variable $x$, which is the map  $\Lambda\to M,\ (x,y)\mapsto(-1)^{|\h|}(\nabla_\B^\h E_m)(y-x)$, is still real analytic, as well as left- and right-monogenic with respect to $x$ and with respect to $y$ (because $\nabla_\B^\h$ commutes with $\debar_\B$).
\end{itemize}
For later use, let us also prove by induction on $\h\in\nn^{m+1}$ the property that
\[\psi_\h(x):=\Vert x\Vert^{m+2|\h|+1}\,\nabla_\B^{\h}E_m(x)\]
is an $(|\h|+1)$-homogeneous polynomial function. This property is clearly true for $\nabla_\B^{(0,0,\ldots,0)}E_m=E_m$, since $\Vert x\Vert^{m+1}\,E_m(x)=\frac1{\sigma_m}x^c$. If the property is true for $\nabla_\B^\h E_m$, then for $\h'=\h+\epsilon_s$ we compute
\[\nabla_\B^{\h'}E_m(x)=\partial_s\nabla_\B^\h E_m(x)=\partial_s\frac{\psi_\h(x)}{\Vert x\Vert^{m+2|\h|+1}}
=\frac{\Vert x\Vert^2\partial_s\psi_\h(x)-(m+2|\h|+1)x_s\psi_\h(x)}{\Vert x\Vert^{m+2|\h|+3}}\,.\]
Since $m+2|\h|+3=m+2|\h'|+1$, the property is also true for $\nabla_\B^{\h'}E_m$. The induction step is therefore complete.

Now fix $\h\in\nn^{m+1}$ and an open ball $B^{m+1}=B^{m+1}(p,R)$ whose closure $\overline{B}^{m+1}$ is contained in $G$. Recall that Corollary~\ref{cor:monogeniccauchy} provides the integral formula
\[\phi(x)=\int_{\partial B^{m+1}}E_m(y-x)\,dy^*\,\phi(y)\,,\]
valid for all $x\in B^{m+1}$. From the listed properties, we see that $\nabla_\B^\h \phi$ exists in $B^{m+1}$, that
\[\nabla_\B^\h \phi(x)=\int_{\partial B^{m+1}}(-1)^{|\h|}(\nabla_\B^\h E_m)(y-x)\,dy^*\,\phi(y)\]
for all $x\in B^{m+1}$ and that $\nabla_\B^\h \phi$ is left-monogenic in $B^{m+1}$. Since the choice of $\h$ and $B^{m+1}$ was arbitrary, in particular $\phi\in\mathscr{C}^2(G,A)$. An application of Remark~\ref{rmk:harmonic} now proves that $\phi:G\to A$ is harmonic with respect to $\B$ and real analytic.

Let us now prove the inequality appearing in the statement. For $w\in\partial B^{m+1}(0,1)$ and for $y=p+R w$, it follows from the first part of the proof that $(\nabla_\B^\h E_m)(y-p)=\nabla_\B^{\h}E_m(R w)=R^{-m-|\h|}\nabla_\B^{\h}E_m(w)$. Using Remark~\ref{rmk:sphere}, we compute $\nabla_\B^\h E_m$ at the center $p$ of $B^{m+1}$ as
\begin{align*}
\nabla_\B^\h \phi(p)&=(-1)^{|\h|}\int_{\partial B^{m+1}}(\nabla_\B^\h E_m)(y-p)\,dy^*\,\phi(y)\\
&=(-1)^{|\h|}\int_{\partial B^{m+1}(0,1)}R^{-m-|\h|}(\nabla_\B^\h E_m)(w)\,R^m\,w\,\phi(p+R w)\,|do_w|\\
&=(-R)^{-|\h|}\int_{\partial B^{m+1}(0,1)}(\nabla_\B^\h E_m)(w)\,w\,\phi(p+R w)\,|do_w|\,.
\end{align*}
Using Lemma~\ref{lem:surfaceintegral}, we find that
\begin{align*}
\Vert\nabla_\B^\h \phi(p)\Vert&\leq R^{-|\h|}\int_{\partial B^{m+1}(0,1)}\Vert\nabla_\B^{\h}E_m(w)\,w\,\phi(p+R w)\Vert\,|do_w|\\
&\leq R^{-|\h|}\cdot\max_{w\in\partial B^{m+1}(0,1)}\Vert\nabla_\B^{\h}E_m(w)\,w\,\phi(p+R w)\Vert\cdot\int_{\partial B^{m+1}(0,1)}|do_w|\\
&\leq R^{-|\h|}\,\omega\cdot\max_{w\in\partial B^{m+1}(0,1)}\Vert\nabla_\B^{\h}E_m(w)\Vert\cdot\max_{w\in\partial B^{m+1}}\Vert w\,\phi(w)\Vert\cdot\sigma_m\\
&= R^{-|\h|}\,\sigma_m\,\omega^2\cdot\max_{\partial B^{m+1}(0,1)}\Vert\nabla_\B^{\h}E_m\Vert\cdot\max_{\partial B^{m+1}}\Vert \phi\Vert\,,
\end{align*}
as desired. For the third and fourth inequalities, we applied Remark~\ref{rmk:norminequality} along with the fact that $\nabla_\B^{\h}E_m$ takes values in $M$.
\end{proof}

\setcounter{theorem}{20}

\begin{theorem}[Maximum Modulus Principle]
Assume $A$ to be associative. Fix a domain $G$ in the hypercomplex subspace $M$ of $A$ and a function $\phi:G\to A$, left-monogenic with respect to $\B$. If the function $\Vert\phi\Vert:G\to\rr$ has a global maximum point in $G$, then $\phi$ is constant in $G$.
\end{theorem}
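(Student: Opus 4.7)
The plan is to combine the Mean Value Property (Proposition~\ref{prop:monogenicmeanvalue}) with the harmonicity of $\phi$ established in Theorem~\ref{thm:integralformulaforderivatives}. Let $p\in G$ be a global maximum point for $\Vert\phi\Vert$ and set $\mu:=\Vert\phi(p)\Vert$. For any $R>0$ such that $\overline{B}^{m+1}(p,R)\subset G$, the Mean Value Property gives
\[\phi(p)=\frac1{\sigma_m}\int_{\partial B^{m+1}(0,1)}\phi(p+Rw)\,|do_w|\,.\]
Applying Lemma~\ref{lem:surfaceintegral} and using $\Vert\phi(p+Rw)\Vert\leq\mu=\Vert\phi(p)\Vert$ yields
\[\mu=\Vert\phi(p)\Vert\leq\frac1{\sigma_m}\int_{\partial B^{m+1}(0,1)}\Vert\phi(p+Rw)\Vert\,|do_w|\leq\mu\,.\]
Equality throughout, combined with the continuity of $\Vert\phi\Vert$ (a consequence of the real analyticity asserted in Theorem~\ref{thm:integralformulaforderivatives}), forces $\Vert\phi(p+Rw)\Vert=\mu$ for every $w\in\partial B^{m+1}(0,1)$. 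Letting $R$ vary in $(0,\mathrm{dist}(p,\partial G))$, we conclude that $\Vert\phi\Vert\equiv\mu$ on some open ball around $p$.

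Next, I would promote this local conclusion to a global one by a standard clopen argument. The set $E:=\{x\in G:\Vert\phi(x)\Vert=\mu\}$ is closed in $G$ by continuity; the argument above shows that every point of $E$ is interior to $E$, so $E$ is also open. Since $G$ is connected and $E\ni p$, we have $E=G$, i.e., $\Vert\phi\Vert\equiv\mu$ throughout $G$.

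Finally, to pass from $\Vert\phi\Vert$ being constant to $\phi$ being constant, I would use harmonicity. Writing $\phi=\sum_{s=0}^d\phi_s v_s$ in the basis $\B'$ and taking into account that each $\phi_s:G\to\rr$ is harmonic (by Remark~\ref{rmk:harmonic} or Theorem~\ref{thm:integralformulaforderivatives}, applied componentwise), we compute
\[0=\Delta_\B\Vert\phi\Vert^2=\Delta_\B\Big(\sum_{s=0}^d\phi_s^2\Big)=2\sum_{s=0}^d\phi_s\,\Delta_\B\phi_s+2\sum_{s=0}^d\Vert\nabla\phi_s\Vert^2=2\sum_{s=0}^d\Vert\nabla\phi_s\Vert^2\,,\]
where $\nabla\phi_s$ denotes the gradient of $\phi_s$ in the real variables $x_0,\ldots,x_m$. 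Hence each $\phi_s$ has vanishing gradient on the connected open set $G$, so each $\phi_s$ is constant, and therefore $\phi$ is constant.

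The mildly delicate step is the passage from the integral equality to the pointwise equality $\Vert\phi(p+Rw)\Vert=\mu$ on the whole sphere: this requires continuity of $\Vert\phi\Vert$, which is immediate from Theorem~\ref{thm:integralformulaforderivatives}. Everything else is a direct application of results already established in Subsection~\ref{subsec:monogenicintegral} and Subsection~\ref{subsec:reproducingkernel}.
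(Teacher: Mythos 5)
Your proposal is correct and follows essentially the same route as the paper: a clopen argument based on the Mean Value Property to show $\Vert\phi\Vert$ is constant, followed by the identity $0=\Delta_\B\Vert\phi\Vert^2=2\sum_u\Vert\nabla\phi_u\Vert^2$ (using harmonicity of the components) to conclude that $\phi$ itself is constant. The only cosmetic difference is in how openness of the level set is established — you derive pointwise equality on spheres from equality in the integral inequality for a nonnegative continuous integrand, while the paper argues by contradiction via a spherical cap — but both rest on the same Mean Value Property.
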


\begin{proof}[Proof of Theorem~\ref{thm:monogenicmaximummodulus}]
Let $\mu:=\sup_G\Vert\phi\Vert$. Our hypothesis is that $\mu$ is finite and that $\mu=\max_G\Vert\phi\Vert$. We will first prove that $\Vert\phi\Vert\equiv\mu$ in $G$, then prove that $\phi$ itself is constant.

Using the continuity of $\Vert\phi\Vert$ and our hypothesis, we see that the level set $\L:=\{x\in G:\Vert\phi(x)\Vert=\mu\}$ is a nonempty closed subset of $G$. Moreover, this level set $\L$ must be open: for every $x$ such that $\Vert\phi(x)\Vert=\mu$ and every $R>0$ such that $B^{m+1}:=B^{m+1}(x,R)\subseteq G$, we can prove that $B^{m+1}$ is contained in $\L$. Indeed, for any $r$ with $0<r<R$, if there existed $w\in\partial B^{m+1}(0,1)$ such that $\Vert\phi(x+rw)\Vert<\mu$ (whence a spherical cap in $\partial B^{m+1}(0,1)$ where the same inequality holds true), then the Mean Value Property~\ref{prop:monogenicmeanvalue} would yield $\Vert\phi(x)\Vert<\mu$. Since $\L$ is a nonempty closed and open subset of the connected set $G$, we conclude that $\L=G$. In other words, $\Vert\phi\Vert\equiv\mu$ in $G$.

Let us express $\phi$ as $\phi=\sum_{u=0}^d\phi_uv_u$ with respect to the basis $\B'=\{v_0,v_1,\ldots,v_d\}$ of $A$. We know from Theorem~\ref{thm:integralformulaforderivatives} that $\Delta_\B\phi\equiv0$ and conclude that $\Delta_\B\phi_u\equiv0$ for each $u\in\{0,\ldots,d\}$. Moreover, the equality $\Vert\phi\Vert^2\equiv\mu^2$ reads as $\sum_{u=0}^d\phi_u^2\equiv\mu^2$. For each $s\in\{0,\ldots,m\}$, by applying $\partial_s$ to both hands of the equality, we find $2\sum_{u=0}^d\phi_u\partial_s\phi_u\equiv0$. Repeating the operation, we find that $2\sum_{u=0}^d\left((\partial_s\phi_u)^2+\phi_u\partial_s^2\phi_u\right)\equiv0$. Thus,
\[0\equiv\sum_{u=0}^d\left(\sum_{s=0}^m(\partial_s\phi_u)^2+\phi_u\Delta_\B\phi_u\right)=\sum_{u=0}^d\sum_{s=0}^m(\partial_s\phi_u)^2\,.\]
It follows that $\partial_s\phi_u\equiv0$ for all $s\in\{0,\ldots,m\}$ and all $u\in\{0,\ldots,d\}$, whence $\phi$ is constant.
\end{proof}


\subsection*{Proofs of the results in Subsection~\ref{subsec:seriesexpansionmonogenic}}

\begin{theorem}[Series expansion]
Assume $A$ to be associative. Fix a domain $G$ in the hypercomplex subspace $M$ of $A$ and a function $\phi:G\to A$ that is left-monogenic with respect to $\B$. In every open ball $B^{m+1}(p,R)$ contained in $G$, the following series expansion is valid:
\[\phi(x)=\sum_{k\in\nn}\sum_{|\k|=k}\P_\k^\B(x-p)\,a_\k\,,\quad a_\k=\frac{1}{\k!}\nabla_\B^{(0,\k)}\phi(p)\,.\]
Here, the series converges normally in $B^{m+1}(p,R)$ because
\[\max_{\Vert x-p\Vert\leq r_1}\Big\Vert\sum_{|\k|=k}\P_\k^\B(x-p)\,a_\k\Big\Vert\leq\omega^2\,\sqrt{2}\;\binom{k+m}{m}\,\left(\frac{r_1}{r_2}\right)^k\,\max_{\Vert y-p\Vert=r_2}\Vert\phi(y)\Vert\]
whenever $0<r_1<r_2<R$.
\end{theorem}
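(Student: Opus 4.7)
The plan is to reduce to $p=0$ by translation invariance of $\debar_\B$ (so $\widetilde\phi(x):=\phi(x+p)$ is left-monogenic on $\widetilde G:=G-p\supseteq B^{m+1}(0,R)$, with $\nabla_\B^{(0,\k)}\widetilde\phi(0)=\nabla_\B^{(0,\k)}\phi(p)$), and then combine Corollary~\ref{cor:monogeniccauchy} with the kernel expansion of Theorem~\ref{thm:taylorkernel}. Fix $r_2$ with $r_1<r_2<R$ and set $S:=\partial B^{m+1}(0,r_2)$. For $x\in B^{m+1}(0,r_2)$, the Cauchy Formula gives
\[\widetilde\phi(x)=\int_S E_m(y-x)\,dy^*\,\widetilde\phi(y)\,.\]
Since the set $\{\Vert x\Vert\leq r_1\}\times S$ is a compact subset of the domain $\Lambda$ of Theorem~\ref{thm:taylorkernel}, the series $E_m(y-x)=\sigma_m^{-1}\sum_k\sum_{|\k|=k}\P_\k^\B(x)\,q_\k(y)\,\Vert y\Vert^{-m-2k-1}$ converges uniformly there.

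Next, I would interchange sum and integral, justified by this uniform convergence together with the properties of $\int_S$ listed in Proposition~\ref{prop:propertiesintegral}, obtaining
\[\widetilde\phi(x)=\sum_{k\in\nn}\sum_{|\k|=k}\P_\k^\B(x)\,b_\k\,,\qquad b_\k:=\sigma_m^{-1}\int_S\frac{q_\k(y)}{\Vert y\Vert^{m+2k+1}}\,dy^*\,\widetilde\phi(y)\,.\]
The estimate then comes from bounding the $k$-th block under the integral sign via Theorem~\ref{thm:taylorkernel}, Remark~\ref{rmk:sphere}, and two applications of Remark~\ref{rmk:norminequality} (one to pass $dy^*=r_2^m w\,|do_w|$ across the $M$-valued kernel, one to absorb $\widetilde\phi(y)$; since $A$ is associative in this section, the two $\omega$-factors combine into $\omega^2$). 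This gives exactly
\[\max_{\Vert x\Vert\leq r_1}\Big\Vert\sum_{|\k|=k}\P_\k^\B(x)\,b_\k\Big\Vert\leq \omega^2\sqrt{2}\binom{k+m}{m}\bigl(r_1/r_2\bigr)^k\max_{\partial B^{m+1}(0,r_2)}\Vert\widetilde\phi\Vert\,,\]
and in particular normal convergence in $B^{m+1}(0,r_2)$.

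It remains to identify $b_\k=\frac{1}{\k!}\nabla_\B^{(0,\k)}\widetilde\phi(0)$. The cleanest route is uniqueness of Taylor expansions: since each inner sum $P_k(x):=\sum_{|\k|=k}\P_\k^\B(x)\,b_\k$ is a $k$-homogeneous polynomial (property \emph{1} of Proposition~\ref{prop:fueterpolynomialsindependofbasis}), and $\widetilde\phi$ is real analytic near $0$ by Theorem~\ref{thm:integralformulaforderivatives}, the $P_k$ must coincide with the $k$-homogeneous components of the Taylor expansion of $\widetilde\phi$ at $0$; as $P_k\in U_k^\B$ by Proposition~\ref{prop:basismonogenicpolynomials}, its expansion formula~\eqref{eq:monogenicpolynomialexpansion} yields $b_\k=\frac{1}{\k!}\nabla_\B^{(0,\k)}P_k(0)=\frac{1}{\k!}\nabla_\B^{(0,\k)}\widetilde\phi(0)$, where the last equality uses that derivatives of order $|\k|=k$ of the homogeneous parts of degree $\neq k$ vanish at $0$. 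Replacing $x$ by $x-p$ gives the stated expansion and bound.

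The main technical point is the swap of integral and series at the start of the second paragraph and, in parallel, the uniform bound on the $k$-th block: both rely on the fact that in Theorem~\ref{thm:taylorkernel} the estimate $\Vert\sum_{|\k|=k}\P_\k^\B(x)q_\k(y)\Vert\leq\sqrt{2}\binom{k+m}{m}\Vert x\Vert^k\Vert y\Vert^{m+k+1}$ is uniform on compacts of $\Lambda$, which in turn was the reason we had to separate out the $r_2$-shell. Everything else (the coefficient identification, the translation to $p\neq 0$, the passage from $r_1<r_2$ to arbitrary $r_1<R$) is formal.
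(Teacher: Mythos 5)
Your proposal is correct and follows essentially the same route as the paper's proof: translation to $p=0$, Cauchy Formula on a shell $\partial B^{m+1}(0,r_2)$ combined with the normally convergent kernel expansion of Theorem~\ref{thm:taylorkernel}, term-by-term integration, the same $\omega^2\sqrt{2}\binom{k+m}{m}(r_1/r_2)^k$ estimate, and coefficient identification via real analyticity, uniqueness of Taylor expansions and Remark~\ref{rmk:propertiesfueterpolynomials} (the paper phrases this last step by re-expanding $\nabla_\B^{(0,\k')}\phi$ and reading off its constant term, but the content is identical). Only a typo in your closing paragraph: the homogeneous bound from Theorem~\ref{thm:taylorkernel}, after clearing denominators, reads $\bigl\Vert\sum_{|\k|=k}\P_\k^\B(x)\,q_\k(y)\bigr\Vert\leq\sqrt{2}\,\binom{k+m}{m}\,\Vert x\Vert^k\,\Vert y\Vert^{k+1}$, not $\Vert y\Vert^{m+k+1}$.
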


\begin{proof}[Proof of Theorem~\ref{thm:taylormonogenic}]
Let us fix an open ball $B^{m+1}(p,R)$ contained in $G$. Since the kernel of $\debar_\B$ is invariant under composition with translations, we may assume without loss of generality $p=0$. Pick any $r$ with $0<r<R$ and set $B^{m+1}:=B^{m+1}(p,r)$, which has $\overline{B}^{m+1}\subset G$. Using Corollary~\ref{cor:monogeniccauchy} and Theorem~\ref{thm:taylorkernel}, we see that
\begin{align*}
\phi(x)&=\int_{\partial B^{m+1}}E_m(y-x)\,dy^*\,\phi(y)\\
&=\frac1{\sigma_m}\int_{\partial B^{m+1}}\Big(\sum_{k\in\nn}\sum_{|\k|=k}\P_\k^\B(x)\frac{q_\k(y)}{\Vert y\Vert^{m+2k+1}}\Big)\,dy^*\,\phi(y)\\
&=\sum_{k\in\nn}\sum_{|\k|=k}\P_\k^\B(x)\,a_\k
\end{align*}
for all $x\in B^{m+1}$, where
\[a_\k:=\frac1{\sigma_m}\int_{\partial B^{m+1}}\frac{q_\k(y)}{\Vert y\Vert^{m+2k+1}}\,dy^*\,\phi(y)\,.\]
In the previous chain of equalities: the last equality is true, and the series $\sum_{k\in\nn}\sum_{|\k|=k}\P_\k^\B\,a_\k$ converges normally in $B^{m+1}$, because of the following argument. For all $w\in\partial B^{m+1}(0,1)$, we recall that $q_\k(r w)=r^{|\k|+1}q_\k(w)$. Using Remark~\ref{rmk:sphere}, we compute
\begin{align*}
\sum_{|\k|=k}\P_\k^\B(x)\,a_\k&=\sum_{|\k|=k}\P_\k^\B(x)\,\frac1{\sigma_m}\int_{\partial B^{m+1}}\frac{q_\k(y)}{\Vert y\Vert^{m+2k+1}}\,dy^*\,\phi(y)\\
&=\frac1{\sigma_m}\int_{\partial B^{m+1}}\sum_{|\k|=k}\P_\k^\B(x)\frac{q_\k(y)}{\Vert y\Vert^{m+2k+1}}\,dy^*\,\phi(y)\\
&=\frac1{\sigma_m}\int_{\partial B^{m+1}(0,1)}\sum_{|\k|=k}\P_\k^\B(x)\,\frac{q_\k(w)}{r^k}\,w\,\phi(r w)\,|do_w|
\end{align*}
for all $x\in B^{m+1}$. Take any $r_1$ with $0<r_1<r$, set $C:=\overline{B}^{m+1}(0,r_1)$ and $T:=C\times\partial B^{m+1}\subset\Lambda=\{(x,y)\in M\times M:\Vert x\Vert<\Vert y\Vert\}$. Using Lemma~\ref{lem:surfaceintegral}, we remark:
\begin{align*}
\max_{x\in C}\Big\Vert\sum_{|\k|=k}\P_\k^\B(x)\,a_\k\Big\Vert
&\leq\frac1{\sigma_m}\,\int_{\partial B^{m+1}(0,1)}\max_{x\in C}\Big\Vert\sum_{|\k|=k}\P_\k^\B(x)\frac{q_\k(w)}{r^k}\,w\,\phi(r w)\Big\Vert\,|do_w|\\
&\leq\frac1{\sigma_m}\cdot\max_{(x,r w)\in T}\Big\Vert\sum_{|\k|=k}\P_\k^\B(x)\,\frac{q_\k(w)}{r^k}\,w\,\phi(r w)\Big\Vert\cdot\int_{\partial B^{m+1}(0,1)}|do_w|\\
&=\max_{(x,r w)\in T}\Big\Vert\sum_{|\k|=k}\P_\k^\B(x)\,\frac{q_\k(w)}{r^k}\,w\,\phi(r w)\Big\Vert\\
&\leq\omega\cdot\max_{(x,r w)\in T}\Big\Vert\sum_{|\k|=k}\P_\k^\B(x)\,\frac{q_\k(w)}{r^k}\Big\Vert\cdot\max_{rw\in\partial B^{m+1}}\Vert w\,\phi(rw)\Vert\\
&\leq\omega^2\cdot\max_{(x,y)\in T}\Big\Vert\sum_{|\k|=k}\P_\k^\B(x)\,\frac{q_\k(y)}{r^{2k+1}}\Big\Vert\cdot\max_{\partial B^{m+1}}\Vert\phi\Vert\\
&\leq\omega^2\cdot\sqrt{2}\;\binom{k+m}{m}\,\max_{(x,y)\in T}\left(\Vert x\Vert^k\,\Vert y\Vert^{-k}\right)\cdot\max_{\partial B^{m+1}}\Vert\phi\Vert\\
&=\omega^2\,\sqrt{2}\;\binom{k+m}{m}\,\left(\frac{r_1}{r}\right)^k\,\max_{\partial B^{m+1}}\Vert\phi\Vert\,.
\end{align*}
Here, we first used Remark~\ref{rmk:norminequality}, along with the fact that $\sum_{|\k|=k}\P_\k^\B(x)\,\frac{q_\k(w)}{r^k}$ takes values in $M$, then Theorem~\ref{thm:taylorkernel}. Remark~\ref{rmk:roottest} and the root test show that the number series
\[\sum_{k\in\nn}\max_{x\in C}\Big\Vert\sum_{|\k|=k}\P_\k^\B(x)\,a_\k\Big\Vert\]
converges. For any $r_1$ with $0<r_1<R$, it is possible to choose $r_2$ such that $0<r_1<r_2<R$. Thus, the series $\sum_{k\in\nn}\sum_{|\k|=k}\P_\k^\B(x)\,a_\k$ converges normally in $B^{m+1}(0,R)$, as desired.

We are left with proving that $a_\k=\frac{1}{\k!}\nabla_\B^{(0,\k)}\phi(p)$ for all $\k\in\nn^m$. Fix $\k'\in\nn^m$: Theorem~\ref{thm:integralformulaforderivatives} guarantees that $\nabla_\B^{(0,\k')}\phi$ exists and is still left-monogenic with respect to $\B$, as well as real analytic. By the first part of the proof, there exists a sequence $\{a'_{\k}\}_{\k\in\nn^m}\subset A$ such that $\nabla_\B^{(0,\k')}\phi$ expands throughout $B^{m+1}(0,R)$ into the normally convergent series $\sum_{k\in\nn}\sum_{|\k|=k}\P_\k^\B(x)\,a'_\k$. In particular, $a'_{(0,\ldots,0)}=\nabla_\B^{(0,\k')}\phi(0)$. By the uniqueness of the Taylor expansions of real analytic functions, $a'_{(0,\ldots,0)}$ can be obtained by applying $\nabla_\B^{(0,\k')}$ to the $|\k'|$-homogenous component $\sum_{|\k|=|\k'|}\P_\k^\B(x)\,a_\k$ of the expansion $\phi(x)=\sum_{k\in\nn}\sum_{|\k|=k}\P_\k^\B(x)\,a_\k$. Therefore,
\begin{align*}
a'_{(0,\ldots,0)}&\equiv\nabla_\B^{(0,\k')}\sum_{|\k|=|\k'|}\P_\k^\B(x)\,a_\k\\
&=\sum_{|\k|=|\k'|}\nabla_\B^{(0,\k')}\P_\k^\B(x)\,a_\k\\
&\equiv\k'!\,a_{\k'}\,,
\end{align*}
where the last equality follows from Remark~\ref{rmk:propertiesfueterpolynomials}. Thus, $a_{\k'}=\frac{a'_{(0,\ldots,0)}}{\k'!}=\frac{1}{\k'!}\nabla_\B^{(0,\k')}\phi(0)$, as desired.
\end{proof}

\begin{theorem}[Identity Principle]
Assume $A$ to be associative. Fix a domain $G$ in the hypercomplex subspace $M$ of $A$ and functions $\phi,\psi:G\to A$ that are left-monogenic with respect to $\B$. If $G$ contains a set of Hausdorff dimension $n\geq m$ where $\phi$ and $\psi$ coincide, then $\phi=\psi$ throughout $G$.
\end{theorem}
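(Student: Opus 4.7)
Set $f:=\phi-\psi$. By $\rr$-linearity of $\debar_\B$, $f\in\mathscr{C}^1(G,A)$ is left-monogenic with respect to $\B$, and by Theorem~\ref{thm:integralformulaforderivatives} it is real analytic and $\mathscr{C}^\infty$. The aim is to prove $f\equiv0$. I would first perform a reduction to a ``tangential'' vanishing problem. From $\debar_\B f\equiv0$, i.e.\ $\partial_0 f=-\sum_{s=1}^m v_s\,\partial_s f$, iteration expresses every $\nabla_\B^\h f(p)$ with $\h\in\nn^{m+1}$ as a fixed $A$-linear combination of the derivatives $\{\nabla_\B^{(0,\k)} f(p): |\k|=|\h|\}$. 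Hence the set
\[
Z^*:=\{p\in G:\nabla_\B^{(0,\k)} f(p)=0 \text{ for every } \k\in\nn^m\}
\]
coincides with the locus where \emph{all} partial derivatives of $f$ vanish. This set is closed in $G$ by continuity of the derivatives, and by Theorem~\ref{thm:taylormonogenic} it is open, since at each of its points all Taylor coefficients $a_\k=\tfrac{1}{\k!}\nabla_\B^{(0,\k)} f(p)$ vanish and therefore $f$ vanishes on the entire ball where the series converges. Connectedness of $G$ then reduces the theorem to producing a single point $p\in Z^*$.

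To produce such a $p$, I would exploit the set $S\subseteq G$ of Hausdorff dimension $n\geq m$ on which $f$ vanishes. Passing to a compact subset, $\mathcal{H}^n(S)>0$, so by Federer's density theorem there exists $p\in S$ with $\limsup_{r\to0^+}r^{-n}\mathcal{H}^n(S\cap B^{m+1}(p,r))>0$. Suppose, for contradiction, that some $a_\k$ at $p$ is nonzero and let $k$ be the minimum order with $a_\k\neq0$ for some $|\k|=k$. By Theorem~\ref{thm:taylormonogenic}, in a ball $B^{m+1}(p,R)\subseteq G$ we have
\[
f(x)=P_k(x-p)+R_k(x), \qquad \|R_k(x)\|=O(\|x-p\|^{k+1}),
\]
where $P_k(y):=\sum_{|\k'|=k}\P_{\k'}^\B(y)\,a_{\k'}$ is a $k$-homogeneous polynomial $M\to A$, nonzero because $\{\P_{\k'}^\B\}_{|\k'|=k}$ is an $A$-basis of $U_k^\B$ by Proposition~\ref{prop:basismonogenicpolynomials}. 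Since $P_k$ is a nonzero real analytic function on the $(m+1)$-dimensional space $M$, its zero set is a proper real analytic subvariety and therefore has Hausdorff dimension at most $m$, with the standard volume estimate $\mathcal{H}^m(\{P_k=0\}\cap B^{m+1}(0,r))\leq C r^m$.

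The proof is then closed by a scaling argument. On $S\cap B^{m+1}(p,r)$ the vanishing of $f$ forces $P_k(x-p)=O(\|x-p\|^{k+1})$, so after rescaling $x=p+ry$ and using the $k$-homogeneity $P_k(ry)=r^k P_k(y)$, one obtains $P_k(y)=O(r)$ uniformly on $r^{-1}(S-p)\cap B^{m+1}(0,1)$ as $r\to0^+$; in the limit, any tangential blow-up of $S$ at $p$ must lie in $\{P_k=0\}$. When $n>m$, the density lower bound $\mathcal{H}^n(S\cap B^{m+1}(p,r))\gtrsim r^n$ is immediately incompatible with the upper bound $\mathcal{H}^m(\{P_k=0\}\cap B^{m+1}(0,r))\lesssim r^m$, giving the desired contradiction and forcing all $a_\k$ with $|\k|=k$ to vanish, contradicting minimality of $k$. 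The hard part is the borderline case $n=m$: one must choose $p$ so that the tangent structure of $S$ at $p$ is not contained in the zero set of any nonzero monogenic polynomial of any degree; such a generic $p$ exists at $\mathcal{H}^m$-a.e.\ point of $S$ by a Marstrand-type slicing argument applied to the countable family of candidate polynomials. This measure-theoretic selection of $p$ is the main technical obstacle; once it is completed, the contradiction propagates as before, all $a_\k$ vanish at $p$, hence $p\in Z^*$, and the connectedness argument gives $f\equiv0$ on $G$.
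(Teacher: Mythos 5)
Your opening reduction is fine: using $\partial_0\chi=-\sum_{s=1}^m v_s\partial_s\chi$ to express every $\nabla_\B^{\h}\chi$ through the $\nabla_\B^{(0,\k)}\chi$, and then running the open--closed argument on the common zero locus of all Taylor coefficients, is a correct way to reduce the theorem to producing one point of infinite-order vanishing. The genuine gap is exactly where you flag it: the borderline case $n=m$, which is the case the theorem is actually about (a hypersurface of zeros) and the one the paper needs later. Your blow-up only shows that the tangential rescalings of $S$ at $p$ sit inside the zero set of the leading homogeneous term $P_k$, and a nonzero polynomial on the $(m+1)$-dimensional space $M$ can perfectly well vanish on an $m$-dimensional set. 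What you would need is that a nonzero \emph{left-monogenic} homogeneous polynomial cannot vanish on a set of dimension $m$ --- but that is an instance of the very Identity Principle being proved, so you cannot invoke it. The ``Marstrand-type slicing argument'' you defer to does not repair this: the family of candidate leading polynomials is uncountable (the coefficients $a_{\k}$ range over $A$), and the polynomial $P_k$ to be avoided depends on the point $p$ you are trying to select, so there is no fixed countable family to slice against. There are also secondary measure-theoretic issues (a set of Hausdorff dimension $n$ need not have positive $\mathcal{H}^n$ measure, and passing the lower density bound to the limit set $\{P_k=0\}$ requires a continuity of Hausdorff measure along decreasing compacts that fails in general), but these are fixable; the $n=m$ case is not, by this route, without new input.

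The paper closes precisely that case by a different, non-measure-theoretic device. Since $\chi=\phi-\psi$ is real analytic, a zero set of dimension $m$ contains a piece of a real analytic hypersurface of $M$. If $\nabla_\B^{\h}\chi$ vanishes along that hypersurface, its gradient at a point $p$ of the hypersurface is constrained by $m$ independent real-linear relations (it is ``normal'' to the hypersurface), so all components $\partial_s\nabla_\B^{\h}\chi(p)$ are real multiples $c_s$ of a single one, with some $c_{s_0}=1$. Feeding this into the monogenicity relation gives $0=\sum_{s=0}^m v_s\,\partial_s\nabla_\B^{\h}\chi(p)=\left(\sum_{s=0}^m v_sc_s\right)\partial_{s_0}\nabla_\B^{\h}\chi(p)$, and since a nonzero element of $M$ is not a left zero divisor, the remaining derivative vanishes too. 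Induction on $|\h|$ then annihilates the entire Taylor series at $p$, and Theorem~\ref{thm:taylormonogenic} forces $\chi\equiv0$ on a ball. If you want to keep your blow-up scheme, you would at a minimum have to prove separately that the zero set of a nonzero left-monogenic homogeneous polynomial has dimension at most $m-1$; some version of this gradient argument appears unavoidable for that, at which point the paper's proof is the shorter path.
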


\begin{proof}[Proof of Theorem~\ref{thm:identitymonogenic}]
Let $S$ denote the set of all points of $G\subseteq M$ where $\phi$ and $\psi$ coincide, i.e., the zero set of $\chi:=\phi-\psi$. Seeking a contradiction, we assume that neither the equality $S=G$ nor the inequality $\dim_\rr(S)<m$ hold true. Since $\dim_\rr(M)=m+1$ and since $\chi$ is a real analytic function, it follows that $\dim_\rr(S)=m$ and that there exists an open ball $B^{m+1}\subseteq M$ such that $S\cap B^{m+1}$ is a real analytic hypersurface of $B^{m+1}$. We will prove that the zero set of $\chi$ includes an open ball, thus reaching a contradiction with our hypotheses.

Taking into account Theorem~\ref{thm:taylormonogenic}, which provides a series expansion of $\chi$ centered at any $p\in G$ with coefficients $\{\frac1{\k!}\nabla_\B^{(0,\k)}\chi(p)\}_{\k\in\nn^m}$, it suffices to prove that $\nabla_\B^{\h}\chi$ vanishes identically in $S\cap B^{m+1}$ for all $\h\in\nn^{m+1}$. We do so by induction on $|\h|$. The induction basis is the fact that $\chi$ vanishes identically in $S\cap B^{m+1}$. The induction step from $h$ to $h+1$ consists in assuming $\nabla_\B^\h \chi$ to vanish in $S\cap B^{m+1}$ for all $\h\in\nn^{m+1}$ with $|\h|=h$ and proving that $\nabla_\B^{\h'} \chi$ vanishes in $S\cap B^{m+1}$ for all $\h'\in\nn^{m+1}$ with $|\h'|=h+1$. This is the same as proving that, for any $\h\in\nn^{m+1}$ with $|\h|=h$ and any $p \in S\cap B^{m+1}$, the vector
\[w:=\begin{pmatrix}
(\partial_0\nabla_\B^\h \chi)(p)\\
(\partial_1\nabla_\B^\h \chi)(p)\\
\vdots\\
(\partial_m\nabla_\B^\h \chi)(p)
\end{pmatrix}\in A^{m+1}\]
is the zero vector. Since $\nabla_\B^\h \chi$ vanishes identically on the hypersurface $S\cap B^{m+1}$, there exists an $m\times (m+1)$ matrix $\mathcal{A}$ of rank $m$, with entries in $\rr\subset A$, such that $\mathcal{A}\,w=0\in A^m$. Thus, there exist $n\in\{0,\ldots,m\}$ and $c_0,\ldots,c_m\in\rr$ (with $c_n=1$) such that
\[w=\begin{pmatrix}
c_0\\
c_1\\
\vdots\\
c_m
\end{pmatrix}(\partial_n\nabla_\B^\h \chi)(p)\,.\]
Now, since $\nabla_\B^\h \chi$ is still left-monogenic,
\[0=(\debar_\B \nabla_\B^\h \chi)(p)=\sum_{s=0}^mv_s(\partial_s\nabla_\B^\h \chi)(p)=\left(\sum_{s=0}^mv_sc_s\right)(\partial_n\nabla_\B^\h \chi)(p)\,.\]
Now, $\sum_{s=0}^mv_sc_s$ belongs to $M$ and is not zero because $c_n=1$. We conclude that $\sum_{s=0}^mv_sc_s$ is not a left zero divisor in $A$ and that $(\partial_n\nabla_\B^\h \chi)(p)=0$. Thus, $w$ is the zero vector in $A^{m+1}$, as desired.
\end{proof}

\newpage

\section*{Acknowledgements}

Both authors are partly supported by: GNSAGA INdAM; Progetto “Teoria delle funzioni ipercomplesse e applicazioni” Università di Firenze. The second author is also partly supported by: PRIN 2022 “Real and complex manifolds: geometry and holomorphic dynamics” (2022AP8HZ9) MIUR; Finanziamento Premiale “Splines for accUrate NumeRics: adaptIve models for Simulation Environments” INdAM.\\
The authors warmly thank the anonymous reviewer for providing precious suggestions, which allowed to improve the presentation of this work.




\end{document}